\def\mystrut{{\rule[-2ex]{0ex}{4.5ex}{}}}
\newtheorem{theorem}{Theorem}[section]
\newtheorem{lemma}[theorem]{Lemma}
\newtheorem{proposition}[theorem]{Proposition}
\newtheorem{corollary}[theorem]{Corollary}
\newtheorem{definition}{Definition}
\def\al{\alpha}
\def\be{\beta}
\def\g{\gamma}
\def\G{\Gamma}
\def\la{\lambda}
\def\La{\Lambda}
\def\de{\delta}
\def\wt{\widetilde}
\def\wh{\widehat}
\def\R{\mbox{$\mathbb R$}}
\def\C{\mbox{$\mathbb C$}}
\def\D{\mbox{$\mathbb D$}}
\def\N{\mbox{$\mathbb N$}}
\def\SSS{{\mathcal S}}
\def\TTT{{\mathcal T}}
\def\AAA{{\mathcal A}}
\def\BBB{{\mathcal B}}
\def\CCC{{\mathfrak C}}
\def\DDD{{\mathcal D}}
\def\EEE{{\mathcal E}}
\def\FFF{{\mathcal F}}
\def\GGG{{\mathcal G}}
\def\III{{\mathcal I}}
\def\KKK{{\mathcal K}}
\def\JJJ{{\mathcal J}}
\def\OOO{{\mathcal O}}
\def\PPP{{\mathcal P}}
\def\RRR{{\mathcal R}}
\def\UUU{{\mathcal U}}
\def\VVV{{\mathcal V}}
\def\cbar{\widehat{\C}}
\def\smm{{\backslash}}
\def\m{\text{mod}}
\def\ds{\displaystyle}
\begin{document}
\title{Renormalizations and wandering \\ Jordan curves of rational maps
\footnote{2010 Mathematics Subject Classification: 37F10, 37F20}}
\author{Guizhen Cui, Wenjuan Peng and Lei Tan}
\date{\today}
\maketitle
\begin{abstract}
We realize a dynamical decomposition for a post-critically finite rational map which admits a combinatorial decomposition. We split the Riemann sphere into two completely invariant subsets. One is a subset of the Julia set consisting of uncountably many Jordan curve components. Most of them are wandering. The other consists of components that are pullbacks of finitely many renormalizations, together with possibly uncountably many points. The quotient action on the decomposed pieces is encoded by a dendrite dynamical system. We also introduce a surgery procedure to produce post-critically finite rational maps with wandering Jordan curves and prescribed renormalizations.
\end{abstract}

\section{Introduction}

A rational map of one complex variable acts on the Riemann sphere and generates a dynamical system by iteration. One general principle in analyzing the dynamical system is to decompose it into invariant sub-systems with simpler dynamics. The Riemann sphere is canonically decomposed into the disjoint union of the Fatou set and the Julia set, defined by whether the iterated sequence forms a normal family in a neighborhood of the point. While the dynamics on the Fatou set is relatively tame, the dynamics on the Julia set is wild and chaotic.

It may happen that there exists a periodic continuum in the Julia set, where the first return map behaves like another rational map on its own Julia set. The induced dynamical system is usually called a {\it renormalization}. It may also happen that a continuum in the Julia set wanders around and forms a pairwise disjoint orbit.  The continuum is usually called a {\it wandering continuum}.

Many existing decomposition results can be described by stable multicurves. For a polynomial with a disconnected Julia set such that only finitely many filled Julia components contain post-critical points, one can extract a stable multicurve from a finite pullback of an equipotential curve in the basin of infinity. This stable multicurve forms a tableau under pullback. The pioneering work of Branner-Hubbard on cubic polynomials uses this tableau to show that the filled Julia set consists of components that are either pullbacks of renormalizations or points \cite{BH}. Later on DeMarco and Pilgrim also use this tableau and an infinite tree to encode the combination of these points and renormalization copies \cite{DP}.

For polynomials with connected Julia sets there are similar decomposition results. Often there are more than one periodic external rays landing at a common periodic point. These periodic external rays cut the Julia set into pieces. Together with equipotential curves, these rays play the role of a stable multicurve and form puzzles.  As before the dynamics on a periodic puzzle piece is a renormalization. In various expanding cases, the remaining part of the Julia set, after extracting the renormalized copies and their pullbacks, consists of uncountable many point components. See for example \cite{DH1, Ly,Y}, among others.

For non-polynomial rational maps, the situation is more complicated. There is an example of hyperbolic rational maps whose Julia set is a Cantor set of Jordan curves \cite{Mc1}. A Jordan curve contained essentially in an annular Fatou domain forms a stable multicurve. In general, if a sub-hyperbolic rational map has a disconnected Julia set, one can also extract a stable multicurve in the multiply-connected Fatou domains and obtain a canonical decomposition of the Julia set. It turns out that every wandering component of the Julia set is either a single point or a Jordan curve, and there are only finitely many cycles of non-trivial periodic components of the Julia set except the cycles of Jordan curves. A detailed analysis can be found in \cite{CT, PT}.

The situation of a rational map with a connected Julia set is actually much harder. In some particular cases such as the Newton's method for cubic polynomials, or quadratic rational maps with a $2$-periodic critical point, rays from distinct Fatou basins may join together at their landing points and form graphs to cut apart the Julia set (see e.g. \cite{Luo, Ro, Tan2}). But this is by far the general case. For instance there are Julia sets  homeomorphic to a Sierpinski carpet, in which any two distinct Fatou domains have disjoint closures.

The Julia set of a post-critically finite rational map is automatically connected. Conversely, any sub-hyperbolic rational map with a connected Julia set is quasi-conformally conjugate to a post-critically finite rational map in a neighborhood of the Julia set. There is already a well-known combinatorial decomposition for post-critically finite maps, in case that the map has a fully invariant Jordan curve up to homotopy. Cutting along this curve will decompose the rational map (or its second iteration if necessary) into two polynomials. The rational map is called the mating of the two polynomials. See for example \cite{ST, Tan}. However, the decomposed polynomials can not be considered as renormalizations in the usual sense. In general none of the small Julia sets is embedded in the original big Julia set. Distinct tips of a small Julia set are often identified under mating \cite{Shi}.

In this work we will establish a new type of decomposition procedure for post-critically finite rational maps. The first challenge is to find a natural class of multicurves to cut along.  Our key concept, {\it Cantor multicurves}, is introduced precisely for this purpose. Contrary to the equator of a mating, a Cantor multicurve is a multicurve whose consecutive pullbacks will generate a strictly increasing number of curves in each homotopy class.

The stability of multicurves is only measured up to homotopy. The crucial step in our study is to promote a Cantor multicurve to a multi-annulus such that it is exactly invariant in a sense defined later on. We will call such a dynamical system an {\it exact annular system}. This is to be compared to the monograph \cite{Pi}, where stable multicurves are promoted to invariant Jordan curves in the setting of branched coverings. It will play the role of multiply-connected Fatou components in the disconnected case, and will allow us to decompose the Julia set into pairwise disjoint pieces.

Cantor multicurves and exact annular systems appear naturally in the study of rational maps with disconnected Julia sets (see e.g. \cite{Go, Mc1, PT}). In the connected Julia set case, these concepts appear also in Haissinsky-Pilgrim's example \cite{HP}. However they have never been applied to general post-critically finite rational maps.

The dynamics of the exact annular sub-system is quite simple since it is expanding. Similar to the disconnected Julia set case we will prove that each component of its Julia set is a Jordan curve. There are uncountably many of such components.
All but countably many of them are wandering.

The next step is to analyze the complement of the grand orbit of the Julia set of the exact annular sub-system. It has countably many continuum components which are pullbacks of finitely many renormalizations and possibly uncountably many point components. Finally, we will encode the induced dynamics on these decomposed components by an expanding dynamical system on a dendrite.

We also present a new result about wandering continua. For a post-critically finite rational map, the existence of a non-simply connected wandering continuum is equivalent to the existence of a Cantor multicurve. Moreover, the wandering continuum must be a Jordan curve. It is known that for a polynomial with connected Julia set and without irrational indifferent cycles, there are no wandering continua if and only if its Julia set is locally connected \cite{BL, Ki1, Ki2, Le, Th}.

Our final and somewhat independent task is to construct post-critically finite rational maps with Cantor multicurves and prescribed renormalizations. For this purpose we will introduce a new surgery procedure which we call {\it folding}, to construct a branched covering from a polynomial. The resulting branched covering has a Cantor multicurve consisting of a single curve. We will show that under certain conditions the branched covering is Thurston equivalent to a rational map. Consequently, this rational map has a Cantor multicurve. Hence it has wandering Jordan curves and a renormalization whose straightening map is the polynomial we started with.

\vskip 0.24cm

{\bf Statements}.
Let $F$ be a branched covering of the Riemann sphere $\cbar$. We always assume $\deg F\ge 2$ in this paper. Denote by $\Omega_F$ the set of critical points of $F$. The {\bf post-critical set}\ of $F$ is defined by
$$
\PPP_F=\overline{\bigcup_{n\ge 1}F^n(\Omega_F)}.
$$
The map $F$ is called {\bf post-critically finite}\ if $\PPP_F$ is a finite set.

A Jordan curve $\g$ in $\cbar\smm\PPP_F$ is {\bf null-homotopic} if one of its complementary components contains no point of $\PPP_F$, or {\bf peripheral} if one of its complementary components contains exactly one point of $\PPP_F$, or {\bf essential} otherwise, i.e. if each of its two complementary components contains at least two points of $\PPP_F$.

A {\bf multicurve}\ $\Gamma$ is a non-empty and finite collection of disjoint Jordan curves in $\cbar\smm\PPP_F$, each essential and no two homotopic rel $\PPP_F$.

Given $\G$,  a collection of curves in $\cbar$, we also use $\G$ to denote the union of curves in $\G$ as a subset of $\cbar$ if there is no confusion.

Let $\G$ be a multicurve of $F$. Denote by $\G_1$ the collection of curves in $F^{-1}(\G)$ homotopic rel $\PPP_F$ to a curve in $\G$. Inductively, for $n\ge 2$, denote by $\G_n$ the collection of curves in $F^{-1}(\G_{n-1})$ homotopic rel $\PPP_F$ to a curve in $\G$. Note that $\G_n$ is contained in, but may not equal, the collection of curves in $F^{-n}(\G)$ homotopic rel $\PPP_F$ to curves in $\G$. For each $\g\in\G$, denote
$$
\kappa_n(\g)=\#\{\beta\in\G_n:\, \beta\text{ is homotopic to $\g$ rel $\PPP_F$}\}.
$$

\begin{definition}
A multicurve $\G$ is called a {\bf Cantor multicurve} if $\kappa_n(\g)\to\infty$ as $n\to\infty$ for all $\g\in\G$.
\end{definition}

A post-critically finite rational map $f$ need not have even a multicurve, e.g. $\#\PPP_f\le 3$. A topological polynomial, i.e. a branched covering with a totally invariant point, has no Cantor multicurves. In the classical construction of mating of polynomials, there is a Jordan curve whose pre-image is a single curve homotopic to itself rel the post-critical set. In this case the multicurve consisting of this single curve is not a Cantor multicurve. Concrete examples of rational maps with Cantor multicurves will be constructed in \S8.

\vskip 0.24cm

The following notations and definitions will be used throughout this paper.

\indent $\bullet$ Let $f$ be a rational map. We will denote by $\JJJ_f$ the {\bf Julia set} of $f$ and $\FFF_f$ the {\bf Fatou set} of $f$. We refer to \cite{Bea, CG, Mc2, Mi} for the definitions and basic properties. \\
\indent $\bullet$ Let $U,V\subset\cbar$ be open sets. We write $U\Subset V$ if $\overline{U}\subset V$.  \\
\indent $\bullet$ Let $A\subset\cbar$ be an annulus and $E\subset A$ be a connected open or closed subset. We say that $E$ is contained {\bf essentially} in $A$ if $E$ separates the boundary $\partial A$. \\
\indent $\bullet$ A {\bf multi-annulus}\ is a finite disjoint union of annuli in $\cbar$ with finite modulus. \\
\indent $\bullet$ Let $f$ be a post-critically finite rational map. An annulus $A\subset\cbar\smm\PPP_f$ is {\bf homotopic} rel $\PPP_f$ to a Jordan curve $\g$ (or an annulus $A'$) in $\cbar\smm\PPP_f$ if essential Jordan curves in $A$ are homotopic to $\g$ (or essential curves in $A'$) rel $\PPP_f$. A multi-annulus $\AAA\subset\cbar\smm\PPP_f$ is {\bf homotopic} rel $\PPP_f$ to a multicurve $\G$ (or a multi-annulus $\AAA'\subset\cbar\smm\PPP_f$) if each component of $\AAA$ is homotopic to a curve in $\G$ (or exactly one component of $\AAA'$)) rel $\PPP_f$ and each curve in $\G$ (or each component of $\AAA'$) is homotopic to exactly one component of $\AAA$.

\begin{definition} \label{defAS} Let $\AAA^1, \AAA\subset\cbar$ be two multi-annuli such that each component of $\AAA^1$ is contained essentially in a component of $\AAA$. A map $g: \AAA^1\to \AAA$ is called an {\bf annular system}\ if \\
\indent (1) for each component $A^1$ of $\AAA^1$, its image $g(A^1)$ is a component of $\AAA$ and the map $g:A^1\to g(A^1)$ is a holomorphic covering; \\
\indent (2) there is an integer $n\ge 1$ such that for each component $A$ of $\AAA$, the set $g^{-n}(\AAA)\cap A$ is non-empty and disconnected. \\
\indent The {\bf Julia set} of $g$ is defined by $\JJJ_g:=\bigcap_{n\ge 0}g^{-n}(\AAA)$. An annular system $g: \AAA^1\to \AAA$ is called {\bf proper}\ if $\AAA^1\Subset\AAA$; or {\bf exact}\ if for every component $A$ of $\AAA$, each of the two components of $\partial A$ is also a component of $\partial(A\cap \AAA^1)$.
\end{definition}

{\bf Remark}. Our definition of the Julia set $\JJJ_g$ of an annular system $g$ is perhaps slightly misleading. At first, $\JJJ_g$ need not be compact. Secondly, $g^{-1}(\JJJ_g)=\JJJ_g$ and $g(\JJJ_g)\subset\JJJ_g$ from the definition but $g(\JJJ_g)$ need not equal $\JJJ_g$ since the map $g$ may not be onto.

\begin{theorem}\label{as}{\bf (from a Cantor multicurve to an annular system)}
Let $f$ be a post-critically finite rational map with a Cantor multicurve $\Gamma$. Then there exists a unique multi-annulus $\AAA\subset\cbar\smm\PPP_f$ homotopic rel $\PPP_f$ to $\Gamma$ such that $g=f|_{\AAA^1}: \AAA^1\to \AAA$ is an exact annular system, where $\AAA^1$ is the union of components of $f^{-1}(\AAA)$ that are homotopic rel $\PPP_f$ to curves in $\Gamma$. Moreover, $\JJJ_g\subset\JJJ_f$. $\JJJ_g$ has uncountably many components. Each of these is a Jordan curve and all but countably many of them are wandering.
\end{theorem}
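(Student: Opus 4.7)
\emph{Proof plan.} The strategy is in two main steps: first, construct $\AAA$ by an iterative pullback procedure, and second, derive the Jordan-curve and wandering properties of $\JJJ_g$ from a Schwarz-lemma expansion argument. For the construction, I would start from a seed multi-annulus $\AAA_{(0)}\subset\cbar\smm\PPP_f$ consisting of one thin open annular neighborhood for each $\gamma\in\Gamma$, and at stage $n+1$ adjoin to $\AAA_{(n)}$ every component of $f^{-1}(\AAA_{(n)})$ homotopic rel $\PPP_f$ to a curve in $\Gamma$, then take in each homotopy class the minimal annulus in $\cbar\smm\PPP_f$ containing the resulting set. Because $\Gamma$ is a Cantor multicurve with $\kappa_n(\gamma)\to\infty$, new preimages keep appearing; nevertheless each homotopy class is bounded in $\cbar\smm\PPP_f$, so the monotone sequence stabilizes at a multi-annulus $\AAA$ with $\AAA^1\subset\AAA$. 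Condition (1) of the annular-system definition is automatic because $\AAA\subset\cbar\smm\PPP_f$ and $f(\Omega_f)\subset\PPP_f$ imply that $\AAA^1$ avoids critical points, so $f|_{A^1}\colon A^1\to A$ is a holomorphic covering on each component.

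To establish exactness, I would argue by contradiction: if a boundary component $\beta$ of a component $A$ of $\AAA$ were not a boundary component of $A\cap\AAA^1$, then $\AAA^1$ would stay at positive distance from $\beta$ inside $A$; starting instead from a sufficiently thin seed $\AAA_{(0)}$ disjoint from a collar of $\beta$, I could shrink $A$ by this collar while retaining $\AAA^1\subset\AAA$, contradicting minimality of the construction. A parallel rigidity argument --- comparing any two candidates $\AAA$, $\AAA'$ via the iterated pullback operator --- yields uniqueness. Condition (2) of the annular-system definition then follows immediately from $\kappa_n(\gamma)\to\infty$: once $\kappa_n(\gamma)\ge 2$, the component of $\AAA$ corresponding to $\gamma$ contains at least two disjoint essential components of $g^{-n}(\AAA)$, which disconnects $g^{-n}(\AAA)\cap A$.

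For the properties of $\JJJ_g$, I would use a Schwarz-lemma expansion argument. Each component of $g^{-n}(\AAA)$ is essentially contained in the corresponding component of $\AAA$, so the inclusion strictly contracts the hyperbolic metric of $\AAA$ (unless they coincide), and combined with condition (2) some iterate $g^n$ becomes uniformly expanding on $\overline{\JJJ_g}$. A point $x\in\JJJ_g$ then has its $f$-orbit trapped in $\overline{\AAA}$ with this uniform expansion, which rules out $x\in\FFF_f$ and gives $\JJJ_g\subset\JJJ_f$. Each component of $\JJJ_g$ is the intersection of a nested sequence $A\supset A^{(1)}\supset A^{(2)}\supset\cdots$ of essential annuli whose ``gap moduli'' within $\AAA$ tend to zero by the expansion, so the intersection is a single Jordan curve. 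The tree of such nested choices has uncountably many branches (since $\kappa_n\to\infty$), giving uncountably many distinct components; only countably many choice sequences are eventually periodic, so all but countably many components of $\JJJ_g$ are wandering. The main technical obstacle I anticipate is step one --- actually producing a stabilizing iterative construction and verifying exactness --- because exactness is a knife-edge condition requiring the multi-annulus to land at a precise configuration rather than merely absorbing preimages.
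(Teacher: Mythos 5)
There is a genuine gap at the heart of your construction. Because $\kappa_n(\gamma)\to\infty$, your monotone pullback procedure never stabilizes at a finite stage: strictly new preimage components appear forever, so at best you can pass to the increasing union. That union does give a multi-annulus with $\AAA^1\subset\AAA$, but it does not give exactness, and exactness is precisely the hard point (as you yourself flag). Your contradiction argument for it is circular: ``minimality'' in your scheme is minimality relative to the preimages of the \emph{previous} stage, and shrinking $A$ by a collar changes $f^{-1}(\AAA)$ as well, so nothing forces the outermost preimage annuli to accumulate on the whole of both boundary circles of $A$. The paper does not attempt a direct construction at all: it first builds a topological model $F=f\circ\theta_0$ in the Thurston class of $f$ for which the pullback configuration is already exactly nested ($F|_{\CCC^1}:\CCC^1\to\CCC$ a topological exact annular system), and then transports this to $f$ via the Rees--Shishikura semi-conjugacy $h$ of Theorem \ref{shi}; the crucial step, where the Cantor hypothesis is genuinely used, is Claim 1 that no fiber of $h$ crosses a component of $\CCC$ (proved with Lemma \ref{topology} and a homeomorphic pullback along a backward orbit avoiding critical points), after which $\AAA$ is defined from fiber-saturated annuli $h(\wt C)$ and exactness is verified through the identity $\wh E\smm E=\wh{E^1}\smm E$. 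Uniqueness is likewise obtained by a second application of the semi-conjugacy machinery (pullback homeomorphisms converging to the identity, plus Proposition \ref{compactly}), not by a soft rigidity remark. Your proposal contains no substitute for any of this.

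The second half also has a real flaw. For an \emph{exact} annular system $\AAA^1$ is not compactly contained in $\AAA$ --- they share boundary components --- so the Schwarz-lemma contraction of the inclusion is strict but not uniform near $\partial\AAA$, and no iterate of $g$ becomes uniformly expanding from the intrinsic data alone. Buff's example (Theorem \ref{Buff} in the appendix) is exactly such a system, with holomorphic coverings, whose Julia components are densely often non-locally connected; your argument, if valid, would apply verbatim to it and prove too much. The paper instead gets uniform expansion from the orbifold metric of the post-critically finite map $f$ (usable because $\overline{\AAA}$ avoids neighborhoods of the critical cycles --- this is also how $\JJJ_g\subset\JJJ_f$ is proved), and even then the Jordan-curve conclusion is not ``nested annuli with small gaps'': a nested intersection of essential annuli is only a separating continuum, and local connectivity requires the lifted-homotopy argument of Theorem \ref{curve} combined with the common-boundary Proposition \ref{common}. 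Your counting of uncountably many components and of the countably many non-wandering ones is fine in spirit (the paper does it through the symbolic/linear model, Corollary \ref{uncountable}), but it sits downstream of the expansion and local-connectivity steps that your sketch does not actually establish.
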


A {\bf continuum} $E\subset\cbar$ is a connected compact subset containing more than one point. It is called {\bf simply connected} if $\cbar\smm E$ has exactly one component, or {\bf $n$-connected} with $n\in\N\cup\{\infty\}$ if $\cbar\smm E$ has exactly $n$ components.

\begin{definition}
Let $f$ be a rational map. By a {\bf wandering continuum} we mean a continuum $K\subset\JJJ_f$ such that $f^n(K)\cap f^m(K)=\emptyset$ for any $n>m\ge 0$.
\end{definition}

\begin{theorem}\label{wc}{\bf (from a wandering continuum to a Cantor multicurve)}
Let $f$ be a post-critically finite rational map. Suppose that $K\subset\JJJ_f$ is a non-simply connected wandering continuum. Then $K$ and all its forward iterates are Jordan curves. For $n>0$ large enough, the homotopy classes of $\{f^n(K)\}$ rel $\PPP_f$ form a Cantor multicurve.
\end{theorem}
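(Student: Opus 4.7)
The proof splits into two tasks: (i) show every forward iterate $K_n := f^n(K)$ is a Jordan curve, and (ii) extract a Cantor multicurve from the homotopy classes of $\{K_n\}$ for $n$ large. The finite set $\PPP_f \cup \Omega_f$ together with the disjointness of $\{K_n\}$ yields $N_0$ with $K_n \cap (\PPP_f \cup \Omega_f) = \emptyset$ for all $n \geq N_0$. Thus $f$ is a local homeomorphism near each such $K_n$, so $f\colon K_n \to K_{n+1}$ is an unbranched covering of continua sending complementary components bijectively to complementary components; in particular non-simple-connectivity propagates from $K$ to every $K_n$.

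\textbf{Jordan curve property.} To show each $K_n$ ($n \geq N_0$) is a Jordan curve, I would combine local connectivity of $\JJJ_f$ (standard for post-critically finite rational maps) with the expansion of $f$ in the orbifold metric. The crucial claim is that each complementary component of $K_n$ is a Jordan disk: otherwise, pulling back a failure of local connectivity through $f^n$ along the wandering chain $K_{n-1}, \ldots, K_0$ would, by distortion estimates, produce an infinite family of sub-disks with geometrically shrinking diameters accumulating on $\PPP_f$, contradicting an orbifold area or modulus bound. This shrinking-lemma argument is the main technical obstacle. Once $K_n$ is a Jordan curve for $n \geq N_0$, earlier iterates follow by pullback: $f^{-1}(K_n)$ is a disjoint union of Jordan curves (unbranched since $K_n$ avoids critical values), and $K_{n-1}$, being connected, is one such component. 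For the finitely many iterates meeting $\Omega_f$, a local branched-cover analysis together with wandering rules out self-identifications.

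\textbf{Essentialness and $\G$.} For $n \geq N_0$, $K_n$ is essential in $\cbar \setminus \PPP_f$. The key stability fact: if $\gamma$ bounds a disk $D$ with $|D \cap \PPP_f| \leq 1$, then each component $D'$ of $f^{-1}(D)$ is a disk with $|D' \cap \PPP_f| \leq 1$ (by Riemann--Hurwitz applied to the branched cover $f\colon D' \to D$, branched at most over the single post-critical point of $D$, so $D'$ is connected planar with one boundary circle and exactly one pre-image of that point; then $D' \cap \PPP_f \subset D' \cap f^{-1}(\PPP_f)$ reduces to at most one point using $f(\PPP_f) \subset \PPP_f$). Hence if some $K_n$ with $n \geq N_0$ were null-homotopic or peripheral, so would be every $K_m$ with $N_0 \leq m \leq n$; extending the pullback across the finitely many critical iterates forces $K$ itself to be null-homotopic or peripheral. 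But an infinite disjoint forward orbit of such a Jordan curve cannot coexist with expansion: the orbifold diameters of the $K_m$ grow exponentially, forcing accumulation near a single post-critical point, which contradicts a uniform annular modulus estimate in $\cbar \setminus \PPP_f$. Hence for $n$ large, $K_n$ is essential. The essential homotopy classes of Jordan curves in $\cbar \setminus \PPP_f$ form a finite set, so $\{[K_n] : n \geq N_0\}$ realizes only finitely many recurrent classes $[\gamma_1], \ldots, [\gamma_r]$; after enlarging $N_0$ every $K_n$ with $n \geq N_0$ has class in this collection. Choose pairwise disjoint representatives $\gamma_i = K_{m_i}$ and set $\G := \{\gamma_1, \ldots, \gamma_r\}$, which is a multicurve by construction.

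\textbf{Cantor condition.} To verify $\kappa_n(\gamma_i) \to \infty$, fix $i$ and $n$, and for each $M \geq N_0 + n$ with $[K_M] = [\gamma_i]$, use that $K_M$ and $\gamma_i$ cobound an annulus $A_M \subset \cbar \setminus \PPP_f$; pulling $A_M$ back through $f^n$ (unbranched since $A_M$ avoids $\PPP_f$) yields an annular component whose boundary circles are $K_{M-n}$ and a distinct pre-image $\beta_M$ of $\gamma_i$ with $[\beta_M] = [K_{M-n}]$. An induction on $n$ tracking these pre-images through $\G_1, \G_2, \ldots, \G_n$ places $\beta_M \in \G_n$. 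For infinitely many $M$ the class $[K_{M-n}]$ equals $[\gamma_i]$ (pigeonhole on the $r$ recurrent classes), and the corresponding $\beta_M$ are pairwise distinct (they lie in disjoint annular components around the disjoint $K_{M-n}$); hence $\kappa_n(\gamma_i) \to \infty$, proving $\G$ is a Cantor multicurve.
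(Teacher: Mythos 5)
The central gap is the Jordan curve property, which is the heart of the theorem and which you explicitly leave as "the main technical obstacle." Local connectivity of $\JJJ_f$ gives no control over a subcontinuum $K\subset\JJJ_f$: a wandering continuum could a priori be a circle with hairs attached, or a non-locally-connected spiraling continuum (compare Buff's example in the appendix, where an exact annular system has non-locally-connected wandering Julia components), and your intermediate claim that each complementary component of $K_n$ is a Jordan disk neither follows from the sketched shrinking argument nor would suffice by itself. The paper's proof runs in the opposite order and it is precisely the extra machinery that does the work: it first establishes (Lemmas \ref{essential}--\ref{irreducible}) that the iterates of $K$ are eventually essential and their homotopy classes form an irreducible Cantor multicurve $\G_K$, then uses the family of essential preimages of the iterates to assemble an exact annular system $f\colon\AAA^1\to\AAA$ whose Julia set contains the whole orbit of $K$, and only then invokes Theorem \ref{curve} (expansion in the orbifold metric forces every Julia component of an exact annular system to be a Jordan curve); since $K$ is essential it must coincide with that component. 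Your proposal inverts this logical order (Jordan curve first, Cantor multicurve second) and never actually proves the Jordan step.

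Two further steps fail as written. First, essentialness: the proposed contradiction via "orbifold diameters of the $K_m$ grow exponentially" cannot be correct, since diameters are bounded on the sphere, and the same heuristic would rule out simply connected wandering continua, whose existence is an open problem and which do occur for flexible Latt\`es maps; the paper's Lemma \ref{essential} instead combines a counting argument (only finitely many iterates can have three or more postcritical complementary components), Sullivan's no-wandering-domains theorem, and an exceptional-point/normal-family analysis. Second, the Cantor condition: for a \emph{fixed} $n$ you claim infinitely many pairwise distinct curves $\beta_M\in\G_n$ homotopic to $\gamma_i$, but $f^{-n}(\gamma_i)$ has only finitely many components, so infinitely many $M$ must produce the same $\beta_M$; the parenthetical justification "(they lie in disjoint annular components around the disjoint $K_{M-n}$)" is false, because two distinct continua $K_{M-n}$, $K_{M'-n}$ in the same class can be adjacent, from opposite sides, to one and the same preimage curve. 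Your count therefore only yields $\kappa_n(\gamma_i)\ge 1$. The paper proves the Cantor property by contradiction: if each class had a unique preimage component in the classes, the annuli between $f^n(K)$ and $f^{n+kp}(K)$ would map forward properly with all images avoiding $\PPP_f$, giving a normal family on a region meeting $\JJJ_f$, which is impossible.
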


A connected subset $E\subset\cbar$ is called of {\bf simple type} (w.r.t. $\PPP_f$) if either there exists a simply connected domain $U\subset\cbar$ such that $E\subset U$ and $U$ contains at most one point in $\PPP_f$, or there exists an annulus $A\subset\cbar\smm\PPP_f$ such that $E\subset A$. It is called of {\bf complex type} (w.r.t. $\PPP_f$) otherwise.

Let $f$ be a post-critically rational map with a stable Cantor multicurve $\Gamma$. Denote by $\JJJ(\Gamma)$ the union of the grand orbit under $f$ of the Julia set of the annular sub-system derived from Theorem \ref{as}. Set $\KKK(\Gamma)=\cbar\smm\JJJ(\Gamma)$. We refer to \S5 and \S6 for the definitions of renormalization and finite dendrite map.

\begin{theorem}\label{decomposition}{\bf (renormalization)} Let $f$ be a post-critically rational map with a stable Cantor multicurve $\Gamma$. Then every component of $\KKK(\Gamma)$ is either a single point or a continuum which is eventually periodic. There are only finitely many periodic components of $\KKK(\G)$ which are continua. Each of them is either the closure of a quasi-disk or a complex type continuum. The former is the closure of a periodic Fatou domain,  while the latter is the filled Julia set of a renormalization.
\end{theorem}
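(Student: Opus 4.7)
The plan is to exploit the exact annular system $g\colon\AAA^1\to\AAA$ from Theorem \ref{as} to build a puzzle decomposition of $\cbar$ whose nested intersections recover the components of $\KKK(\G)$. At level $n$, I would take the closures of connected components of $\cbar\smm\bigcup_{k=0}^{n}f^{-k}(\AAA)$, and call these level-$n$ puzzle pieces. Since $\AAA^1\Subset\AAA$, level-$(n+1)$ pieces strictly refine the level-$n$ ones; a direct computation gives $f\bigl(\bigcup_{k=0}^{n+1}f^{-k}(\AAA)\bigr)\supseteq\bigcup_{k=0}^{n}f^{-k}(\AAA)$, so $f$ maps each level-$(n+1)$ piece onto a level-$n$ piece as a proper branched covering, and every puzzle boundary sits in $\JJJ(\G)$. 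Consequently each component $X$ of $\KKK(\G)$ lies in a unique decreasing nest $P_0\supset P_1\supset\cdots$ with $X=\bigcap_n P_n$.

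The next step is to extract the point-versus-continuum dichotomy together with eventual periodicity. If eventually $P_n$ is of simple type, then either $\operatorname{diam}(P_n)\to 0$, yielding a single point, or $X$ is trapped inside a Jordan domain meeting $\PPP_f$ in at most one point; post-critical finiteness forces every Fatou component to be a quasi-disk and allows only finitely many Fatou cycles, so such $X$ is eventually periodic and, when periodic, equals the closure of a super-attracting Fatou component. If every $P_n$ is of complex type, I would argue that the topological type of the pair $(P_n,\PPP_f\cap P_n)$ takes only finitely many values, because $\PPP_f$ is finite and the combinatorics of $P_n$ relative to the nested annular system are bounded. A pigeonhole then makes $\{f^k(P_n)\}$ eventually periodic; wandering complex-type continua are ruled out by Theorem \ref{wc}, which forces every non-simply-connected wandering continuum in $\JJJ_f$ to be a Jordan curve, hence of simple type.

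Finiteness of the periodic continuum components follows because each periodic cycle of period $p$ must capture a critical point of some $f^{j}$: otherwise $f^p$ would be a local homeomorphism on a puzzle neighborhood of $X$, forcing the nested pieces there to shrink to a point and contradicting that $X$ is a continuum. Since $\Omega_f$ is finite, only finitely many such cycles can exist, and simple-type cycles are similarly bounded by the finiteness of Fatou cycles. For the renormalization statement, given a periodic complex-type $X$ of period $p$, choose $n$ large and set $U=P_n^{\circ}$, $U'=P_{n+p}^{\circ}$. Exactness of $g$ combined with the complex-type hypothesis should give $U'\Subset U$, so $f^p\colon U'\to U$ is a proper branched cover of degree $\ge 2$. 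This is a polynomial-like map in the sense of Douady--Hubbard whose filled Julia set $\bigcap_{k\ge 0}f^{-kp}(U)$ equals $X$; its straightening provides the sought renormalization.

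The main technical obstacle I foresee lies in the complex-type analysis: proving that the puzzle combinatorics are genuinely bounded so the pigeonhole step applies, and verifying the strict nesting $U'\Subset U$ at each periodic cycle, which is delicate near those boundary curves of $P_n$ that accumulate on $\PPP_f$. A secondary difficulty is excluding wandering complex-type continua without circularly invoking the very renormalization theory being constructed; here the use of Theorem \ref{wc} as an input, rather than as a byproduct, is essential.
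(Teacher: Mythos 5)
Your outline follows the paper's broad strategy (puzzle pieces from pullbacks of the invariant multi-annulus, simple/complex type dichotomy, renormalization at complex-type periodic pieces), but the two hardest steps are exactly the ones left unresolved. First, the compact containment needed for the renormalization: your hope that ``exactness of $g$ \ldots should give $U'\Subset U$'' has the logic reversed. Exactness means each boundary component of a component $A$ of $\AAA$ is also a boundary component of $A\cap\AAA^1$, so $\partial f^{-n+1}(\AAA)\subset\partial f^{-n}(\AAA)$ and your level-$(n+p)$ puzzle piece shares boundary curves with the level-$n$ piece containing it; compact containment fails for the natural puzzle pieces. The paper's proof of Theorem \ref{renorm} is devoted precisely to repairing this: it enlarges the complex-type piece by the adjacent annuli $A_1,\dots,A_n$ of $\AAA$ and then chooses Jordan curves $\g_i\subset A_i$, chased backwards around the cycles that $f^p$ induces on these annuli, so that the relevant $f^p$-preimage of $\g_j$ separates $\g_i$ from $K$; only after this curve construction does one get $W_1\Subset W$, and after stabilizing $\PPP_f\cap W_n$ a rational-like map $f^p\colon U\to V$ with degree $\ge 2$ (Proposition \ref{deg}) and $\deg<\deg f^p$ (since $\JJJ_g\not\subset K$). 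Note also that these domains are finitely connected, so the object produced is a rational-like map in the sense of Theorem \ref{st}, not a Douady--Hubbard polynomial-like map as you assert.

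Two further gaps. (i) Eventual periodicity and finiteness of the complex-type components: a pigeonhole on the topological types of $(P_n,\PPP_f\cap P_n)$ does not make the actual orbit of pieces eventually periodic (equal types need not mean equal pieces). The paper instead uses stability of $\G$ to show that every curve of $f^{-n}(\AAA)$ lying in a complementary piece $B$ of $\AAA$ is null-homotopic or peripheral, so $B$ contains exactly one complex-type component of $\cbar\smm f^{-n}(\AAA)$ for every $n$; hence the complex-type part $\KKK_c$ of $\KKK(\G)$ has exactly $\#\G+1$ components, each a component of $f^{-1}(\KKK_c)$, and eventual periodicity and finiteness follow at once. (ii) The claim that wandering components are single points is not argued: the dichotomy ``either the diameters of $P_n$ tend to $0$ or $X$ is trapped in a disk with at most one postcritical point, hence eventually periodic'' is a non sequitur, and Theorem \ref{wc} only excludes wandering complex-type continua -- it says nothing about wandering simple-type continua (for instance a wandering Jordan curve homotopic to curves of some other Cantor multicurve, or a full wandering continuum), which are exactly what must be ruled out. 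The paper does this with the decomposition $\UUU=\DDD\sqcup\GGG\sqcup\RRR$, Sullivan's no-wandering-domain theorem to force a wandering component into finitely many pieces $\DDD'$ of bounded homotopic diameter infinitely often, and expansion of the orbifold metric to shrink it to a point (Propositions \ref{GGG} through \ref{decomposition3}); without analogues of these steps the central ``single point or eventually periodic continuum'' assertion remains unproven.
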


\begin{theorem}\label{coding}{\bf (coding)}
There exist an induced expanding finite dendrite map $\tau:\,\TTT\to\TTT$ and a continuous semi-conjugacy $\Theta$ from $f$ to $\tau$ such that for each regular point $t\in\TTT$, the fiber $\Theta^{-1}(t)$ is a component of $\JJJ(\G)$, and for each vertex $t\in\TTT$, the fiber $\Theta^{-1}(t)$ is a component of $\KKK(\G)$.
\end{theorem}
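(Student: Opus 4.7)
My plan is to produce $\TTT$ as the quotient of $\cbar$ by the partition $\DDD$ whose blocks are the components of $\JJJ(\G)$ (each a Jordan curve by Theorem \ref{as} or a pullback of one) and the components of $\KKK(\G)$ (each, by Theorem \ref{decomposition}, either a point, the closure of a Fatou domain, or the filled Julia set of a renormalization). Let $\Theta:\cbar\to\TTT:=\cbar/\DDD$ denote the quotient projection. The core technical step is to show that $\DDD$ is upper semi-continuous. For a Jordan-curve block $C\subset\JJJ(\G)$, I would use the linear nesting order of the Jordan-curve components of $\JJJ_g$ inside each annulus of $\AAA$ (and of its pullbacks under $f$): two nearby such Jordan curves cobound a thin saturated annular neighborhood of $C$. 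For a continuum block $C\subset\KKK(\G)$, I would exhibit $C$ as the intersection of a nested sequence of closed regions, each bounded by finitely many Jordan curves in $\JJJ(\G)$; the uniform shrinking of these regions follows from the fact that $\AAA^1\Subset\AAA$ makes $g$ hyperbolically expanding on $\JJJ_g$.

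Once $\DDD$ is upper semi-continuous, $\TTT$ is a metrizable Peano continuum and $\Theta$ is continuous. To see that $\TTT$ is a dendrite, I would suppose for contradiction that there is a Jordan curve in $\TTT$. Its preimage under $\Theta$ would be a saturated continuum separating $\cbar$ into two non-empty open regions, each meeting $\KKK(\G)$. Working inside a single component of the grand orbit of $\AAA$, one can read off the combinatorics of such a separating set: its Jordan-curve blocks form an initial segment in the linear nesting order, and the endpoints of that segment must join the two sides through a single $\KKK(\G)$-block; this produces a loop consisting of a single block, which is impossible since each block is either a Jordan curve (whose quotient to a point does not create a loop) or a continuum not separating $\cbar$.

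Since $f^{-1}(\JJJ(\G))=\JJJ(\G)$, the map $f$ permutes blocks and descends to a continuous $\tau:\TTT\to\TTT$ with $\tau\circ\Theta=\Theta\circ f$. I would declare $t\in\TTT$ to be a vertex precisely when $\Theta^{-1}(t)$ is a non-degenerate block of $\KKK(\G)$. Theorem \ref{decomposition} provides finitely many periodic continuum blocks and the fact that every continuum block of $\KKK(\G)$ is eventually periodic; together these yield that the vertex set is forward-invariant, eventually periodic, and comprises finitely many grand orbits, which is the finite-dendrite-map structure defined in \S6. The fibre conditions stated in the theorem are then immediate from the definition of $\DDD$.

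Expansion of $\tau$ follows by combining two ingredients: on arcs of $\TTT$ coming from $\JJJ(\G)$-blocks, the uniform hyperbolic expansion of $g:\AAA^1\to\AAA$ (a proper holomorphic covering with $\AAA^1\Subset\AAA$, hence expanding by Schwarz) transfers directly to $\tau$; at vertices the rational map $f$ acts as a branched covering of degree at least two on each periodic Fatou cycle or renormalization piece, giving local expansion in the appropriate sense. The main obstacle I foresee is the upper semi-continuity step, where the combinatorial bookkeeping of nested pulled-back regions must be uniform enough to conclude that the diameters of $\KKK(\G)$-components inside ever smaller regions shrink to zero; the proofs that $\TTT$ is a dendrite and that $\tau$ is expanding both hinge on this uniform control.
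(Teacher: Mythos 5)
Your global picture is right (the paper's map $\Theta$ does in effect realize $\TTT$ as the quotient of $\cbar$ by the partition into components of $\JJJ(\G)$ and of $\KKK(\G)$), but as a proof of Theorem \ref{coding} the proposal has genuine gaps, and the most serious one is not the upper semi-continuity you flag. The theorem asserts that $\tau$ is an \emph{expanding finite dendrite map} in the precise sense of \S6: there must be a finite invariant tree $T_0\subset\TTT$ with $T_n=\tau^{-n}(T_0)$ finite trees, $v(T_n)=\tau^{-n}(v(T_0))$, $\tau$ a homeomorphism on each edge, $\bigcup_n T_n$ dense, bounded degree, and a metric in which $\tau$ expands. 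Your construction produces an abstract quotient dendrite with a self-map but never builds $T_0$ or verifies any of these axioms; in particular declaring ``$t$ is a vertex iff $\Theta^{-1}(t)$ is a non-degenerate block of $\KKK(\G)$'' does not give the vertex set of such a tree, and with that convention the fibre statement is not ``immediate'': fibres over points that are neither your vertices nor images of Jordan-curve blocks are the (possibly uncountably many) point components of $\KKK(\G)$, so you must first locate vertices and regular points inside your quotient, which is exactly the tree structure you have not constructed. The paper does this the other way around: it builds the dual trees of $\Gamma_n=f^{-n}(\Gamma)$, assembles them into a tower of tree maps, obtains $\tau$ as the limit (Theorem \ref{limit}), and only then constructs $\Theta$ via itineraries on $\JJJ_g$ (Proposition \ref{linear}) extended and pulled back to a uniformly convergent sequence $\Theta_n$.

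Second, your expansion argument is not just incomplete but rests on the wrong mechanism. Expansion of $\tau$ is a statement about a metric on the dendrite, transverse to the collapsed Jordan curves; the Schwarz-lemma expansion $\|g'\|\ge\lambda$ on $\AAA$ does not ``transfer directly'' because no metric on the quotient has been specified, and ``$f$ has degree $\ge 2$ on each renormalization piece'' says nothing about how $\tau$ stretches the edges adjacent to a vertex (the dynamics inside a collapsed fibre is invisible in $\TTT$). In the paper the expansion is combinatorial: it is exactly here that the Cantor multicurve hypothesis enters, via $\lambda(M_{r,\Gamma})>1$ (Lemma \ref{eigenvalue}) and a Perron--Frobenius eigenvector used to assign edge lengths so that $|(\iota_0\circ\tau_0)'|\ge\lambda>1$ (Proposition \ref{tree}); without this (or an equivalent construction of an adapted metric) your $\tau$ has no proven expansion. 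Finally, the upper semi-continuity of the decomposition and the ``no Jordan curve in the quotient'' step are only asserted; they can in principle be assembled from Propositions \ref{compactly}, \ref{2-continuum}, \ref{proper}, \ref{disk} and \ref{decomposition3}, but as written they are claims rather than arguments, and the loop-contradiction sketch is too vague to check.
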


\begin{theorem}\label{folding}{\bf (folding)}
Let $g$ be a post-critically finite polynomial with $\#\PPP_g\ge 3$. Then there exist a post-critically finite rational map $f$ and a Cantor multicurve $\G$ consisting of a single curve, such that $\KKK(\G)$ consists of the grand orbit of a unique fixed component $K$. Moreover, $K$ is the filled Julia set of a renormalization whose straightening map is the polynomial $g$.
\end{theorem}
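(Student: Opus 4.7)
My plan is to prove Theorem~\ref{folding} by explicitly constructing $f$ in two stages: a topological surgery on $g$, the \emph{folding}, which produces a post-critically finite branched covering $F$, and a Thurston-type realization argument promoting $F$ to the desired rational map. Normalize $g$ so that $\infty$ is the super-attracting fixed point and let $K_g$ denote its filled Julia set. Choose an equipotential $\gamma$ in the basin of infinity close enough to $K_g$ that the bounded component $D$ of $\cbar\smm\gamma$ contains $K_g$ and satisfies $D\Subset g(D)$; set $D^*=\cbar\smm\overline{D}$. Define $F:\cbar\to\cbar$ by $F|_D=g|_D$, and on $D^*$ by a branched covering of controlled degree $d+k$ (for some small $k\ge 1$) that (a) agrees with $g|_\gamma$ on the boundary, (b) has a single new critical orbit in $D^*$ that eventually lands in $\PPP_g$, so that $\PPP_F=\PPP_g$, and (c) has at least one component of $F^{-1}(\gamma)$ inside $D^*$ which separates $\gamma$ from $\infty$. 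Since $\#\PPP_g\ge 3$, both $\gamma$ and this interior component are essential rel $\PPP_F$, with the latter homotopic to $\gamma$; an induction then shows $\{\gamma\}$ is a Cantor multicurve for $F$, with $\kappa_n(\gamma)$ growing geometrically.

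The next step is to establish that $F$ is Thurston equivalent to a rational map $f$. The difficulty here is that a Cantor multicurve is in general a Thurston obstruction, so Thurston's theorem cannot be applied to $F$ directly. Instead, the plan is to decompose $F$ along $\gamma$ into two rational pieces: on the $D$-side the map is already the rational polynomial $g$; on the $D^*$-side, pinching $\gamma$ to a point and collapsing $D$ to a marked point yields a topological polynomial of small degree with hyperbolic orbifold and no Thurston obstruction, hence realizable by a genuine polynomial via the classical Thurston theorem. A gluing argument along an invariant annular neighborhood of $\gamma$---analogous to an exact annular system as in Theorem~\ref{as} but built on the topological side---reassembles these rational pieces into $f$. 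Under this equivalence the Cantor multicurve $\{\gamma\}$ of $F$ transports to a Cantor multicurve $\G$ of $f$ consisting of a single Jordan curve.

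Finally, I would identify the renormalization by applying Theorem~\ref{decomposition} to $f$ with $\G$. By construction the only periodic continuum component of $\KKK(\G)$ is the fixed component $K$ corresponding to $D$; it is of complex type since it carries all $\#\PPP_g\ge 3$ post-critical points, every other continuum component of $\KKK(\G)$ lies in the grand orbit of $K$, and the remaining components are points. By Theorem~\ref{decomposition}, $K$ is the filled Julia set of a renormalization of $f$. Since the first return map of $f$ on a neighborhood of $K$ is hybrid equivalent to $g$ on $K_g$ by construction, its straightening map is exactly $g$. The main obstacle in the whole argument will be the realization step: handling the Thurston obstruction carried by the Cantor multicurve through a piecewise decomposition that exploits the simplicity of the folding---a single added fold on a topologically trivial disk---so that each decomposed piece is individually Thurston-equivalent to a rational map, and then gluing carefully along an invariant annulus.
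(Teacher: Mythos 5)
Your realization step rests on a false premise and an invalid workaround, and this is the heart of the matter. A Cantor multicurve is \emph{not} ``in general a Thurston obstruction'': the transition matrix weights each preimage curve by $1/\deg$, so a single curve $\beta$ with $m\ge 2$ homotopic preimages of degrees $d_1,\dots,d_m$ is Cantor (Lemma \ref{eigenvalue}) yet satisfies $\lambda_\beta=\sum 1/d_i<1$ as soon as the degrees are large. The paper's whole strategy (Proposition \ref{construction1}, Theorems \ref{no1}, \ref{no2}, \ref{apply2}) is to build the folding with the degrees on the preimage curves chosen so that $\lambda_\beta<1$ and, moreover, so that \emph{no other} irreducible multicurve is an obstruction: curves crossing $\beta$ are handled by the intersection-number estimates (Lemmas \ref{intersection}--\ref{entry}, requiring $d(F,\beta)<m(F,\beta)$ or an injective tree), and curves disjoint from $\beta$ reduce via Proposition \ref{polynomial} to multicurves of the marked polynomial $g$, which carries none. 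Then Thurston's Theorem \ref{Thurston} applies directly to $F$. Your alternative --- cut along $\gamma$, realize the two pieces separately, and ``glue along an invariant annulus'' --- cannot work if $F$ actually carried an obstruction, since Thurston's theorem is an equivalence: no surgery on the pieces can make an obstructed branched covering equivalent to a rational map. And if $F$ carries no obstruction, the gluing is superfluous; what is needed instead is the quantitative verification you omit. Note also that $\lambda_\beta<1$ alone is not enough: the example in \S\ref{ex} is a folding with $\lambda_\beta=1/2$ that still has an obstruction crossing $\beta$, which is exactly why Theorem \ref{apply2}(1) takes $m\ge\deg g+1$ preimage curves of degree $\ge\deg g+5$ (hence $\deg f\ge \deg g+(\deg g+1)(\deg g+5)$, not ``$d+k$ for small $k$''), so that $d(F,\beta)=\deg g<m(F,\beta)$ and Theorem \ref{no1} applies; your low-degree version has no such mechanism, and the alternative route via Theorem \ref{no2} needs $g$ to have bounded Fatou domains, which a general post-critically finite $g$ with $\#\PPP_g\ge3$ need not have.

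There is also an inconsistency in your construction of $F$ itself. If $\PPP_F=\PPP_g$, then the unbounded component $D^*$ contains exactly one post-critical point, namely $\infty$, since $\PPP_g\smm\{\infty\}\subset K_g\subset D$; hence $\gamma$ is peripheral, not essential, and $\{\gamma\}$ is not even a multicurve. To make $\gamma$ essential you must place at least two post-critical points in $D^*$ (in Proposition \ref{construction1} this is done by letting the critical values of the outer pieces land on preimages, lying outside the disk, of eventually periodic points of $g$), and then necessarily $\PPP_F\supsetneq\PPP_g$. This is repairable, but together with the missing obstruction analysis it means your argument does not yet yield the rational map $f$; the final identification of $K$ and its straightening via Theorem \ref{decomposition} is fine once $f$ exists.
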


{\bf Outline of the paper}. This paper is organized as follows. In \S2, we recall Thurston's theory. Some equivalent conditions of Cantor multicurves are given here. In \S3, we show that every component of the Julia set of an exact annular system is a Jordan curve if the annular system is expanding. Theorem \ref{as} is proved in \S4. In \S5, we will study the decomposition pieces and prove Theorem \ref{decomposition}. In \S6, we introduce the definition of finite dendrite maps and prove Theorem \ref{coding}. Theorem \ref{wc} is proved in \S7. In \S8, we will introduce the folding surgery to construct branched coverings from polynomials and provide several theorems, which are more precise than Theorem \ref{folding}.

\section{Thurston's Theorem and multicurves}

We will recall Thurston's characterization theorem at first. Let $F$ be a post-critically finite branched covering of $\cbar$. Let $\G$ be a multicurve. Its {\bf transition matrix} $M_{\G}=(a_{\g\beta})$ is defined as:
$$
a_{\gamma\beta}=\sum_{\alpha}\frac 1{\deg(F: \alpha\to\beta)},
$$
where the summation is taken over components $\alpha$ of $F^{-1}(\beta)$ which are homotopic to $\gamma$ rel $\PPP_F$. Denote by $\lambda_{\G}$ the leading eigenvalue of its transition matrix $M_{\G}$.

A multicurve $\G$ is called {\bf stable} if
for each $\g\in\G$
each essential curve in $F^{-1}(\g)$
	is homotopic rel $\PPP_F$ to a curve in $\G$. A stable multicurve $\Gamma$ is called a {\bf Thurston obstruction}\ if $\la_{\G}\ge 1$.

Two post-critically finite branched coverings $F$ and $G$ are called {\bf Thurston equivalent}\ if there is a pair of homeomorphisms $(\phi,\psi):\cbar\to\cbar$ such that $\phi$ is isotopic to $\psi$ rel $\PPP_F$ and $\phi\circ F\circ \psi^{-1}=G$. We refer to \cite{DH3} or \cite{Mc2} for the next theorem and the definition of a hyperbolic orbifold.

\begin{theorem}\label{Thurston} {\bf (Thurston)} Let $F$ be a post-critically finite branched covering of $\cbar$ with a hyperbolic orbifold. Then $F$ is Thurston equivalent to a rational map $f$ if and only if $F$ has no Thurston obstruction. Moreover, the rational map $f$ is unique up to holomorphic conjugation.
\end{theorem}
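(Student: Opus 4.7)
The plan is to split the theorem into uniqueness and existence, following the Douady--Hubbard approach via Teichm\"uller space, with existence carrying the real content. For uniqueness, given two rational maps $f_1, f_2$ both Thurston-equivalent to $F$, I would iteratively lift the witnessing isotopy through $f_1$ to produce a sequence of quasiconformal homeomorphisms $h_n$ of $\cbar$ satisfying $f_2 \circ h_n = h_{n+1} \circ f_1$. Since pullback by $F$ preserves the Beltrami norm, one extracts a subsequential quasiconformal limit $h$ conjugating $f_1$ to $f_2$. The hyperbolic orbifold hypothesis then forces the Beltrami coefficient of $h$ to be trivial: it must vanish on the Julia set by expansion, while a nontrivial $f_1$-invariant Beltrami form on the Fatou set would produce a measurable invariant line field, which by the no-invariant-line-fields rigidity theorem only occurs for flexible Latt\`es examples, and these are excluded by the hyperbolic orbifold hypothesis.

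For existence, let $\TTT_F$ denote the Teichm\"uller space of marked complex structures on $(\cbar, \PPP_F)$. Using $\PPP_F \subset F^{-1}(\PPP_F)$, I would define a holomorphic self-map $\sigma_F : \TTT_F \to \TTT_F$ by pulling a complex structure back through $F$ and then forgetting the extra marked points. Its fixed points correspond exactly to rational maps in the Thurston class of $F$, so producing one fixed point suffices. The map $\sigma_F$ factors as an isometric embedding (the pure pullback to the richer Teichm\"uller space) followed by the forgetful projection, which is a weak contraction for the Teichm\"uller metric by the Royden--Gardiner theorem; under the hyperbolic orbifold hypothesis and $\deg F \ge 2$ it is strictly contracting on compact subsets. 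Hence, if the orbit $\tau_n := \sigma_F^n(\tau_0)$ remains in some compact region of $\TTT_F$, it converges to a unique fixed point, which yields the desired rational model.

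The principal obstacle --- and the true heart of the theorem --- is the dichotomy: either $\{\tau_n\}$ is bounded in $\TTT_F$, or a Thurston obstruction for $F$ exists. I would argue contrapositively. If $\{\tau_n\}$ escapes every compact set, Mumford--Bers compactness of moduli space forces certain simple closed hyperbolic geodesics on $(\cbar \smm \PPP_F, \tau_n)$ to shrink to zero length along a subsequence. Passing to a further subsequence to stabilize the homotopy class of the set of short curves, I would extract a multicurve $\G$. Using the collar lemma to convert hyperbolic length bounds into annular modulus bounds, together with the observation that each preimage annulus of a short collar under $F$ contributes to $M_\G$ with weight equal to its inverse covering degree, I would derive a comparison between the length vectors of $\G$ at consecutive times governed by $M_\G$. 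Iterating this comparison and balancing against a Perron--Frobenius eigenvector of $M_\G$ forces $\lambda_\G \ge 1$, so $\G$ is a Thurston obstruction, contradicting the hypothesis. The delicate technical step is the covering-lemma argument showing that the transition matrix entries emerge from the pullback degrees in exactly the right way for the length comparison to close up.
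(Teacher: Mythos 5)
First, a point of orientation: the paper does not prove this theorem at all --- it is stated as background, with the proof deferred to Douady--Hubbard \cite{DH3} and McMullen \cite{Mc2}. So there is no internal argument to compare with; what you have written is an outline of precisely the cited Douady--Hubbard proof: the pullback map $\sigma_F$ on the Teichm\"uller space of $(\cbar,\PPP_F)$, the correspondence between its fixed points and rational maps in the Thurston class, weak contraction via Royden--Gardiner, and the dichotomy ``orbit bounded versus short geodesics organizing into a multicurve $\G$ with $\la_\G\ge 1$.'' As a roadmap this is accurate, and you correctly locate the analytic heart of the matter in the length-vector comparison governed by the transition matrix $M_\G$.

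Two caveats. The theorem is an equivalence, and your argument addresses only one implication, namely ``no obstruction $\Rightarrow$ Thurston equivalent to a rational map'' (plus uniqueness). The converse --- a rational map with hyperbolic orbifold admits no Thurston obstruction --- is nowhere argued; it is the easier direction, but it requires its own estimate (essentially the same annulus/modulus inequality run at the fixed point of $\sigma_F$, showing a stable multicurve of a rational map with hyperbolic orbifold has $\la_\G<1$), and a complete proof must include it. Second, the steps you yourself flag as delicate are exactly where all the work lies and are not carried out: (i) the passage from ``the orbit projects into a compact part of moduli space'' to actual convergence in Teichm\"uller space, which needs the non-increase of $d(\tau_n,\tau_{n+1})$, uniform strict contraction of an iterate of $\sigma_F$ on the closure of the orbit, and control of the marking; and (ii) the inequality comparing hyperbolic length vectors at consecutive times up to bounded error via $M_\G$, which needs the argument that sufficiently short curves pull back, up to homotopy rel $\PPP_F$, only to curves of the extracted multicurve, together with the degree bookkeeping. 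Finally, for uniqueness you invoke the no-invariant-line-field rigidity for post-critically finite maps, which is an additional deep input; in \cite{DH3} uniqueness is obtained more cheaply from strict contraction (two fixed points would contradict $d(\sigma_F\tau,\sigma_F\tau')<d(\tau,\tau')$). None of this makes the outline wrong, but as it stands it is a faithful summary of the known proof, with its hardest steps acknowledged rather than executed, and with one direction of the stated equivalence missing.
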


A multicurve $\G$ is called {\bf irreducible} if for each pair $(\g,\be)\in\G\times\G$, there exists a sequence $\{\delta_0=\g, \delta_1, \cdots, \delta_n=\beta\}$ of curves in $\G$ such that
for every $1\le k\le n$,
$F^{-1}(\delta_k)$ has a component homotopic to $\delta_{k-1}$ rel $\PPP_F$

A multicurve $\G$ is called {\bf pre-stable} if each curve in $\G$ is homotopic rel $\PPP_F$ to a curve in $F^{-1}(\G)$. It is easy to check that an irreducible multicurve is pre-stable.

\begin{lemma}\label{Mc} Let $F$ be a branched covering of $\cbar$. For any pre-stable multicurve $\G_0$, there is a stable and pre-stable multicurve $\G$ such that $\G\supset\G_0$ and hence $\lambda_{\G_0}\le\lambda_{\G}$. Conversely, for any stable multicurve $\G$ with $\lambda_{\G}>0$, there is an irreducible multicurve $\G_0\subset\G$ such that $\lambda_{\G_0}=\lambda_{\G}$.
\end{lemma}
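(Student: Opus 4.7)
The plan for the first statement is to build $\G$ by a saturation procedure. Starting from $\G_0$, at step $i$ I enlarge the current multicurve $\G_i$ by adjoining a disjoint representative of each homotopy class rel $\PPP_F$ of essential components of $F^{-1}(\G_i)$ not already represented in $\G_i$. Since any multicurve on $\cbar\smm\PPP_F$ contains at most $\#\PPP_F-3$ elements, the cardinalities $\#\G_i$ are uniformly bounded and the chain stabilizes at some $\G:=\G_N$. The stopping condition is exactly stability, and pre-stability is inherited at each step: every old curve remains homotopic to a curve in $F^{-1}(\G_i)\subset F^{-1}(\G_{i+1})$, while every new curve is by definition a component of $F^{-1}(\G_i)$. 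For the eigenvalue inequality, the entry $a_{\g\be}$ depends only on the homotopy classes of components of $F^{-1}(\be)$ and is independent of the ambient multicurve, so $M_{\G_0}$ is literally the principal submatrix of $M_\G$ on $\G_0\times\G_0$; the standard Perron--Frobenius monotonicity of the spectral radius for non-negative matrices then yields $\la_{\G_0}\le\la_\G$.

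For the converse statement I plan to use the Frobenius normal form of $M_\G$. I build the directed graph on the vertex set $\G$ with an arrow $\g\to\be$ whenever $a_{\g\be}>0$, and partition the vertices into strongly connected components. In a block ordering compatible with the DAG of these components, $M_\G$ becomes block upper-triangular with irreducible diagonal blocks, one per strong component, so $\la_\G$ equals the maximum of the spectral radii of these blocks. Choosing $\G_0$ to be a strong component that realizes this maximum produces a sub-multicurve which is irreducible in the sense of the lemma (strong connectivity of the graph is exactly the chain condition stated), with $\la_{\G_0}=\la_\G$. The hypothesis $\la_\G>0$ prevents the maximizing block from being a single vertex without a self-loop, so $\G_0$ is automatically non-empty.

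The main technical obstacle lies in the saturation step of the first part. The components of $F^{-1}(\G_i)$ are pairwise disjoint among themselves, but a given new essential component may cross $\G_i$ transversally, so it is not a priori obvious that adjoining its homotopy class produces a genuine multicurve. One must verify that each new essential isotopy class has vanishing geometric intersection number with every class already in $\G_i$, so that mutually disjoint representatives can be chosen simultaneously; I would handle this by a standard bigon-elimination argument, leveraging the pre-stability hypothesis on $\G_0$ together with the fact that pre-images of simple closed curves disjoint from $\PPP_F$ are themselves disjoint unions of simple closed curves.
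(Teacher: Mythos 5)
Your proposal is correct, but it differs from the paper's treatment in both halves, so a comparison is in order. For the first statement the paper does not saturate the original collection: it sets $\wt\G_n$ to be a choice of representatives of the homotopy classes rel $\PPP_F$ of the essential curves in $F^{-n}(\G_0)$, observes (using pre-stability) that each class in $\wt\G_n$ reappears in $\wt\G_{n+1}$, and concludes from the bound $\#\PPP_F-3$ that $\#\wt\G_N=\#\wt\G_{N+1}$ for some $N$, which is exactly stability of $\wt\G_N$. Since all curves at a fixed level $n$ are components of the single preimage $F^{-n}(\G_0)$, they are automatically disjoint, so the realization issue you flag as your ``main technical obstacle'' never arises; the mild price is that $\wt\G_N$ contains $\G_0$ only up to homotopy (the eigenvalue inequality still follows, as in your argument, because the transition entries depend only on homotopy classes). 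Your saturation keeps $\G_0$ literally inside $\G$ but does need the disjoint-realization step; note that it can be settled more directly than by bigon surgery: by induction each $\G_i$ is pre-stable, so every old class $[\g]$ has a representative among the components of $F^{-1}(\G_i)$, and since a new curve $\al$ is another such component, $\al$ is disjoint from that representative, whence the geometric intersection number of $[\al]$ with $[\g]$ vanishes by homotopy invariance; pairwise zero intersection numbers then allow simultaneous disjoint representatives by standard surface topology. For the converse, the paper gives no argument at all and simply refers to McMullen; your Frobenius-normal-form reduction (strongly connected components, block upper-triangular structure, spectral radius attained on a diagonal block, with $\la_\G>0$ ruling out a trivial one-vertex block without a self-loop) is a complete and correct substitute, and indeed it never uses the stability hypothesis, so it proves slightly more than is stated. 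One point worth making explicit if you write this up: inside a strongly connected component, any directed path between two of its vertices stays inside the component, so the chain condition in the paper's definition of irreducibility is met with all intermediate curves in $\G_0$.
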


Refer to \cite{Mc2} for the proof of the second part. We prove the first part here.

\begin{proof} For each $n\ge 1$, let $\wt\G_n$ be a multicurve consisting of curves in $F^{-n}(\G_0)$ such that any essential curve in $F^{-n}(\G_0)$ is homotopic rel $\PPP_F$ to a curve in $\wt\G_n$. Then $\wt\G_n$ is pre-stable and each curve in $\wt\G_n$ is homotopic to a curve in $\wt\G_{n+1}$. Thus, $\#\wt\G_n\le\#\wt\G_{n+1}$. Since any multicurve contains at most $\#\PPP_F-3$ curves, there is an integer $N\ge 0$ such that $\#\wt\G_N =\#\wt\G_{N+1}$. It follows that $\wt\G_N$ is a stable and pre-stable multicurve.
\end{proof}

\vskip 0.24cm

Let $\G$ be a multicurve of a post-critically finite branched covering $F$. Recall that $\G_1$ is the collection of curves in $F^{-1}(\G)$ homotopic rel $\PPP_F$ to a curve in $\G$, $\G_n$ is the collection of curves in $F^{-1}(\G_{n-1})$ homotopic rel $\PPP_F$ to a curve in $\G$ for $n\ge 2$, and for each $\g\in\G$,
$$
\kappa_n(\g)=\#\{\beta\in\G_n:\, \beta\text{ is homotopic to $\g$ rel $\PPP_F$}\}.
$$
The multicurve $\G$ is a Cantor multicurve if $\kappa_n(\g)\to\infty$ as $n\to\infty$ for all $\g\in\G$. It is easy to check that a Cantor multicurve is pre-stable.

For each $\g\in\G$, denote by $m(\g)$ the number of curves in $F^{-1}(\g)$ homotopic to a curve in $\G$. The following equations are easy to check:
$$
\#\G_1=\sum_{\g\in\G}\kappa_1(\g)=\sum_{\g\in\G}m(\g).
$$
For $n\ge 1$,
$$
\#\G_{n+1}=\sum_{\g\in\G}\kappa_{n+1}(\g)=\sum_{\g\in\G}\kappa_n(\g)\cdot m(\g) \ .
$$

\begin{lemma}\label{Cantor}
Let $\G$ be an irreducible multicurve. The following statements are equivalent: \\
\indent (1) $\#\G_1>\#\G$. \\
\indent (2) $m(\g)\ge 2$ for some $\g\in\G$. \\
\indent (3) $\kappa_1(\g)\ge 2$ for some $\g\in\G$. \\
\indent (4) $\#\G_{n+1}>\#\G_n$ for all $n\ge 1$. \\
\indent (5) $\kappa_n(\g)\to\infty$ for some $\g\in\G$. \\
\indent (6) $\kappa_n(\g)\to\infty$ for all $\g\in\G$, i.e., $\G$ is a Cantor multicurve.
\end{lemma}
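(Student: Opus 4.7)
The plan is to package everything through the recursion
\[
\kappa_{n+1}(\g)=\sum_{\g'\in\G}T_{\g,\g'}\kappa_n(\g'),\qquad \kappa_0\equiv 1,
\]
where $T_{\g,\g'}$ denotes the number of components of $F^{-1}(\g')$ homotopic rel $\PPP_F$ to $\g$, so that $m(\g')=\sum_{\g\in\G}T_{\g,\g'}$, and then to read everything off from the two bookkeeping identities already recorded in the excerpt,
\[
\#\G_{n+1}=\sum_{\g'\in\G}\kappa_n(\g')\,m(\g'),\qquad \#\G_n=\sum_{\g\in\G}\kappa_n(\g).
\]
The first step I would carry out is to establish two positivity bounds: $m(\g)\ge 1$ and $\kappa_1(\g)\ge 1$ for every $\g\in\G$. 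The bound $\kappa_1(\g)\ge 1$ comes straight from pre-stability, which the excerpt notes to follow from irreducibility. For $m(\g)\ge 1$ I apply the irreducibility hypothesis to the ordered pair $(\g,\g)$: the resulting loop $\delta_0=\g,\delta_1,\dots,\delta_n=\g$ forces $F^{-1}(\g)=F^{-1}(\delta_n)$ to contain a component homotopic to $\delta_{n-1}\in\G$. A straightforward induction then yields the componentwise monotonicity $\kappa_{n+1}(\g)\ge\kappa_n(\g)$: the base case is $\kappa_1\ge\mathbf 1=\kappa_0$ and the step follows because $T$ is entrywise non-negative.

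The equivalences (1)$\Leftrightarrow$(2)$\Leftrightarrow$(3) then fall out of the identity $\#\G_1=\sum_{\g\in\G}m(\g)=\sum_{\g\in\G}\kappa_1(\g)$: since every summand is $\ge 1$, the sum exceeds $\#\G$ exactly when some summand is $\ge 2$. For (1)$\Rightarrow$(4), I subtract the two bookkeeping identities to obtain
\[
\#\G_{n+1}-\#\G_n=\sum_{\g\in\G}\kappa_n(\g)\bigl(m(\g)-1\bigr)\ge 0,
\]
and pick $\g_0$ with $m(\g_0)\ge 2$ from (2); then for every $n\ge 1$ the right-hand side is at least $(m(\g_0)-1)\,\kappa_n(\g_0)\ge 1$ by the monotonicity bound $\kappa_n(\g_0)\ge 1$. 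The reverse (4)$\Rightarrow$(1) is by contrapositive: if $\#\G_1=\#\G$, then every $m(\g)$ equals $1$ and the difference formula collapses to $\#\G_{n+1}=\#\G_n$ for all $n$.

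Finally, for the remaining loop involving (5) and (6): from (4) the strictly increasing integer sequence $\#\G_n=\sum_{\g}\kappa_n(\g)$ tends to infinity, and since $\G$ is finite at least one coordinate $\kappa_n(\g)$ must be unbounded; monotonicity upgrades this to $\kappa_n(\g)\to\infty$, giving (5). The passage (5)$\Rightarrow$(6) is the standard irreducibility argument: for any $\beta\in\G$, the definition of irreducibility applied to the ordered pair $(\beta,\g)$ produces an $m\ge 0$ with $T^m_{\beta,\g}\ge 1$, whence $\kappa_{n+m}(\beta)\ge T^m_{\beta,\g}\kappa_n(\g)\to\infty$. Trivially (6) implies (5), and (5)$\Rightarrow$(1) closes the cycle by contrapositive: if (1) fails then all $m(\g)=1$, so $\#\G_n\equiv\#\G$ for all $n$, which combined with $\kappa_n\ge\mathbf 1$ forces $\kappa_n\equiv\mathbf 1$ and rules out $\kappa_n(\g)\to\infty$. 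I expect the only point requiring genuine care is to verify that irreducibility, and not mere pre-stability, is exactly what is needed to secure $m(\g)\ge 1$ for every $\g$; once that is pinned down, the rest is linear bookkeeping around the monotonicity.
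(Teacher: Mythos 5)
Your proposal is correct and follows essentially the same route as the paper: the two counting identities $\#\G_1=\sum_\g m(\g)=\sum_\g\kappa_1(\g)$ and $\#\G_{n+1}=\sum_\g\kappa_n(\g)m(\g)$, positivity of $m(\g)$ and $\kappa_1(\g)$ from irreducibility, and the irreducibility chain giving $\kappa_{n+k}(\beta)\ge\kappa_n(\g)$ for $(5)\Rightarrow(6)$. Your explicit reduced-transition-matrix recursion $\kappa_{n+1}=T\kappa_n$ and the monotonicity $\kappa_{n+1}(\g)\ge\kappa_n(\g)$ merely make the paper's terse bookkeeping (and its boundedness argument for the equivalence with (5)) fully explicit; there is no substantive difference.
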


\begin{proof} Since $\G$ is irreducible, both $m(\g)$ and $k_1(\g)$ are positive for each $\g\in\G$. From the first equation above, we have
$(1)\Longleftrightarrow(2)\Longleftrightarrow (3)$. From the second equation, we have $(2)\Longleftrightarrow (4)$.

If $\#\G_{n+1}>\#\G_n$ for all $n\ge 1$, then $\kappa_n(\g)\to\infty$ for some $\g\in\G$ by the second equation. Conversely, if $\#\G_{n+1}=\#\G_n$ for some $n\ge 1$, then $m(\g)=1$ for all $\g\in\G$. Thus, $\#\G_{n+1}=\#\G_n$ for all $n\ge 1$. It follows that $\kappa_n(\g)\le\#\G_n=\#\G_1$ for all $\g\in\G$. This implies that $(4)\Longleftrightarrow (5)$.

Assume that $\kappa_n(\g)\to\infty$ for some $\g\in\G$. Since $\G$ is irreducible, for each $\beta\in\G$, there is an integer $k\ge 1$ such that $F^{-k}(\g)$ has a component $\de$ homotopic to $\beta$ rel $\PPP_F$ and $F^i(\de)$ is homotopic to a curve in $\G$ for $1\le i<k$. Therefore, $\kappa_{n+k}(\beta)\ge\kappa_n(\g)$. So $\kappa_n(\beta)\to\infty$ as $n\to\infty$. This shows that $(5)\Longleftrightarrow (6)$.
\end{proof}

Let $\Gamma=\{\gamma_1, \cdots, \gamma_n\}$ be a multicurve of $F$. Its {\bf reduced transition matrix} $M_{r, \Gamma}=(b_{ij})$ is defined by $b_{ij}=k$ if there are $k$ components of $F^{-1}(\gamma_j)$ homotopic to $\gamma_i$ rel $\PPP_F$. This definition was introduced by Shishikura.

\begin{lemma}\label{eigenvalue}
Let $\Gamma$ be a pre-stable multicurve of $F$. Then the leading eigenvalue of its reduced transition matrix satisfies $\lambda(M_{r,\Gamma})\ge 1$. Moreover, $\lambda(M_{r,\Gamma})>1$ if $\Gamma$ is a Cantor multicurve.
Conversely, if $\Gamma$ is irreducible and $\lambda(M_{r,\Gamma})>1$, then $\Gamma$ is a Cantor multicurve.
\end{lemma}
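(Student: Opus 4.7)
My plan is to translate the combinatorial claims about the counts $\kappa_n(\gamma)$ into a linear recursion and then invoke the Perron--Frobenius theorem. Enumerate $\Gamma = \{\gamma_1, \ldots, \gamma_N\}$ and set $\vec{\kappa}_n := (\kappa_n(\gamma_1), \ldots, \kappa_n(\gamma_N))^T$, with the convention $\vec{\kappa}_0 = (1, \ldots, 1)^T =: \vec{1}$ (so that $\Gamma_0 = \Gamma$). The first step I will carry out is to verify that
$$
\vec{\kappa}_{n+1} \;=\; M_{r,\Gamma}\,\vec{\kappa}_n, \qquad \text{hence} \qquad \vec{\kappa}_n \;=\; M_{r,\Gamma}^n \vec{1}.
$$
This rests on the observation that whenever $\beta$ is homotopic rel $\PPP_F$ to $\gamma_j$, the number of components of $F^{-1}(\beta)$ homotopic to $\gamma_i$ is precisely $b_{ij}$; summing over $\beta \in \Gamma_n$ grouped by homotopy class then gives the recursion. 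The pre-stable hypothesis translates into $M_{r,\Gamma}\vec{1} \ge \vec{1}$ componentwise, since $\kappa_1(\gamma_i)$ is visibly the $i$-th row sum of $M_{r,\Gamma}$.

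Granting this recursion, the lower bound $\lambda := \lambda(M_{r,\Gamma}) \ge 1$ follows by iteration: $M_{r,\Gamma}^n \vec{1} \ge \vec{1}$ for all $n$, so $\|M_{r,\Gamma}^n\|$ does not decay, and Gelfand's formula $\lambda = \lim_n \|M_{r,\Gamma}^n\|^{1/n}$ forces $\lambda \ge 1$. For the strict inequality under the Cantor hypothesis I will argue by contradiction. Assuming $\lambda = 1$, the Perron--Frobenius theorem applied to $M_{r,\Gamma}^T$ furnishes a nonzero, non-negative left eigenvector $\vec{u}$ with $M_{r,\Gamma}^T \vec{u} = \vec{u}$, and then
$$
\vec{u}^T \vec{\kappa}_n \;=\; \vec{u}^T M_{r,\Gamma}^n \vec{1} \;=\; \vec{u}^T \vec{1}
$$
is constant in $n$. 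Since all terms $u_j \kappa_n(\gamma_j)$ are non-negative and sum to a constant, $\kappa_n(\gamma_i)$ must stay bounded for every $i$ with $u_i > 0$; any such $i$ contradicts the Cantor hypothesis that $\kappa_n(\gamma_i) \to \infty$ for \emph{all} $i$.

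For the converse I suppose $\Gamma$ is irreducible and $\lambda > 1$. Irreducibility of the multicurve is equivalent to irreducibility of $M_{r,\Gamma}$ as a non-negative matrix, since the associated directed graph with an edge $j \to i$ whenever $b_{ij} > 0$ is strongly connected. Perron--Frobenius therefore delivers a strictly positive right eigenvector $\vec{v} > 0$ with $M_{r,\Gamma}\vec{v} = \lambda \vec{v}$. Choosing $c > 0$ small enough that $\vec{1} \ge c\vec{v}$ componentwise and iterating gives $\vec{\kappa}_n \ge c \lambda^n \vec{v}$, so every $\kappa_n(\gamma_i) \ge c \lambda^n v_i \to \infty$, proving that $\Gamma$ is a Cantor multicurve.

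The step requiring the most care, and the only real obstacle I anticipate, is the verification of the recursion $\vec{\kappa}_{n+1} = M_{r,\Gamma}\vec{\kappa}_n$. Concretely it reduces to the isotopy-lifting assertion that the number of components of $F^{-1}(\beta)$ in a given homotopy class rel $\PPP_F$ depends only on the homotopy class of $\beta$ in $\cbar \smm \PPP_F$. This is standard for branched coverings but deserves to be recorded cleanly; once it is available, all three assertions reduce to routine invocations of Perron--Frobenius.
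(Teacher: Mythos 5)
Your proposal is correct. The key identity $\vec\kappa_n=M_{r,\Gamma}^n\mathbf 1$, which you rightly single out, rests on the standard lifting fact that a homotopy rel $\PPP_F$ of a curve in $\cbar\smm\PPP_F$ lifts under the covering $F:\cbar\smm F^{-1}(\PPP_F)\to\cbar\smm\PPP_F$ (recall $\PPP_F$ contains the critical values and is forward invariant), so the number of preimage components in each homotopy class depends only on the class of the curve being pulled back; this is exactly the counting the paper itself uses implicitly in the equations preceding Lemma \ref{Cantor}, so there is no genuine gap. Where you diverge from the paper is in how the spectral conclusions are extracted. The paper handles both inequalities with one elementary comparison (Lemma A.1 of \cite{CT}: $Mv\ge cv$ with $v>0$ forces $\lambda\ge c$), applied to $M_{r,\Gamma}\mathbf 1\ge\mathbf 1$ for pre-stability and to $M_{r,\Gamma}^n\mathbf 1\ge 2\,\mathbf 1$ (valid for some $n$ by the Cantor property) for strictness, whereas you use Gelfand's formula and a left-eigenvector contradiction; these are interchangeable. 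For the converse the paper is more economical: if every column sum were at most $1$ then $\lambda\le 1$, so some column sum, i.e.\ some $m(\gamma)$, is at least $2$, and irreducibility together with Lemma \ref{Cantor} (2) already yields the Cantor property, with no need for a Perron eigenvector. Your route instead invokes the full Perron--Frobenius theorem for irreducible nonnegative matrices to get $\vec\kappa_n\ge c\lambda^n\vec v$ with $\vec v>0$, which in effect re-proves a quantitative form of Lemma \ref{Cantor} by linear algebra. What your approach buys is self-containedness and an explicit exponential lower bound on $\kappa_n$; what the paper's buys is brevity, since the combinatorics was already done in Lemma \ref{Cantor}. One small streamlining available to you: once $\vec\kappa_n=M_{r,\Gamma}^n\mathbf 1$ is in place, the Cantor hypothesis gives $M_{r,\Gamma}^n\mathbf 1\ge 2\,\mathbf 1$ for some $n$, and your own Gelfand argument then yields $\lambda^n\ge 2$, making the left-eigenvector contradiction unnecessary.
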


\begin{proof} Note that $M_{r, \Gamma}{\bf v}\ge {\bf v}$ for the vector ${\bf v}=(1, \cdots, 1)$ since $\Gamma$ is pre-stable. Thus, $\lambda(M_{r,\Gamma})\ge 1$ by Lemma A.1 in \cite{CT}. If $\Gamma$ is a Cantor multicurve, then there exists an integer $n\ge 1$ such that $M_{r, \Gamma}^n{\bf v}\ge 2{\bf v}$. Thus, $\lambda(M_{r,\Gamma})^n=\lambda(M_{r,\Gamma}^n)>1$. Conversely, if $\Gamma$ is irreducible and $\lambda(M_{r,\Gamma})>1$, then there exists at least one column of the matrix such that the summation of the entries of this column is bigger than one. Thus, $\Gamma$ is a Cantor multicurve by Lemma \ref{Cantor} (2).
\end{proof}

\section{Annular systems}

In this section we will show that every component of the Julia set of an expanding exact annular system (see Definition \ref{defAS}) is a Jordan curve.

Let $\AAA^1, \AAA\subset\cbar$ be two multi-annuli such that each component of $\AAA^1$ is contained essentially in a component of $\AAA$. Recall that a map $g: \AAA^1\to \AAA$ is an annular system if \\
\indent (1) for each component $A^1$ of $\AAA^1$, its image $g(A^1)$ is a component of $\AAA$ and the map $g:A^1\to g(A^1)$ is a holomorphic covering; \\
\indent (2) there is an integer $n\ge 1$ such that for each component $A$ of $\AAA$, the set $g^{-n}(\AAA)\cap A$ is non-empty and disconnected.

The Julia set of $g$ is defined by $\JJJ_g:=\bigcap_{n\ge 0}g^{-n}(\AAA)$. An annular system $g: \AAA^1\to \AAA$ is proper if $\AAA^1\Subset\AAA$; or exact if for every component $A$ of $\AAA$, each of the two components of $\partial A$ is also a component of $\partial(A\cap \AAA^1)$.

\subsection{Basic properties}

\begin{proposition}\label{deg}
Let $g: \AAA^1\to \AAA$ be an annular system. There is an integer $N\ge 1$ such that $\deg (g^N|_A)\ge 2$ for each component $A$ of $g^{-N}(\AAA)$.
\end{proposition}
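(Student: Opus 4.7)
The plan is a contradiction argument based on a modulus-decrease principle. Fix $n_0\ge 1$ from condition (2) of Definition \ref{defAS}, so that for every component $A$ of $\AAA$ the set $g^{-n_0}(\AAA)\cap A$ has at least two components, and set $h:=g^{n_0}$. First I would verify, by a short induction modeled locally on $z\mapsto z^d$ (preimages of essentially contained sub-annuli under holomorphic coverings of annuli are again essentially contained), the lemma that each component of $g^{-n}(\AAA)$ is essentially contained in the component of $g^{-(n-1)}(\AAA)$ containing it. Iterating and using the transitivity of essential containment, each component of $g^{-kn_0}(\AAA)$ is essentially contained in the component of $g^{-n_0}(\AAA)$ containing it.

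Next, I consider the directed graph $G$ whose vertices are the (finitely many) components of $\AAA$, with an edge $A\to A'$ whenever some component $B$ of $g^{-n_0}(\AAA)$ essentially contained in $A$ satisfies $h(B)=A'$ and $\deg(h|_B)=1$; call such a $B$ bad. The key step is to show $G$ is acyclic. On a putative cycle $A_0\to A_1\to\cdots\to A_p=A_0$ witnessed by bad components $B_0,\dots,B_{p-1}$, the map $h|_{B_i}$ is biholomorphic, so $\mathrm{mod}(B_i)=\mathrm{mod}(A_{i+1})$; on the other hand $A_i$ contains at least two disjoint essentially contained components of $g^{-n_0}(\AAA)$, so Gr\"otzsch's inequality forces $\mathrm{mod}(B_i)<\mathrm{mod}(A_i)$. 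Chaining around the cycle yields $\mathrm{mod}(A_0)<\mathrm{mod}(A_0)$, absurd. Let $L$ denote the length of a longest directed path in the finite DAG $G$, which is finite by acyclicity.

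I then claim $N:=(L+1)n_0$ works. Given a component $C$ of $g^{-N}(\AAA)$, set $C_i:=h^i(C)$ for $0\le i\le L$, and let $B_i$ be the component of $g^{-n_0}(\AAA)$ containing $C_i$. By the essential-containment lemma, $C_i$ is essentially contained in $B_i$, and the local $z\mapsto z^d$ model yields $\deg(h|_{C_i})=\deg(h|_{B_i})$. Consequently
$$
\deg(g^N|_C)=\prod_{i=0}^{L}\deg(h|_{C_i})=\prod_{i=0}^{L}\deg(h|_{B_i}).
$$
If every factor were $1$, the $B_i$ would trace out a bad directed path of length $L+1$ in $G$, contradicting the maximality of $L$. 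Hence some factor is at least $2$, so $\deg(g^N|_C)\ge 2$ as required. The main technical obstacle is setting up the essential-containment lemma carefully enough to justify the identification $\deg(h|_{C_i})=\deg(h|_{B_i})$; once it is in place, Gr\"otzsch's inequality together with the finiteness of $G$ finishes the argument.
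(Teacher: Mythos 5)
Your proposal is correct, but it is organized quite differently from the paper's argument. The shared analytic core is the same rigidity fact: a degree-one holomorphic covering from an annulus onto a finite-modulus annulus in which it sits essentially forces equality of moduli (your step $\mathrm{mod}(B_i)=\mathrm{mod}(A_{i+1})$ versus strict Gr\"otzsch decrease; the paper's step ``$\deg(g^{p}:A^p\to A^0)\le 1$ hence $A^p=A^0$''), and in both cases condition (2) of Definition \ref{defAS} supplies the contradiction. The packaging differs: the paper takes $N=m$, the number of components of $\AAA$, assumes some component $A$ of $g^{-m}(\AAA)$ has $\deg(g^m|_A)=1$, uses the pigeonhole principle on the orbit $A, g(A),\dots,g^m(A)$ to produce a component $A^0$ of $\AAA$ with $g^p(A^0)=A^0$ of degree one, and then contradicts condition (2) directly because $A^0\cap g^{-n}(\AAA)=A^0$ never becomes disconnected; note this only uses ``disconnected for \emph{some} $n$'' and needs no uniform $n_0$. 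You instead fix the uniform $n_0$ from condition (2), build the directed graph of degree-one transitions at scale $n_0$, prove acyclicity by chaining the strict Gr\"otzsch inequality around a cycle (which is where the ``at least two essential components'' is used), and take $N=(L+1)n_0$ via a longest-path bound. Your route buys a structural picture of where degree-one behaviour can persist, at the cost of the extra bookkeeping you flag (the essential-containment lemma and the identity $\deg(h|_{C_i})=\deg(h|_{B_i})$, which rests on connectedness of preimages of essential sub-annuli under annulus coverings -- the paper uses the same fact implicitly when it equates $\deg(g^p:A^p\to A^0)$ with $\deg(g^p:g^k(A)\to g^{k+p}(A))$); the paper's route is shorter and yields the explicit bound $N=m$. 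Two harmless points to tidy in a write-up: in the last step your $B_i$'s a priori give a directed \emph{walk} of length $L+1$, and you should remark that in an acyclic digraph a walk cannot repeat vertices, so it is a path; and the chained strict inequality $\mathrm{mod}(A_0)<\mathrm{mod}(A_0)$ needs all moduli finite and the second Gr\"otzsch component of positive modulus, both of which hold automatically since multi-annuli have finite modulus and all preimage components are nondegenerate annuli.
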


\begin{proof}
Let $m\ge 1$ be the number of components of $\AAA$. By contradiction we assume that there is a component $A$ of
$g^{-m}(\AAA)$ such that $\deg (g^{m}|_A)=1$. There exist integers $0\le k<k+p\le m$ such that both $g^{k}(A)$ and $g^{k+p}(A)$ are contained in the same component $A^0$ of $\AAA$. So $g^p(g^k(A))\subset A^0$. Let $A^p\subset A^0$ be the component of $g^{-p}(A^0)$ containing $g^{k}(A)$. Since $g^p: A^p\to A^0$ is a covering between annuli and $g^{k}(A)$ is contained essentially in $A^p$, we have
$$
\deg\Big(g^{p}:\, A^p\to A^0\Big)=\deg\Big(g^{p}:\, g^{k}(A)\to
g^{k+p}(A)\Big)\le\deg (g^{m}|_A)=1.
$$
Thus, the moduli of the annuli $A^p$ and $A^0$ are equal and hence $A^p=A^0$. It follows that $A^0=g^{p}(A^0)$. Therefore, $A^0\cap g^{-np}(\AAA)=A^0$ for all integers $n\ge 1$. Since $g^{-n}(\AAA)\subset g^{-n+1}(\AAA)$ for all $n\ge 1$, we conclude that $A^0\cap g^{-n}(\AAA)=A^0$ for all $n\ge 1$. This contradicts the condition that $A^0\cap g^{-n}(\AAA)$ is disconnected for some $n\ge 1$. So $\deg(g^{m}|_A)\ge 2$ for each component $A$ of $g^{-m}(\AAA)$.
\end{proof}

\begin{proposition}\label{compactly}
Let $g:\AAA^1\to \AAA$ be an exact annular system. Let $\{A^n\}$ be a nested sequence of annuli of $g^{-n}(\AAA)$, i.e. the annulus $A^n$ is a component of $g^{-n}(\AAA)$ and $A^{n+1}\subset A^n$. Then either $\bigcap_{n\ge 1}A^n=\emptyset$ or for every $n\ge 1$, there is an integer $m>n$ such that $A^m\Subset A^n$.
\end{proposition}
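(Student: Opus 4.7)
The plan is to argue by contraposition: assume $\bigcap_{n\ge 1}A^n\neq\emptyset$ and, fixing $n\ge 1$, produce some $m>n$ with $A^m\Subset A^n$. The starting point is a topological dichotomy coming from exactness: for every $k\ge 1$, either $A^{k+1}\Subset A^k$, or $A^{k+1}$ shares a full boundary curve with $A^k$. To obtain this, apply the exactness condition to $B^k:=g^k(A^k)\in\AAA$: each of the two components of $\partial B^k$ is a whole component of $\partial(B^k\cap\AAA^1)$, so the components of $B^k\cap\AAA^1$ split into at most two ``collar'' pieces (whose closures contain a full boundary curve of $B^k$) and interior pieces compactly contained in $B^k$. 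Pulling back by the holomorphic covering $g^k\colon A^k\to B^k$ preserves this classification for the components of $g^{-(k+1)}(\AAA)\cap A^k$, one of which is $A^{k+1}$.

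Suppose toward a contradiction that no $m>n$ yields $A^m\Subset A^n$. By the dichotomy, $A^{k+1}$ must share a boundary curve with $A^k$ for every $k\ge n$. An induction on $m$ then shows that a single boundary component $\alpha$ of $A^n$ is a boundary component of every $A^m$ with $m\ge n$: if at some stage $A^{m+1}$ inherited the ``new'' inner boundary $\beta_m$ of $A^m$ instead of $\alpha$, both boundary components of $A^{m+1}$ would be interior Jordan curves of $A^n$, forcing $A^{m+1}\Subset A^n$ and contradicting the assumption. Proposition \ref{deg} now gives $\mathrm{mod}(A^m)\to 0$, and the other boundaries $\beta_m$ form a sequence of Jordan curves nesting monotonically inward in $A^n$.

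It remains to contradict $\bigcap_mA^m\neq\emptyset$ under these constraints. Pushing forward by $g^n$ reduces the problem to showing $\bigcap_kB^k=\emptyset$ for the iterated collar pullbacks $B^k\subset A_\ast:=g^n(A^n)$ adjacent to $\alpha^\ast:=g^n(\alpha)$. The natural boundary map on $\partial\AAA$ (defined by extending each exact-collar covering continuously) acts on a finite set, so the orbit of $\alpha^\ast$ is eventually periodic; on its cycle, $g^p$ induces a degree-$d\ge 2$ (by Proposition \ref{deg}) holomorphic covering of an annulus onto itself fixing the boundary curve, and such a covering admits no interior fixed point (via conjugation with the model $z\mapsto z^d$). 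Gr\"otzsch's inequality applied to $A^n=A^m\cup\beta_m\cup V_m$ with $V_m=A^n\setminus\overline{A^m}$ yields $\mathrm{mod}(V_m)+\mathrm{mod}(A^m)\le\mathrm{mod}(A^n)$, hence $\limsup\mathrm{mod}(V_m)\le\mathrm{mod}(A^n)$; promoting this to equality (combined with the no-interior-fixed-point observation) would force the nested collars to collapse onto $\alpha^\ast$, giving $\bigcap_kB^k\subset\alpha^\ast$ and therefore $\bigcap_kB^k=\emptyset$. Establishing this genuine collapse — ruling out ``bulgy'' limiting shapes that would allow extra interior accumulation — is the main obstacle, and I expect to handle it via Koebe-style bounded-distortion estimates along the periodic cycle of inverse branches of $g^p$.
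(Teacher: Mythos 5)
Your first half runs parallel to the paper's own reduction: the per-step dichotomy from exactness, the induction showing that once compact containment fails forever a single boundary component $\alpha$ of $A^n$ persists as a boundary component of every $A^m$, and the pigeonhole giving eventual periodicity of that boundary piece under the induced boundary dynamics are all present (in compressed form) in the paper's proof. But the decisive step is exactly the one you leave open. Your tools for it do not suffice: Gr\"otzsch plus Proposition \ref{deg} only give $\mathrm{mod}(A^m)\to 0$, and small modulus in no way forces the nested collars to collapse onto $\alpha^\ast$ --- a collar of tiny modulus can still be ``long'' and sweep through all of $A^n$, so $\bigcap_m A^m$ could a priori be a nonempty continuum reaching far from $\alpha^\ast$. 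The appendix example of Buff is a warning here: the collar components can be round annuli minus spirals, with badly non-locally-connected inner boundaries, and it is a dynamical expansion argument, not a modulus count, that kills the intersection. The ``no interior fixed point'' remark does not substitute for this, and a Koebe-distortion scheme in the original coordinates is awkward precisely because $\alpha^\ast$ need not be locally connected, so you have no geometric control of the region where you would want bounded distortion. Also note that the boundary components you call Jordan curves are in general only continua, so the phrase ``interior Jordan curves of $A^n$'' needs the same (mild) caution the paper silently takes.

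The paper closes the gap as follows. After pigeonholing (it uses three return times to guarantee two of them send the common boundary continuum to the \emph{same} boundary component $L'$ of a component $B^0$ of $\AAA$), one gets $p\ge 1$ and a nested sequence $B^{np}$ of collar annuli attached to $L'$ with $g^p(B^{np})=B^{(n-1)p}$, $g^p(L')=L'$, and $B^p\neq B^0$ (the latter by condition (2) of Definition \ref{defAS}). Let $U$ be the component of $\cbar\smm L'$ containing $B^0$ and $\phi:U\to\D$ a Riemann map; then $h=\phi\circ g^p\circ\phi^{-1}$ extends by Schwarz reflection across $\partial\D$ to a holomorphic covering $V_1\to V$ of the doubled annuli, and $B^p\neq B^0$ gives $V_1\Subset V$, so $h$ strictly expands the hyperbolic metric of $V$. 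Hence $\bigcap_{n>0}h^{-n}(V)=\partial\D$, which pulled back gives $\bigcap_{n>0}B^{np}=\emptyset$ and therefore $\bigcap_{n\ge 1}A^n=\emptyset$. This reflection-plus-hyperbolic-expansion step is precisely the ``genuine collapse'' your proposal identifies as the main obstacle but does not prove; uniformizing by $\phi$ is what lets you bypass any geometric analysis of $\alpha^\ast$ or of the collar shapes. As written, then, the proposal has a genuine gap at the heart of the proposition.
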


\begin{proof} By exactness, either for any $n\ge 1$, there is an integer $m>n$ such that $A^m\Subset A^n$, or there is an integer $N\ge 1$ and a component $L$ of $\partial A^N$ such that $L\subset\partial A^n$ for $n\ge N$.

We only need to show that $\bigcap_{n\ge 1}A^n=\emptyset$ in the latter case. Since $\AAA$ has only finitely many components, there exist a component $B^0$ of $\AAA$ and integers $i>j>k\ge N$ such that $g^i(A^i)=g^j(A^j)=g^k(A^k)=B^0$. Since $B^0$ has exactly two boundary components, there are a boundary component $L'$ of $B^0$, and two integers among $\{i,j,k\}$, say $i$ and $j$, such that $g^i(L)=g^j(L)=L'$ (This formula means that as $z$ tends to $L$ in $A^i$, both $g^i(z)$ and $g^j(z)$ tends to $L'$).

Denote by $B^n=g^j(A^{n+j})$ for $n\ge 0$, then $\{B^n\}$ is a nested sequence of annuli which has a common boundary component $L'$. Moreover, $g^p(B^p)=B^0$ and $g^p(L')=L'$ for $p=i-j$. It follows that $g^p(B^{np})=B^{(n-1)p}$ for $n\ge 1$.
Note that $B^p\neq B^0$. Otherwise $B^n=B^0$ for all $n\ge 1$ and this contradicts the condition that $B^0\cap g^{-n}(\AAA)$ is disconnected for some $n\ge 1$.

Let $U$ be the component of $\cbar\smm L'$ containing $B^0$ and $\phi$ be a conformal map from $U$ onto the unit disk $\D$. Then $h:=\phi\circ g^{p}\circ \phi^{-1}$ is a holomorphic covering from $\phi(B^{p})$ to $\phi(B^0)$, which can be extended continuously to the unit circle.

Denote by $V_1$ the union of $\phi(B^{p})$, its reflection in the unit circle and together with the unit circle itself. Similarly, let $V$ be the union of $\phi(B^0)$, its reflection in the unit circle and the unit circle. By the symmetric extension principle, $h$ can be extended to a holomorphic covering map from the annulus $V_1$ to $V$.  Since $V_1\Subset V$, $h$ is expanding under the hyperbolic metric of $V$. So $\bigcap_{n>0}h^{-n}(V)=\partial\D$ and hence $\bigcap_{n>0}h^{-n}(\phi(B^0))=\emptyset$. Note that $\phi(B^{np})=h^{-n}(\phi(B^0))$. Therefore $\bigcap_{n>0}B^{np}=\emptyset$ and hence $\bigcap_{n>0}A^n=\emptyset$.
\end{proof}

Let $g: \AAA^1\to \AAA$ be an annular system and $K$ a connected component of $\JJJ_g$. Then, for each $n\ge 0$ there is a unique component of $g^{-n}(\AAA)$, denoted by $A^n(K)$, such that $K\subset A^n(K)$. Consequently, $K\subset\bigcap_{n\ge 1}A^n(K)$.

\begin{proposition}\label{2-continuum}
Let $g:\AAA^1\to \AAA$ be an exact annular system. \\
\indent (1) For any component $K$ of $\JJJ_g$, $K$ is a $2$-connected continuum contained essentially in each $A^n(K)$ and $K=\bigcap_{n\ge 1}A^n(K)$. \\
\indent (2) For each component $A$ of $\AAA$ and any point $z\in A$, there exist components $K_1, K_2$ of $\JJJ_g\cap A$ such that the annulus bounded by $K_1$ and $K_2$ contains the point $z$.
\end{proposition}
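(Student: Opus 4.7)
The key input is Proposition \ref{compactly}. Since $K\neq\emptyset$ and $K\subset A^n(K)$ for every $n$, we have $\bigcap_n A^n(K)\neq\emptyset$, so the compact-nesting alternative of Proposition \ref{compactly} holds: for every $n$ there is $m>n$ with $A^m(K)\Subset A^n(K)$. After passing to a subsequence so that $\overline{A^{n+1}(K)}\subset A^n(K)$ for all $n$, the set $E:=\bigcap_n\overline{A^n(K)}$ is a decreasing intersection of compact connected sets, hence a compact continuum, and compact nesting places $E\subset\bigcap_n A^n(K)\subset\JJJ_g$. Since $E$ is connected, contains $K$, and $K$ is a whole component of $\JJJ_g$, I conclude $E=K$. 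Essential containment $K\subset A^n(K)$ follows by iterating the standing hypothesis that each component of $\AAA^1$ sits essentially in a component of $\AAA$ (pulled back through the coverings $g^j$) and then taking intersections. For 2-connectedness, letting $D_1^n,D_2^n$ denote the two closed topological disks in $\cbar\setminus A^n(K)$ labeled coherently so that $D_i^n\subset D_i^{n+1}$, the identity $\cbar\setminus K=\bigcup_n D_1^n\cup\bigcup_n D_2^n$ displays $\cbar\setminus K$ as a disjoint union of two connected increasing unions; $K$ has more than one point because it separates the two disjoint Jordan curves bounding $A^0(K)$.

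\textbf{Part (2): isolating a compactly contained preimage above $z$.} Fix $z\in A$. To produce $K_1$ on the $L_1$-side, I use iterated exactness to build a chain $C_1^{(1)}\supset C_2^{(1)}\supset\cdots$ of components of $g^{-n}(\AAA)\cap A$ with $L_1\subset\partial C_n^{(1)}$, lifting exactness from $g^n(C_n^{(1)})$ back through the covering $g^n$ at each stage. The chain shares the boundary $L_1$, so the compact-nesting alternative of Proposition \ref{compactly} fails and $\bigcap_n C_n^{(1)}=\emptyset$, hence $z\notin C_n^{(1)}$ for $n$ large. Inside such a $C_n^{(1)}$ the inner boundary $L_1^n$ lies strictly inside $A$, and the same boundary-reaching argument applied to $L_1^n$ inside $C_n^{(1)}$ produces, for some $m>n$, a component $D$ of $g^{-m}(\AAA)\cap C_n^{(1)}$ with $L_1^n\subset\partial D$ but $L_1\not\subset\partial D$. (The alternative that $C_n^{(1)}$ is a component of $g^{-m}(\AAA)$ for every $m$ would force $C_n^{(1)}\subset\JJJ_g$; applying part (1) to the component of $\JJJ_g$ meeting $C_n^{(1)}$ would then contradict Proposition \ref{compactly} on the constant chain.) Both boundary curves of $D$ lie in the interior of $A$, so $D\Subset A$, and $D$ is strictly on the $L_1$-side of $z$ since $D\subset C_n^{(1)}$ while $L_1^n$ separates $z$ from $C_n^{(1)}$.

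\textbf{Part (2): Julia components and conclusion.} The last step is $\JJJ_g\cap D\neq\emptyset$, which I reduce via the covering $g^m:D\to g^m(D)$ (using $g^{-m}(\JJJ_g)=\JJJ_g$) to the claim $\JJJ_g\cap A_*\neq\emptyset$ for every component $A_*$ of $\AAA$. For the latter, iterated condition (2) of Definition \ref{defAS} yields at least $2^k$ components of $g^{-kn_0}(\AAA)\cap A_*$, and by exactness at most two of them are boundary-adjacent, so for $k\ge 2$ there are middle components $B\Subset A_*$. The finite directed graph whose nodes are the components of $\AAA$ and whose edges are the $g^{kn_0}$-images of such middle components has all out-degrees positive, so contains a cycle $A_0\to\cdots\to A_l=A_0$ along middle edges $B_j\Subset A_j$; pulling back around the cycle, properness of $g^{kn_0}$ combined with $B_{j+1}\Subset A_{j+1}$ gives a polynomial-like restriction $g^{lkn_0}\colon B_0^{(1)}\to A_0$ with $B_0^{(1)}\Subset A_0$ (of degree at least $2$ by Proposition \ref{deg}, provided $k$ is chosen large), whose filled Julia set is a non-empty compact subset of $\JJJ_g\cap A_0$. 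A finite backward propagation of Julia points along a path from $A_*$ into the cycle then gives $\JJJ_g\cap A_*\neq\emptyset$. Taking $K_1$ to be the component of $\JJJ_g$ through a point of $D$, part (1) forces $K_1\subset A^m(K_1)=D\Subset A$, so $K_1$ is essentially contained in $A$ and strictly above $z$; the symmetric construction on the $L_2$-side yields $K_2$ strictly below $z$, and $z$ lies in the middle annular complementary component of $A\setminus(K_1\cup K_2)$, bounded by $K_1$ and $K_2$. The principal technical obstacle is precisely the Julia-point existence step, because exactness alone produces only boundary-adjacent preimage chains (with empty intersection by Proposition \ref{compactly}); one must exploit the branching provided by iterated condition (2) to extract middle components and convert a cycle in the resulting incidence graph into a polynomial-like fixed-point argument.
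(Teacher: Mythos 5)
Your argument is correct, and for part (1) and the opening of part (2) it coincides with the paper's: Proposition \ref{compactly} gives cofinal compact nesting, so $\bigcap_n A^n(K)$ is a $2$-connected continuum inside $\JJJ_g$ equal to $K$, and the two boundary-adjacent chains in $A$ have empty intersection, so $z\notin A^m_1\cup A^m_2$ for some $m$. The genuine divergence is in the remaining step of part (2): the paper simply asserts (``Notice that there exists a component $K_i$ of $\JJJ_g$ contained essentially in $A^m_i$''), whereas you identify this non-emptiness as the real technical content and prove it — reducing via the covering $g^m$ and the backward invariance of $\JJJ_g$ to showing $\JJJ_g$ meets every component of $\AAA$, extracting ``middle'' components from the iterate of condition (2) of Definition \ref{defAS} (at most two components can share a boundary component of $A_*$, so middle ones exist and are compactly contained), finding a cycle in the finite component graph, and pulling back around the cycle to get a proper annular covering $g^{lkn_0}\colon B_0^{(1)}\to A_0$ with $B_0^{(1)}\Subset A_0$, whose nested preimage closures have non-empty compact intersection inside $\JJJ_g$; backward propagation along the graph then reaches every component. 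This is exactly the justification the paper leaves implicit, and it is the only place in either argument where points of $\JJJ_g$ are actually produced, so your version buys completeness at the cost of length. One remark: your intermediate construction of the compactly contained component $D$ adjacent to $L_1^n$ (with its dichotomy ``either such a $D$ exists or $C_n^{(1)}$ persists as a component of every $g^{-m}(\AAA)$'') does hold, but it is dispensable: once you know $\JJJ_g$ meets every component of $g^{-n}(\AAA)$, you can take $K_1$ to be a Julia component inside $C_n^{(1)}$ itself — it is essential in $A$ by part (1), and $z$, lying outside $C_n^{(1)}$, is automatically on its $L_2$ side — which is precisely the paper's shortcut with $A^m_1$ in place of your $D$.
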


\begin{proof} (1) For any $n\ge 0$, there is an integer $m>n$ such that $A^m(K)\Subset A^n(K)$ by Proposition \ref{compactly}. Since $A^{n+1}(K)$ is contained essentially in $A^n(K)$ for every $n\ge 0$, $\bigcap_{n\ge 0} A^n(K)$ is a $2$-connected continuum contained essentially in each $A^n(K)$. By definition it is contained in $\JJJ_g$ and hence is equal to $K$.

(2) Let $A^n_1,\,A^n_2\subset A$ be the components of $g^{-n}(\AAA)$ such that they share a common boundary component with $A$. Then $\bigcap_{n\ge 0}(A^n_1\cup A^n_2)=\emptyset$ by Proposition \ref{compactly}. Thus, there exists an integer $m\ge 1$ such that $z\notin (A^m_1\cup A^m_2)$. Notice that there exists a component $K_i$ of $\JJJ_g$ contained essentially in $A^m_i$ ($i=1,2$). Thus, the annulus bounded by $K_1$ and $K_2$ contains the point $z$.
\end{proof}

By Proposition \ref{2-continuum}, for each component $K$ of $\JJJ_g$, $g(K)$ is a component of $\JJJ_g$ and each component of $g^{-1}(K)$ is also a component of $\JJJ_g$. We will say that a component $K$ of $\JJJ_g$ is {\bf periodic}\ if there is an integer $p\ge 1$ such that $g^p(K)=K$; or {\bf pre-periodic}\ if $f^k(K)$ is periodic for some integer $k\ge 1$; or {\bf wandering}\ otherwise. Recall that a {\bf quasicircle} is the image of the unit circle under a quasiconformal map of $\cbar$.

\begin{proposition}\label{qc}
Let $g:\AAA^1\to \AAA$ be an exact annular system. Then any pre-periodic component $K$ of $\JJJ_g$ is a quasicircle.
\end{proposition}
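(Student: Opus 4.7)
My plan is first to reduce to the periodic case $g^p(K)=K$, and then to realize $K$ as the invariant circle of a quasiconformal model, by constructing an annulus-like straightening analogous to Douady--Hubbard.

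For the reduction, suppose $K$ is pre-periodic and let $k\ge 0$ be minimal with $K':=g^k(K)$ periodic. By Proposition \ref{2-continuum}, $K\subset A^k(K)$, and $g^k: A^k(K)\to A^0(K')$ is a finite holomorphic covering of annuli (a composition of unbranched annular coverings from the definition of annular system). Pulling $K'$ back through this covering preserves Ahlfors' bounded-turning condition up to a constant factor, so $K$ is a quasicircle whenever $K'$ is. It therefore suffices to treat the periodic case.

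Assume $g^p(K)=K$ and set $A_n:=A^{np}(K)$; by Proposition \ref{2-continuum}(1), these annuli are nested with $K=\bigcap_n A_n$ and each $A_{n+1}$ essentially contained in $A_n$. Proposition \ref{compactly} gives $M\ge 1$ with $A_M\Subset A_0$, and Proposition \ref{deg} ensures, after passing to a power, that $\deg(g^p|_{A_1})\ge 2$. Replacing $p$ by $Mp$, I assume $A_1\Subset A_0$ and $g^p: A_1\to A_0$ is a degree $d\ge 2$ holomorphic covering of essentially contained annuli. The model to compare against is $\phi(z)=z^d$ acting between the round annuli $A_1^\star:=\{r^{1/d}<|z|<r^{-1/d}\}$ and $A_0^\star:=\{r<|z|<1/r\}$ (with $r\in(0,1)$ chosen so that $\mathrm{mod}(A_0^\star)=\mathrm{mod}(A_0)$), whose invariant continuum is $S^1$. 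Starting from any quasiconformal homeomorphism $h_0: \overline{A_0\setminus A_1}\to\overline{A_0^\star\setminus A_1^\star}$ respecting boundary pairings, I lift inductively through the coverings $g^p$ and $\phi$ to obtain $h_n: \overline{A_n\setminus A_{n+1}}\to\overline{A_n^\star\setminus A_{n+1}^\star}$ satisfying $\phi\circ h_{n+1}=h_n\circ g^p$ and agreeing with $h_n$ on $\partial A_{n+1}$. Since the lifts go through holomorphic (hence conformal) coverings on both sides, each $h_n$ has the same dilatation as $h_0$, and gluing produces a quasiconformal homeomorphism $h: A_0\setminus K\to A_0^\star\setminus S^1$ of uniformly bounded dilatation.

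The main obstacle is to promote $h$ to a global quasiconformal homeomorphism of $\cbar$ carrying $S^1$ onto $K$, since $K$ is only known to be a continuum and is not a priori removable for quasiconformal maps. I would handle this via the Measurable Riemann Mapping Theorem: pull back the standard conformal structure on $A_0^\star$ through $h$ to obtain a bounded measurable Beltrami coefficient $\mu$ on $A_0\setminus K$, extend $\mu\equiv 0$ across $K$ and across the two complementary disks of $A_0$ in $\cbar$, and apply the MRMT to obtain a quasiconformal homeomorphism $\Psi: \cbar\to\cbar$ realizing $\mu$. Then on $A_0\setminus K$ the composition $h\circ\Psi^{-1}$ is conformal, so each component of $\Psi(A_0)\setminus\Psi(K)$ is conformally equivalent to a component of $A_0^\star\setminus S^1$. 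The Hausdorff convergence of $\partial A_n$ to $K$ (matching the round circles $|z|=r^{\pm 1/d^n}$ to $S^1$) forces these two conformal equivalences to glue across $\Psi(K)$ into a biholomorphism $\Psi(A_0)\to A_0^\star$ sending $\Psi(K)$ onto $S^1$. Composing with $\Psi^{-1}$ yields a global quasiconformal homeomorphism of $\cbar$ carrying $S^1$ onto $K$, so $K$ is a quasicircle.
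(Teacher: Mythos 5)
Your strategy is the same as the paper's (reduce to a periodic component, then straighten $g^p$ near $K$ to the model $z\mapsto z^{d}$ by a quasiconformal surgery; the paper does this in one line, with the details of the surgery as in \cite{PT2}), but the decisive step of your version is not justified. After applying the Measurable Riemann Mapping Theorem you know only that $h\circ\Psi^{-1}$ is conformal on each of the two components of $\Psi(A_0)\smm\Psi(K)$, sending them onto the two round annuli of $A_0^\star\smm S^1$. The claim that Hausdorff convergence of $\partial A_n$ to $K$ \emph{forces} these two conformal maps to glue across $\Psi(K)$ into a biholomorphism is exactly the point at issue: to glue you would need (i) continuous extension of each conformal map to $\Psi(K)$, which requires local connectivity of $K$ --- not known at this stage, and in fact part of what is being proved (compare the appendix example of a non--locally connected wandering component); (ii) matching of the boundary values; and (iii) a removability statement for $\Psi(K)$ so that the glued homeomorphism is holomorphic across it. None of this follows from Hausdorff convergence: for \emph{any} $2$-connected continuum $K'$ essentially contained in $A_0$ the two components of $A_0\smm K'$ are finite-modulus annuli, hence quasiconformally equivalent to the two components of $A_0^\star\smm S^1$, so the mere existence of a quasiconformal map $A_0\smm K\to A_0^\star\smm S^1$ cannot certify that $K$ is a quasicircle; the dynamics must enter at this point. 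The standard surgery avoids the problem: extend to a global degree-$d$ quasiregular branched covering $G$ of $\cbar$ by filling the two complementary disks of a smooth essential subannulus with quasiregular degree-$d$ maps having superattracting fixed points, take a $G$-invariant Beltrami form (zero on $K$, pulled back from the model elsewhere), straighten by the MRMT to a rational map necessarily conjugate to $z^d$, and then identify $\Psi(K)$ with $S^1$ \emph{dynamically}, as the set of points whose orbits never enter the two basins. In your set-up the same finish is available (your $\mu$ is $g^p$-invariant, so $\Psi\circ g^p\circ\Psi^{-1}$ is holomorphic near $\Psi(K)$), but you must extend the model to the sphere and argue via the dynamics, not via the asserted conformal gluing.

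A secondary defect: you take the initial map $h_0$ to be a quasiconformal homeomorphism of $\overline{A_0\smm A_1}$ onto $\overline{A_0^\star\smm A_1^\star}$. The annuli $A^0(K)$ and $A^{p}(K)$ are components of $\AAA$ and of $g^{-p}(\AAA)$, and nothing in Definition \ref{defAS} makes their boundaries Jordan curves; they may fail to be locally connected, in which case no homeomorphism of the closures exists and ``agreeing with $h_n$ on $\partial A_{n+1}$'' is meaningless. This is repairable: choose a smooth-bounded essential annulus $B_0$ with $\overline{A_1}\subset B_0\Subset A_0$, let $B_1$ be the component of $(g^p)^{-1}(B_0)$ containing $K$ (so $B_1\Subset B_0$ with analytic boundaries), and choose $h_0$ on $\partial B_1$ as a lift of $h_0\circ g^p$ through $z^d$ so that the first gluing is consistent --- ``respecting boundary pairings'' must mean exactly this conjugacy relation on the inner boundary. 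Also, if $g^p$ exchanges the two ends of the annulus, pass to $g^{2p}$ or use $z\mapsto z^{-d}$, as the paper's $|d|$ implicitly allows. These repairs are routine; the genuine gap is the gluing across $\Psi(K)$ described above.
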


\begin{proof} We only need to consider periodic components of $\JJJ_g$ since each component of their pre-images is also a quasicircle. Let $K$ be a periodic component of $\JJJ_g$ with period $p\ge 1$. Then $g^p(A^{p}(K))=A^0(K)$ and $A^p(K)\Subset A^0(K)$ by Proposition \ref{compactly}. Now applying quasiconformal surgery, we have a quasiconformal map $\phi$ of $\cbar$ such that $\phi\circ g^{p}\circ \phi^{-1}=z^d$ in a neighborhood of $\phi(K)$, where $|d|=\deg (g^{p}|_{A^{p}(K)})\ge 2$. Thus, $K$ is a quasicircle.
\end{proof}

\subsection{Semi-conjugacy to linear systems}

Let $g:\AAA^1\to \AAA$ be an exact annular system. In this sub-section, we want to characterize its dynamics by a linear system as follows: Denote by $A_1, \cdots, A_n$ the components of $\AAA$ and $A^1_1, \cdots, A^1_m$ the components of $\AAA^1$. Let
$$
\III=I_1\cup\cdots\cup I_n\quad\text{ and }\quad\III^1=I_1^1\cup\cdots\cup I_m^1
$$
be disjoint unions of open intervals on $\R^1$ such that \\
\indent (a) $\III^1\subset\III$ and $I_i^1\subset I_j$  whenever $A^1_i\subset A_j$, and \\
\indent (b) for each $I_i$, $\partial I_i\subset\partial (\III^1\cap I_i)$.

Define $\sigma:\III^1\to \III$ by $\sigma(I_i^1)=I_j$ if $g(A_i^1)=A_j$ and $\sigma$ is linear on each $I_i^1$. Set
$$
\III^k=\sigma^{-k}(\III)\text{ for }k>1\text{ and }
\JJJ_{\sigma}=\ds\bigcap_{k\ge 1}\III^k.
$$

\begin{proposition}\label{linear0}
The linear system $\sigma:\III^1\to\III$ is expanding and the closure of $\JJJ_{\sigma}$ in $\R$ is a Cantor set.
\end{proposition}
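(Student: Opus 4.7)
The plan is to model the dynamics by a piecewise linear Markov map on $\R$ whose slopes encode the covering degrees of $g$ and whose Markov partition has genuine open gaps. Fix $c>1$ and set $|I_j|:=c\cdot\mathrm{mod}(A_j)$, $|I_i^1|:=\mathrm{mod}(A_i^1)$. Gr\"otzsch's inequality applied to the disjoint essential sub-annuli $A_i^1\subset A_j$ gives
$$
\sum_{A_i^1\subset A_j}|I_i^1| \ \leq \ \mathrm{mod}(A_j) \ < \ |I_j|,
$$
so the $I_i^1$'s can be realized as pairwise disjoint open subintervals of $I_j$ with nonempty open gaps between consecutive ones, securing condition (a). The exactness hypothesis $\partial A_j\subset\partial(A_j\cap\AAA^1)$ produces, for each of the two boundary components of $A_j$, some $A_i^1\subset A_j$ sharing it; placing those two $I_i^1$'s flush with the endpoints of $I_j$ gives condition (b).

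With this setup, the slope of $\sigma$ on a branch $I_i^1\to I_k$ equals
$$
|I_k|/|I_i^1|\ =\ c\cdot\mathrm{mod}(A_k)/\mathrm{mod}(A_i^1)\ =\ c\cdot\deg(g|_{A_i^1}),
$$
via the standard formula $\mathrm{mod}(A_k)=\deg(g|_{A_i^1})\cdot\mathrm{mod}(A_i^1)$ for holomorphic coverings of annuli. Since $\deg\geq 1$ and $c>1$, the slope is uniformly at least $c$, so $\sigma$ is uniformly expanding with factor $\geq c$.

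For the Cantor set conclusion, uniform expansion and the open gaps together force the iterated preimages of the gaps to be dense in $\III$, so $\overline{\JJJ_\sigma}$ is nowhere dense in $\R$. Any two distinct points of $\JJJ_\sigma$ lie, for $n$ sufficiently large, in distinct branches of $\sigma^{-n}(\III)$ (branches have diameter $O(c^{-n})$) and are thus separated by one of those open gaps, giving total disconnectedness of $\overline{\JJJ_\sigma}$. Closedness and boundedness are immediate. For perfectness, condition (2) of Definition \ref{defAS} furnishes, for each component $A$ of $\AAA$, an integer $N_A$ with $g^{-N_A}(\AAA)\cap A$ disconnected; taking $N:=\max_A N_A$ and observing that component counts of $g^{-n}(\AAA)\cap A$ do not decrease under further iteration (holomorphic annular coverings do not merge essential sub-annuli), every branch of $\sigma^n$ contains at least two sub-branches of $\sigma^{n+N}$, producing accumulation points of $\JJJ_\sigma$ at every point of $\JJJ_\sigma$. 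Hence $\overline{\JJJ_\sigma}$ is closed, bounded, perfect and totally disconnected in $\R$, i.e., a Cantor set. The main technical obstacle I anticipate is justifying the uniform ``at least two sub-branches after $N$ iterates'' property, which requires a careful accounting of how the number of components of $g^{-n}(\AAA)\cap A$ behaves under the coverings $g|_{A_i^1}$ combined with the disconnectedness in condition (2).
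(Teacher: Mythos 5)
Your modulus-based model does not prove the proposition as stated, and the construction itself breaks down in admissible cases. The intervals $I_j$, $I_i^1$ are not yours to choose at this point: Proposition \ref{linear0} asserts expansion for \emph{any} collection of open intervals subject only to conditions (a) and (b), and nothing in that setup ties the lengths to moduli. For a general admissible choice an individual branch of $\sigma$ can perfectly well have slope $\le 1$ (take a long $I_i^1$ inside a long $I_j$ mapping onto a short $I_k$), so "expanding" here can only mean that some iterate satisfies $|(\sigma^k)'|>1$ on $\III^k$; the entire content of the paper's proof is the nested-interval ratio argument showing that the maximal length $L_k$ of the components of $\III^k$ tends to $0$ for \emph{every} admissible choice, with no reference to moduli. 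Your slope computation $|I_k|/|I_i^1|=c\deg\bigl(g|_{A_i^1}\bigr)\ge c$ is an artifact of your special choice of lengths (the Gr\"otzsch and degree-modulus identities you invoke are correct, but beside the point), so at best you would be proving the weaker statement that \emph{some} realization is expanding, not the stated one.

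Moreover, even as a construction of one good model your prescription can fail condition (b). Exactness does not guarantee that every component $A_j$ of $\AAA$ contains two or more components of $\AAA^1$: it is compatible with Definition \ref{defAS} (including condition (2), which only requires disconnectedness after some number of iterates) that $A_j\cap\AAA^1$ is a single annulus, and exactness then forces both components of $\partial A_j$ to be boundary components of that annulus — in fact forces it to equal $A_j$. In that case condition (b) requires the single open subinterval to be flush with \emph{both} endpoints of $I_j$, which is impossible when its prescribed length is $\mathrm{mod}(A_i^1)\le\mathrm{mod}(A_j)<c\,\mathrm{mod}(A_j)=|I_j|$. Consequently the "open gap inside every $I_j$" on which your nowhere-density and total-disconnectedness argument rests is not available at the first level; gaps are only guaranteed after enough iterates, via condition (2) — exactly the uniform "two sub-branches after $N$ iterates" point you flag at the end as unresolved. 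The paper avoids all of this: once $L_k\to 0$ is known, components of $\JJJ_\sigma$ are points and perfectness of the closure follows from the same disconnectedness property, with no choice of metric and no gap bookkeeping at level one.
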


\begin{proof} To prove that $\sigma$ is expanding, we only need to show that there is an integer $k\ge 1$ such that for any $x\in\III^k$, $|(\sigma^k)'(x)|>1$. For each $k\ge 1$, let $l_k$ and $L_k$ be the minimum and maximum, respectively, of the length of the components of $\III^k$.  Then $L_{k+1}\le L_k$ for any $k\ge 1$. It suffices to show that there is an integer $k\ge 1$ such that $L_k<l_1$.

We will prove that $L_k\to L=0$ as $k\to\infty$. Assume on the contrary that $L=\lim_k L_k>0$. Then for each $k\ge 1$, there is a component of $\III^k$ whose length is at least $L$. Therefore, there exists a sequence $\{I^k\}_{k\ge 1}$ with $I^k$ a component of $\III^k$, such that $I^k\supset I^{k+1}$ and $|I^k|\ge L$.

Denote by $I^{\infty}=\bigcap_k I^k$. Then $|I^{\infty}|\ge L$ and $|I^k|\to |I^{\infty}|$ as $k\to\infty$. In particular, there exists an integer $k_0\ge 1$ such that as $k\ge k_0$,
$$
\frac{|I^k|}{|I^{\infty}|}<\frac{L_1+l_1}{L_1}.
$$

Since $g$ is an annular system, there exists an integer $k_1\ge k_0$ such that $I^{k_1}$ contains another component $I$ of $\III^{k_1+1}$ distinct from $I^{k_1+1}$. Thus,
$$
\frac{|I|}{|I^{k_1+1}|}\le\frac{|I^{k_1}|-|I^{k_1+1}|}{|I^{k_1+1}|}<\frac{l_1}{L_1}.
$$
Since $\sigma^{k_1}$ is linear on $I^{k_1}$, we have
$$
\frac{|\sigma^{k_1}(I)|}{|\sigma^{k_1}(I^{k_1+1})|}<\frac{l_1}{L_1}.
$$
By definition, $|\sigma^{k_1}(I)|\ge l_1$ and $|\sigma^{k_1}(I^{k_1+1})|\le L_1$. So
$$
\frac{|\sigma^{k_1}(I)|}{|\sigma^{k_1}(I^{k_1+1})|}\ge\frac{l_1}{L_1}.
$$
This is a contradiction.

Now each component of $\JJJ_{\sigma}$ is a single point since the linear system $\sigma$ is expanding. It is easy to check that the closure of $\JJJ_{\sigma}$ in $\R$ is
a perfect set and hence a Cantor set.
\end{proof}

For any point $x\in\JJJ_{\sigma}$ and each $k\ge 1$, if we denote by $I^k(x)$ the component of $\III^k$ containing the point $x$, then $\cap_{k\ge 1}I^k(x)=\{x\}$. For any two distinct points $x,y\in\JJJ_{\sigma}$, either they are contained in different component of $\III^1$, or there exists an integer $k_0\ge 2$ such that $I^{k_0}(x)\cap I^{k_0}(y)=\emptyset$ and $I^k(x)=I^k(y)$ for $1\le k<k_0$. In the latter case, $\sigma^{k_0-1}(I^{k_0}(x))$ and $\sigma^{k_0-1}(I^{k_0}(y))$ are different components of $\III^1$. Define the {\bf itinerary} of a point $x\in\JJJ_{\sigma}$ by $i(x)=(j_0,j_1,\cdots)$ if $\sigma^k(x)\in I_{j_k}^1$. Then $i(x)\neq i(y)$ if $x\neq y$.

Define the {\bf itinerary} for each point $z\in\JJJ_g$ by $i_*(z)=(j_0,j_1,\cdots)$ if $g^k(z)\in A_{j_k}^1$. Define a map $\Pi:\, \JJJ_g\to\JJJ_{\sigma}$ by $\Pi(z)=x$ if $i_*(z)=i(x)$. It is well-defined and surjective by Proposition \ref{compactly}.

\begin{proposition}\label{linear}
The map $\Pi:\JJJ_g\to\JJJ_{\sigma}$ is continuous and $\sigma\circ\Pi=\Pi\circ g$ on $\JJJ_g$. For each point $x\in\JJJ_{\sigma}$,
$\Pi^{-1}(x)$ is a component of $\JJJ_g$.
\end{proposition}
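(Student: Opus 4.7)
My plan is to attack the three assertions separately: the semi-conjugacy is essentially formal, the fiber identification is the heart of the argument and rests on Proposition \ref{compactly}, and continuity follows from the expanding nature of $\sigma$ established in Proposition \ref{linear0}.

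For the semi-conjugacy $\sigma\circ\Pi=\Pi\circ g$, note that if $z\in\JJJ_g$ has itinerary $i_*(z)=(j_0,j_1,j_2,\ldots)$, then $i_*(g(z))=(j_1,j_2,\ldots)$ is the shift, and the same shift relation holds for $i$ on $\JJJ_\sigma$. Hence $\Pi(g(z))$ and $\sigma(\Pi(z))$ are both the unique point of $\JJJ_\sigma$ with the shifted itinerary.

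To identify the fibers, fix $x\in\JJJ_\sigma$ with itinerary $(j_0,j_1,\ldots)$. The condition $z\in\Pi^{-1}(x)$ reads $g^k(z)\in A^1_{j_k}$ for every $k\ge 0$. Since $g$ restricted to any component of $\AAA^1$ is a covering onto a component of $\AAA$, the itinerary prescribes a unique nested sequence of components $A^n(x)$ of $g^{-n}(\AAA)$ with $g(A^n(x))=A^{n-1}(\sigma(x))$, and $\Pi^{-1}(x)=\bigcap_{n\ge 0}A^n(x)$. Surjectivity of $\Pi$ forces this intersection to be nonempty, and Proposition \ref{compactly} then rules out the alternative in which boundary components survive forever, so for every $n$ there is $m>n$ with $A^m(x)\Subset A^n(x)$. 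Passing to such a subsequence exhibits $\Pi^{-1}(x)$ as a nested intersection of compact connected sets $\overline{A^{n_k}(x)}$, hence compact and connected. To upgrade this to the statement that $\Pi^{-1}(x)$ is a full component of $\JJJ_g$, let $K\supset\Pi^{-1}(x)$ be the component of $\JJJ_g$ containing it. Since $K\subset\JJJ_g\subset g^{-k}(\AAA^1)$ and $K$ is connected, each $g^k(K)$ is a connected subset of the disjoint union $\AAA^1$, hence lies in a single component $A^1_{i_k}$. Matching $i_k$ with the itinerary forced by the points of $\Pi^{-1}(x)\subset K$ yields $i_k=j_k$ for all $k$, so $K\subset\Pi^{-1}(x)$ and $K=\Pi^{-1}(x)$.

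For continuity, suppose $z_n\to z$ in $\JJJ_g$ with $i_*(z)=(j_0,j_1,\ldots)$. Given any $N\ge 0$, continuity of $g^k$ together with the openness and pairwise disjointness of the components $A^1_j$ forces $g^k(z_n)\in A^1_{j_k}$ for every $k\le N$ once $n$ is sufficiently large. Thus the first $N+1$ entries of $i_*(z_n)$ and $i_*(z)$ coincide, which places $\Pi(z_n)$ and $\Pi(z)$ in the same component of $\III^{N+1}$; this component has length at most $L_{N+1}\to 0$ by Proposition \ref{linear0}, so $\Pi(z_n)\to\Pi(z)$. The step I expect to be the main obstacle is the fiber identification, specifically verifying that $\bigcap_n A^n(x)$ is connected: the danger is that a boundary component could be shared across every level, preventing eventual compact nesting, and the essential content of exactness (through Proposition \ref{compactly}) is precisely what excludes this scenario once nonemptiness is supplied by surjectivity of $\Pi$.
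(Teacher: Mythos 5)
Your proof is correct and follows essentially the same route as the paper's: spelling out the itinerary bookkeeping, identifying $\Pi^{-1}(x)$ with the nested intersection $\bigcap_n A^n(x)$ via Proposition \ref{compactly} (with nonemptiness from the surjectivity of $\Pi$ recorded just before the statement) is exactly what the paper dismisses as ``easy to check,'' and your sequential continuity argument through matching initial itinerary segments plus the shrinking component lengths from Proposition \ref{linear0} is equivalent to the paper's observation that $\Pi^{-1}(I^k(x)\cap\JJJ_{\sigma})=A^k(\Pi^{-1}(x))\cap\JJJ_g$ is open in $\JJJ_g$. No gaps; nothing further is needed.
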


\begin{proof} It is easy to check that $\sigma\circ\Pi=\Pi\circ g$ on $\JJJ_g$, and $\Pi^{-1}(x)$ is a component of $\JJJ_g$ for each point $x\in\JJJ_{\sigma}$. Fix any point $x\in\JJJ_{\sigma}$. The collection $\{I^k(x)\cap\JJJ_{\sigma}\}_{k\ge 1}$ forms a basis of neighborhoods of the point $x$ in $\JJJ_{\sigma}$. Now $\Pi^{-1}(\{I^k(x)\cap\JJJ_{\sigma}\})=A^k(\Pi^{-1}(x))\cap\JJJ_g$ is open in $\JJJ_g$ for every $k\ge 1$. So $\Pi$ is continuous.
\end{proof}

Since the set of pre-periodic points is a countable set, we have:

\begin{corollary}\label{uncountable}
There are uncountably many wandering components in $\JJJ_g$.
\end{corollary}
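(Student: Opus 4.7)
The plan is to transport the question to the expanding linear system $\sigma$ via Proposition \ref{linear}, where counting is elementary, and then pull the count back to $\JJJ_g$. Since $\Pi:\JJJ_g\to\JJJ_\sigma$ is a surjective semi-conjugacy whose fibers are precisely the components of $\JJJ_g$, the map $K\mapsto\Pi(K)$ gives a bijection between components of $\JJJ_g$ and points of $\JJJ_\sigma$. Under this bijection, $g^n(K)=\Pi^{-1}(\sigma^n(\Pi(K)))$, so two iterates $g^n(K)$ and $g^m(K)$ coincide if and only if $\sigma^n(x)=\sigma^m(x)$, where $x=\Pi(K)$. Thus $K$ is wandering in the sense of the definition above if and only if $x$ is not pre-periodic under $\sigma$.

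The remaining task is to show that $\JJJ_\sigma$ contains uncountably many non-pre-periodic points. On the one hand, $\JJJ_\sigma$ is itself uncountable: by Proposition \ref{linear0}, its closure in $\R$ is a Cantor set, and the set $\overline{\JJJ_\sigma}\smm\JJJ_\sigma$ is contained in the countable collection of endpoints of the intervals making up the nested family $\{\III^k\}_{k\ge 1}$. On the other hand, the set of pre-periodic points of $\sigma$ in $\JJJ_\sigma$ is countable. Indeed, by expansion (Proposition \ref{linear0}), the linear map $\sigma^p$ has slope of modulus $>1$ on each component of $\III^p$, so each such component carries at most one fixed point of $\sigma^p$; hence the set of periodic points is countable. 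Since $\sigma$ has only finitely many branches on each $\III^k$, pre-images of a countable set remain countable, so the whole pre-periodic set is countable.

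Subtracting the countable pre-periodic set from the uncountable $\JJJ_\sigma$ leaves uncountably many non-pre-periodic points, and by the correspondence in the first paragraph these are in bijection with uncountably many wandering components of $\JJJ_g$. I do not expect any real obstacle here: the substantive work was done in Propositions \ref{linear0} and \ref{linear}, and what remains is only a careful translation of "wandering'' across the semi-conjugacy together with the two countability arguments above.
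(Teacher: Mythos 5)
Your argument is correct and follows essentially the same route as the paper, which deduces the corollary directly from Proposition \ref{linear}: the fibers of $\Pi$ are exactly the components of $\JJJ_g$, the pre-periodic points of $\sigma$ form a countable set, and $\JJJ_\sigma$ is uncountable since its closure is a Cantor set and $\overline{\JJJ_\sigma}\smm\JJJ_\sigma$ consists of countably many interval endpoints. One tiny imprecision: Proposition \ref{linear0} only gives $|(\sigma^k)'|>1$ for some iterate $k$, not for every $p$; but each branch of $\sigma^p$ is affine and cannot be the identity (expansion forces the component lengths of $\III^k$ to tend to zero), so it still has at most one fixed point and your countability argument goes through unchanged.
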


For any point $x\in\JJJ_{\sigma}$, its {\bf $\omega$-limit set} $\omega(x)$ is defined to be the set of points
$y\in\JJJ_{\sigma}$ such that $\sigma^{k_n}(x)$ converges to $y$ as $n\to\infty$ for a subsequence $k_n\to\infty$.

\begin{proposition}\label{infinite}
Let $x\in\JJJ_{\sigma}$ be a wandering point. Then $\omega(x)$ is an infinite set.
\end{proposition}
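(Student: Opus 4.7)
The plan is to argue by contradiction. Suppose $\omega(x)=\{y_1,\ldots,y_m\}$ is finite. I will build a symbolic coding of the forward orbit of $x$ that is eventually periodic, and then invoke the expansion of $\sigma$ (Proposition \ref{linear0}) to conclude that $x$ is eventually periodic, which contradicts the wandering hypothesis.

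First I would observe that $\omega$-limit sets are forward invariant under the continuous map $\sigma$, so $\sigma$ induces a map $\tau\colon\{1,\ldots,m\}\to\{1,\ldots,m\}$ via $\sigma(y_i)=y_{\tau(i)}$. (Since each $y_i$ is visited infinitely often by $\{\sigma^n(x)\}$, no $y_i$ can be ``transient'' under $\tau$, so $\tau$ is in fact a disjoint union of cycles; this is not strictly needed for the argument below.) I would then pick small pairwise disjoint open intervals $W_i\ni y_i$, each contained in a single component of $\III^1$ (so $\sigma$ is linear on every $W_i$), and small enough that $\sigma(\overline{W_i})\cap\overline{W_j}=\emptyset$ whenever $j\ne\tau(i)$; continuity of $\sigma$ at each $y_i$ makes this possible. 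Since $\sigma^n(x)$ converges to $\omega(x)$, there is an $N$ with $\sigma^n(x)\in\bigcup_iW_i$ for all $n\ge N$. Letting $j_n$ be the unique index with $\sigma^n(x)\in W_{j_n}$, the separation condition forces $j_{n+1}=\tau(j_n)$, so $(j_n)_{n\ge N}$ is the forward $\tau$-orbit of $j_N$ on a finite set and hence is eventually periodic: there exist $M\ge N$ and $p\ge 1$ with $j_{n+p}=j_n$ for all $n\ge M$. In particular each $y_{j_n}$ ($n\ge M$) is a period-$p$ point of $\sigma$.

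The last step is to apply expansion. By Proposition \ref{linear0}, $y_{j_n}$ is repelling of period $p$, with $|(\sigma^p)'(y_{j_n})|>1$. Since $\sigma^p$ is piecewise linear, after possibly further shrinking the $W_i$ I may assume $\sigma^p$ is linear on $W_{j_n}$. For $n\ge M$ we have $\sigma^{n+p}(x)\in W_{j_{n+p}}=W_{j_n}$, and inductively $\sigma^{n+kp}(x)\in W_{j_n}$ for every $k\ge 0$. Linearity of $\sigma^p$ on $W_{j_n}$ then gives
$$|\sigma^{n+kp}(x)-y_{j_n}|=|(\sigma^p)'(y_{j_n})|^k\cdot|\sigma^n(x)-y_{j_n}|.$$
The left-hand side is bounded by $|W_{j_n}|$, while the right-hand side grows geometrically unless $\sigma^n(x)=y_{j_n}$. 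Hence $\sigma^n(x)=y_{j_n}$, so $x$ is eventually periodic, contradicting the wandering hypothesis.

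I expect the main obstacle to be the first step: choosing the neighborhoods $W_i$ so that the induced code $(j_n)$ is \emph{forced} to follow $\tau$ rather than being an arbitrary sequence in $\{1,\ldots,m\}$. Once that coding is in place, the expansion of $\sigma^p$ on a linearity neighborhood of the repelling periodic point $y_{j_n}$ instantly rules out any non-periodic orbit of $x$ consistent with it, and the contradiction is immediate.
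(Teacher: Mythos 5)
Your argument is correct and follows essentially the same route as the paper's proof: both track the orbit of $x$ by the nearest point of the finite, forward-invariant set $\omega(x)$ (your coding via the neighborhoods $W_i$ plays the role of the paper's $\delta/(2M)$-shadowing), conclude that the shadowed orbit inside $\omega(x)$ is eventually periodic, and then use expansion together with linearity to force $\sigma^n(x)$ to coincide with the periodic point, contradicting the wandering hypothesis. The only detail to tidy is the ``further shrinking'' of the $W_i$ after the code $(j_n)$ has been fixed: either choose each $W_i$ inside the component of $\III^{m}$ containing $y_i$ from the start (the eventual period satisfies $p\le m=\#\omega(x)$, so $\sigma^p$ is automatically linear there), or observe that the convergence of the orbit to $\omega(x)$ re-establishes $\sigma^n(x)\in W_{j_n}$ for the shrunk intervals for all large $n$.
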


\begin{proof} Assume that $\omega(x)$ is finite. Define $d(y_1, y_2)$ to be the Euclidean distance if $y_1, y_2$ are contained in the same component of $\III$, or infinity otherwise. Then there exists a constant $\delta>0$ such that for any two distinct points $y_1, y_2\in\III$, $d(y_1, y_2)>\delta$ if either the two points $y_1$ and $y_2$ are contained in different components of $\III^1$ or both of them are contained in $\omega(x)$.

Pick a constant $M\in (1,\infty)$ such that $|\sigma'(y)|<M$ for any point $y\in\III^1$. By the definition of $\omega(x)$, there exists a constant $N\ge 1$ such that for any $n\ge N$, there exists a unique point $y_n\in\omega(x)$ such that $d(\sigma^n(x), y_n)<\delta/(2M)$. Thus, $\sigma^n(x)$ and $y_n$ are contained in the same component of $\III_1$. So $d(\sigma^{n+1}(x), \sigma(y_n))<\delta/2$. It follows that $y_{n+1}=\sigma(y_n)$ for $n\ge N$. This contradicts the fact that $\sigma$ is expanding.
\end{proof}

\subsection{Common boundary}

Recall that each component of the Julia set of an exact annular system is a $2$-connected continuum by Proposition \ref{2-continuum}.

\begin{proposition}\label{common}
Let $g:\AAA^1\to\AAA$ be an exact annular system and $K$ a component of $\JJJ_g$. Let $U$ and $V$ be the two components of $\cbar\smm K$. Then $\partial U=\partial V=K$.
\end{proposition}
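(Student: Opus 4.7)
The plan is to prove $\partial U = K$; the analogous equality $\partial V = K$ follows by symmetry, giving the statement. Since $\partial U \subset K$ is immediate, it suffices to show each $p \in K$ lies in $\overline{U}$.

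Note first that $U$ and $V$ are both simply connected: $\cbar \setminus U = K \cup V$ is connected because $K$ and $V$ are each connected and $K \cap \overline{V} = \partial V \neq \emptyset$, and symmetrically for $V$. Hence $\partial U$ and $\partial V$ are connected closed subsets of $K$, and since every point of $K$ is accumulated by the open set $\cbar \setminus K = U \sqcup V$, we have $K = \partial U \cup \partial V$. I now argue by contradiction: suppose some $p \in K$ fails to lie in $\overline{U}$, and pick $r > 0$ with $\overline{B(p,r)} \cap \overline{U} = \emptyset$, so $B(p,r) \subset K \cup V$. Write $A^n = A^n(K)$, an annulus with boundary circles $C^n_U \subset U$ and $C^n_V \subset V$ in which $K$ sits essentially. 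The idea is to construct, for $n$ large, an arc $\alpha_n \subset \overline{A^n}$ from $C^n_U$ to $C^n_V$ passing through $p$ with diameter less than $r$; such an $\alpha_n$ then contains a point of $C^n_U \subset U$ inside $B(p,r)$, contradicting $B(p,r) \cap U = \emptyset$.

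The arc $\alpha_n$ comes from the conformal uniformization of $A^n$. By Proposition \ref{deg}, the degree $d_n := \deg(g^n|_{A^n})$ of the covering $g^n \colon A^n \to A^0(g^n(K))$ tends to $\infty$, so $\mathrm{mod}(A^n) = \mathrm{mod}(A^0(g^n(K)))/d_n \to 0$ (the moduli of the finitely many components of $\AAA$ are bounded). Uniformizing $\phi_n \colon A^n \to \{1 < |w| < R_n\}$ with $R_n = e^{2\pi\,\mathrm{mod}(A^n)}$, let $\alpha_n$ be the $\phi_n$-preimage of the radial segment $\{t\phi_n(p)/|\phi_n(p)| : 1 \le t \le R_n\}$. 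This is an arc in $\overline{A^n}$ from $C^n_U$ to $C^n_V$ passing through $p$.

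The main obstacle is showing $\mathrm{diam}(\alpha_n) \to 0$. In the round model the radial segment has Euclidean length $R_n - 1 \to 0$; the content of the estimate is to transfer this to the Euclidean diameter of $\alpha_n$ despite the fact that the conformal distortion of $\phi_n^{-1}$ degenerates as $R_n \to 1$. The correct control is dynamical: lifting $\alpha_n$ under $g^n$ unfolds it to a single radial cross-cut in the fixed annulus $A^0(g^n(K))$, whose geometry depends only on the finitely many components of $\AAA$; then, using the nesting $A^{n+k}(K) \Subset A^n(K)$ supplied by Proposition \ref{compactly} together with the expansion of $g^n$ from Proposition \ref{deg}, Koebe-type distortion bounds on a definite hyperbolic neighborhood of $p$ inside the fixed ambient annulus $A^0(K)$ yield a Euclidean diameter bound for $\alpha_n$ that shrinks like $1/d_n$. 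With $\mathrm{diam}(\alpha_n) < r$ for $n$ large, the contradiction is achieved.
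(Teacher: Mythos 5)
Your reduction to the claim that, for large $n$, the conformal radial cross-cut $\alpha_n$ of $A^n(K)$ through $p$ has small Euclidean diameter is exactly where the proof lives, and that step is not established. What you actually have from Proposition \ref{deg} is degree growth, hence $\mathrm{mod}(A^n(K))\to 0$; but ``modulus $\to 0$'' alone does not force short cross-cuts, nor the common-boundary property. For a non-dynamical counterexample, let $K=\partial\D\cup[1,2]$ (a circle with a hair): one can choose nested essential annuli $A^n\supset K$ with $\bigcap_n A^n=K$ whose core curves are forced to travel around the thin neighborhood of the hair, so $\mathrm{mod}(A^n)\to 0$, yet $\partial U=\partial\D\subsetneq K$, and any cross-cut of $A^n$ through a point $p$ of the hair must reach the component of the complement lying inside $\D$, so its diameter is bounded below independently of $n$. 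Thus the dynamics must enter beyond modulus decay, and your appeal to ``Koebe-type distortion'' plus ``expansion of $g^n$ from Proposition \ref{deg}'' does not supply it: Proposition \ref{deg} gives no pointwise expansion in any fixed metric, the proposition is stated for arbitrary exact annular systems (no expanding metric is assumed --- compare Theorem \ref{curve} and Buff's example, where wandering components are not even locally connected), $g^n$ has no critical points but its inverse branches are only defined on simply connected pieces of $A^0(g^n(K))$ with no uniformly larger domain on which to run a distortion argument, and exactness allows $A^n(K)$ and the arc $\alpha_n$ to run into $\partial\AAA$, where the hyperbolic metric of $\AAA$ (the only natural expanding metric here) degenerates.

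The paper's proof is structured precisely to get around this. Pre-periodic components are handled separately: they are quasicircles by quasiconformal surgery (Proposition \ref{qc}). For a wandering component $K$ one uses the semi-conjugacy to the linear model (Proposition \ref{linear}) and the fact that the $\omega$-limit set of a wandering itinerary is infinite (Proposition \ref{infinite}) to produce infinitely many times $n_k$ at which $g^{n_k}(K)$ lies in an annulus $A^m\Subset\AAA$; only then, pulling back a fixed annulus $W_0\Subset\AAA$ containing $A^m$ and using $\|g'\|\ge\lambda>1$ on $g^{-1}(W_0)$ in the hyperbolic metric of $\AAA$, does one show that the hyperbolic ``width'' of the pullbacks $W_k\supset K$ tends to $0$, which forces $K\subset\partial U\cap\partial V$. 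In other words, the missing ingredient in your sketch is the recurrence into a compactly contained sub-annulus (supplied by the wandering/pre-periodic dichotomy and Proposition \ref{infinite}); without it, no contraction estimate of the kind you want can be extracted, as the hair example shows.
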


\begin{proof}
Assume that each component of $\AAA$ contains at least two components of $\AAA^1$ (otherwise we consider $g^n$ for some $n\ge 2$ by the definition). Then $\|g'\|>1$ under the hyperbolic metric of $\AAA$.

If $K$ is eventually periodic, then $K$ is a quasicircle by Proposition \ref{qc} and hence the theorem holds.
Now we suppose that $K$ is wandering. Let $\Pi$ be a semi-conjugacy from $g:\JJJ_g\to\JJJ_g$ to a linear system $\sigma:\III^1\to\III$ defined in Proposition \ref{linear}. Then $x=\Pi(K)$ is a wandering point. Thus, $\omega(x)$ is an infinite set by Proposition \ref{infinite}. In particular $\omega(x)$ contains a point $y\in\III$ such that $y\notin\partial\III$. It follows that there exists a component $I^m$ of $\sigma^{-m}(\III)$ for which $y\in I^m$ and $I^m$ is contained in the interior of $\III$. Hence there exists an increasing sequence $\{n_k\}_{k\ge 1}$ of positive integers such that $n_k\to\infty$ as $k\to\infty$ and $\sigma^{n_k}(x)\in I^m$.

Denote by $A^m$ the component of $g^{-m}(\AAA)$ corresponding to the interval $I^m$. Then $A^m\Subset\AAA$ and $g^{n_k}(K)\subset A^m$. For any component $J$ of $\JJJ_g$, denote by $A^n(J)$ the component of $g^{-n}(\AAA)$ that contains $J$ . Then we have
$$
g^{n_k}(A^{m+n_k}(K))=A^m(g^{n_k}(K))=A^m.
$$

For each annulus $W\Subset\AAA$, define
$$
\text{width}(W)=\sup_{z\in W}\Big\{d_W(z,\partial_+ W)+d_W(z,\partial_-W)\Big\},
$$
where $\partial_{\pm}W$ denotes the two boundary components of $W$ and $d_W(z, \partial_{\pm}W)$ denotes the infimum of the length of arcs connecting $z$ to $\partial_{\pm}W$ in $W$ under the hyperbolic metric of $\AAA$.

Pick an annulus $W_0$ bounded by smooth curves such that $W_0\Subset\AAA$ and $A^m\subset W_0$. Then $\text{width}(W_0)<\infty$ and there exists a constant $\lambda>1$ such that $\|g'(z)\|\ge\lambda>1$ for every point $z\in g^{-1}(W_0)$.

Denote by $W_k$ the component of $g^{-n_k}(W_0)$ that contains $K$. Then
$$
A^{m+n_k-n_j}(g^{n_j}(K))\subset g^{n_j}(W_k)\Subset A^{n_k-n_j}(g^{n_j}(K))
$$
for $0\le j\le k$ (set $n_0=0$). Note that $A^{n_k-n_j}(g^{n_j}(K))\subset W_0$ if $n_k-n_j\ge m$. Thus, $g^{n_j}(W_k)\subset W_0$ if $k-j\ge m$. Therefore, $\|(g^{n_k})'(z)\|\ge\la^{k-m}$ for any point $z\in W_k$ since the finite orbit $\{z, g(z), \cdots, g^{n_k-1}(z)\}$ passes at least $k-m$ times through the set $g^{-1}(W_0)$ where $\|g'\|\ge\lambda$ and $\|g'\|>1$ always. So
$$
\text{width}(W_k)\le \la^{m-k}\text{width}(W_0).
$$
Hence $\text{width}(W_k)\to 0$ as $k\to\infty$.

Clearly $\partial U\cup\partial V\subset K$. In order to prove that $\partial U=\partial V=K$ we only need to show that $K\subset\partial U$ by symmetry. Otherwise, assume $z\in K\smm\partial U$. Then the spherical distance $d(z,\partial U)>0$. Label the boundary components of $W_k$ by $\partial _\pm W_k$ such that $\partial_+ W_k\subset U$. Then $d(z, \partial_+W_k)>d(z, \partial U)>0$. Note that there exists a constant $M>0$ such that $d_{W_k}(z,\partial_+W_k)\ge M\cdot d(z, \partial_+W_k)$ for all $k\ge 0$. Therefore,
$$
\text{width}(W_k)\ge d_{W_k}(z,\partial_+W_k)\ge M\cdot d_U(z, \partial_{\pm} U)>0.
$$
This contradicts the fact that $\text{width}(W_k)\to 0$ as $k\to\infty$.
\end{proof}

\subsection{Local connectivity}

In the appendix we will give an example constructed by X. Buff showing that an exact annular system may have non-locally connected wandering Julia components. The next theorem gives a sufficient condition about the local connectivity of wandering components. The idea of the proof comes from \cite{PT}.

\begin{theorem}{\label{curve}}
Let $g:\AAA^1\to\AAA$ be an exact annular system. Suppose that $g$ is expanding, i.e. there exists a smooth metric $\rho$ on $\AAA$ and a constant $\lambda>1$ such that $\|g'\|\ge\lambda$. Then every component of $\JJJ_g$ is a Jordan curve.
\end{theorem}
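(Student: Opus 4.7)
The plan is to reduce the theorem to the assertion that $K$ is locally connected, and then conclude via Carath\'eodory. Pre-periodic components of $\JJJ_g$ are quasicircles by Proposition \ref{qc}, so it suffices to treat a wandering component $K$. By Propositions \ref{2-continuum} and \ref{common}, $K$ is a 2-connected continuum with $\cbar\smm K=U\sqcup V$ and $\partial U=\partial V=K$; in particular, both $U$ and $V$ are simply connected. Once local connectivity of $K$ is established, each Riemann map $\phi_U\colon\D\to U$ and $\phi_V\colon\D\to V$ extends continuously to $\overline\D$ by Carath\'eodory's theorem, and a locally connected continuum that is the common boundary of two simply connected complementary domains in $\cbar$ is forced to be a Jordan curve.

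To prove local connectivity, I would couple the expansion of $g$ in $\rho$ with the recurrence furnished by Proposition \ref{infinite}. Since $K$ is wandering, $\omega(\Pi(K))$ is infinite and meets the interior of some component $I^m$ of $\sigma^{-m}(\III)$; the corresponding annulus $A^m$ of $g^{-m}(\AAA)$ satisfies $A^m\Subset\AAA$ and there is a subsequence $n_k\to\infty$ with $g^{n_k}(K)\subset A^m$. Fix a smooth open set $W_0$ with $A^m\subset W_0\Subset\AAA$ and let $W_k$ be the component of $g^{-n_k}(W_0)$ containing $K$, so $g^{n_k}\colon W_k\to W_0$ is a holomorphic covering with $\|(g^{n_k})'\|_\rho\ge\lambda^{n_k}$ throughout $W_k$. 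Choose $r>0$ so small that every $\rho$-ball of radius $r$ based at a point of $A^m$ is a simply connected topological disk contained in $W_0$; such $r$ exists by compactness of $A^m$ in $\AAA$ and smoothness of $\rho$. For each $p\in K$ the inverse branch $h_k$ of $g^{n_k}$ sending $g^{n_k}(p)$ to $p$ is defined on $D_\rho(g^{n_k}(p),r)\subset W_0$, and its image $D_k'=h_k(D_\rho(g^{n_k}(p),r))$ is a simply connected neighborhood of $p$ in $W_k$ of $\rho$-intrinsic diameter at most $2r\lambda^{-n_k}$. Because $p$ sits at positive spherical distance from $\partial\AAA$ and $\rho$ is locally comparable to the spherical metric there, the spherical diameter of $D_k'$ also tends to zero.

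The final step is to convert the shrinking disks $D_k'$ into shrinking cross-cuts of $U$ and $V$ at $p$. Because $\partial U=\partial V=K$, by Lindel\"of's theorem applied to the Riemann maps of $U$ and $V$ the accessible points from each side form a dense subset of $K$; after approximating $p$ by such accessible points, one builds inside the disk $D_\rho(g^{n_k}(p),r)\subset W_0$ two short arcs $\alpha_k^U$ and $\alpha_k^V$ through $g^{n_k}(p)$ whose endpoints lie on $K_{n_k}:=g^{n_k}(K)$ and which run on the $g^{n_k}(U)$ and $g^{n_k}(V)$ sides of $K_{n_k}$ respectively. Pulling back by $h_k$ produces cross-cuts of $U$ and $V$ through $p$ with endpoints on $K$ and spherical diameter tending to zero. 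These form fundamental chains for the prime ends of $U$ and $V$ corresponding to $p$, so the impression at each such prime end is the singleton $\{p\}$. Consequently, the Riemann maps extend continuously to $\overline\D$, $K$ is locally connected, and the first paragraph finishes the proof.

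The main obstacle is the cross-cut construction in the third paragraph: one must produce the arcs $\alpha_k^U,\alpha_k^V$ with endpoints on the a priori wild continua $K_{n_k}$ \emph{uniformly} in $k$ inside disks of fixed $\rho$-radius $r$. The uniform containment $K_{n_k}\subset A^m\Subset W_0$, the 2-connectedness of $K_{n_k}$ (which forces it to separate a definite annular region of $B_{n_k}$), and the density of points of $K$ accessible from both $U$ and $V$ together make this possible, following the strategy introduced in \cite{PT}.
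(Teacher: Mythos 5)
Your reduction to local connectivity and the expansion/recurrence set-up in your first two paragraphs are sound, and they run parallel to what the paper itself does inside the proof of Proposition \ref{common} (recurrence of $g^{n_k}(K)$ into an annulus $A^m\Subset\AAA$, plus contraction of pullbacks under the expanding metric). The gap is exactly where you place it: the third paragraph, and it is not a routine step but the heart of the theorem. Shrinking pullback disks $D_k'$ around every $p\in K$ do not by themselves control the topology of $K$ near $p$: the set $K\cap D_k'$ may be disconnected, and distant strands of $K$ may re-enter every $D_k'$, which is precisely the phenomenon that must be excluded. Your cross-cut construction is only asserted, and as stated it has concrete defects: (i) an arc ``through $g^{n_k}(p)$'' lying on the $U$-side presupposes that $g^{n_k}(p)$ is accessible from that side, which is unknown for an arbitrary point of a wandering component, and density of accessible points does not repair this, since after pulling back you obtain a chain based near $p$, not a chain controlling the prime ends whose impressions contain $p$ (also, density of the points accessible from $U$ and density of those accessible from $V$ do not give density of points accessible from both); (ii) a cross-cut of small diameter need not cut off a subdomain of small diameter, so the chains you build need not have impression $\{p\}$; and (iii) most importantly, producing for each $p$ \emph{some} prime end with impression $\{p\}$ does not show that \emph{every} prime end impression is a singleton, which is what the Carath\'eodory criterion requires --- ``the prime ends corresponding to $p$'' is not a well-defined class that your chains exhaust. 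So the key implication (shrinking disks $\Rightarrow$ continuous extension of the Riemann maps) is not established.

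The paper's proof takes a different, global route that avoids prime ends entirely: it picks a pre-periodic (quasicircle) component of $\JJJ_g$ in each annulus of $\AAA$ to form $\G_0$, takes the curves $\al_n$ of $g^{-n}(\G_0)$ lying in $A^n(K)$, lifts the finitely many homotopies between $\G_0$ and $\G_1$ to homotopies from $\al_n$ to $\al_{n+1}$, and uses the expansion to bound the homotopic lengths of the connecting tracks by $C\lambda^{-n}$. The circle parametrizations $h_n$ of $\al_n$ are then a Cauchy sequence; the limit $h$ is continuous, $h(S^1)\subset K$ separates the two complementary components of $K$, and Proposition \ref{common} forces $h(S^1)=K$. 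Hence $K$ is a continuous image of $S^1$, so locally connected, and therefore a Jordan curve. To salvage your local approach you would need a genuine substitute for this global parametrization, for instance a uniform statement that every sufficiently small cross-cut of $U$ cuts off a piece of small diameter; nothing of that sort follows from the shrinking-disk estimate alone, so as it stands the proposal has a genuine gap.
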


\begin{proof}
Pick a pre-periodic component of $\JJJ_g$ in each component of $\AAA$ and denote by $\G_0$ the collection of them.  It is a multicurve consisting of quasicircles. Denote by $\G_n$ the collection of curves in $g^{-n}(\G_0)$. Then for any curve $\gamma\in\G_n$ and any curve $\beta\in\G_m$ with $m\ne n$, either they are disjoint or they coincide.

For each curve $\be\in\G_1$, there is a unique curve $\g\in\G_0$ such that $\be$ and $\g$ are contained in the same component of $\AAA$. If $\be\neq\g$, there is a homotopy $\Phi_{\be}: S^1\times [0,1]\to\AAA$ from $\g$ to $\be$ such that $\phi_t:=\Phi_{\be}(\cdot, t)$ is a homeomorphism for any $t\in [0,1]$. In particular, $\phi_0(S^1)=\g$,  $\phi_1(S^1)=\be$ and $\phi_t(S^1)$ is a curve between $\be$ and $\g$. If $\be=\g$, define
$\Phi_{\be}(\cdot,t): S^1\to\be$ to be a homeomorphism independent of $t$.

Define the homotopic length of a path $\de: [0,1]\to\AAA$ by
$$
\text{h-length}(\de)=\inf\Big\{\text{length of } \alpha\text{ under metric } \rho\Big\},
$$
where the infimum is taken over all the path $\alpha$ in $\AAA$ connecting $\de(0)$ to $\de(1)$ and homotopic to $\de$. Then
$$
\text{h-length}(\tilde\de)\le\frac{1}{\la}\cdot \text{h-length}(\de)
$$
for any lift $\tilde\de$ of $\de$ under the map $g$ since $\|g'\|\ge\la$.

For each $\be\in\G_1$ and any $s\in S^1$, $\Phi_{\be}(s,\cdot)$ maps the interval $[0,1]$ to a path $\de_{\beta,\, s}$ in the closed annulus $\Phi_{\be}(S^1\times [0,1])$ which connects two points in each of its boundary. So there is a constant $C>0$ such that $\text{h-length}(\de_{\beta,\,s})<C$ for each $\beta\in\G_1$ and any $s\in S^1$.

For each wandering component $K$ of $\JJJ_g$, let $\al_n$ be the unique curve of $\G_n$ with $\al_n\subset A^n(K)$. Then $g^n(\al_n)\in\G_0$ and $\be:=g^n(\al_{n+1})\in\G_1$ are contained in the same component of $\AAA$. Now the homotopy $\Phi_{\be}$ from $g^n(\al_n)$ to $\be$ defined above can be lifted to a homotopy,
denoted by $\Psi_n$,
from $\al_n$ to $\al_{n+1}$,
through the following commutative diagram:
$$
\diagram
S^1\times [0,1] \rto^{\Psi_n} \dto_{P_d} & A^n(K)\dto^{g^n}  \\
S^1\times [0,1] \rto_ {\Phi_{\be}}  &
g^n(A^n(K)).
\enddiagram
$$
Here $d=\deg (g|_{A^n(K)})$ and $P_d(s,t)=(s^d, t)$, i.e. $P(\cdot,t)$ is a covering of $S^1$ with degree $d$. Set
$\psi_{n,t}:=\Psi_n(\cdot, t)$. It is a homeomorphism for any $t\in [0,1]$, and in particular, $\psi_{n,0}(S^1)=\al_n$ and $\psi_{n,1}(S^1)=\al_{n+1}$. For any $s\in S^1$, $\Psi_n(s,t)(S^1)$
is a path connecting a point in $\al_n$ with a point in $\al_{n+1}$ whose homotopic length is less than $C\la^{-n}$.

These isotopies $\Psi_n$ can be pasted together to a continuous map $\Psi: S^1\times [0, \infty)\to\AAA$ as follows:
$$
\Psi(s,t)=\left\{
\begin{array}{ll}
\Psi_0(s,\, t)  & \text{on } S^1\times [0,1]  \\
\Psi_1\Big(\psi_{1,0}^{-1}\circ\psi_{0,1}(s),\, t-1\Big)  & \text{on } S^1\times [1,2] \mystrut\\
\qquad\vdots & \qquad\vdots\mystrut \\
\Psi_n\Big(\psi_{n,0}^{-1}\circ\psi_{n-1,1}\circ\cdots\circ
\psi_{1,0}^{-1}\circ\psi_{0,1}(s),\, t-n\Big)  & \text{on } S^1\times [n,n+1] \\
\qquad\vdots & \qquad\vdots
\end{array}
\right.
$$
Set $h_t=\Psi(\cdot, t)$. Then $h_n(S^1)=\al_n$. For each $s\in S^1$ and any integers $m>n\ge 0$, the homotopic length of the path $\zeta_s(n,m):=\{\Psi(s,t):\, n\le t\le m\}$ satisfies:
$$
\text{h-length }\Big(\zeta_s(n,m)\Big)\le C\la^{-n}+\cdots +C\la^{1-m}\le
\frac C{(\la-1)\la^{n-1}}.
$$
Note that the two endpoints of $\zeta_s(n,m)$ are $h_n(s)\in\al_n$ and $h_m(s)\in\al_m$. The above inequality shows that $\{h_n\}$ is a Cauchy sequence and hence converges uniformly to a continuous map $h$. Since $\al_n\subset A^n(K)$, we have $h(S^1)\subset\bigcap_{n>1}A^n(K)=K$. Note that $h(S^1)$ separates the two components of $\cbar\smm K$. Thus, $h(S^1)=K$ by Proposition \ref{common}. Therefore, $K$ is locally connected and hence is a Jordan curve.
\end{proof}

\section{From multicurves to annular systems}

We prove Theorem \ref{as} in this section. In order to do so, we take a detour into the space of branched coverings of the sphere. We  first modify topologically the rational map $f$ to a branched covering $F$ in its Thurston equivalence class such that $F$ has a topological exact annular system. We then apply a theorem of Rees and Shishikura (cf.\ Theorem \ref{shi} in the appendix) to obtain a semi-conjugacy from $F$  to $f$. Finally we show that the existence of an exact annular system is preserved under the semi-conjugacy due to the following purely topological result.

\begin{lemma}\label{topology}
Let $\G\subset\cbar$ be a finite disjoint union of Jordan curves and $L\subset\cbar$ a compact subset. Then for any Jordan domain $D$ containing $L$, there is an integer $N\ge 0$ such that for any two distinct points $z_1,z_2\in L$, there exists a Jordan arc $\de\subset D$ connecting $z_1$ with $z_2$ and such that $\#(\de\cap \G)\le N$.
\end{lemma}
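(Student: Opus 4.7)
The plan is to cover $L$ by finitely many small ``good'' open Jordan disks (each meeting $\Gamma$ in at most one arc, with simple local topology), pre-construct a Jordan template arc with finitely many $\Gamma$-intersections between any two basepoints of these disks, and then assemble $\delta$ from intra-disk routing and one template. The uniform bound $N$ will depend on $\Gamma$, $L$, and $D$.

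\textbf{Step 1 (local disk lemma).} First I would show that for every $z \in L$ there is an open Jordan disk $U_z$ with $z \in U_z$, $\overline{U_z} \subset D$, and $\overline{U_z} \cap \Gamma$ either empty or a single closed sub-arc of the unique $\gamma \in \Gamma$ through $z$, meeting $\partial U_z$ exactly at its two endpoints (so $\overline{U_z} \setminus \Gamma$ has at most two components). For $z \notin \Gamma$, any small disk missing $\Gamma$ works. For $z \in \gamma$, disjointness of the curves in $\Gamma$ leaves $\gamma$ the only curve near $z$; a Schoenflies homeomorphism of $\cbar$ taking $\gamma$ to a round circle transports a small round disk centered at the image of $z$ back to the desired $U_z$. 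Compactness of $L$ extracts a finite sub-cover $U_1,\dots,U_n$, and I fix a basepoint $p_i \in U_i \setminus \Gamma$ for each $i$.

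\textbf{Step 2 (template arcs).} Next, for each pair $(i,j)$ I would build a Jordan arc $\eta_{ij} \subset D$ from $p_i$ to $p_j$ with $\#(\eta_{ij}\cap \Gamma)$ finite. Starting from any continuous path $\alpha:[0,1]\to D$ joining the two points (which exists by path-connectedness of $D$), the image $\alpha([0,1])$ is compact, so it can be covered by finitely many ``good'' disks $B_1,\ldots,B_{m_{ij}}$ of the same type as in Step 1. Extracting an overlapping chain from $p_i$ to $p_j$ and traversing it by a Jordan arc that, within each $B_l$, either stays in one component of $B_l\setminus \Gamma$ or crosses the dividing arc exactly once, gives $\#(\eta_{ij}\cap \Gamma) \le m_{ij}$. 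I set $M = \max_{i,j} m_{ij}$, which is finite.

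\textbf{Step 3 (assembly) and main obstacle.} Given $z_1, z_2 \in L$ with $z_1 \in \overline{U_i}$ and $z_2 \in \overline{U_j}$, I connect $z_1$ to $p_i$ by a Jordan arc $\sigma_1 \subset \overline{U_i}$ with $\#(\sigma_1\cap \Gamma)\le 1$, and similarly $p_j$ to $z_2$ by $\sigma_2 \subset \overline{U_j}$ with $\#(\sigma_2\cap \Gamma)\le 1$. Concatenating $\sigma_1 \cdot \eta_{ij} \cdot \sigma_2$ and extracting a Jordan arc $\delta$ inside the image of this concatenation (using the standard fact that any continuous path contains a Jordan arc between any two of its points) yields $\#(\delta\cap \Gamma) \le M+2$, so $N := M+2$ works. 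The main obstacle is Step 2: an arbitrary continuous path in $D$ can meet a wild Jordan curve in uncountably many points, so finiteness of $\#(\eta_{ij}\cap \Gamma)$ is not automatic; the chain-of-good-disks construction circumvents this by exploiting the simple local topology (at most two complementary half-disks per good disk) to force at most one crossing per disk.
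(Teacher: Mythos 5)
Your proof is correct, but it takes a genuinely different route from the paper. The paper's argument is global and component-based: it sets $\La$ to be the collection of components of $\G\cap D$ that meet $L$, shows $N:=\#\La<\infty$ by parametrizing $\G$ by $S^1\times\{1,\dots,n\}$ and covering the compact set $\gamma^{-1}(\G\cap L)$ by the disjoint open sets $\gamma^{-1}(\al)$, and then, for given $z_1,z_2$, takes an arc that crosses exactly once each $\al\in\La$ for which $\al\cup\partial D$ separates $z_1$ from $z_2$, while avoiding every other component of $\G\cap D$; the bound is the intrinsic constant $\#\La$. You instead localize: Schoenflies-normalized ``good'' disks covering $L$ (and covering template paths between basepoints), a chain argument giving at most one crossing per disk, and assembly plus arc-extraction, yielding the cover-dependent bound $N=M+2$. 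What each buys: the paper's proof is shorter and its constant is typically much smaller, but the existence of the arc crossing each separating component once and missing the possibly infinitely many remaining components is asserted rather than constructed; your chain-of-good-disks construction is more elementary and explicit, needs no analysis of which components separate $z_1$ from $z_2$, and handles wild curves purely through the local Schoenflies model, at the cost of a cruder bound and some bookkeeping you should make explicit in a final write-up (choose the waypoints in the open overlaps off $\G$, which is possible since $\G$ is nowhere dense; note that $z_1$ or $z_2$ may lie on $\G$, in which case $\si_1$ or $\si_2$ meets $\G$ exactly at that endpoint, consistent with your count; and record that passing to a Jordan sub-arc of the concatenated path can only decrease the number of intersection points). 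Both arguments deliver the uniform $N$ required by the lemma.
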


\begin{proof}
Set
$$
\La=\{\al:\, \al \text{ is a component of }\G\cap D\text{ such that } \al\cap L\ne\emptyset\}.
$$
Then $N:=\#\La<\infty$. In fact, let $\gamma: S^1\times\{1,\cdots,n\}\to\G$ be a homeomorphism. Then $\gamma^{-1}(\G\cap L)$ is a compact subset, which is covered by the open intervals $\{\gamma^{-1}(\al),\, \al\in\La\}$. Thus, $\La$ is a finite set.

For any two distinct points $z_1,z_2\in L$, denote
$$
\La(z_1,z_2)=\{\al\in\La:\, \al\cup\partial D\text{ separates $z_1$ from $z_2$}\}.
$$
Then there exists a Jordan arc $\de\subset D$ connecting $z_1$ with $z_2$ such that $\de$ intersects each $\al\in\La(z_1,z_2)$ in a single point and is disjoint from other components of $\G\cap D$. So $\#(\de\cap\G)\le\#\La(z_1,z_2)\le N$.
\end{proof}

\vskip 0.24cm

\noindent{\it Proof of Theorem \ref{as}}. {\bf Topological modification}. Let $f$ be a post-critically finite rational map with a Cantor multicurve $\G$. There exists a multi-annulus $\CCC\subset\cbar\smm\PPP_f$ homotopic to $\G$ rel $\PPP_f$ such that its boundary $\partial\CCC$ is a disjoint union of Jordan curves in $\cbar\smm\PPP_f$. Let $\CCC^*$ be the union of all of the components of $f^{-1}(\CCC)$ which are homotopic to curves in $\G$ rel $\PPP_f$. Then for each $\g\in\G$, there is at least one component of $\CCC^*$ homotopic to $\g$ rel $\PPP_f$ since $\G$ is pre-stable.

For each $\g\in\G$, denote by $\CCC^*(\g)$ the smallest annulus containing all the components of $\CCC^*$ which are homotopic to $\g$ rel $\PPP_f$. Then its boundary are two Jordan curves in $\cbar\smm\PPP_f$ homotopic to $\g$ rel $\PPP_f$. Set $\CCC^*(\G)=\cup_{\g\in\G}\CCC^*(\g)$. There exist a neighborhood $U$ of $\PPP_f$ and a homeomorphism $\theta_0$ of $\cbar$ such that $\theta_0$ is isotopic to the identity rel $U$ and $\theta_0(\CCC)=\CCC^*(\G)$.

Set $F:=f\circ\theta_0$ and $\CCC^1:=\theta_0^{-1}(\CCC^*)$. Then $\PPP_F=\PPP_f$ and $F$ is Thurston equivalent to $f$ via the pair $(\theta_0, \text{id})$. Moreover, the restriction $F|_{\CCC^1}: \CCC^1\to\CCC$ is a
topological exact annular system.

\vskip 0.24cm

{\bf A semi-conjugacy}. By Theorem \ref{shi}, there exists a neighborhood $V$ of the critical cycles of $F$ and a sequence $\{\phi_n\}$ $(n\ge 1)$ of homeomorphisms of $\cbar$ isotopic to the identity rel $\PPP_F\cup V$ such that $f\circ\phi_{n}=\phi_{n-1}\circ F$, the sequence $\{\phi_n\}$ converges uniformly to a continuous onto map $h$ of
$\cbar$ and $f\circ h=h\circ F$.

\vskip 0.24cm

  {\bf Each fiber does not cross $\CCC$}. Let us define
  $$
  T=\{w\in\cbar:\, h^{-1}(w)\text{ crosses some component of }\CCC\}.
  $$
  Here,
  we say a continuum $E$ {\bf crosses} an annulus $C$ if $E$ intersects both of the two boundary components of $C$. In particular, for each point $w\in T$, $h^{-1}(w)$ is a continuum. Thus $T\subset\JJJ_f$ by Theorem \ref{shi} (3). Moreover, the set $T$ is a closed subset. In fact, let $\{w_n\}$ be a sequence of points in $T$ converging to a point $w_{\infty}\in\cbar$. Passing to a sub-sequence, we may assume that there exist a component $C$ of $\CCC$ and two points $z_n$ and $z'_n$ in $h^{-1}(w_n)$ such that they are contained in the two different boundary components of $C$.  Again we may assume that both the sequences $\{z_n\}$ and $\{z'_n\}$ converge to the points $z_{\infty}$ and $z'_{\infty}$. Then the two points $z_{\infty}$ and $z'_{\infty}$ are also contained in the two different boundary components of $C$.
The continuity of $h$ implies that $h(z_{\infty})=h(z'_{\infty})=w_{\infty}$. Thus, $w_{\infty}\in T$.

  The next claim is crucial. The property of Cantor multicurves is essential here. It is not true for the equator in a mating of polynomials.

\vskip 0.24cm

{\bf Claim 1}. The set $T$ is empty.

\vskip 0.24cm

{\it Proof}. Assume $T\ne\emptyset$ by contradiction. Then $f(T)\subset T$. In fact, suppose $w\in T$, i.e., $h^{-1}(w)$ crosses some component of $\CCC$, then $h^{-1}(w)$ crosses some component $C^1$ of $\CCC^1$. By Theorem \ref{shi} (4), $h^{-1}(f(w))=F(h^{-1}(w))$. So $h^{-1}(f(w))$ crosses $F(C^1)$ which is a component of $\CCC$, so $f(w)\in T$. Set $T_\infty=\bigcap_{n\ge 0}f^n(T)$. Then $T_\infty$ is a non-empty
closed set and $f(T_\infty)=T_\infty$.

Pick one point $w_0\in T_{\infty}$. Since $f(T_\infty)=T_\infty$, there exists a sequence of points $\{w_n\}_{n\ge 0}$ in $T_\infty$ such that $f(w_{n+1})=w_n$ (i.e. $T_{\infty}$ contains a backward orbit). Either $w_n$ is periodic for all $n\ge 0$ or there is an integer $n_0\ge 0$ such that $w_n$ is not periodic for all $n\ge n_0$. In the former case all the points $w_n$ are not critical points of $f$ since $w_n\in\JJJ_f$. In the latter case, there exists an integer $n_1\ge 0$ such that $w_n$ are non-critical points of $f$ for $n\ge n_1$. So in both cases, we have a sequence of points $\{w_n\}_{n\ge 0}$ in $T_\infty\smm\Omega_f$ such that $f(w_{n+1})=w_n$.

Set $L_n=h^{-1}(w_n)$. By Theorem \ref{shi} (3) and (4), $L_n$ is a component of $F^{-1}(L_{n-1})$ and there exists a Jordan domain $D_0\supset L_0$ such that $F^n:\, D_n\to D_{0}$ is a homeomorphism for $n\ge 1$, where $D_n$ is the component of $F^{-n}(D_{0})$ containing $L_n$.

Pick an essential Jordan curve in each component of $\CCC$. They form a Cantor multicurve $\G_0$. By Lemma \ref{topology}, there exists an integer $N\ge 0$ such that for any two distinct points $z_0,z'_0\in L_0$, there is a Jordan arc $\de\subset D_0$ connecting $z_0$ with $z'_0$ and
$$
\#(\de\cap\G_0)\le N.
$$

Since $\G_0$ is a Cantor multicurve, there is an integer $m>0$ such that for each component $C$ of $\CCC$, there are at least $N+1$ components of $F^{-m}(\CCC)$ contained essentially in $C$. Since $L_m$ crosses a component of $\CCC$, there exist two distinct points $z_m, z'_m\in L_m$ such that $F^{-m}(\G_0)$ has at least $N+1$ components separating $z_m$ from $z'_m$.

Now the two points $F^m(z_m)$ and $F^m(z'_m)$ are contained in $L_0$. So there exists a Jordan arc $\de\subset D_0$ connecting $F^m(z_m)$ with $F^m(z'_m)$ such that $\#(\de\cap\G_0)\le N$. Let $\de_m$ be the component of $F^{m}|_{D_m}(\de)$. Then $\delta_m$ connects $z_m$ with $z'_m$. Since $F^m: \de_m\to\de$ is a homeomorphism, we have
$$
\#(\de_m\cap F^{-m}(\G_0))\le N.
$$
This inequality contradicts the fact that $F^{-m}(\G_0)$ has at least $N+1$ components separating $z_m$ from $z'_m$. \qed

\vskip 0.24cm

{\bf Claim 2}. For any $n\ge 0$ and any distinct components $E_1, E_2$ of $\cbar\smm F^{-n}(\CCC)$, $h(E_1)$ is disjoint from $h(E_2)$.

\vskip 0.24cm

{\it Proof}. $E_1$ and $E_2$ are separated by a component $A$ of $F^{-n}(\CCC)$. If $h(E_1)\cap h(E_2)\neq\emptyset$, pick a point $w\in h(E_1)\cap h(E_2)$, then $h^{-1}(w)$ crosses $A$. So $F^n(h^{-1}(w))=h^{-1}(f^n(w))$ crosses $F^n(A)$ by Theorem \ref{shi} (4). This contradicts Claim 1.
\qed

\vskip 0.24cm

{\bf Construction of the multi-annulus $\AAA$}. Denote $\wh E=h^{-1}(h(E))$ for any subset $E\subset\cbar$. Then $\wh E$ is also a continuum if $E$ is a continuum by Theorem \ref{shi} (5) since $h(E)$ is a continuum.

Denote $\EEE=\cbar\smm\CCC$. Then $F^{-1}(\wh\EEE)=\wh{F^{-1}(\EEE)}$ by Theorem \ref{shi} (7). Thus, if $E^1$ is a component of $F^{-1}(\EEE)$, then $\wh{E^1}$ is a component of $F^{-1}(\wh\EEE)$ by Claim 2.

For any two disjoint continua $E_1, E_2\subset\cbar$, we denote by $A(E_1, E_2)$ the unique annular component of $\cbar\smm (E_1\cup E_2)$. For each component $C$ of $\CCC$, there are two distinct components $E_+, E_-$ of $\EEE$ such that $C=A(E_+, E_-)$. Define $\wt C:=A(\wh E_+, \wh E_-)$. It is an annulus contained essentially in $C$ by Claim 2. We claim that the following statements hold:

\vskip 0.24cm

\indent (a) $h^{-1}(h(\wt C))=\wt C$. \\
\indent (b) $\wt C\cap\wh E=\emptyset$ for any subset $E\subset\cbar$ with $E\cap\wt C=\emptyset$. \\
\indent (c) $h(\wt C)$ is an annulus homotopic to $C$ rel $\PPP_f$.

\vskip 0.24cm

{\it Proof}. (a) For any point $z\in\wt C$, if $h^{-1}(h(z))$ is not contained in $\wt C$, then it must intersect $E_+\cup E_-$. So $z\in\wh E_+\cup\wh E_-$. This is a contradiction.

(b) If $z\in\wt C\cap\wh E$, then $h^{-1}(h(z))\subset\wt C$ and hence is disjoint from $E$. It contradicts the condition that $z\in\wh E$.

(c) Let $Q_+, Q_-$ be the two components of $\cbar\smm\wt C$. Then both $\wh Q_+$ and $\wh Q_-$ are disjoint from $\wt C$ by (b). Moreover, they are also disjoint from each other since $h^{-1}(h(z))$ does not cross $C$ for any point $z\in\cbar$ by Claim 1. So $\cbar\smm h(\wt C)$ has exactly two components, $h(Q_+)$ and $h(Q_-)$. Therefore, $h(\wt C)$ is an annulus. Since $h$ is homotopic to the identity rel $\PPP_f$,  the annulus $h(\wt C)$ is
homotopic to $C$ rel $\PPP_f$.  \qed

\vskip 0.24cm

Now let $\wt{\CCC}$ be the union of $\wt C$ for all the components $C$ of $\CCC$. Then $\wt\CCC\subset\CCC$ and it is a multi-annulus homotopic to $\Gamma$ rel $\PPP_f$. Set $\AAA$ to be the union of $h(\wt C)$ for all the components $C$ of $\CCC$. Since $h(\wt C_1)$ is disjoint from $h(\wt C_2)$ for distinct components $C_1, C_2$ of $\CCC$ by (b),
$\AAA$ is a multi-annulus and homotopic to $\Gamma$ rel $\PPP_f$ by (c). Moreover, $\AAA$ is disjoint from a neighborhood of critical cycles of $f$ since $h$ is the identity in a neighborhood of critical cycles of $f$.

\vskip 0.24cm

{\bf Construction of $\AAA^1$}. For each component $C^1$ of $\CCC^1$, there are two distinct components $E^1_+, E^1_-$ of $F^{-1}(\EEE)$ such that $C^1=A(E^1_+, E^1_-)$. Define $\wt{C^1}:=A(\wh{E_+^1}, \wh{E_-^1})$ as above. It
is an annulus contained essentially in $C^1$. Moreover, the following statements hold:

\vskip 0.24cm

\indent $\bullet$ $h^{-1}(h(\wt{C^1}))=\wt{C^1}$. \\
\indent $\bullet$ $\wt{C^1}\cap\wh E=\emptyset$ for any subset $E\subset\cbar$ with $E\cap\wt{C^1}=\emptyset$. \\
\indent $\bullet$ $h(\wt{C^1})$ is an annulus homotopic to $C^1$ rel $\PPP_f$.

\vskip 0.24cm

Set $\wt{\CCC^1}$ to be the union of $\wt{C^1}$ for all the components $C^1$ of $\CCC^1$. Set $\AAA^1$ to be the union of $h(\wt{C^1})$ for all the components $C^1$ of $\CCC^1$. Then it is a multi-annulus contained essentially in $\AAA$.

\vskip 0.24cm

{\bf Invariance of $\AAA$}. Note that each component of $\wt\CCC$ is a component of $\cbar\smm\wh\EEE$ and each component of $\wt{\CCC^1}$ is a
component of $\cbar\smm F^{-1}(\wh\EEE)=\cbar\smm\wh{F^{-1}(\EEE)}$. So $F:\wt{\CCC^1}\to\wt\CCC$ is proper. Since $\wt\CCC=h^{-1}(\AAA)$ and $\wt{\CCC^1}=h^{-1}(\AAA^1)$, the map $f:\, \AAA^1\to\AAA$ is also proper.

For any component $E$ of $\EEE$, there is a unique component $E^1$ of $F^{-1}(\EEE)$ such that $\partial E\subset\partial E^1$. Moreover, $E^1\subset E$ and $E\smm E^1$ is a disjoint union of Jordan domains in $E$. We claim that $\wh E\smm E=\wh{E^1}\smm E$.

Since $E\supset E^1$, we have $\wh E\supset\wh{E^1}$. On the other hand, any component $D$ of $\cbar\smm E$ is a Jordan domain. Assume $z\in\wh E\cap D$, then $h^{-1}(h(z))$ is a full continuum intersecting $\partial E$ by Theorem \ref{shi} (3). Thus, $h^{-1}(h(z))$ intersects $\partial E^1$. Therefore, $z\in\wh{E^1}$ and hence $\wh E\smm E\subset\wh{E^1}\smm E$. The claim is proved.

From the claim, we see that
$\wt{\CCC^1}\subset\wt\CCC$ and each component of $\partial\wt\CCC$ is a component of $\partial\wt{\CCC^1}$. Hence $\AAA^1\subset\AAA$ and
for any component $A$ of $\AAA$,
each component of $\partial A$
	is a component of $\partial A^1$ for some component $A^1$ of $\AAA^1$ in $A$. So $f:\AAA^1\to\AAA$ is an exact annular system.

\vskip 0.24cm

{\bf Uniqueness of $\AAA$}. Suppose that $f: \AAA^1_1\to\AAA_1$ is another exact annular system such that $\AAA_1$ is homotopic to $\G$ rel $\PPP_f$. Pick an essential Jordan curve in every components of $\AAA$ and $\AAA_1$, respectively. They form two multicurves $\G_0\subset\AAA$ and $\G_1\subset\AAA_1$. Both of them are homotopic to $\Gamma$ rel $\PPP_f$. So there exist a neighborhood $U$ of the critical cycles of $f$ and a homeomorphism $\theta_0$ of $\cbar$ such that $\theta_0(\G_0)=\G_1$ and $\theta_0$ is isotopic to the identity rel $\PPP_f\cup U$. By Theorem \ref{shi}, there exist a neighborhood $V$ of the critical cycles of $f$ and a sequence $\{\theta_n\}$ ($n\ge 1$) of homeomorphisms of $\cbar$ isotopic to the identity rel $\PPP_f\cup V$, such that $f\circ\theta_n=\theta_{n-1}\circ f$. Moreover, $\{\theta_n\}$ converges uniformly to a continuous map $h$ of $\cbar$.

It is easy to see that $h$ is the identity in the Fatou set of $f$. On the other hand, $h$ is also the identity on the Julia set $\JJJ_f$ since the closure of $\cup_{n\ge 0}f^{-n}(\PPP_f)$ contains $\JJJ_f$ and $\theta_n$ is the identity on $f^{-n}(\PPP_f)$. So $\{\theta_n\}$ converges uniformly to the identity.

For each component $A$ of $\AAA$, set $A(n,\G_0)$ to be the closed annulus bounded by two curves in $A\cap (f|_{\AAA^1})^{-n}(\G_0)$ such that $A\cap (f|_{\AAA^1})^{-n}(\G_0)\subset A(n,\G_0)$. Then $\theta_n(A(n,\G_0))\subset\AAA_1$ since $\theta_n(f^{-n}(\G_0))=f^{-n}(\G_1)$. By Proposition \ref{compactly}, for any compact set $G\subset A$, $G\subset A(n,\G_0)$ when $n$ is large enough. So $A\subset\AAA_1$ since $\{\theta_n\}$ converges uniformly to the identity. It follows that $\AAA\subset\AAA_1$. By symmetry, we have $\AAA=\AAA_1$.

\vskip 0.24cm

{\bf Properties of $\JJJ_g$}. We want to prove that $\JJJ_g\subset\JJJ_f$. Assume by contradiction that there is a point $z\in \JJJ_g\smm\JJJ_f$. Then $\{f^n(z)\}_{n\ge 0}$ converges to a super-attracting cycle of $f$ as $n\to\infty$. But $f^n(z)\in g^n(\JJJ_g)\subset\JJJ_g$. Thus, $\overline{\AAA}$ contains a critical cycle. This is a contradiction since $\AAA$ is disjoint from a neighborhood of critical cycles.

There is a singular conformal metric $\rho$ on $\cbar$ where the singularities may occur at $\PPP_f$ such that $f$ is strictly expanding on $(\cbar,\rho)$ except in a neighborhood of super-attracting cycles (e.g., the hyperbolic metric on the orbifold of $f$, cf.\  \cite{DH1, TY, Th}). Applying Theorem \ref{curve}, we see that each component of $\JJJ_g$ is a Jordan curve. In particular, $\JJJ_g$ has uncountably  many components. All but countably many of them are wandering by Corollary \ref{uncountable}.
\qed

\section{Decompositions and renormalizations}

We will prove Theorem \ref{decomposition} in this section. First, let us recall the definition of rational-like maps and prove a straightening theorem.

\subsection{Rational-like maps}

\begin{definition} Let $U\Subset V$ be two finitely-connected domains in $\cbar$. A  map $g:U\to V$  is called a {\bf
rational-like map}\ if \\
\indent (1) $g$ is holomorphic and proper with $\deg g\ge 2$, \\
\indent (2) the orbit of every critical point of $g$ (if any) stays in $U$, and \\
\indent (3) each component of $\cbar\smm U$ contains at most one  component of $\cbar\smm V$.

The {\bf filled Julia set}\ of $g$ is defined by
$$
\KKK_g=\bigcap_{n>0}g^{-n}(V).
$$

We say that a rational-like map $g:U\to V$ is  a {\bf renormalization}\ of a rational map $f$ if $g=f^p|_U$ for some integer $p\ge 1$ and $\deg g<\deg f^{p}$.
\end{definition}

{\bf Remark.} A rational-like map here is actually a repelling system of constant complexity defined in \cite{CT}.

\begin{proposition}\label{connected}
Let $g: U_1\to U_0$ be a rational-like map. Then $g^{-n}(U_0)$ is connected for any $n\ge 1$. The filled Julia set $\KKK_g$ is a continuum.
\end{proposition}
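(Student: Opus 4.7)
The plan is to prove both assertions in parallel by combining an induction on $n$ with a standard nested-continuum argument. I first record two preliminary observations. By condition (2), every critical value of $g$ lies in $\KKK_g$ and hence in each $U_n := g^{-n}(U_0)$: if $c$ is a critical point of $g$, then $\{g^k(c)\}_{k \geq 0} \subset U_1$, so an easy induction gives $g(c) \in U_n$ for all $n \geq 0$. Second, properness of $g$ implies $\overline{U_{n+1}} = g^{-1}(\overline{U_n}) \subset g^{-1}(U_{n-1}) = U_n$ by induction from $U_1 \Subset U_0$, so the closed sets $\overline{U_n}$ form a decreasing family of nonempty compact subsets of $U_0$.

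For the connectedness of each $U_n$, I argue by induction. Assuming $U_n$ is connected, I view the proper holomorphic restriction $g|_{U_{n+1}} : U_{n+1} \to U_n$ as a branched covering of degree $d := \deg g$ with all critical values in the target. Letting $B \subset U_n$ be the (finite) set of critical values, the number of connected components of $U_{n+1}$ equals the number of orbits of the monodromy representation $\rho_n : \pi_1(U_n \smm B) \to S_d$ on a fiber of size $d$. Since $U_1$ is connected, the monodromy $\rho_0$ of $g : U_1 \to U_0$ is transitive; hence it suffices to show the inclusion-induced map $\pi_1(U_n \smm B) \to \pi_1(U_0 \smm B)$ is surjective. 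This is where condition (3) plays its essential role: I maintain inductively the auxiliary property $(\ast_n)$ that each complementary component of $U_n$ contains at most one complementary component of $U_0$, with $(\ast_1)$ being condition (3) itself. Granting $(\ast_n)$, a generating loop of $\pi_1(U_0 \smm B)$ around a complementary component $K_0$ of $U_0$ is freely homotopic in $U_0 \smm B$ to a loop in $U_n \smm B$ encircling the unique complementary component $K_n$ of $U_n$ containing $K_0$; the homotopy sweeps through $K_n \smm K_0$, which by $(\ast_n)$ lies in $U_0$ and is disjoint from $B \subset U_n$. Loops around points of $B$ are realized trivially in $U_n \smm B$.

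The second assertion then follows: $\KKK_g = \bigcap_{n \geq 1} U_n = \bigcap_{n \geq 1} \overline{U_n}$ is a decreasing intersection of nonempty compact connected sets, hence itself nonempty, compact, and connected; it contains more than one point because $g$ has infinitely many periodic points in $\KKK_g$ (ensured by $\deg g \geq 2$ via the argument principle applied to $g^n(z) - z$), so $\KKK_g$ is a continuum. The main technical hurdle will be the inductive step $(\ast_n) \Rightarrow (\ast_{n+1})$, which I plan to handle contrapositively: if a complementary component $K_{n+1}$ of $U_{n+1}$ contained two distinct complementary components $K_0, K_0'$ of $U_0$, then by $(\ast_n)$ these lie in distinct complementary components $K_n, K_n'$ of $U_n$, both in $K_{n+1}$, forcing a connecting path in the gap $U_n \smm U_{n+1}$ joining $\partial K_n$ to $\partial K_n'$. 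Pushing this path forward under $g$, which by orientation-preservation of holomorphic proper maps sends $\partial K_n$ to the boundary of the complementary component of $U_{n-1}$ containing $K_0$, produces an analogous configuration one level lower; iterating $n$ times reduces to a violation of condition (3).
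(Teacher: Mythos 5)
Your route (induction on $n$, reducing connectedness of $U_{n+1}=g^{-1}(U_n)$ to transitivity of the monodromy of $g\colon U_1\smm g^{-1}(B)\to U_0\smm B$ over $U_n\smm B$) is genuinely different from the paper's, but its central step is not established. You reduce everything to surjectivity of $\pi_1(U_n\smm B)\to\pi_1(U_0\smm B)$ and then argue only that each generating loop of $\pi_1(U_0\smm B)$ is \emph{freely} homotopic into $U_n\smm B$. Free homotopy shows only that the image subgroup $H$ contains a conjugate of each standard generator, and that is strictly weaker than surjectivity; it is also too weak for what you actually need, namely transitivity of $H$ on the fiber. Concretely, in $F_2=\langle x_1,x_2\rangle$ the subgroup $\langle x_2x_1x_2^{-1},\,x_1x_2x_1^{-1}\rangle$ contains a conjugate of each generator yet is proper (send $x_1\mapsto(1\,2)$, $x_2\mapsto(1\,3)$ into $S_3$: both listed elements map to $(2\,3)$), and a proper subgroup need not act transitively under a given monodromy. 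So the based-versus-free distinction is not a technicality here: to close it you need a genuinely geometric input exploiting planarity and $(\ast_n)$ (for instance, pass to a compact core of $U_n$, fill the trivial complementary components, and show that by $(\ast_n)$ each essential boundary curve of the core is isotopic in $U_0\smm B$ to a curve enclosing exactly one complementary component of $U_0$ or one point of $B$, so that the filled inclusion is a homotopy equivalence). Also, the annulus carrying your free homotopy avoids $B$ only if the curve around $K_n$ is chosen inside a $B$-free neighborhood of $K_n$; disjointness of the compact set $K_n\smm K_0$ from $B$ does not by itself give that.

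The auxiliary induction $(\ast_n)\Rightarrow(\ast_{n+1})$ is not yet an argument either: $K_{n+1}$ is merely a continuum, hence need not be path connected, and the part of $K_{n+1}$ joining $K_n$ to $K_n'$ may run through further complementary components of $U_n$, so there is no reason a single path in $U_n\smm U_{n+1}$ from $\partial K_n$ to $\partial K_n'$ exists; moreover the assertion about where $g$ sends $\partial K_n$ is itself something to be proved, not iterated. For comparison, the paper handles both difficulties at once: it fixes $V_0\Subset U_0$ with Jordan curve boundary, $U_1\Subset V_0$, each complementary component of $V_0$ containing exactly one complementary component of $U_0$, and uses that all critical values lie in $\KKK_g\subset V_{n+1}$, so every component of $V_n\smm\overline{V_{n+1}}$ is an unbranched preimage of a disk or an annulus, with one boundary curve on each of $\partial V_n$ and $\partial V_{n+1}$ in the annular case; this gap structure yields simultaneously the connectedness of $V_{n+1}$ and the one-complementary-component control level by level, after which $\KKK_g=\bigcap_n\overline{V_n}$ is a nested intersection of continua. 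Your concluding step is fine in substance, though the periodic-point count via the argument principle on a multiply connected domain deserves justification; it is easier to note that if $\KKK_g$ were a single point it would be a totally invariant fixed critical point, and then a whole neighborhood of it would lie in $\KKK_g$, a contradiction.
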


\begin{proof} Pick a domain $V_0\Subset U_0$ such that every component of $\cbar\smm V_0$ contains exactly one component of $\cbar\smm U_0$, $U_1\Subset V_0$ and every component of $\partial V_0$ is a Jordan curve. Set
$V_1:=g^{-1}(V_0)$. Then $V_1\Subset V_0$, every component of $\cbar\smm V_1$ contains at most one component of $\cbar\smm V_0$ and each component of $\partial V_1$ is a Jordan curve.

Since every component of $\cbar\smm V_1$ contains at most one component of $\cbar\smm V_0$, each component $W$ of $V_0\smm\overline{V_1}$ is either a disk or an annulus. In the latter case, one of the component of the boundary $\partial W$ is a component of the boundary $\partial V_0$ and the other is a component of the boundary $\partial V_1$.

Denote by $V_n=g^{-n}(V_0)$ for $n>1$. Then $V_{n+1}\Subset V_n$ for $n\ge 1$. Since all the critical orbits of $g$ stay in $U_1$ and thus in $\KKK_g$, each component $W$ of $V_1\smm\overline{V_2}$ is also either a disk or an annulus. In the latter case, one of the component of $\partial W$ is a component of $\partial V_1$ and the other is a component of $\partial V_2$. Therefore, $V_2$ is also connected and every component of $\cbar\smm V_2$ contains at most one component of $\cbar\smm V_1$. Inductively, we have that $V_{n+1}$ is connected and every component of $\cbar\smm V_{n+1}$ contains at most one component of $\cbar\smm V_n$. It follows that $\KKK_g$ is a connected compact set.
\end{proof}

Similarly to Douady-Hubbard's polynomial-like map theory \cite{DH2}, we get a straightening theorem for rational-like maps with a slightly different proof.

\begin{theorem}\label{st}
Let $g: U\to V$ be a rational-like map. Then there is a rational map $f$ with $\deg f=\deg g$ and a quasiconformal map $\phi$ of $\cbar$ such that: \\
\indent (a) $f\circ\phi=\phi\circ g$ in a neighborhood of $\KKK_g$, \\
\indent (b) the complex dilatation $\mu_{\phi}$ of $\phi$ satisfies $\mu_{\phi}(z)=0$ for a.e. $z\in\KKK_g$, \\
\indent (c) $\JJJ_f=\partial\phi(\KKK_g)$, and \\
\indent (d) each component of $\cbar\smm\phi(\KKK_g)$ contains at most one point of $\PPP_f$. \\
\indent Moreover, the rational map $f$ is unique up to holomorphic conjugation.
\end{theorem}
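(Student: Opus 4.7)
The plan is to adapt the Douady--Hubbard straightening theorem for polynomial-like maps \cite{DH2} to this finitely-connected setting. The overall strategy is to extend $g$ to a branched self-covering $G$ of $\cbar$ of degree $d := \deg g$, construct a $G$-invariant Beltrami form of bounded dilatation that vanishes near $\KKK_g$, and then apply the measurable Riemann mapping theorem to straighten $G$ to a rational map.

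First I would replace $(U,V)$ by shrunken domains $(U', V')$ with real-analytic Jordan-curve boundaries: take any $V' \Subset V$ containing $\bigcup_{n\ge 1} g^n(\Omega_g)$ with smooth boundary, and set $U' := g^{-1}(V')$. The same argument as in Proposition \ref{connected} shows that $g: U' \to V'$ is again a rational-like map, and the rational-like condition guarantees that for every component $D$ of $\cbar \setminus V'$ the component of $V' \setminus \overline{U'}$ adjacent to $\partial D$ is an annulus $A_D$, with inner boundary some component $\gamma_D$ of $\partial U'$ such that $g: \gamma_D \to \partial D$ is an unramified covering of some degree $d_D \ge 1$. (Components of $\cbar \setminus U'$ that are contained in $V'$ are handled by a symmetric construction.) I would then replace each $D$ by a standard closed disk $\hat D$ glued along $\partial D$ via a real-analytic diffeomorphism, place the model dynamics $z \mapsto z^{d_D}$ on $\hat D$, retain $g$ on $U'$, and on each collar $A_D$ insert a quasiconformal interpolation that matches these two boundary dynamics. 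The resulting map $G: \cbar \to \cbar$ is a degree-$d$ branched self-covering, holomorphic on $U'$ and on each $\hat D$, and quasiconformal on the finitely many annular collars.

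Next I would pull back the standard complex structure under iteration of $G$. Any $G$-orbit visits the collars at most once: afterwards it is either captured into some $\hat D$ where $G$ is holomorphic, or it remains forever in $U'$ where $G = g$ is holomorphic. Hence the resulting $G$-invariant Beltrami form $\mu$ satisfies $\|\mu\|_\infty < 1$, and vanishes on $\KKK_g$ and on each $\hat D$. The measurable Riemann mapping theorem then produces a quasiconformal $\phi: \cbar \to \cbar$ with $\mu_\phi = \mu$, and $f := \phi \circ G \circ \phi^{-1}$ is holomorphic, hence a rational map of degree $d$. Properties (a) and (b) are immediate from the construction. For (c), each $\phi(\hat D)$ is a super-attracting basin of $f$ with a super-attracting fixed point at $\phi(0_{\hat D})$, so $\phi(\KKK_g)$ is completely invariant, its complement is the union of these basins, and $\JJJ_f$ is their common boundary $\partial \phi(\KKK_g)$. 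For (d), each complementary component contains exactly one super-attracting fixed point, and no other critical orbit of $f$ leaves $\phi(\KKK_g)$ since by hypothesis the critical orbits of $g$ remain in $U'$.

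For uniqueness, given two solutions $(f_1,\phi_1)$ and $(f_2,\phi_2)$, the map $\psi := \phi_2 \circ \phi_1^{-1}$ conjugates $f_1$ to $f_2$ in a neighborhood of $\phi_1(\KKK_g)$ with $\mu_\psi = 0$ there; extending $\psi$ across each complementary component of $\phi_1(\KKK_g)$ using the B\"ottcher coordinates at the super-attracting fixed points of $f_1$ and $f_2$ produces a qc homeomorphism of $\cbar$ with $\mu_\psi = 0$ almost everywhere, which must be a M\"obius transformation by Weyl's lemma. The main obstacle is the first step: the combinatorial construction of $G$ requires checking that the degrees $d_D$ together with those on the interior holes of $U'$ fit together so that the assembled map is a genuine degree-$d$ branched self-covering of $\cbar$. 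The hypothesis that each component of $\cbar \setminus U$ contains at most one component of $\cbar \setminus V$ is precisely what ensures this bookkeeping works without forcing identifications of boundary components or introducing extra branching.
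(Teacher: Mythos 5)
Your overall strategy (extend $g$ to a quasiregular branched covering $G$ of $\cbar$ with model dynamics on the complementary disks, straighten via the measurable Riemann mapping theorem, then prove uniqueness by a B\"ottcher-coordinate adjustment) is the same line the paper follows, but there is a genuine error in the first and decisive step: you assume that for each component $D$ of $\cbar\smm V'$ the adjacent boundary curve $\gamma_D\subset\partial U'$ satisfies $g(\gamma_D)=\partial D$, and accordingly you place a model $z\mapsto z^{d_D}$ with a super-attracting \emph{fixed} point in each glued disk $\hat D$. The definition of a rational-like map does not give this: it only requires each component of $\cbar\smm U$ to contain at most one component of $\cbar\smm V$, and in general $g(\gamma_D)=\partial D'$ for a possibly different component $D'$, i.e.\ the complementary components are permuted (and several boundary curves of $U'$ may map onto the same $\partial D'$). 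This is not a marginal case: in the very renormalizations this theorem is used for (Theorem \ref{renorm}), the annuli $A_i$ surrounding $K$ are cyclically permuted with period $q\ge 2$, so the complementary components of $V$ are genuinely not invariant. When $D'\neq D$, your collar interpolation on $A_D$ has no consistent target: its two boundary curves would have to map onto $\partial D'$ and onto a curve inside $\hat D$, and the region "between" these is not an annulus, so the assembled $G$ is not a continuous branched covering of degree $d$. The correct construction (as in the paper) takes one model disk $D_i$ for \emph{every} complementary component of $U_1$ and a model map of the form $Q(z)=r(z-a_i)^{d_i}+a_{\tau(i)}$ sending $D_i$ into a compactly contained copy of $D_{\tau(i)}$, where $\tau$ records which boundary component maps to which; the resulting super-attracting dynamics in the complement then consists of cycles of possibly higher period plus strictly preperiodic disks (coming from holes of $U_1$ inside $V_1$). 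Note also that even in the fixed case your model $z\mapsto z^{d_D}$ leaves $\partial\hat D$ invariant, so the annulus onto which the collar should map degenerates; you need the contracting factor ($rz^{d_D}$, $0<r<1$) for the gluing to have room.

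Two smaller points. First, once $G$ is built correctly, orbits may pass through the non-holomorphic region more than once (the paper shows at most three passages and then invokes Shishikura's surgery principle); your "at most once" claim is an artifact of the incorrect model, but a bounded number of passages is all the pullback argument needs, so this part of your plan is sound after repair. Second, your uniqueness sketch is too quick: replacing $\theta$ by the B\"ottcher-coordinate conjugacy on a periodic complementary Fatou component only works after checking that the new map agrees with $\theta$ on the boundary of that component (the paper verifies the circle map $T$ is a rotation for exactly this reason), and the strictly preperiodic complementary components cannot be handled by a direct B\"ottcher gluing; the paper instead modifies $\theta$ only on the periodic super-attracting domains and then pulls back, obtaining the holomorphic conjugacy as a limit $\Theta_n\to\Theta$. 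With the corrected model and these adjustments your argument would coincide with the paper's proof.
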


\begin{proof} Pick a domain $V_1\Subset V$ such that every component of $\cbar\smm V_1$ contains exactly one component of $\cbar\smm V$, $U\Subset V_1$ and every component of $\partial V_1$ is a quasicircle. Then
$U_1:=g^{-1}(V_1)\Subset V_1$, every component of $\cbar\smm U_1$ contains at most one component of $\cbar\smm V_1$ and each component of $\partial U_1$ is a quasicircle.

Let $E_1,\cdots, E_m$ be the components of $\EEE:=\cbar\smm\overline{V_1}$. Let $B_1,\cdots, B_n$ be the components of $\BBB:=\cbar\smm\overline{U_1}$ such that $B_i\supset E_i$ for $1\le i\le m$. Then $\EEE\Subset\BBB$. Define a map
$\tau$ on the index set by $\tau(i)=j$ if $g(\partial B_i)=\partial E_j$.

Let $D_i\subset\C$ ($i=1,\cdots,n$) be round disks centered at $a_i$ with unit radius such that their closures are pairwise disjoint. Denote their union by $\DDD$. Define a map $Q$ on $\DDD$ by
$$
Q(z)=r(z-a_i)^{d_i}+a_{\tau(i)},\ z\in D_i,
$$
where $0<r<1$ is a constant and $d_i=\deg (g|_{\partial E_i})$. Then $Q(D_i)\Subset D_{\tau(i)}$. Denote by
$D_{\tau(i)}(r):=Q(D_i)$ and $\DDD(r)=Q(\DDD)$.

Let $\psi:\EEE\to\DDD(r)$ be a conformal map such that $\psi(E_i)=D_i(r)$. It can be extended to a quasiconformal map in a neighborhood of $\overline{\EEE}$ since the components of $\EEE$ are quasidisks with pairwise disjoint closures. Since $Q: \partial\DDD\to\partial\DDD(r)$ and $g:\, \partial\BBB\to\partial\EEE$ are coverings with same degrees on
corresponding components, there is a homeomorphism $\psi_1: \partial\BBB\to\partial\DDD$ such that $\psi\circ g=Q\circ\psi_1$.

Since each component of $\partial\BBB$ is a quasicircle, the conformal map $\psi:\EEE\to\DDD(r)$ can be extended to a
homeomorphism $\psi:\overline{\BBB}\to\overline{\DDD}$ such that $\psi|_{\partial\BBB}=\psi_1$ and $\psi$ is quasiconformal on $\BBB$. Define a map
$$
G=\left\{
\begin{array}{ll}
g & \text{on } U_1,\\
\psi^{-1}\circ Q\circ\psi & \text{on }\overline{\BBB}.
\end{array}\right.
$$
Then $G$ is a quasiregular branched covering of $\cbar$. Set $\OOO:=\psi^{-1}(\{a_1,\cdots,a_n\})$. Then
$$
G(\OOO)\subset\OOO\ \text{ and }\ \PPP_G\smm\KKK_g\subset\OOO
$$
since no critical point of $g$ escapes. Moreover, for each point $z_0\in\cbar\smm\KKK_g$, there exists a neighborhood $N$ of the point $z_0$ such that the forward orbit $\{G^n(z)\}$ converges to the invariant set $\OOO$ uniformly for $z\in N$.

By the Measurable Riemann Mapping Theorem, there is a quasiconformal map $\Phi$ of $\cbar$ whose complex dilatation satisfies $\mu_{\Phi}=0$ on $U_1$ and $\mu_{\Phi}=\mu_{\psi}$ on $\BBB$. Set $F:=\Phi\circ G\circ\Phi^{-1}$. Then $F$ is holomorphic in the interior of $\Phi(g^{-1}(U_1)\cup\EEE)$.

For any orbit $\{F^n(z)\}_{n\ge 0}$, if $z$ is not contained in the interior of $\Phi(g^{-1}(U_1)\cup\EEE)$, then $z$ is contained in either $\Phi(\overline{U_1})\smm\Phi(g^{-1}(U_1))$ or the closure of $\Phi(\BBB)\smm\Phi(\EEE)$. In the latter case, $F(z)\in\Phi(\EEE)$ and thus $F^2(z)$ is contained in the interior of $\Phi(\EEE)$. In the former case, $F(z)\in \Phi(\BBB)\smm\Phi(\EEE)$ and thus $F^3(z)$ is contained in the interior of $\Phi(\EEE)$. Thus, $F^n(z)$ is contained in the interior of $\Phi(\EEE)$ for $n\ge 3$ in both cases. This shows that every orbit of $F$ passes through the closure of $\cbar\smm \Phi(g^{-1}(U_1)\cup\EEE)$ at most three times. Applying Shishikura's Surgery Principle (see Lemma 15 in \cite{Ah}), there is quasiconformal map $\Phi_1:\cbar\to\cbar$ such that $f=\Phi_1\circ F\circ\Phi_1^{-1}$ is a rational map. Moreover,
$\mu_{\Phi_1}(z)=0$ for $a.e.\ z\in \Phi(\KKK_g)$. Set $\phi=\Phi_1\circ\Phi$. Then $f\circ\phi=\phi\circ g$ on $U_1$ and $\mu_{\phi}(z)=0$ for $a.e.\ z\in\KKK_g$.

For each point $w_0\in\cbar\smm\phi(\KKK_g)$, there exists a neighborhood $N$ of the point $w_0$ such that the forward orbit $\{f^n(w)\}$ converges to the invariant set $\phi(\OOO)$ uniformly for $w\in N$. Moreover, $\PPP_f\smm\phi(\KKK_g)\subset\phi(\OOO)$. So $\cbar\smm\phi(\KKK_g)\subset\FFF_f$. Since $\phi(\KKK_g)$ is completely invariant under $f$, we have $\partial\phi(\KKK_g)=\JJJ_f$.

If there is another rational map $f_1$ satisfying the conditions of the theorem, then there is a quasiconformal map $\theta$ of $\cbar$ such that $f_1\circ\theta=\theta\circ f$ in a neighborhood of
$\phi(\KKK_g)$ and $\mu_{\theta}(z)=0$ for $a.e.\ z\in\phi(\KKK_g)$.

Let $W$ be a periodic Fatou domain of $f$ in $\cbar\smm\phi(\KKK_g)$ with period $p\ge 1$. Then $W$ is simply connected and contains exactly one point $z_0\in\PPP_f$, which is a super-attracting periodic point. Therefore, there is a conformal map $\eta$ from $W$ onto the unit disc $\D$ such that $\eta(z_0)=0$ and $\eta\circ
f^p\circ\eta^{-1}(z)=z^d$ with $d=\deg_{z_0}f^p>1$. On the other hand, let $z_1\in\theta(W)$ be the super-attracting periodic point of $f_1$, then there is a conformal map $\eta_1: \theta(W)\to\D$ such that $\eta_1(z_1)=0$ and $\eta_1\circ f_1^p\circ\eta_1^{-1}(z)=z^d$. Therefore,
$$
\eta_1\circ\theta\circ f^p\circ\theta^{-1}\circ\eta_1^{-1}(z)=z^d
$$
in a neighborhood of $\partial\D$ in $\D$. This shows that $T=\eta_1\circ\theta\circ\eta^{-1}$ is a rotation on $\partial\D$ (see the commutative diagram below).
$$
\diagram \D\dto_{z\mapsto z^d} & W \lto_{\eta}\dto_{f^p} \rto^{\theta} & \theta(W)\dto^{f_1^p} \rto ^{\eta_1} & \D\dto^{z\mapsto z^d} \\ \D
&W \lto^{\eta} \rto_{\theta} & \theta(W)\rto_{\eta_1} & \D
 \enddiagram
$$
Let $\theta_W=\eta_1^{-1}\circ T\circ\eta$. Then $\theta_W:W\to\theta(W)$ is holomorphic, $\theta_W=\theta$ on the
boundary $\partial W$ and $f_1\circ\theta_W=\theta_W\circ f$.

Define $\Theta_0:\cbar\to\cbar$ by $\Theta_0=\theta_W$ on all the super-attracting Fatou domains of $f$ in $\cbar\smm\phi(\KKK_g)$, and $\Theta_0=\theta$ otherwise. Then $\Theta_0$ is a quasiconformal map and $\Theta_0\circ f=f_1\circ\Theta_0$ on the union of $\phi(\KKK_g)$ and all the super-attracting Fatou domains of $f$ in $\cbar\smm\phi(\KKK_g)$. Pulling back $\Theta_0$, we get a sequence of quasiconformal maps $\Theta_n:\cbar\to\cbar$ such that $\Theta_{n-1}\circ f=f_1\circ\Theta_n$. It is easy to check that $\Theta_n$ converges uniformly to a holomorphic conjugacy from $f$ to $f_1$.
\end{proof}

\subsection{Renormalizations}

Let $f$ be a post-critically rational map with a stable Cantor multicurve $\Gamma$. By Theorem \ref{as}, there is a unique multi-annulus $\AAA\subset\cbar\smm\PPP_f$ homotopic rel $\PPP_f$ to $\Gamma$ such that $g=f|_{\AAA^1}: \AAA^1\to \AAA$ is an exact annular system, where $\AAA^1$ is the union of the components of $f^{-1}(\AAA)$ homotopic rel $\PPP_f$ to curves in $\Gamma$. Moreover, $\JJJ_g\subset\JJJ_f$ and each component of $\JJJ_g$ is a Jordan curve.
Denote
$$
\JJJ(\Gamma)=\bigcup_{n\ge 1}f^{-n}(\JJJ_g).
$$
Since $g^{-1}(\JJJ_g)=\JJJ_g$, we have $\JJJ_g\subset f^{-1}(\JJJ_g)$, each component of $\JJJ_g$ is also a component of $f^{-1}(\JJJ_g)$ and each component of $f^{-1}(\JJJ_g)$ is a Jordan curve. Consequently, $f^{-1}(\JJJ(\Gamma))=\JJJ(\Gamma)$ and each component of $\JJJ(\Gamma)$ is a Jordan curve. By the definition of $\JJJ_g$, we have:
$$
\JJJ(\Gamma)=\bigcup_{n\ge 1}\bigcap_{m\ge n}f^{-m}(\AAA).
$$
Denote $\KKK(\Gamma)=\cbar\smm\JJJ(\Gamma)$. It is completely invariant and
$$
\KKK(\Gamma)=\bigcap_{n\ge 1}\bigcup_{m\ge n}(\cbar\smm f^{-m}(\AAA)).
$$
Since $\partial\AAA\subset\partial f^{-1}(\AAA)$, we have
$$
\partial f^{-n+1}(\AAA)\subset\partial f^{-n}(\AAA)\subset\KKK(\Gamma)\text{ for }n\ge 1.
$$

Recall that a connected subset $E\subset\cbar$ is of simple type (w.r.t. $\PPP_f$) if either there exists a simply connected domain $U\subset\cbar$ such that $E\subset U$ and $U$ contains at most one point in $\PPP_f$, or there exists an annulus $A\subset\cbar\smm\PPP_f$ such that $E\subset A$. It is of complex type (w.r.t. $\PPP_f$) otherwise. Since $f(\PPP_f)\subset\PPP_f$, for each simple type continuum $E\subset\cbar$, each component of $f^{-1}(E)$ is also simple type.

Set $\BBB^0=\cbar\smm\AAA$. It has $\#\Gamma+1$ components and each of them is of complex type. For each component $B$ of $\BBB^0$ and any component $A$ of $f^{-n}(\AAA)$ with $n\ge 1$,  either $A\cap B=\emptyset$ or $A\subset B$ since $\partial\AAA_{n}\subset\partial\AAA_{n+1}$. In the latter case, the essential Jordan curves in $A$ is either null-homotopic or peripheral since $\Gamma$ is stable. Thus, for each $n\ge 1$, $B\smm f^{-n}(\AAA)$ has exactly one complex type component.

Denote by $\BBB^n$ the union of complex type components of $\cbar\smm f^{-n}(\AAA)$ for $n\ge 1$. Then $\BBB^n\subset\BBB^{n-1}$ and it also has exactly  $\#\Gamma+1$ components since each component of $\AAA\smm f^{-n}(\AAA)$ is of simple type. Obviously, each component of $\BBB^n$ is also a component of $f^{-1}(\BBB^{n-1})$.  Denote by
$$
\KKK_c=\bigcap_{n\ge 0}\BBB^n.
$$
Then $\KKK_c$ is compact and has exactly $\#\Gamma+1$ components which are of complex type. Each of its components is also a component of $f^{-1}(\KKK_c)$.
It is easy to verify that $\KKK_c\subset\KKK(\Gamma)$ and each component of $\KKK_c$ is also a component of $\KKK(\Gamma)$.

Each component of $\cbar\smm\KKK_c$ is either a component of $\AAA$ or a simply connected domain containing at most one point of $\PPP_f$. Therefore, each component of $\KKK(\Gamma)\smm\KKK_c$ is of simple type. In summary we have:

\begin{proposition}\label{decomposition1}
The compact set $\KKK_c$ is the union of complex type components of $\KKK(\Gamma)$. It has exactly $\#\Gamma+1$ components and each of them is also a component of $f^{-1}(\KKK_c)$. Consequently, each component of $\KKK_c$ is eventually periodic.
\end{proposition}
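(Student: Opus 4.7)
The plan is to verify the three assertions in order; each follows essentially from the structural facts recorded in the paragraphs just above the statement, so the proof is largely bookkeeping.

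First, to identify $\KKK_c$ with the union of the complex type components of $\KKK(\Gamma)$, I will use that $\KKK_c\subset\KKK(\Gamma)$, that each component of $\KKK_c$ is of complex type, and that each component of $\KKK(\Gamma)\setminus\KKK_c$ is of simple type, all of which are already noted. It then remains to observe that a component of $\KKK_c$ coincides with a component of $\KKK(\Gamma)$: since $\KKK_c$ is closed and its complement in $\KKK(\Gamma)$ is a disjoint union of (simple type) components of $\KKK(\Gamma)$, no component of $\KKK(\Gamma)$ can straddle $\partial\KKK_c$.

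Next, for the count: the sequence $\{\BBB^n\}_{n\ge 0}$ is a nested decreasing sequence of compact sets, each consisting of exactly $\#\Gamma+1$ pairwise disjoint complex type components, and each component of $\BBB^n$ is contained in a unique component of $\BBB^{n-1}$ (by the discussion above, and since each component of $\BBB^n$ is a component of $f^{-1}(\BBB^{n-1})$, one further verifies that the inclusion gives a bijection at each level by a cardinality argument). Fixing a component $B^0$ of $\BBB^0$ and tracking its chain of descendants $B^0\supset B^1\supset B^2\supset\cdots$ produces a nested sequence of continua; the intersection is a nonempty continuum by compactness of $\cbar$. Distinct starting components give disjoint intersections, so $\KKK_c$ has at least $\#\Gamma+1$ components. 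The reverse inequality is immediate since any component of $\KKK_c$ is contained in a unique component of each $\BBB^n$, determining such a chain.

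For the invariance, let $K$ be a component of $\KKK_c$ and $K^n$ the component of $\BBB^n$ containing $K$, so that $K=\bigcap_n K^n$. Let $\wt K$ be the component of $f^{-1}(\KKK_c)$ containing $K$. For each $n\ge 1$, $K^n$ is a component of $f^{-1}(\BBB^{n-1})$ by the recorded facts, and $\wt K$ is a connected subset of $f^{-1}(\KKK_c)\subset f^{-1}(\BBB^{n-1})$ meeting $K^n$; hence $\wt K\subset K^n$. Intersecting over $n$ gives $\wt K\subset K$, so $\wt K=K$. The eventual periodicity then follows: from $K\subset f^{-1}(\KKK_c)$ we get $f(K)\subset\KKK_c$, and connectedness forces $f(K)$ to lie in a single component; thus $f$ induces a self-map of the finite set of $\#\Gamma+1$ components of $\KKK_c$, so every orbit must eventually become periodic. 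I do not anticipate any serious obstacle; the only point requiring care is the disjointness in step two, which is guaranteed because the components inside each single $\BBB^n$ are already pairwise disjoint.
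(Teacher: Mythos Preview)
Your proposal is correct and follows essentially the same approach as the paper. In fact the paper gives no separate proof of this proposition at all: it is stated with the lead-in ``In summary we have,'' so the intended argument is precisely the collection of facts recorded in the preceding paragraphs (that each $\BBB^n$ has $\#\Gamma+1$ complex type components, that $\BBB^n\subset\BBB^{n-1}$, that each component of $\BBB^n$ is a component of $f^{-1}(\BBB^{n-1})$, that $\KKK_c\subset\KKK(\Gamma)$ with matching components, and that the remaining components of $\KKK(\Gamma)$ are simple type). Your write-up simply makes explicit the nested-continua argument for the component count and the containment argument $\wt K\subset\bigcap_n K^n=K$ for the invariance, which the paper leaves to the reader.
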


\begin{theorem}\label{renorm}
Let $K$ be a periodic component of $\KKK_c$ with period $p\ge 1$. Then there exist domains $U\Subset V$ in $\cbar$ such that $K\subset U$ and $g=f^p|_U: U\to V$ is a renormalization of $f$ with filled Julia set $\KKK_g=K$.
\end{theorem}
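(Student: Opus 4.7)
The plan is to build the renormalization $g=f^p|_U\colon U\to V$ by choosing $V$ to be a slight thickening, into the multi-annulus $\AAA$, of the interior of the component $B^N$ of $\BBB^N$ containing $K$, for a suitably large $N$. I write $B^n$ for the component of $\BBB^n$ containing $K$, so that $K=\bigcap_n B^n$ by Proposition~\ref{decomposition1}, and $f^p\colon B^p\to B^0$ is a branched cover. The naive choice $V=\mathrm{int}(B^0)$, $U=\mathrm{int}(B^p)$ fails the compact containment $U\Subset V$: exactness of the annular system forces $\partial B^0\cap\partial\AAA\subset\partial B^n$ for every $n$, so $\overline{U}$ and $\partial V$ share the outer boundary curves of $B^0$. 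I resolve this by thickening $V$ slightly into $\AAA$ along the annular subsystem.

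For each boundary curve $\gamma$ of $B^0$ (necessarily contained in $\partial\AAA$), let $A_\gamma$ be the adjacent component of $\AAA$, and for each $m\ge 0$ let $A^m_\gamma$ be the unique component of $g^{-m}(\AAA)\cap A_\gamma$ having $\gamma$ on its boundary; existence and uniqueness follow by iterating exactness. As $m$ grows, the other boundary of $A^m_\gamma$ retreats strictly into $A_\gamma$, giving the key nesting $\overline{A^{m+p}_\gamma}\subset A^m_\gamma\cup\gamma$ (the outer boundary of $A^{m+p}_\gamma$ lies in the open annulus $A^m_\gamma$). Pick $N$ large enough that $\Omega_{f^p}\cap B^N\subset K$---possible because $\Omega_{f^p}$ is finite and $K=\bigcap_n B^n$---then pick any $n\ge 1$ and set
\[
V:=\mathrm{int}(B^N)\;\cup\;\bigcup_{\gamma\subset\partial B^0}\bigl(A^n_\gamma\cup\gamma\bigr),\qquad U:=\text{component of }f^{-p}(V)\text{ containing }K.
\]
Using that $f^p(\partial B^0)\subset\partial B^0$ (which holds because $\partial B^0\subset\partial B^p$ and $f^p(\partial B^p)=\partial B^0$), and that $f^p$ sends $A^{n+p}_\gamma$ onto $A^n_{f^p(\gamma)}$ (an annular-system computation), I identify the main portion of $U$ as $\mathrm{int}(B^{N+p})\cup\bigcup_\gamma(A^{n+p}_\gamma\cup\gamma)$; the nesting property then yields $U\Subset V$.

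To finish, I verify the remaining rational-like axioms and identify the filled Julia set. The map $f^p|_U\colon U\to V$ is holomorphic and proper as a component restriction of a branched cover; its degree is $<\deg f^p$ because $f^{-p}(V)$ contains additional components arising from the other $\#\Gamma$ components of $\BBB^0$, and the degree is $\ge 2$ because otherwise $g$ would be a biholomorphism with $\KKK_g$ forced to be a point or a Jordan curve---both impossible for $K$ of complex type. The critical-orbit condition holds because annular coverings under $f^p$ are unbranched and the choice of $N$ forces $\Omega_g\subset\Omega_{f^p}\cap B^{N+p}\subset K$, which is $f^p$-invariant. The complement condition follows from the planar topology of the nested collars. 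For the filled Julia set, $K\subset\KKK_g$ by invariance, and conversely any $z\in\KKK_g$ has its $g$-orbit forever in $U$; since each iterate of $g$ ejects the sub-collars from $U$, the orbit lies in $\bigcap_k B^{N+kp}=K$. The main obstacle will be the bookkeeping required to identify $U$ precisely as a component of $f^{-p}(V)$: preimages of the sub-collars in $V$ may enter ``bubble'' annuli inside $B^{N+p}$, and I must show these extra pieces of $U$ do not violate $U\Subset V$ nor the complement condition, which requires tracking how $f^p$ acts on the boundary curves of $B^0$ and on the annular sub-system.
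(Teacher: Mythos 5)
Your overall architecture (thicken the complex-type piece into the adjacent annuli by boundary collars, pull back by $f^p$, stabilize the post-critical intersection, then check the rational-like axioms and identify $\KKK_g=K$) is the same as the paper's, but the step on which everything hinges is exactly the one you assert without proof, and it is false as stated. Your ``key nesting'' $\overline{A^{m+p}_\gamma}\subset A^m_\gamma\cup\gamma$ claims that the depth-$(m+p)$ collar retreats strictly inside the depth-$m$ collar, where $p$ is the period of $K$. Applying $g^m$, this is equivalent to the assertion that in each component $A_0$ of $\AAA$ the depth-$p$ collar attached to a boundary curve has closure disjoint from the opposite boundary of $A_0$ --- a statement that does not follow from the definition of an annular system (disconnectedness of $g^{-n}(\AAA)\cap A_0$ is only guaranteed for \emph{some} $n$, not for $n=p$) and is independent of your choice of $n$, so ``pick any $n\ge 1$'' cannot repair it. The paper explicitly allows this failure: in its notation the collar $A^p_i$ may equal the whole annulus $A_i$ for some annuli in a cycle (only not for all of them, since otherwise $f^{qp}(A_1)=A_1$ contradicts the annular-system condition), and the heart of its proof is the round-the-cycle construction of curves $\g_i$ --- starting from one annulus where $A^p_i\subsetneq A_i$ and propagating backwards --- so that the component of $f^{-p}(\g_j)$ in $A^p_i$ separates $\g_i$ from $K$. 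In effect one must place cutting curves at \emph{different} depths/positions in the different annuli around each cycle; your uniform-depth collars do not achieve the compact containment $U\Subset V$, and Proposition \ref{compactly} (emptiness of the intersection of the collars) only gives eventual shrinking along the full orbit, not shrinking by the fixed time $p$.

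There are secondary gaps as well. Your $V=\mathrm{int}(B^N)\cup\bigcup_\gamma(A^n_\gamma\cup\gamma)$ is not shown to be open and connected along $\partial B^0$: you must rule out that some other component of $f^{-N}(\AAA)$ inside $B^0$ has closure meeting a curve $\gamma\subset\partial B^0$ (the paper sidesteps this by taking $W'=B_0\cup\bigcup A_i$ with the \emph{full} adjacent annuli, which is automatically a domain, and only then cutting along the chosen curves $\g_i$). You also defer, and never resolve, the verification of axiom (3) of rational-like maps and the behaviour of the ``bubble'' preimage pieces inside $B^{N+p}$; in the paper this is handled because every component of $W\smm\overline{W_1}$ is shown to be an annulus disjoint from $\PPP_f$ or a disk with at most one point of $\PPP_f$, using stability of $\G$. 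Finally, your degree bound $\deg g\ge 2$ (``otherwise $\KKK_g$ is a point or a Jordan curve'') is plausible but unproved, whereas the paper gets it from Proposition \ref{deg}; and your critical-orbit argument controls critical points via $\Omega_{f^p}\cap B^N\subset K$, but the relevant control is over the forward orbits of critical \emph{values} in $\PPP_f\cap V$, which is what the paper's stabilization $W_n\cap\PPP_f=W_N\cap\PPP_f$ is designed for. The first paragraph's issue is the essential one: without a correct replacement for the nesting claim, the construction of $U\Subset V$ does not go through.
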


\begin{proof}
Let $B_0, \cdots, B_{p-1}$ be the components of $\cbar\smm\AAA$ such that $K\subset B_0$ and $f^i(K)\subset B_i$ for $0<i<p$. Let $A_1, \cdots, A_n$ be the components of $\AAA$ whose boundary intersects $B_0$. Set
$$
W'=B_0\cup\bigcup_{i=1}^n A_i.
$$
It is a finitely-connected domain. Let $W'_1$ be the component of $f^{-p}(W')$ containing $K$. Then $W'_1\subset W'$ and each component $D$ of $W'\smm\overline{W'_1}$ is either an annulus disjoint from $\PPP_f$ or a simply-connected domain. For each $1\le i\le n$, an essential curve in each component of $f^{-p}(A_i)$ is either non-essential or homotopic to a curve in $\G$ since $\Gamma$ is stable. Thus in the latter case the simply connected domain $D$ contains at most one point of $\PPP_f$.

Each $A_i$ contains exactly one component of $W'_1\smm K$, denoted by $A_i^p$, which is a component of $f^{-p}(\AAA)$ and shares a common boundary component with $A_i$. Relabeling the indices of the $A_i$'s if necessary we may assume that there exists an integer $q\ge 1$ such that
$$
f^p(A_1^p)=A_2,\, \cdots, f^p(A_{q-1}^p)=A_q \text{ and } f^p(A_q^p)=A_1,\,
$$
i.e. $A_1$ is $q$-periodic. Then there is an integer $1\le i\le q$ such that $A^p_i\subsetneq A_i$. Otherwise $A^p_i=A_i$ for $1\le i\le q$ and hence $f^{qp}(A_1)=A_1$. It contradicts the fact that $f: \AAA^1\to\AAA$ is an annular system.

We claim that there exists a Jordan curve $\g_i$ contained essentially in each $A_i$ for $1\le i\le q$ such that $\g'_i$ separates $\g_i$ from $K$, where $\g'_i$ is the component of $f^{-p}(\g_{i+1})$ contained $A^p_i$ for $1\le i<q$ and $\g'_q$ is the component of $f^{-p}(\g_{1})$ contained $A^p_q$.

Assume that $q=1$. Then $A^p_1\subsetneq A_1$. Let $\g_1$ be a Jordan curve contained essentially in $A_1$ such that it is disjoint from $A^p_1$. Let $\g'_1$ be the component of $f^{-p}(\g_1)$ in $A^p_1$. Then $\g'_1$ is a Jordan curve contained essentially in $A^p_1$ and hence separates $\g_1$ from $K$.

Now we assume that $q\ge 2$ and $A^p_1\subsetneq A_1$. Then there exists a Jordan curve $\g_1$ contained essentially in $A_1$ such that it is disjoint from $A^p_1$. Let $\g'_q$ be the component of $f^{-p}(\g_1)$ in $A^p_q$. Then one may find a Jordan curve $\g_q$ contained essentially in $A_q$ such that $\g'_q$ separates $\g_q$ from $K$. Let $\g'_{q-1}$ be the component of $f^{-p}(\g_q)$ in $A^p_{q-1}$. Then one may also find a Jordan curve $\g_{q-1}$ contained essentially in $A_{q-1}$ such that $\g'_{q-1}$ separates $\g_{q-1}$ from $K$. Inductively, for $2\le i<q$, let $\g'_i$ be the component of $f^{-p}(\g_{i+1})$ in $A^p_i$, one may find a Jordan curve $\g_i$ contained essentially in $A_i$ such that $\g'_i$ separates $\g_i$ from $K$. Let $\g'_1$ be the component of $f^{-p}(\g_2)$ contained in $A^p_1$. Then $\g'_1$ separates $\g_1$ from $K$ since $\g_1$ is disjoint from $A^p_1$. Now the claim is proved.

Do this process for each cycle. We obtain a Jordan curve $\g_i$ contained essentially in each periodic annulus $A_i$ such that if $f^p(A^p_i)=A_j$, then the component of $f^{-p}(\g_j)$ in $A_i$ separates $\g_i$ from $K$.  If $A_i$ is not periodic but $A_j=f^p(A^p_i)$ is periodic, then there is always a Jordan curve $\g_i$ contained essentially in $A_i$ such that the component of $f^{-p}(\g_j)$ in $A_i$ separates $\g_i$ from $K$.

In summary, we have a Jordan curve $\g_i\subset A_i$ for each $A_i$ such that if $f^p(A_i^p)=A_j$, then the component of $f^{-p}(\g_j)$ in $A^p_i$ separates $\g_i$ from $K$. Let $W\subset W'$ be the domain bounded by the curves $\g_i$ defined above. Then $W_1\Subset W$, where $W_1$ is the component of $f^{-p}(W)$ containing $K$, and each component of $W\smm\overline{W_1}$ is either an annulus disjoint from $\PPP_f$ or a disk containing at most one point of $\PPP_f$.

Let $W_n$ be the component of $f^{-np}(W)$ containing $K$ for $n\ge 2$. Then $W_n\Subset W_{n-1}$ and each component of $W\smm\overline{W_1}$ is either an annulus disjoint from $\PPP_f$, or a disk which contains at most one point of $\PPP_f$.

Since $\PPP_f$ is finite, there is an integer $N\ge 1$ such that $W_n\cap\PPP_f=W_N\cap \PPP_f$ for $n\ge N$. Set $U=W_{N+1}$, $V=W_N$ and $g:=f^p|_U: U\to V$. Then every critical point of $g$ stays in $U$. By Proposition \ref{deg}, there is an integer $n\ge 1$ such that $\deg (f^n|_A)\ge 2$ for all the components $A$ of $\AAA^n$. So we have $\deg g\ge 2$. Therefore, $g: U\to V$ is a rational-like map.

Now we want to show that $\deg g<\deg f^p$. Otherwise $\JJJ_f\subset\KKK_g$. But we know that the Julia set of the annular system $f:\, \AAA^1\to\AAA$ is contained in $\JJJ_f$. This is impossible. So $\deg g<\deg f^p$.
It follows that $g$ is a renormalization of $f$.
\end{proof}

From Theorem \ref{st}, Theorem \ref{renorm} and Theorem 2.1 in \cite{TY}, we have:

\begin{corollary}\label{local}
Let $K$ be a component of $\KKK_c$. For each component $W$ of $\cbar\smm K$, its boundary $\partial W$ is locally connected.
\end{corollary}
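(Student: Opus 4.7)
The plan is to deduce the corollary from the straightening theorem together with the classical result that boundaries of Fatou components of a post-critically finite rational map are locally connected (Theorem 2.1 of \cite{TY}). By Proposition \ref{decomposition1}, $K$ is eventually periodic, so I will first prove the statement for periodic $K$ and then transfer to the pre-periodic case via pullback.

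Assume $K$ is periodic with period $p$. Theorem \ref{renorm} supplies a renormalization $g = f^p|_U : U \to V$ with $\KKK_g = K$. Applying Theorem \ref{st} to $g$, I obtain a rational map $\hat f$ of degree $\deg g$ and a quasiconformal homeomorphism $\phi$ of $\cbar$ such that $\hat f \circ \phi = \phi \circ g$ on a neighborhood of $K$, $\JJJ_{\hat f} = \partial \phi(K)$, and each component of $\cbar \smm \phi(K)$ contains at most one point of $\PPP_{\hat f}$. The crucial observation is that $\hat f$ is post-critically finite: the critical orbits of $g$ inside $K$ coincide with orbits of $f^p$ acting on a subset of $\PPP_f$, hence are finite, while the surgery in the proof of Theorem \ref{st} contributes only the super-attracting periodic cycle formed by the points $a_1,\ldots,a_n$, whose orbits are automatically finite.

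Each component $\Omega$ of $\cbar \smm \phi(K)$ is an open connected subset of $\FFF_{\hat f}$ whose boundary lies in $\JJJ_{\hat f}$; since a Fatou component is a maximal open connected subset of $\FFF_{\hat f}$ and $\Omega$ cannot be enlarged without crossing $\JJJ_{\hat f}$, $\Omega$ is itself a single Fatou component of $\hat f$. Theorem 2.1 of \cite{TY} then yields that $\partial \Omega$ is locally connected. Because $\phi$ is a homeomorphism of $\cbar$, it sends each component $W$ of $\cbar \smm K$ homeomorphically onto such an $\Omega$, and therefore $\partial W = \phi^{-1}(\partial \Omega)$ is locally connected.

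For the pre-periodic case, let $k \ge 1$ be minimal with $K' := f^k(K)$ periodic. Shrinking appropriately, $f^k$ is a proper branched covering from a neighborhood of $K$ onto a neighborhood of $K'$, mapping components of $\cbar \smm K$ near $K$ onto components of $\cbar \smm K'$ near $K'$. Since local connectivity of a compact planar set is preserved under pullback by a proper open holomorphic map (locally modeled on $z \mapsto z^d$), the already-established local connectivity of $\partial W'$ for components $W'$ of $\cbar \smm K'$ transfers to $\partial W$. The main obstacle I expect is the careful audit of the surgery in Theorem \ref{st} needed to certify that $\hat f$ is post-critically finite and hence falls under the hypotheses of \cite{TY}; once that is in hand the rest of the argument is a direct application.
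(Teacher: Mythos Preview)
Your proof is correct and follows exactly the route the paper indicates: it invokes Theorem \ref{renorm} and Theorem \ref{st} to realize a periodic component as the filled Julia set of a post-critically finite rational map, then appeals to Theorem 2.1 of \cite{TY} for local connectivity of the resulting Fatou boundaries, and finally pulls back to handle pre-periodic components. The paper itself gives no details beyond citing these three results, so your expanded argument---including the verification that the straightened map is post-critically finite---is a faithful elaboration of the intended proof.
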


\subsection{Topology of $\KKK(\Gamma)$}

We prove Theorem \ref{decomposition} in this sub-section. Assume first that $\KKK_c\cup\AAA=\cbar$. This may happen for the folding of polynomials defined in \S8. Then for each component $K$ of $\KKK(\Gamma)$, $K$ is a continuum which eventually maps to a component of $\KKK_c$ under $f$. We have concluded the proof in this case.

Now, assume that $\KKK_c\cup\AAA\neq\cbar$. Then $\cbar\smm\KKK_c$ has infinitely many components. We will construct a puzzle by choosing some components of $f^{-n}(\KKK_c)$ for some integer $n\ge 1$.

Each component of $f^{-n}(\KKK_c)$ for $n\ge 1$ is also a component of $\KKK(\Gamma)$. It is either a component of $\KKK_c$ or a simple type continuum which could be: \\
\indent (a) ({\it disk-type}) a compact set containing exactly one point of $\PPP_f$; or \\
\indent (b) ({\it annular-type}) a compact set disjoint from $\PPP_f$ but having  exactly two complementary components containing points of $\PPP_f$; or \\
\indent (c) ({\it trivial-type}) a compact set disjoint from $\PPP_f$
 and having exactly one complementary components containing points of $\PPP_f$.

Obviously, each component of the pre-image of a trivial-type component is also trivial-type.

\vskip 0.24cm

There exists an integer $n_0\ge 1$ such that each component of $\AAA$ contains essentially an annular-type component of $f^{-n_0}(\KKK_c)$.

For each point $x\in\PPP_f\smm\KKK_c$ (if it exists) either there exists an integer $n_1(x)\ge 1$ such that $x\in f^{-n_1(x)}(\KKK_c)$, or $x\notin\cup_{n\ge 1}f^{-n}(\KKK_c)$. In both cases, there exists an integer $n_2(x)\ge 1$ such that $f^{-n_2(x)}(\KKK_c)$ has an annular-type component separating the point $x$ from other points of $\PPP_f$.  Since $\PPP_f$ is finite, there exists an integer $s\ge 1$ such that
$$
s\ge\left\{\begin{array}{ll}
n_0, &  \\
n_1(x) & \text{ for }x\in (\cup_{n\ge 1}f^{-n}(\KKK_c))\smm\KKK_c, \\
n_2(x) & \text{ for }x\in\PPP_f\smm\KKK_c.
\end{array}\right.
$$

Denote by $\KKK_{e}$ the union of components of $f^{-s}(\KKK_c)$ that are not trivial-type. Then the following statements hold:

(1) Each component of $\AAA$ contains essentially a component of $\KKK_e$.

(2) For each point $x\in\PPP_f\cap(\cup_{n\ge 1}f^{-n}(\KKK_c))$, we have $x\in\KKK_e$.

(3) For each point $x\in\PPP_f\smm(\cup_{n\ge 1}f^{-n}(\KKK_c))$, there exists an annular-type component $K$ of $\KKK_e$ such that $K$ separates the point $x$ from other points of $\PPP_f$.

(4) $f(\KKK_e)\subset\KKK_e$.

\begin{figure}[htbp]
\begin{center}
\includegraphics[width=9cm]{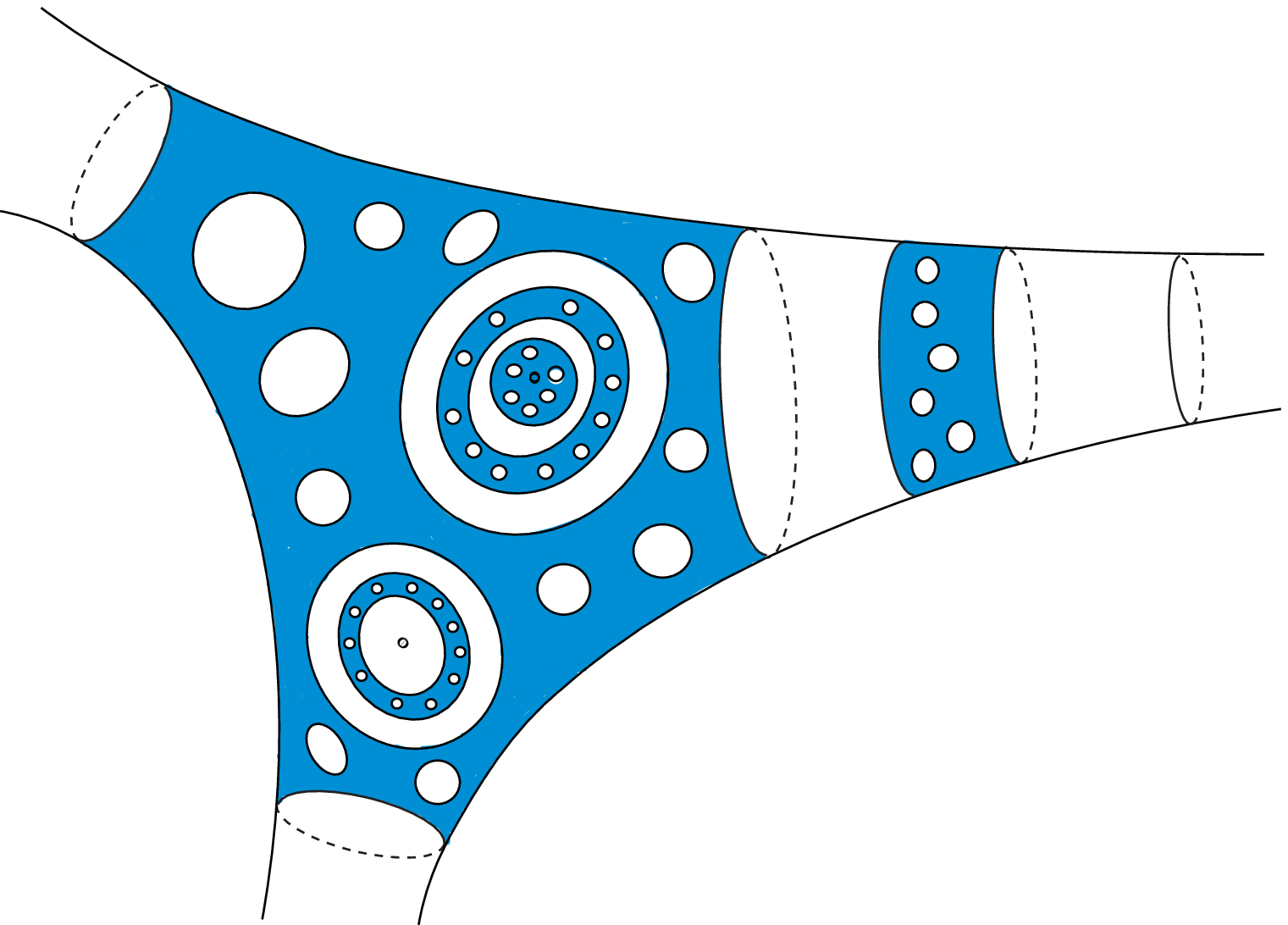}
\end{center}
\begin{center}{\sf Figure 1. The components of $\KKK_e$.}
\end{center}
\end{figure}

Denote $\UUU=\cbar\smm\KKK_e$. Then $\UUU$ has infinitely many components and $f^{-1}(\UUU)\subset\UUU$. Each component of $\UUU$ is either a simply connected domain containing at most one point in $\PPP_f$, or a component of $f^{-s}(\AAA)$ and hence is an annulus.

Denote $\UUU_n=f^{-n}(\UUU)$ for $n\ge 0$. Then $\UUU_{n+1}\subset\UUU_n$. Each component of $\UUU_n$ is either a simply connected domain containing at most one point in $\PPP_f$, or a component of $f^{-n-s}(\AAA)$.

Denote $\KKK_r=\KKK(\Gamma)\smm\cup_{n\ge 0}f^{-n}(\KKK_c)$. For each component $K$ of $\KKK_r$, denote by $U_n(K)$ the component of $\UUU_n$ that contains $K$ for each $n\ge 0$.

\begin{proposition}\label{proper}
For each component $K$ of $\KKK_r$ and each $n\ge 0$, there exists an integer $m>n$ such that $U_m(K)\Subset U_n(K)$.
\end{proposition}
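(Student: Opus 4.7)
I reduce to the case $n=0$ and then split into two sub-cases depending on the topology of the nested pieces $U_m(K)$.

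For the reduction, note that $\KKK_r$ is forward invariant: if $f(K)\cap f^{-j}(\KKK_c)\ne\emptyset$ for some $j\ge 0$, then $K\cap f^{-(j+1)}(\KKK_c)\ne\emptyset$, contradicting $K\subset\KKK_r$. Hence $f^n(K)\subset K'$ for some component $K'$ of $\KKK_r$, and $f^n\colon U_n(K)\to U_0(K')$ is a proper holomorphic surjection. By properness, $U_k(K')\Subset U_0(K')$ for some $k\ge 1$ implies that the component of $(f^n|_{U_n(K)})^{-1}(U_k(K'))$ containing $K$, which coincides with $U_{n+k}(K)$, is compactly contained in $U_n(K)$. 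So it suffices to treat $n=0$. Moreover, no proper holomorphic surjection from a simply connected domain can land on an annulus, so if $U_m(K)$ is simply connected then so is $U_0(f^m(K))$.

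\emph{Annular case.} Suppose $U_m(K)$ is annular for some $m\ge 0$, i.e.\ a component of $f^{-(m+s)}(\AAA)$. Iterating the above reduction, it suffices to exhibit $k>m$ with $U_k(K)\Subset U_m(K)$. The map $f^{m+s}\colon U_m(K)\to A_0$ is an unbranched covering of annuli onto a component $A_0$ of $\AAA$. Let $A^j\subset A_0$ be the component of $g^{-j}(\AAA)\cap A_0$ containing $f^{m+s}(K)$, where $g=f|_{\AAA^1}$. Then $\bigcap_j A^j\supseteq\{f^{m+s}(K)\}\ne\emptyset$, and Proposition \ref{compactly} furnishes $j$ with $A^j\Subset A_0$. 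Pulling back by the proper cover $f^{m+s}|_{U_m(K)}$ produces a compactly contained annular component containing $K$, which one identifies with $U_{m+s+j}(K)$.

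\emph{Simply connected case.} Suppose $U_m(K)$ is simply connected for all $m\ge 0$. Then each $U_0(f^m(K))$ is simply connected with at most one $\PPP_f$-puncture. Since $f$ is post-critically finite, every periodic Fatou component is super-attracting and contained in $\KKK_c\subset\KKK_e$, so $\UUU=\cbar\smm\KKK_e$ avoids all super-attracting cycles; the orbifold weights at the punctures in $\overline{\UUU}$ are therefore finite. Using the orbifold expanding metric $\rho$ of $f$ (as invoked in the proof of Theorem \ref{as}), for which $\|f'\|_\rho\ge\lambda>1$, the proper restrictions $f^m\colon U_m(K)\to U_0(f^m(K))$ give $\mathrm{diam}_\rho(U_m(K))\le\lambda^{-m}\max_W\mathrm{diam}_\rho(W)$, where $W$ ranges over the finitely many simply connected components of $\UUU$, each of finite $\rho$-diameter. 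Hence $\mathrm{diam}_\rho(U_m(K))\to 0$, and $\overline{U_m(K)}\Subset U_0(K)$ for large $m$.

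\textbf{The hardest part} will be the precise identification in the annular case of the compactly-contained pullback annulus with $U_{m+s+j}(K)$ (this needs pullback components via the annular system to agree with those components of $f^{-(m+s+j)}(\AAA)$ inside $U_m(K)$ containing $K$), together with justifying the uniform boundedness of $\rho$-diameters in the simply connected case, i.e., checking that $\overline{\UUU}$ is disjoint from all weight-$\infty$ orbifold points, a consequence of the chosen value of $s$.
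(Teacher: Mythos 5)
Your reduction to $n=0$ is fine, but both of your main cases contain genuine errors. In the annular case, you define $A^j$ as the component of $g^{-j}(\AAA)\cap A_0$ containing $f^{m+s}(K)$ and assert $\bigcap_j A^j\supseteq f^{m+s}(K)\neq\emptyset$. This cannot happen: $K\subset\KKK_r\subset\KKK(\Gamma)$ and $\KKK(\Gamma)$ is completely invariant, so $f^{m+s}(K)$ is disjoint from $\JJJ_g\subset\JJJ(\Gamma)$; since $\JJJ_g=\bigcap_{j}g^{-j}(\AAA)$, the set $f^{m+s}(K)$ must leave $g^{-j}(\AAA)$ at some finite level (typically it already sits in a gap of $A_0\smm\AAA^1$), so your nested sequence is not even defined for all $j$, and a nonempty intersection containing $f^{m+s}(K)$ would force $f^{m+s}(K)\subset\JJJ_g$, a contradiction. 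This is not a fixable detail: trapping the image of $K$ inside a nest of annuli of the annular system is exactly what is impossible for components of $\KKK(\Gamma)$ (compare Proposition \ref{disk}). The paper's proof goes the other way around: it picks a point $z\in K$ and, using Proposition \ref{2-continuum}(2), finds two components $A_1,A_2$ of $f^{-m-s}(\AAA)$ contained essentially in $U_n(K)$, one on each side of $z$, so that the region $E$ between them contains $z$ and is compactly contained in $U_n(K)$; since $\partial A_1\cup\partial A_2\subset f^{-m-s}(\KKK_c)$ is disjoint from $\UUU_m$, the component $U_m(K)$ cannot cross these boundaries and hence $U_m(K)\subset E\Subset U_n(K)$.

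The simply connected case rests on the claim that every superattracting cycle lies in $\KKK_c\subset\KKK_e$, so that $\UUU$ avoids superattracting cycles; this is false, and it is exactly the check you flagged at the end. The paper's Proposition \ref{decomposition2} shows that a periodic component of $\KKK_r$ can be the closure of a superattracting periodic Fatou domain, and such a component is disjoint from every $f^{-n}(\KKK_c)$, hence lies in $\UUU$ (in a component of $\GGG$). For such orbits the orbifold metric is not expanding along the whole orbit, and the component of $\GGG$ containing the superattracting point has infinite $\rho$-diameter, so the bound $\mathrm{diam}_\rho(U_m(K))\le\lambda^{-m}\max_W\mathrm{diam}_\rho(W)$ is unjustified; moreover $\DDD$ may have infinitely many components, so the maximum you invoke need not be finite, and even with finite diameters one must control the endpoints of lifted paths, which is why the paper introduces the homotopic length/h-diameter device and restricts to the finitely many components of $\DDD'$ in Proposition \ref{decomposition3} (a statement about wandering components, not the one at hand). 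The paper's actual proof of Proposition \ref{proper} uses no metric estimates at all, and the same soft topological argument applies verbatim when $U_n(K)$ is simply connected.
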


\begin{proof} If $U_n(K)$ is an annulus, then it is a component of $f^{-n-s}(\AAA)$. Pick a point $z\in K$. From Proposition \ref{2-continuum} (2), there exist two components $A_1, A_2$ of $f^{-m-s}(\AAA)$ for some $m>n$ such that both $A_1$ and $A_2$ are contained essentially  in $U_n(K)$and the $2$-connected continuum between $A_1$ and $A_2$, denoted by $E$, contains the point $z$. Note that both $\partial A_1$ and $\partial A_2$ are contained in $f^{-m-s}(\KKK_c)$. Thus, $U_m(K)\subset E$ and hence $U_m(K)\Subset U_n(K)$.

The same argument works when $U_n(K)$ is simply connected.
\end{proof}

\begin{proposition}\label{disk}
Let $K$ be a component of $\KKK_r$ and $n\ge 0$  an integer. If $U_n(K)$ is an annulus, then there exists an integer $m>n$ such that $U_m(K)$ is either simply connected or an annulus but is not contained essentially in $U_n(K)$.
\end{proposition}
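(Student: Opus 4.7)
The plan is to argue by contradiction. Suppose that for every $m>n$ the set $U_m(K)$ is an annulus essentially contained in $B:=U_n(K)$; I will show this forces $K\subset\JJJ(\Gamma)$, contradicting $K\subset\KKK_r\subset\KKK(\Gamma)$. First, the nesting is essential at every step: since $U_{n+k+1}(K)\subset U_{n+k}(K)$ and both essentially separate the two boundary components of $B$, any essential curve of $U_{n+k+1}(K)$ also separates the boundary of $U_{n+k}(K)$. Second, because $B$ is an annular component of $f^{-(n+s)}(\AAA)$ and $\AAA$ is disjoint from the critical values of $f^{n+s}$, the restriction $f^{n+s}:B\to A_0$ is an unramified holomorphic covering onto a component $A_0$ of $\AAA$. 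Setting $C_k:=f^{n+s}(U_{n+k}(K))$ yields a nested sequence of annuli inside $A_0$: each $C_k$ is a component of $f^{-k}(\AAA)$ essentially contained in $A_0$, $C_{k+1}$ is essentially contained in $C_k$ (coverings preserve essential containment), and $f^{n+s}(K)\subset\bigcap_{k\ge 0}C_k$ is non-empty.

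The heart of the argument is to show that each $C_k$ is a component of $g^{-k}(\AAA)$, where $g:\AAA^1\to\AAA$ is the exact annular system produced by Theorem \ref{as}. This is established by induction on $k$. The base case $k=1$ is immediate from $\AAA^1=g^{-1}(\AAA)$. For the inductive step, assume $C_k$ is a component of $g^{-k}(\AAA)$; then $f^j$ restricts to a covering of $C_k$ onto $f^j(C_k)$, which is a component of $g^{-(k-j)}(\AAA)\subset\AAA^1$ for $0\le j\le k-1$, while $f^k(C_k)$ is a component of $\AAA$. Essential containment $C_{k+1}\subset C_k$ is preserved by each such covering, so $f^k(C_{k+1})$ is a component of $f^{-1}(\AAA)$ essentially contained in the component $f^k(C_k)$ of $\AAA$; by definition of $\AAA^1$, $f^k(C_{k+1})\in\AAA^1$. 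Together with $f^j(C_{k+1})\subset f^j(C_k)\subset\AAA^1$ for $0\le j\le k-1$, this gives $C_{k+1}\subset g^{-(k+1)}(\AAA)$; maximality follows from the inclusion $g^{-(k+1)}(\AAA)\subset f^{-(k+1)}(\AAA)$ together with $C_{k+1}$ already being a component of the latter.

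With the identification in place, Proposition \ref{compactly} applied to the nested sequence $\{C_k\}$ yields that either $\bigcap_k C_k=\emptyset$ or the sequence is eventually compactly contained, and non-emptiness forces the latter. By the argument of Proposition \ref{2-continuum}(1) combined with Theorem \ref{curve} (applicable because $f$ is strictly expanding on $\AAA$, as invoked at the end of the proof of Theorem \ref{as}), the intersection $\bigcap_k C_k$ is a Jordan curve and a component of $\JJJ_g$. Therefore $f^{n+s}(K)\subset\JJJ_g\subset\JJJ(\Gamma)$, and complete invariance of $\JJJ(\Gamma)$ under $f$ gives $K\subset\JJJ(\Gamma)$, the desired contradiction. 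I expect the inductive step identifying $C_k$ as a component of $g^{-k}(\AAA)$ to be the main obstacle; it is the only place where the exactness of the annular system plays a crucial role, and the remaining steps are direct applications of the propositions already established.
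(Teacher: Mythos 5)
Your proof is correct and is essentially the paper's argument: negate the statement, so all $U_m(K)$ with $m>n$ are annuli essentially nested in $U_n(K)$, and conclude $K\subset\JJJ(\Gamma)$, contradicting $K\subset\KKK_r\subset\KKK(\Gamma)$. The paper reaches the contradiction in one line via the identity $\JJJ(\Gamma)=\bigcup_{n\ge 1}\bigcap_{m\ge n}f^{-m}(\AAA)$ established at the start of \S5.2 (each annular $U_m(K)$ is a component of $f^{-m-s}(\AAA)$, so $K\subset\bigcap_{m>n}U_m(K)\subset\JJJ(\Gamma)$), so your transport by $f^{n+s}$ into $\AAA$ and the inductive identification of the images with components of $g^{-k}(\AAA)$, while valid, is extra work the paper's formulation makes unnecessary.
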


\begin{proof}
Otherwise, $\{U_{m}(K)\}$ are all annuli and $U_{m+1}(K)$ is contained essentially  in $U_m(K)$ for all $m\ge n$. Then $\cap_{n\ge 0} U_n(K)$ is a component of $\JJJ(\Gamma)$. Contradiction.
\end{proof}

Let $K$ be a component of $\KKK_r$. By Proposition \ref{disk}, we know that as $n$ is large enough, either $U_n(K)$ is simply connected, or $U_n(K)$ is an annulus and one component $E_n(K)$ of $\cbar\smm U_n(K)$ contains at most one point of $\PPP_f$. Denote by $V_n(K)=U_n(K)\cup E_n(K)$ in the case or $V_n(K)=U_n(K)$ otherwise. Then $V_n(K)$ is a simply connected domain containing at most one point of $\PPP_f$ as $n$ is large enough and $V_{n+1}(K)\subset V_n(K)$ for all $n\ge 0$. Moreover, for each integer $n\ge 0$, there exists an integer $m>n$ such that $V_m(K)\subset U_n(K)$. Thus, $\cap_{n>0}V_n(K)=\cap_{n>0}U_n(K)$ and it is disjoint from $\cup_{n\ge 0}f^{-n}(\KKK_c)$ and $\JJJ(\Gamma)$. Thus, we have
$$
K=\bigcap_{n>0}U_n(K)=\bigcap_{n>0}V_n(K).
$$
Combining with Proposition \ref{proper}, we have:

\begin{corollary}\label{compact}
Each component of $\KKK_r$ is either a single point or a simply connected continuum.
\end{corollary}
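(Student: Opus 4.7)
The plan is to exploit the nested decomposition $K=\bigcap_{n>0}V_n(K)$ already established in the paragraph preceding the corollary, where for all sufficiently large $n$ the set $V_n(K)$ is a simply connected domain containing at most one point of $\PPP_f$, and $V_{n+1}(K)\subset V_n(K)$. Since $K$ is a component of $\KKK_r\subset\cbar$, it is by definition connected; the two remaining points are to show that $K$ is closed (hence compact in $\cbar$), and that $\cbar\smm K$ has at most one component, so that $K$ is simply connected whenever it contains more than one point.

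First I would establish compactness by extracting a subsequence $\{n_k\}$ with $\overline{V_{n_{k+1}}(K)}\subset V_{n_k}(K)$. Proposition \ref{proper} supplies compact containment $U_m(K)\Subset U_n(K)$ for $m$ sufficiently larger than $n$, while the construction of $V_n(K)$ from $U_n(K)$ in the paragraph preceding the corollary yields $V_{m'}(K)\subset U_m(K)$ for $m'$ large enough. Chaining these two inclusions gives $V_{m'}(K)\subset U_m(K)\Subset U_n(K)\subset V_n(K)$, the desired compact containment along a subsequence of the $V$'s. Consequently
$$
K\;=\;\bigcap_k V_{n_k}(K)\;=\;\bigcap_k \overline{V_{n_k}(K)},
$$
a decreasing intersection of non-empty compact connected sets, hence itself compact.

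Next I would prove simple connectivity by working on the complement. Since $V_{n+1}(K)\subset V_n(K)$, the closed sets $\cbar\smm V_n(K)$ form an increasing nested family, and
$$
\cbar\smm K\;=\;\cbar\smm\bigcap_n V_n(K)\;=\;\bigcup_n\bigl(\cbar\smm V_n(K)\bigr).
$$
Each $V_n(K)$ is a proper simply connected open subset of $\cbar$ (proper because the existence of the multicurve $\G$ forces $\#\PPP_f\ge 4$, whereas $V_n(K)$ contains at most one such point), so each $\cbar\smm V_n(K)$ is a non-empty continuum. The union of a nested family of non-empty connected sets is connected, so $\cbar\smm K$ is connected. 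If $K$ contains more than one point, then $K$ is a continuum with connected complement, hence simply connected by definition.

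The main obstacle is only mildly technical: one must thread Proposition \ref{proper} together with the construction of $V_n(K)$ from $U_n(K)$ in order to promote the compact containment from the $U$'s to a subsequence of the $V$'s. Once this is in place, the rest reduces to the elementary topology of $\cbar$ and the standard fact that a proper simply connected open subset of $S^2$ has connected complement.
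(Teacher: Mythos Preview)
Your proposal is correct and follows essentially the same approach as the paper, which merely records the corollary as an immediate consequence of the preceding paragraph ``Combining with Proposition \ref{proper}'' without spelling out the details. You have filled in precisely what the paper leaves implicit: chaining Proposition \ref{proper} with the inclusion $V_{m'}(K)\subset U_m(K)$ to obtain compact containment along a subsequence of the $V_n(K)$, and then reading off compactness and simple connectivity of $K=\bigcap_n V_n(K)$ from the nested family of simply connected domains.
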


\begin{proposition}\label{decomposition2}
Let $K$ be a periodic component of $\KKK_r$. Then $K$ is either a single point or the closure of a quasi-disk, which is a periodic Fatou domain of $f$.
\end{proposition}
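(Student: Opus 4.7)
By Corollary~\ref{compact}, $K$ is either a single point, in which case the conclusion is immediate, or a simply connected continuum. I focus on the latter case and aim to show that $K=\overline{D}$ where $D$ is a periodic Fatou component of $f$ that is a quasi-disk. The strategy is to realize $f^p$ as a rational-like map on a neighborhood of $K$ and then apply the straightening Theorem~\ref{st}.

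Using Proposition~\ref{proper} together with the fact that the closed neighborhoods $\overline{V_n(K)}$ also decrease to $K$, I choose $N$ large enough that $\overline{V_N(K)}\subset V_{N-p}(K)$. Set $U:=V_N(K)$ and $V:=V_{N-p}(K)$: both are simply connected Jordan domains containing $K$ with $U\Subset V$, each meeting $\PPP_f$ in at most one point. Since $\PPP_f$ is forward-invariant under $f$, and (by further enlarging $N$ if necessary) any post-critical point in $V$ also lies in $U$, every critical orbit of $g:=f^p|_U$ remains in $U$. Hence $g:U\to V$ is a rational-like map of some degree $d\ge 1$ with $K\subset\KKK_g$.

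If $d=1$, then $g$ is a biholomorphism, and $g^{-1}(U)\subsetneq U$ strictly; iterating the Schwarz--Pick contraction shows that $\bigcap_k g^{-k}(U)$ collapses to a unique fixed point $z_0\in U$. Since $g(K)=K\subset U$ gives $K\subset g^{-k}(U)$ for every $k$, we conclude $K=\{z_0\}$, contradicting non-degeneracy. So $d\ge 2$ and $g$ is a renormalization. Moreover $\KKK_g$ is simply connected: its complement $(V\setminus\KKK_g)\cup(\cbar\setminus\overline{V})$ is joined through $\partial V$ into a single connected open set. By Theorem~\ref{st}, there is a quasi-conformal map $\phi$ of $\cbar$ and a post-critically finite polynomial $P$ of degree $d$ with $P\circ\phi=\phi\circ g$ in a neighborhood of $\KKK_g$ and $\phi(\KKK_g)$ equal to the filled Julia set of $P$.

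The image $\phi(K)$ is then a simply connected, non-degenerate, $P$-periodic subcontinuum of $\phi(\KKK_g)$. The membership $K\in\KKK_r$, i.e.\ disjointness from $\bigcup_{n\ge 0}f^{-n}(\KKK_c)$, transports through $\phi$ to preclude $\phi(K)$ meeting any small filled Julia set coming from a sub-renormalization of $P$. Combined with the post-critical finiteness of $P$ and the simple connectivity and non-degeneracy of $\phi(K)$, this forces $\phi(K)=\overline{D_P}$ for a super-attracting periodic Fatou component $D_P$ of $P$. The closure of a super-attracting periodic Fatou domain of a post-critically finite polynomial is a quasi-disk, and pulling back by the quasi-conformal $\phi$ one obtains $K=\overline{D}$ where $D:=\phi^{-1}(D_P)$ is a periodic Fatou component of $f$, with $\partial K$ a quasi-circle. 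The main obstacle is this last identification: excluding the alternatives that $\phi(K)$ could be a proper subcontinuum of $\overline{D_P}$, a thin periodic invariant continuum inside $\JJJ_P$, or the filled Julia set of a further renormalization. The renormalization-avoidance built into $K\in\KKK_r$ eliminates the small filled Julia sets, while the combination of $P$-periodicity, non-degeneracy, simple connectivity, and the PCF hypothesis on $P$ eliminates the remaining thin alternatives.
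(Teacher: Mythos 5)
Your set-up (choosing $U=V_N(K)\Subset V=V_{N-p}(K)$, checking the rational-like conditions, and disposing of the degree-one case by contraction) parallels the paper, but your final step contains a genuine gap, and you have flagged it yourself: after straightening you only know $K\subset\KKK_g$, so $\phi(K)$ is just \emph{some} non-degenerate, simply connected, periodic subcontinuum of the filled Julia set of the straightened polynomial $P$, and you then assert that periodicity, non-degeneracy, simple connectivity, post-critical finiteness of $P$ and avoidance of sub-renormalization copies force $\phi(K)=\overline{D_P}$ for a super-attracting Fatou component $D_P$. That assertion is not a theorem and is false as a general statement: a PCF polynomial can carry plenty of invariant non-degenerate simply connected continua that are not closures of Fatou components and are not small filled Julia sets of renormalizations — the invariant interval $[-2,2]$ of the Chebyshev map $z^2-2$, invariant arcs in dendrite Julia sets, or the entire filled Julia set itself. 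Nothing in the ingredients you list excludes these ``thin'' alternatives; the exclusion has to come from information about $K$ inside the dynamics of $f$ that you have not transported through $\phi$. Even granting the identification, you would still need $\partial D_P$ to be a quasicircle for a possibly non-hyperbolic PCF polynomial $P$ (other critical points of $P$ could lie on or accumulate at $\partial D_P$), which again requires an argument you do not give.

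The missing idea — and the reason the paper's proof is short — is that $K$ is not merely contained in the filled Julia set of the polynomial-like map: it \emph{equals} it, because the paper has already shown $K=\bigcap_{n>0}V_n(K)$ (just before Corollary \ref{compact}), and $V_{n+kp}(K)$ is exactly the pullback component of $V_n(K)$ containing $K$. With this, no straightening to an abstract $P$ and no identification problem arise. Since $V_n(K)$ contains at most one point of $\PPP_f$, the proper map $f^p:V_{n+p}(K)\to V_n(K)$ has at most one critical value; if the degree is $\ge 2$ for all large $n$ (the only possibility when $K$ is non-degenerate, as your contraction argument shows), then a critical point and its post-critical image lie in $\bigcap_n V_n(K)=K$, producing a super-attracting periodic point in $K$, and the polynomial-like map with this single periodic critical value is hybrid equivalent to $z\mapsto z^d$. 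Hence its filled Julia set — which \emph{is} $K$ — is the closure of a quasi-disk, namely the closure of the periodic Fatou domain of $f$ containing that super-attracting point. To repair your write-up, replace the passage from ``The image $\phi(K)$ is then\ldots'' onward by this identification $K=\KKK_g$ and the $z^d$ model; as it stands, the concluding identification is unproved.
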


\begin{proof} Let $K$ be a periodic component of $\KKK_r$ with period $p\ge 1$. Then as $n$ is large enough, $V_n(K)$ is simply connected and $f^p:\,V_{n+p}(K)\to V_n(K)$ is proper with at most one critical point. Moreover $V_{n+p}(K)\Subset V_n(K)$ by Proposition \ref{proper}. If $K$ contains no super-attracting periodic points of $f$, then
$$
\deg (f^p: \,V_{n+p}(K)\to V_n(K))=1
$$
as $n$ is large enough. Thus, $K$ is a single point. Otherwise $f^p: V_{n+p}(K)\to V_n(K)$ is a polynomial-like map and $K$ is the closure of a quasi-disk, which is a periodic Fatou domain of $f$.
\end{proof}

The open set $\UUU$ can be decomposed into $\UUU=\DDD\sqcup\GGG\sqcup\RRR$ as follows: \\
\indent (1) $\DDD$ is the union of simply connected components of $\UUU$ which are disjoint from $\PPP_f$.  \\
\indent (2) $\GGG$ is the union of simply connected components of $\UUU$ containing exactly one point of $\PPP_f$. \\
\indent (3) $\RRR$ is the union of annular components of $\UUU$ (each of them is a component of $f^{-s}(\AAA)$ which is either contained essentially in $\AAA$ or peripheral around a point $x\in\PPP_f\smm\KKK_c$, i.e. it separates the point $x$ from other points of $\PPP_f$).

Obviously, both $\GGG$ and $\RRR$ have only finitely many components.

\begin{proposition}\label{GGG}
$f^{-1}(\GGG)\subset\DDD\cup\GGG$ and each component of $\GGG$ contains exactly one component of $f^{-1}(\GGG)$.
\end{proposition}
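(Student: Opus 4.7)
The plan is to analyze how $f$ acts on components of $\UUU=\cbar\smm\KKK_e$, using that $f^{-1}(\UUU)\subset\UUU$ (equivalently, $f(\KKK_e)\subset\KKK_e$). The key preliminary observation I will establish is that for every component $U'$ of $\UUU_1=f^{-1}(\UUU)$, the image $f(U')$ is a single component of $\UUU$: the restriction $f|_{\UUU_1}:\UUU_1\to\UUU$ is proper, so $f|_{U'}:U'\to f(U')$ is proper as well, and $\partial f(U')\subset f(\partial U')\subset\KKK_e$ combined with $f(U')$ being open and connected forces $f(U')$ to be both open and closed in the $\UUU$-component containing it.

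For the inclusion $f^{-1}(\GGG)\subset\DDD\cup\GGG$, I take a component $W$ of $f^{-1}(G')$ with $G'\in\GGG$. Then $W$ is a component of $\UUU_1$, and the preliminary observation gives $f(W)=G'$. The map $f|_W:W\to G'$ is therefore proper, and since $G'$ contains a unique $\PPP_f$-point $y$ while critical values of $f$ lie in $\PPP_f$, all the branching of $f|_W$ sits over $y$; Riemann--Hurwitz then yields $\chi(W)=|W\cap f^{-1}(y)|\ge 1$, so $W$ is simply connected. A direct boundary analysis then identifies $W$ with the $\UUU$-component containing it, placing $W$ in $\DDD\cup\GGG$.

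For the second claim, the first claim implies that any component of $f^{-1}(\GGG)$ contained in $G'\in\GGG$ must coincide with $G'$, so it suffices to show $G'$ itself is a component of $f^{-1}(\GGG)$, i.e., $f(G')\in\GGG$. The preliminary observation identifies $f(G')$ as a component of $\UUU$, and the same Riemann--Hurwitz computation on $f:G'\to f(G')$ forces $f(G')$ to be simply connected. Writing $x$ for the unique $\PPP_f$-point of $G'$, the defining property $x\notin\bigcup_{n\ge 1}f^{-n}(\KKK_c)$ (case (c) of the trichotomy) is preserved under $f$, so $f(x)$ is again of type (c) and lies in some $\GGG$-component; hence $f(G')\in\GGG$. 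The main technical obstacle I expect is the preliminary observation identifying images of $\UUU_1$-components with full $\UUU$-components, together with the associated step showing $W$ already exhausts its ambient $\UUU$-component.
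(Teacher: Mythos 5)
Your preliminary observation (each component of $\UUU_1=f^{-1}(\UUU)$ maps properly onto a full component of $\UUU$) and the Riemann--Hurwitz computation showing that a component $W$ of $f^{-1}(G')$ is simply connected are both correct. The genuine gap is the step you call ``a direct boundary analysis'': the identification of $W$ with the component of $\UUU$ containing it, and, in the second half, the application of the preliminary observation to $G'$ itself. The set $\KKK_e$ is only forward invariant, so $f^{-1}(\KKK_e)\smm\KKK_e$ is in general nonempty and its pieces lie inside components of $\UUU$ (the paper itself later isolates $\DDD'$, the components of $\DDD$ meeting $f^{-1}(\KKK_e)$). Consequently a component of $\UUU$ need not be a component of $\UUU_1$: the $\UUU$-component $V\supset W$ may contain, besides $W$, components of $f^{-1}(\KKK_e)\smm\KKK_e$ (for instance level $s+1$ preimages of $\KKK_c$ whose image is an annular-type component of $\KKK_e$), in which case $W\subsetneq V$; and simple connectivity of $W$ says nothing about $V$, so by itself it does not exclude $V\subset\RRR$. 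Likewise $G'\in\GGG$ is not known to satisfy $f(G')\subset\UUU$, so you cannot invoke the preliminary observation for $f:G'\to f(G')$; the assertion $f(G')\in\GGG$, i.e.\ that $G'$ is itself a component of $f^{-1}(\GGG)$, is strictly stronger than the proposition, is nowhere justified, and your uniqueness claim rests on it circularly.

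What is actually needed is much less. For $f^{-1}(\GGG)\subset\DDD\cup\GGG$: if the $\UUU$-component $V$ containing $W$ were in $\RRR$, then $G'=f(W)\subset f(\RRR)\subset f^{-s+1}(\AAA)$, which is disjoint from $\PPP_f$, contradicting that $G'$ contains a point of $\PPP_f$; no identification $W=V$ is required. For ``exactly one'': existence follows since $f(x)$ again lies in a component $G_1$ of $\GGG$ (as you note), so the component of $f^{-1}(G_1)$ through $x$ lies in $G'$; but uniqueness requires bounding the number of points of $f^{-1}(\PPP_f)$ in $G'$. The paper does this by showing $G'\subset W_0$, where $W_0$ is the component of $f^{-1}(W_1)$ containing $x$ and $W_1$ is the component of $\cbar\smm\KKK_c$ containing $f(x)$, and then running on $f:W_0\to W_1$ essentially your Riemann--Hurwitz count to see that $W_0$ contains a single point of $f^{-1}(\PPP_f)$; since every component of $f^{-1}(\GGG)$ maps onto a $\GGG$-component and hence contains a point of $f^{-1}(\PPP_f)$, at most one such component can lie in $G'$. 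Your proposal contains no substitute for this counting step.
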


\begin{proof}
Assume that $f^{-1}(\GGG)\cap\RRR\neq\emptyset$. Then $f^{-1}(\GGG)$ has a component $W\subset\RRR$ since $f^{-1}(\UUU)\subset\UUU$. Thus, $f(W)\subset f(\RRR)\subset f^{-s+1}(\AAA)$. But $f(W)$ is a component of $\GGG$ and $f^{-s+1}(\AAA)$ is disjoint from $\PPP_f$. Contradiction.

Let $G$ be a component of $\GGG$. Denote by $x$ be the unique point of $\PPP_f\cap G$. Then $f(x)$ is also contained in a component $G_1$ of $\GGG$. Thus, $G$ contains a component of $f^{-1}(G_1)$.

Let $W_1$ be the component of $\cbar\smm\KKK_c$ that contains $f(x)$. Let $W_0$ be the component of $f^{-1}(W_1)$ that contains the point $x$. Then $G\subset W_0$. Since $W_0$ contains a unique point in $f^{-1}(P_f)$, so does $G$. Therefore, $G$ contains exactly one component of $f^{-1}(\GGG)$.
\end{proof}

\begin{proposition}\label{DDD}
Let $K$ be a wandering component of $\KKK(\Gamma)$. There exists an integer $n>0$ such that $f^n(K)\subset\DDD$.
\end{proposition}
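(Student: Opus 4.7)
My plan is to reduce $K$ to a simply connected continuum (or point) in $\KKK_r$ that avoids $\PPP_f$, then rule out each of $\RRR$ and $\GGG$ as persistent receptacles of the forward iterates $f^n(K)$. The first reductions are routine: if some $p\in K$ lay in $\PPP_f$, then $p$ would be eventually periodic (since $\PPP_f$ is forward invariant and finite), giving two distinct iterates $f^m(K),f^n(K)$ sharing a common point and contradicting wandering; the same argument applies to each $f^n(K)$. By Proposition \ref{decomposition1} every component of $\bigcup_{m\ge 0} f^{-m}(\KKK_c)$ is eventually periodic, so $K$ avoids this grand orbit and in fact $K\subset\KKK_r$; Corollary \ref{compact} then identifies $K$ as either a single point or a simply connected continuum. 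Consequently each $f^n(K)$ lies in exactly one component $W_n$ of $\UUU=\DDD\sqcup\GGG\sqcup\RRR$.

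Next, to rule out $W_n\in\RRR$ for large $n$, I would invoke Proposition \ref{disk}: there exists $N_0$ such that $U_n(K)$ is simply connected for all $n\ge N_0$. For such $n$, $f^n\colon U_n(K)\to W_n$ is a proper holomorphic surjection between planar genus-zero domains, and Riemann--Hurwitz applied to $\chi(U_n(K))=1$ forces $\chi(W_n)\ge 1$. Hence $W_n$ is simply connected and lies in $\DDD\cup\GGG$.

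Now suppose for contradiction that infinitely many $W_n\in\GGG$. Finiteness of $\GGG$ and pigeonhole produce a component $W\in\GGG$ visited by $f^{n_k}(K)$ along an infinite subsequence $n_1<n_2<\cdots$. Write $\{x\}=\PPP_f\cap W$; the observations preceding this proposition place the entire orbit of $x$ in $\PPP_f\setminus\bigcup_m f^{-m}(\KKK_c)$, so $x$ is pre-periodic with eventual period $q$. After replacing $K$ by a sufficiently late iterate (still wandering) and restricting the $n_k$ to a single residue class modulo $q$, I may assume $K\subset W$, that $x$ is periodic of period $q$, and that each $n_{k+1}-n_k$ is a positive multiple of $q$. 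Iterating Proposition \ref{GGG} along the $x$-cycle provides a nested chain of unique simply connected preimage components $W\supset V_q\supset V_{2q}\supset\cdots$, each mapped properly onto $W$ by an iterate of $f^q$. By Proposition \ref{proper} the inclusions are eventually compact, so $f^q\colon V_q\to W$ is a rational-like map in the sense of Section~5.1 whose filled Julia set is $K^*=\bigcap_j V_{jq}$. Theorem \ref{renorm} (together with Proposition \ref{decomposition2} for the super-attracting case) identifies $K^*$ with a periodic component of $\KKK_c$ or the closure of a periodic Fatou domain, and the recurrence $f^{n_k}(K)\subset W$ combined with the uniqueness of the $V_{jq}$ forces $K\subset K^*$, making $K$ pre-periodic---a contradiction.

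The principal difficulty is the uniqueness of the nested chain $V_q\supset V_{2q}\supset\cdots$. One must rule out the degenerate case $V_q=W$, in which $f^q|_W$ would be a self-covering of a disk fixing $x$ and hence conjugate to a rotation on a Siegel-type disk, excluded for post-critically finite maps. One must also verify that every recurrent iterate $f^{n_k}(K)$ genuinely nests into the correct $V_{jq}$ rather than some spurious second component of $f^{-jq}(W)\cap W$, which is where the simply connected geometry of the $\GGG$-components and the role of $x$ as the unique $\PPP_f$-point of $W$ become essential.
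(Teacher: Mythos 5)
Your overall strategy proves statements that are strictly stronger than Proposition \ref{DDD} and that are in fact false in general, so the argument cannot be repaired as it stands. First, Proposition \ref{disk} does not say that $U_n(K)$ is eventually simply connected: it only rules out an infinite essentially nested chain of annuli, and the paper explicitly retains the alternative that $U_n(K)$ is an annulus for arbitrarily large $n$ (this is exactly why the sets $V_n(K)=U_n(K)\cup E_n(K)$ are introduced right after Proposition \ref{disk}). Since $U_n(K)$ is an annulus precisely when the component $W_n$ of $\UUU$ containing $f^n(K)$ lies in $\RRR$, your step (a), ``$W_n\notin\RRR$ for all large $n$'', is unsupported; worse, it is false: a wandering point component whose orbit is dense in $\JJJ_f$ enters the open sets $\RRR\cap\JJJ_f$ (and $\GGG\cap\JJJ_f$, when nonempty) at infinitely many times. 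For the same reason your step (b), ``only finitely many $W_n\in\GGG$'', also fails in general. Proposition \ref{DDD} only asserts that the orbit of $K$ meets $\DDD$ at least once, not that it eventually stays out of $\RRR$ and $\GGG$.

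The paper's proof is a contradiction argument under the standing hypothesis that $f^n(K)\subset\RRR\cup\GGG$ for \emph{all} $n>0$, and this hypothesis is what makes both cases work. If the orbit is eventually always in $\RRR$, then $f^{n+s}(K)\subset\AAA$ for all large $n$, and since every component of $\AAA\smm\KKK_e$ is either a component of $\DDD$ (excluded by the hypothesis) or contained in $\AAA^1$, the orbit is trapped in the annular system, forcing $K\subset\JJJ(\Gamma)$ --- contradiction. If instead the orbit meets $\GGG$ infinitely often, then Proposition \ref{GGG} ($f^{-1}(\GGG)\subset\DDD\cup\GGG$) together with the exclusion of $\DDD$ pushes membership in $\GGG$ backwards to every time $n$, and the uniqueness statement in Proposition \ref{GGG} then makes $K$ eventually periodic --- contradiction. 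Your replacement for the $\GGG$-case (a renormalization-type nest $W\supset V_q\supset V_{2q}\supset\cdots$ around the marked point $x$) is, by your own admission, incomplete at precisely the delicate points (uniqueness of the nest, ruling out other components of $f^{-jq}(W)\cap W$ containing the recurrent iterates), and it aims at a conclusion that is false without the standing hypothesis. The missing idea is thus the global structure of the proof: argue by contradiction from ``the orbit never meets $\DDD$'' rather than trying to show the orbit eventually avoids $\RRR$ and $\GGG$.
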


\begin{proof} Assume by contradiction that $f^n(K)\subset\RRR\cup\GGG$ for all $n>0$.

{\it Case 1}. There exists an integer $n_0>0$ such that $f^n(K)\subset\RRR$ for all $n\ge n_0$. Then $f^{n+s}(K)\subset\AAA$ for $n\ge n_0$ since each component of $\RRR$ is a component of $f^{-s}(\AAA)$. On the other hand, there is at least one component of $\KKK_e$ contained essentially in each component of $\AAA$. Thus, each component of $\AAA\smm\KKK_e$ is either a component of $\DDD$ or is contained in $\AAA^1$. Thus, $f^{n+s}(K)\subset\AAA^1$ for $n\ge n_0$ by the assumption. Therefore, $K\subset\JJJ(\Gamma)$. Contradiction.

{\it Case 2}. There exist infinitely many integers $k>1$ such that $f^k(K)\subset\GGG$. Then $f^{k-1}(K)\subset\GGG\cup\DDD$ by Proposition \ref{GGG}. Hence $f^{k-1}(K)\subset\GGG$ by the assumption. Therefore, $f^n(K)\subset\GGG$ for all $n>0$. It follows that $K$ is eventually periodic by Proposition \ref{GGG}. This is a contradiction since $K$ is wandering. The lemma is proved.
\end{proof}

Let $\DDD'$ be the union of components of $\DDD$ intersecting $f^{-1}(\KKK_e)$. Then $\DDD'$ has only finitely many components. Denote $\DDD''=\DDD\smm\DDD'$.
For each component $D$ of $\DDD''$, it is a component of $\cbar\smm f^{-1}(\KKK_e)$. Thus, $f(D)$ is also a component of $\DDD$.

\begin{proposition}\label{DDD'}
Let $K$ be a wandering component of $\KKK(\Gamma)$. There exists an integer $n>0$ such that $f^n(K)\subset\DDD'$.
\end{proposition}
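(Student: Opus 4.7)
The plan is to argue by contradiction. Suppose that $f^n(K) \not\subset \DDD'$ for every $n \ge 0$. By Proposition \ref{DDD}, choose $n_0 > 0$ with $f^n(K) \subset \DDD$ for all $n \ge n_0$, and for each such $n$ denote by $D_n$ the component of $\DDD$ containing $f^n(K)$. Since $D_n$ is a single component of $\DDD$, it lies entirely in $\DDD'$ or in $\DDD''$; the former is excluded by our assumption, so $D_n \in \DDD''$. The remark preceding the statement of the proposition then says that $f(D_n)$ is a component of $\DDD$, and since $f(D_n)$ contains $f^{n+1}(K)$ it must equal $D_{n+1}$.

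Next I would observe that $f|_{D_n}: D_n \to D_{n+1}$ is a homeomorphism. Indeed, $D_n$ is a component of $\cbar \smm f^{-1}(\KKK_e)$, while $D_{n+1}\subset\DDD$ is simply connected and disjoint from $\PPP_f$, hence contains no critical value of $f$. Thus $f$ restricted to any component of $f^{-1}(D_{n+1})$ is an unramified covering onto $D_{n+1}$, which by simple-connectedness must be a homeomorphism. With this in hand I would prove by induction on $k$ the statement that $U_k(f^n(K)) = D_n$ for all $n \ge n_0$ and $k \ge 0$. The base case $k=0$ is the definition of $D_n$. For the inductive step, the hypothesis $U_k(f^{n+1}(K)) = D_{n+1}$ says that $D_{n+1}$ is a component of $\UUU_k$, so $D_{n+1}\subset\UUU_k$; pulling this back by the homeomorphism $f|_{D_n}$ gives $D_n = (f|_{D_n})^{-1}(D_{n+1}) \subset f^{-1}(\UUU_k) = \UUU_{k+1}$. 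Since $D_n$ is connected, contains $f^n(K)$, and already $U_{k+1}(f^n(K)) \subset U_k(f^n(K)) = D_n$, this forces $U_{k+1}(f^n(K)) = D_n$.

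Because $K$ is wandering, so is each iterate $f^n(K)$; by Proposition \ref{decomposition1} every component of $\bigcup_{n \ge 0} f^{-n}(\KKK_c)$ is eventually periodic, so the orbit of $K$ avoids this set and $f^{n_0}(K)$ is a component of $\KKK_r$. Proposition \ref{proper} then supplies some $m>0$ with $U_m(f^{n_0}(K)) \Subset U_0(f^{n_0}(K))$, which by the induction reads $D_{n_0} \Subset D_{n_0}$, absurd for a nonempty open set. The main obstacle is the homeomorphism claim $f|_{D_n}: D_n \to D_{n+1}$, on which the induction depends; once that is in place, the rest is bookkeeping combined with the strict-nesting statement of Proposition \ref{proper}.
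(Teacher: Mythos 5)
Your proof is correct, but it takes a genuinely different route from the paper's. The paper argues that if the orbit of $K$ stayed in $\DDD''$ forever, the components $D_n$ would satisfy $f(D_n)=D_{n+1}$ and would be either eventually periodic or wandering; it then rules out the periodic case because the $D_n$ are disjoint from $\PPP_f$ and have boundary in $\JJJ_f$ (so a periodic cycle would force a Fatou cycle without post-critical points, impossible for a post-critically finite map), and the wandering case by Sullivan's no-wandering-domains theorem. You instead observe that $D_n$ is a full component of $f^{-1}(D_{n+1})$ and that $D_{n+1}$, being simply connected and disjoint from $\PPP_f$, carries no critical values, so $f|_{D_n}:D_n\to D_{n+1}$ is a conformal homeomorphism; this lets you pull back the relation $U_k(f^{n+1}(K))=D_{n+1}$ to get $U_k(f^n(K))=D_n$ for every $k$, which contradicts the strict nesting $U_m\Subset U_0$ of Proposition \ref{proper}. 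Your argument stays entirely inside the puzzle machinery already developed in \S5 and avoids invoking Sullivan's theorem, at the cost of the extra homeomorphism/induction bookkeeping; the paper's proof is shorter but leans on the classification of periodic and wandering domains. Two small points to tidy up: Proposition \ref{DDD} only provides one $n_0$ with $f^{n_0}(K)\subset\DDD$, and the statement ``$f^n(K)\subset\DDD$ for all $n\ge n_0$'' is not part of it --- it follows inductively from your own step $D_n\in\DDD''\Rightarrow f(D_n)=D_{n+1}\in\DDD$ under the contradiction hypothesis, so phrase it that way; and ``$f^{n_0}(K)$ is a component of $\KKK_r$'' should be weakened to ``$f^{n_0}(K)$ is contained in a component $K'$ of $\KKK_r$'' (images of components of the non-closed set $\KKK(\Gamma)$ need not obviously be components), which is all you need, since $K'\subset\UUU_k$ for every $k$ and hence $U_k(K')=U_k(f^{n_0}(K))$, so Proposition \ref{proper} applied to $K'$ still yields $D_{n_0}\Subset D_{n_0}$, which is absurd because $D_{n_0}$ is a nonempty open set distinct from $\cbar$.
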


\begin{proof}
By Proposition \ref{DDD}, there exists an integer $k>0$ and a component $D$ of $\DDD$ such that $f^k(K)\subset D$. Assume by contradiction that $f^{n+k}(K)\subset\DDD''$ for all $n\ge 0$. Let $D_n$ be the component of $\DDD''$ that contains $f^{n+k}(K)$. Then $f(D_n)=D_{n+1}$. Thus, they are either eventually periodic or wandering. Since $\partial D_n\subset\JJJ_f$, the former case is impossible since they are disjoint from $\PPP_f$. The latter case is also impossible by Sullivan's no wandering Fatou domain Theorem.
\end{proof}

\begin{proposition}\label{decomposition3}
Each wandering component of $\KKK(\Gamma)$ is a single point.
\end{proposition}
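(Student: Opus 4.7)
Suppose for contradiction that some wandering component $K$ of $\KKK(\Gamma)$ is not a single point; by Corollary~\ref{compact}, $K$ is then a simply connected continuum. I first show $K\subset\JJJ_f$. If not, $K$ meets the Fatou set at some $z$; the Fatou component $W\ni z$ is connected and contained in $\FFF_f\subset\KKK(\Gamma)$, and meets $K$, so $W\subset K$. By Sullivan's no wandering Fatou domain Theorem one has $f^n(W)=f^{n+p}(W)$ for some $n\ge 0$, $p\ge 1$. Then $f^n(W)\subset f^n(K)\cap f^{n+p}(K)$, and since $f^n(K)$ and $f^{n+p}(K)$ are both components of $\KKK(\Gamma)$ containing the connected set $f^n(W)$, they coincide, contradicting that $K$ is wandering.

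Iterating Proposition~\ref{DDD'} produces an increasing sequence $n_1<n_2<\cdots$ with $f^{n_i}(K)\subset\DDD'$. Since $\DDD'$ has only finitely many components, after passing to a subsequence I may assume a fixed component $D^*$ of $\DDD'$ contains $f^{n_i}(K)$ for every $i$. Let $D^*_i$ denote the component of $f^{-n_i}(D^*)$ containing $K$. Because $D^*$ is simply connected and disjoint from $\PPP_f$, the proper map $f^{n_i}\colon D^*_i\to D^*$ is an unramified covering of a simply connected target, hence a conformal isomorphism; let $\phi_i\colon D^*\to D^*_i$ be its inverse. In particular $D^*_i$ is simply connected and contains $K$ for every $i$.

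I now invoke the singular conformal metric $\rho$ on $\cbar$ (already used at the end of the proof of Theorem~\ref{as}) under which $f$ is strictly expanding off a neighborhood of the super-attracting cycles. Since $K\subset\JJJ_f$ and the entire forward orbit of $K$ remains in the compact set $\JJJ_f$, which is bounded away from the super-attracting basins, for any fixed $z_0\in K$ one has $\|(f^{n_i})'(z_0)\|_\rho\ge c\lambda^{n_i}$ with constants $c>0$ and $\lambda>1$. Equivalently, $\|\phi_i'(f^{n_i}(z_0))\|_\rho\le c^{-1}\lambda^{-n_i}$. The Koebe distortion theorem applied to the univalent map $\phi_i$ on the fixed simply connected domain $D^*$ then gives
\[
\operatorname{diam}_\sigma\bigl(\phi_i(L)\bigr)\le C(L)\,\lambda^{-n_i}
\]
for every compact $L\Subset D^*$ containing $f^{n_i}(z_0)$, where $\sigma$ denotes the spherical metric and $C(L)$ is independent of $i$.

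The main obstacle I expect is to ensure that a single compact set $L\Subset D^*$ can be chosen with $f^{n_i}(K)\subset L$ for infinitely many $i$. Granting this, $K=\phi_i\bigl(f^{n_i}(K)\bigr)\subset\phi_i(L)$ has spherical diameter tending to $0$, so $K$ is a single point, contradicting our assumption. To rule out the opposite scenario, in which $f^{n_i}(K)$ Hausdorff-accumulates on $\partial D^*\subset\KKK_e$, I would use that $\KKK_e$ consists of finitely many eventually periodic continua (Proposition~\ref{decomposition1}): a Hausdorff accumulation of the wandering orbit $\{f^{n_i}(K)\}$ onto a pre-periodic piece of $\KKK_e$ forces, by a variant of the Sullivan-type argument used above to establish $K\subset\JJJ_f$, the orbit of $K$ itself to be eventually periodic, a contradiction. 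This boundary-behavior analysis is the technically delicate step; the remaining ingredients (conformal pullback on simply connected components of $\DDD'$, orbifold expansion in the Julia set, and Koebe distortion) are standard.
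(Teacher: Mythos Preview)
Your overall strategy---use Proposition~\ref{DDD'} to land in $\DDD'$ along a subsequence, then pull back conformally and invoke orbifold expansion---is exactly the paper's. The gap is precisely where you locate it, and your proposed Sullivan-type patch does not close it. Sullivan's theorem for Fatou domains relies on their being open sets with only finitely many post-critical values to navigate; there is no analogous obstruction to a wandering sequence of continua in the Julia set Hausdorff-accumulating on a periodic piece of $\KKK_e$. So nothing in your argument excludes $f^{n_i}(K)$ drifting out to $\partial D^*$, and once that happens, Koebe distortion from the single-point derivative estimate $\|\phi_i'(f^{n_i}(z_0))\|\le c^{-1}\lambda^{-n_i}$ gives no control over $\operatorname{diam}_\sigma\phi_i(f^{n_i}(K))$.

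The paper avoids this entirely by abandoning the Koebe viewpoint and working instead with a \emph{homotopic diameter} in the orbifold metric: for a simply connected $D\subset\cbar\smm\PPP_f$, set $\text{h-diameter}(D)=\sup_{z,w\in D}\ell[\gamma(z,w)]$, where $\ell[\gamma(z,w)]$ is the infimal orbifold length among arcs in $\cbar\smm\PPP_f$ homotopic rel $\PPP_f\cup\{z,w\}$ to an arc in $D$ joining $z$ to $w$. The crucial fact is that $\text{h-diameter}(D)<\infty$ whenever $\overline D$ is locally connected and disjoint from super-attracting cycles; local connectivity of $\overline D$ for each component $D$ of $\DDD'$ is supplied by Corollary~\ref{local}. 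This yields a uniform bound $\text{h-diameter}(D)\le M$ valid on all of $D$, not merely on compact subsets, so the location of $f^{n_k}(K)$ inside $D_k$ becomes irrelevant. Pullback by $f$ never increases $\text{h-diameter}$, and contracts it by $\lambda^{-1}$ each time the orbit passes through $f^{-1}(\DDD')$; after $k$ such passes one obtains $\text{h-diameter}(W_k)\le M\lambda^{-k}\to 0$, whence $K$ is a point. The ingredient your approach lacks is exactly this use of local connectivity of $\overline{D^*}$ (via Corollary~\ref{local}) to get a diameter notion bounded up to the boundary. Your preliminary step $K\subset\JJJ_f$ is correct but, with the h-diameter machinery in place, unnecessary.
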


\begin{proof}
For each simply connected domain $D\subset\cbar\smm\PPP_f$, denote
$$
\text{h-diameter}(D)=\sup_{z, w\in D}\ell[\gamma(z,w)],
$$
where $\gamma(z,w)$ is an arc in $D$ connecting the points $z$ and $w$, and $\ell[\gamma(z,w)]$ is the infimum of the length of arcs in $\cbar\smm\PPP_f$ under the orbifold metric over all the arcs in $\cbar\smm\PPP_f$ homotopic to $\gamma(z,w)$ rel $\PPP_f\cup\{z,w\}$.

It is easy to verify that if $\overline D$ is locally connected and disjoint from super-attracting periodic points of $f$, then $\text{h-diameter}(D)<\infty$.

For each component $D$ of $\DDD$, $\overline D$ is locally connected by Corollary \ref{local} and disjoint from super-attracting periodic points of $f$. Thus, $\text{h-diameter}(D)<\infty$. Then there exists a constant $M<\infty$ such that
$$
\text{h-diameter}(D)\le M\text{ for each component $D$ of $\DDD'$}.
$$

Since $\overline{\DDD'}$ is disjoint from super-attracting cycles of $f$, there exists a constant $\lambda>1$ such that $\|f'\|\ge\lambda$ on $f^{-1}(\DDD')$,
the norm being with respect to the orbifold metric.

Let $K$ be a wandering component of $\KKK(\Gamma)$. From Proposition \ref{DDD'}, there exists an infinite increasing sequence $\{n_k\}$ of positive integers such that $f^{n_k}(K)\subset D_k$ and $D_k$ are components of $\DDD'$. Let $W_k$ be the component of $f^{-n_k}(D_k)$ that contains $K$, then $$
\text{h-diameter}(W_k)\le M\lambda^{-k}.
$$
Thus, the spherical diameter of $W_k$ converges to zero as $k\to\infty$. It follows that $K$ is a single point.
\end{proof}

{\noindent\it Proof of Theorem \ref{decomposition}}. Combining Propositions \ref{decomposition1}, \ref{decomposition2}, \ref{decomposition3} and Theorem \ref{renorm}, we obtain Theorem \ref{decomposition}. \qed

\section{Coding the quotient dynamics}

In this section we will prove Theorem \ref{coding}, which was
suggested to us by K. Pilgrim. At first we give some definitions.

\begin{definition} A {\bf dendrite} is a locally connected and uniquely arc-wise connected continuum.

Let $\TTT\subset\cbar$ be a dendrite. A continuous onto map $\tau: \TTT\to\TTT$ is called a {\bf finite dendrite map}
if there exists a finite tree $T_0\subset\TTT$ such that the following statements hold. \\
\indent (1) For each $n\ge 1$, $T_n:=\tau^{-n}(T_0)$ is also a finite tree with $v(T_n)=\tau^{-n}(v(T_0))$, where $v(\cdot)$ denotes the set of vertices of a tree. \\
\indent (2) $T_n\subset T_{n+1}$ and $v(T_n)\subset v(T_{n+1})$. \\
\indent (3) $\tau$ is a homeomorphism on each edge of $T_n$. \\
\indent (4) $\cup_{n\ge 0}T_n$ is dense in $\TTT$. \\
\indent (5) $\deg\tau:=\sup\{\#\tau^{-1}(x),\, x\in\TTT\}<\infty$.
\end{definition}

\subsection{The tower of tree maps}

\begin{definition}
By {\bf a tower of tree maps} we mean an infinite sequence of triples $\{T_n, \iota_n,\tau_n\}_{n\ge 0}$, where $T_n$ are finite trees, $\iota_n:\, T_n\to T_{n+1}$ are inclusions and $\tau_n:\, T_{n+1}\to T_n$ are continuous onto maps such that: \\
\indent (a) $\iota_n(\VVV_n)\subset\VVV_{n+1}=\tau_n^{-1}(\VVV_n)$, where $\VVV_n$ is the set of vertices of $T_n$, \\
\indent (b) $\tau_{n+1}(T_{n+2}\smm\iota_{n+1}(T_{n+1}))\subset T_{n+1}\smm\iota_{n}(T_{n})$, \\
\indent (c) $\tau_n$ is a homeomorphism on each edge of $T_{n+1}$, and \\
\indent (d) the following diagram commutes:
$$
\diagram
\cdots\rto^{\tau_n} & T_n\dto_{\iota_{n}}\rto^{\tau_{n-1}} & T_{n-1}\dto_{\iota_{n-1}}\rto^{\tau_{n-2}} & \cdots \rto^{\tau_{1}} & T_1\dto_{\iota_{1}}\rto^{\tau_0} & T_{0}\dto^{\iota_0} &  \\
\cdots\rto^{\tau_{n+1}} & T_{n+1}\rto^{\tau_{n}} & T_n\rto^{\tau_{n-1}} & \cdots\rto^{\tau_2} & T_2\rto^{\tau_{1}} & T_1\rto^{\tau_0} & T_0
\enddiagram
$$
\end{definition}

The {\bf degree of the tree map} $\tau_n: T_{n+1}\to T_n$ is defined by
$$
\deg\tau_n=\sup\{\#\tau_{n}^{-1}(y),\, y\in T_n\}.
$$
Note that the sequence $\{\deg\tau_n\}$ is increasing. The {\bf degree of the tower} is defined to be the limit of this sequence as $n\to\infty$.

A tower of tree maps $\{T_n, \iota_n, \tau_n\}_{n\ge 0}$ is called {\bf expanding} if there exist a constant $\lambda>1$ and a linear metric on $T_0$ such that $(\tau_0\circ\iota_0): T_0\to T_0$ is $C^1$ under this linear metric and the norm of its derivative is bigger than $\lambda$ on $T_0$.

\begin{theorem}\label{limit}
Let $\{T_n, \iota_n, \tau_n\}_{n\ge 0}$ be an expanding tower of tree maps. Suppose that its degree is bounded. Then there exist an expanding finite dendrite map $\tau: \TTT\to\TTT$ and inclusions $i_n: T_n\to\TTT$ for all $n\ge 0$ such that $i_n=i_{n+1}\circ\iota_n$ and the following diagram commutes:
$$
\diagram
\cdots\rto^{\tau_{n+1}} & T_{n+1}\dto_{i_{n+1}}\rto^{\tau_n} & T_n\dto_{i_n}\rto^{\tau_{n-1}}
& \cdots \rto^{\tau_1} & T_1\dto_{i_1}\rto^{\tau_0} & T_0\dto^{i_0} \\
\cdots\rto^{\tau} & \TTT\rto^{\tau} & \TTT\rto^{\tau} & \cdots\rto^{\tau} & \TTT\rto^{\tau} & \TTT
\enddiagram
$$
Moreover the finite dendrite map $\tau:\,\TTT\to\TTT$ is unique up to topological conjugacy.
\end{theorem}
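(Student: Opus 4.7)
First I observe a combinatorial fact: since $\VVV_{n+1}=\tau_n^{-1}(\VVV_n)$ by (a) and $\tau_n$ is a homeomorphism on each edge by (c), no interior point of an edge of $T_{n+1}$ can map to a vertex of $T_n$; hence each edge of $T_{n+1}$ is mapped homeomorphically by $\tau_n$ onto a single edge of $T_n$. Using this, I inductively assign a linear length to each edge of each $T_n$ so that (i) the inclusion $\iota_n\colon T_n\to T_{n+1}$ is isometric (after subdividing edges of $T_n$ at the new vertices $\VVV_{n+1}\smm\iota_n(\VVV_n)$), and (ii) every new edge $e\in T_{n+1}\smm\iota_n(T_n)$ has length $\lambda^{-1}\ell(\tau_n(e))$, where $\tau_n(e)$ is the single edge of $T_n$ onto which $e$ is mapped. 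For $n\geq 1$, property (b) forces $\tau_n(e)$ itself to be a new edge of $T_n$, whose length has already been prescribed; for $n=0$, $\tau_0(e)$ is an edge of $T_0$ whose $d_0$-length is given. By construction $\tau_n$ has derivative exactly $\lambda$ on every new edge, while on the old part $\iota_n(T_n)\subset T_{n+1}$ its derivative agrees with that of $\tau_{n-1}$ on the corresponding edge of $T_n$ via the commutativity (d), and is ultimately bounded below by the expansion rate $>\lambda$ of $\tau_0\circ\iota_0$ on $T_0$.

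\emph{The dendrite.} An easy induction gives $\ell(e)\leq\lambda^{-n}L$ for every new edge $e$ of $T_n$, where $L$ is the maximum $d_0$-length of an edge of $T_0$. Since the inclusions are isometric, $\bigcup_n T_n$ carries a well-defined path metric $d$; let $\TTT$ be its metric completion. Every point $x\in\TTT$ lies within distance
$$
d(x,T_n)\leq\sum_{m>n}\lambda^{-m}L=\frac{L\lambda^{-n}}{\lambda-1}
$$
of $T_n$, so $\TTT$ is totally bounded and hence compact, with $\bigcup_n T_n$ dense in it. The bounded-degree hypothesis keeps the branching at every vertex uniformly finite, which, together with the geometric shrinkage of new edges, yields local connectedness at every point of $\TTT$. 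Unique arcwise connectedness follows by approximating any arc in $\TTT$ by tree-arcs in $T_n$ for large $n$ and using the tree property there. Hence $\TTT$ is a dendrite.

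\emph{The map $\tau$, and uniqueness.} Define $\tau\colon\TTT\to\TTT$ on $\bigcup_n T_n$ by $\tau\circ i_{n+1}=i_n\circ\tau_n$; the commutativity (d) makes this consistent as $n$ varies. On $\bigcup_n T_n$, $\tau$ is linear on each edge with scaling factor between $\lambda$ and the finite constant $\sup_{T_0}|(\tau_0\circ\iota_0)'|$, hence Lipschitz, so it extends uniquely and continuously to $\TTT$. The finite dendrite conditions (1)-(5) are then direct: $\tau^{-n}(T_0)=T_n$ by induction from $\tau^{-1}(T_n)=T_{n+1}$ (using property (b) to rule out preimages outside $T_{n+1}$), vertex and edge combinatorics come from (a) and (c), density was established above, and $\deg\tau=\sup_n\deg\tau_n<\infty$ is the bounded-degree hypothesis; expansion (derivative $\geq\lambda>1$ on every edge) is built into the construction. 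For uniqueness, any other realization $\tau'\colon\TTT'\to\TTT'$ with inclusions $i'_n$ yields level-wise homeomorphisms $i'_n\circ i_n^{-1}\colon i_n(T_n)\to i'_n(T_n)$, compatible with the inclusions by comparing the two commutative diagrams; these glue into a tree-preserving homeomorphism $\phi\colon\bigcup_n i_n(T_n)\to\bigcup_n i'_n(T_n)$ conjugating $\tau$ to $\tau'$, and extend by density to a topological conjugacy $\TTT\to\TTT'$. The principal obstacle is the dendrite verification for $\TTT$: local connectedness at limit points of $\bigcup_n T_n$ and unique arcwise connectedness both require combining the geometric shrinkage from the expanding hypothesis with the bounded-degree hypothesis, without which infinitely many branches could accumulate at a single point and destroy the dendrite structure.
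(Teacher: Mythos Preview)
Your overall strategy follows the paper's closely, but there is a genuine gap in the compactness step. You claim
\[
d(x,T_n)\le\sum_{m>n}\lambda^{-m}L,
\]
implicitly assuming that the arc from $x$ to its nearest point in $T_n$ traverses at most one new edge at each level $m>n$. This is false in general: a single connected component of $T_{m}\smm\iota_{m-1}(T_{m-1})$ is a subtree that can contain many edges, and the arc must traverse it from the attachment point to wherever it entered. Your own observation that each new edge of $T_{m}$ maps under $\tau_{m-1}$ onto a new edge of $T_{m-1}$, together with the degree bound $d$, shows only that the \emph{total} new length satisfies $|T_{m}|-|T_{m-1}|\le (d/\lambda)\bigl(|T_{m-1}|-|T_{m-2}|\bigr)$; if $d\ge\lambda$ this does not sum, and neither the total length nor the radii of the new components need stay bounded. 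So your metric completion need not be compact, and the dendrite verification breaks down before you ever reach the local-connectedness argument.

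The paper's fix is exactly at this point: instead of scaling new edges by your fixed $\lambda$, it chooses a constant $\lambda_1>\max\{\lambda,d\}$ and assigns new edges length so that $|\tau_n'|=\lambda_1$ there. Then $|T_n|-|T_{n-1}|\le(d/\lambda_1)^{n-1}\bigl(|T_1|-|T_0|\bigr)$ with $d/\lambda_1<1$, so the union $\bigcup_n T_n$ has finite total length and its completion is a compact dendrite. This is where the bounded-degree hypothesis actually enters the compactness argument, not only in the local-connectedness and $\deg\tau<\infty$ checks where you invoke it. Once you make this adjustment (replace $\lambda$ by $\lambda_1>d$ on new edges), the rest of your outline---the Lipschitz extension of $\tau$, the verification of conditions (1)--(5), and the uniqueness argument via compatible level-wise homeomorphisms---matches the paper's.
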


The finite dendrite map $\tau:\,\TTT\to\TTT$ will be called the {\bf limit of the tower of tree maps} $\{T_n, \iota_n, \tau_n\}_{n\ge 0}$.

\begin{proof}
Let $|\cdot|$ be an expanding linear metric on $T_0$, i.e. there exists a constant $\lambda>1$ such that $|(\tau_0\circ\iota_0)'|>\lambda$ on $T_0$. Define a metric on $\iota_0(T_0)\subset T_1$ such that $\iota_0$ is an isometry. Define a metric on $T_1\smm\iota_0(T_0)$ such that the norm of the derivative of $\tau_0$ is a constant $\lambda_1>\max\{\lambda, d\}$, where $d$ is the degree of the tower. Then we get a metric on $T_1$ such that $\iota_0$ is an isometry and $\lambda_1\ge |\tau'_0|\ge\lambda$ on $T_1$. The total length of $T_1$ satisfies that $|T_1|\le |T_0|+(d/\lambda_1)|T_0|$.

Inductively, one may define a metric on $T_n$ for $n\ge 2$ such that $\iota_{n-1}$ is an isometry and $\lambda_1\ge |\tau'_{n-1}|\ge\lambda$ on $T_{n}$. By the condition (b), the total length of $T_n$ satisfies that
$$
|T_n|\le |T_{n-1}|+\dfrac{d}{\lambda_1}(|T_{n-1}-|T_{n-2}|).
$$
It deduces that
$$
|T_n|\le |T_0|+\dfrac{d}{\lambda_1}|T_0|+\left(\dfrac{d}{\lambda_1}\right)^2|T_0|+\cdots+\left(\dfrac{d}{\lambda_1}\right)^n|T_0|
\le \dfrac{\lambda_1}{\lambda_1-d}|T_0|.
$$

Denote by $\wt\SSS$ the space consisting of left infinite sequences $ (\cdots, t_1, t_0)$ with $t_n\in T_{n+k}$ for some integer $k\ge 0$, such that for any $n\ge 0$, if $t_n\in T_{n+k}$, then $t_{n+1}=\iota_{n+k}(t_n)$. Define an equivalent relation on $\wt\SSS$ by
$$
(\cdots, s_1, s_0, )\sim (\cdots, t_1, t_0)
$$
if there exists an integer $k$ such that $s_n=t_{n+k}$ whenever $n, n+k\ge 0$. Let $\SSS$ be the quotient space $\wt\SSS/\!\!\sim$. For each point $(\cdots, t_1, t_0)$ in $\wt\SSS$, denote by $[\cdots, t_1, t_0]\in\SSS$ the equivalent class of $(\cdots, t_1, t_0)$. Since $\iota_n$ is an isometry, there exists a metric $\rho$ on $\SSS$ such that the inclusion $i_n: T_n\to\SSS$ defined by:
$$
i_n(t)=[ \cdots, \iota_{n+1}\circ\iota_n(t),\iota_n(t), t]
$$
is an isometry. Clearly, $i_{n+1}\circ\iota_n=i_n$ on $T_n$.

Since $\tau_n\circ\iota_n=\iota_{n-1}\circ\tau_{n-1}$ on $T_n$, there exists a continuous onto map $\tau: \SSS\to \SSS$ such that $\tau\circ i_{n}=i_{n-1}\circ\tau_{n-1}$ on $T_{n}$. Moreover $\lambda_1\ge |\tau'|\ge\lambda$.

Since $\lambda_1>d$, the total length of $\SSS$ is bounded. Let $\TTT$ be the completion of $\SSS$. Then it is a dendrite. The map $\tau$ can be extended to a continuous onto map on $\TTT$ since $\tau$ is uniformly continuous.

One may easily check that all the conditions in the definition of finite dendrite maps are satisfied. The uniqueness of $\tau:\,\TTT\to\TTT$ comes from the condition (4).
\end{proof}

\subsection{From the decomposition to a dendrite map}

Let $f$ be a post-critically finite rational map with a stable Cantor multicurve $\G$. We want to define a tower of tree maps from $f$ and construct a semi-conjugacy from $f$ to the limit of the tower.

Denote by $\Gamma_n$ the collection of all the curves in $f^{-n}(\Gamma)$ for $n\ge 0$. Then each curve in $\Gamma_n$ is essential in $\cbar\smm f^{-n}(\PPP_f)$ and no two of them are homotopic in $\cbar\smm f^{-n}(\PPP_f)$.

For each curve $\gamma\in\Gamma_n$, denote by $\Gamma_{n+1}(\gamma)$ the curves in $\Gamma_{n+1}$ homotopic to $\gamma$ rel $f^{-n}(\PPP_f)$. Since $\Gamma$ is pre-stable and stable, the following results are easy to check:

\begin{proposition}\label{succeed}
Suppose that $\g\in\G_{n+2}$ is not homotopic to any curve in $\G_{n+1}$ rel $f^{-n-1}(\PPP_f)$. Then $f(\g)$ is not homotopic to any curve in $\G_{n}$ rel $f^{-n}(\PPP_f)$.
\end{proposition}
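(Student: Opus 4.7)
The plan is to prove the contrapositive via a direct application of homotopy lifting. The essential structural input is that $\PPP_f$ is forward-invariant under $f$ and contains all critical values; consequently, for every $k\ge 0$ the restricted map
$$
f:\,\cbar\smm f^{-(k+1)}(\PPP_f)\longrightarrow\cbar\smm f^{-k}(\PPP_f)
$$
is an unramified topological covering. Any continuous map of a cylinder into the base, together with a chosen starting lift, then lifts uniquely to a continuous map into the total space.

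Assuming, for the contrapositive, that $f(\g)$ is homotopic to some $\beta\in\G_n$ rel $f^{-n}(\PPP_f)$, I would realise the homotopy as a continuous $H:\,S^1\times [0,1]\to\cbar\smm f^{-n}(\PPP_f)$ whose initial loop $H_0$ equals $f\circ\tilde\alpha_0$ for a homeomorphism $\tilde\alpha_0:\,S^1\to\g$; the terminal loop $H_1$ is a parameterisation of $\beta$ of matching degree. Such an $H$ exists because $f(\g)$ and $\beta$ represent the same free homotopy class in $\pi_1(\cbar\smm f^{-n}(\PPP_f))$, and taking equal powers of freely homotopic loops preserves free homotopy. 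Lifting $H$ via the covering above with $k=n$ and initial condition $\tilde\alpha_0$ yields $\tilde H:\,S^1\times [0,1]\to\cbar\smm f^{-(n+1)}(\PPP_f)$. The terminal loop $\tilde H_1$ has connected image contained in $f^{-1}(\beta)$, hence lies in a single component $\tilde\beta$, which is therefore a curve in $f^{-(n+1)}(\G)=\G_{n+1}$. The map $\tilde H$ exhibits a free homotopy in $\cbar\smm f^{-(n+1)}(\PPP_f)$ between $\g$ and $\tilde\beta$; since freely homotopic essential simple closed curves in a punctured sphere are isotopic, $\g$ is homotopic to $\tilde\beta\in\G_{n+1}$ rel $f^{-(n+1)}(\PPP_f)$, contradicting the hypothesis.

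The only delicate step is the preliminary bookkeeping with parameterisations: one must ensure the initial loop $H_0$ lifts to a parameterisation of all of $\g$, which forces $H_0$ to have degree $\deg(f|_{\g})$ onto $f(\g)$ and $H_1$ to have matching degree onto $\beta$. Once this is set up, the homotopy lifting property supplies $\tilde H$ without further effort. Notably, neither the stability of $\G$ nor its Cantor property plays any role---the proposition is essentially a general naturality statement saying that the push-forward by $f$ of pullback homotopy classes under the covering restriction of $f$ is well-defined.
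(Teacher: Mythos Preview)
Your argument is essentially correct and is the natural covering-space proof; the paper itself gives no proof beyond the phrase ``easy to check.'' Your observation that neither stability nor the Cantor property of $\Gamma$ is needed here is accurate---those hypotheses are really for Proposition~\ref{adjacent}.

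There is, however, a small gap in the final step. Your lifted homotopy $\tilde H$ is a free homotopy from the simple parameterisation $\tilde\alpha_0$ of $\gamma$ to the loop $\tilde H_1$, whose image lies in $\tilde\beta$; but $\tilde H_1$ wraps around $\tilde\beta$ with degree $e=d/d'$ where $d=\deg(f|_\gamma)$ and $d'=\deg(f|_{\tilde\beta})$, and you have not argued that $e=\pm 1$. So strictly speaking you have shown $[\gamma]$ is conjugate to $[\tilde\beta]^e$, not that $\gamma$ and $\tilde\beta$ are freely homotopic as simple curves. The fix is short: $\gamma$ is a simple closed curve, and since $e\neq 0$ (because $d\ge 1$) and $[\tilde\beta]$ is nontrivial, $[\gamma]$ is nontrivial; a nontrivial simple closed curve in a punctured sphere represents a primitive element of the free group $\pi_1$, forcing $e=\pm 1$.

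A cleaner route, and likely what the paper has in mind, avoids this bookkeeping entirely: realise the homotopy $f(\gamma)\simeq\beta$ rel $f^{-n}(\PPP_f)$ by an ambient isotopy $\Phi_t$ of $\cbar$ fixing $f^{-n}(\PPP_f)$, and lift it through the branched cover $f$ to an ambient isotopy $\tilde\Phi_t$ of $\cbar$ fixing $f^{-(n+1)}(\PPP_f)$ with $f\circ\tilde\Phi_t=\Phi_t\circ f$. Then $\tilde\Phi_1$ carries $\gamma$ (a component of $f^{-1}(f(\gamma))$) to a component of $f^{-1}(\beta)\subset\Gamma_{n+1}$, and you are done with no degree to track.
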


\begin{proposition}\label{adjacent}
For each curve $\gamma\in\Gamma_n$, we have $\Gamma_{n+1}(\gamma)\neq\emptyset$ and no curve in $\Gamma_{n+1}\smm\Gamma_{n+1}(\gamma)$ separates curves in $\Gamma_{n+1}(\gamma)$.
Moreover, for any two curves $\gamma_1$ and $\gamma_2$ in $\Gamma_n$, if no curve in $\Gamma_n$ separates $\gamma_1$ from $\gamma_2$, then no curve in $\Gamma_{n+1}\smm(\Gamma_{n+1}(\gamma_1)\cup\Gamma_{n+1}(\gamma_2))$ separates curves in $\Gamma_{n+1}(\gamma_1)\cup\Gamma_{n+1}(\gamma_2)$.
\end{proposition}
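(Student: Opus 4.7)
My plan is to establish the three assertions in turn. For non-emptiness of $\Gamma_{n+1}(\gamma)$, I would lift a homotopy. Given $\gamma\in\Gamma_n$, $f^n(\gamma)\in\Gamma$, and pre-stability of the Cantor multicurve $\Gamma$ provides $\tilde\beta\in f^{-1}(\Gamma)$ with $\tilde\beta$ homotopic to $f^n(\gamma)$ rel $\PPP_f$. Since the critical values of $f^n$ lie in $\PPP_f$, the restriction of $f^n$ to $\cbar\smm f^{-n}(\PPP_f)$ is an unbranched covering of $\cbar\smm\PPP_f$; lifting this homotopy starting at $\gamma$ yields $\hat\beta\in f^{-(n+1)}(\Gamma)=\Gamma_{n+1}$ homotopic to $\gamma$ rel $f^{-n}(\PPP_f)$, so $\hat\beta\in\Gamma_{n+1}(\gamma)$.

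For the first non-separation statement, suppose $\alpha,\beta\in\Gamma_{n+1}(\gamma)$ and $\delta\in\Gamma_{n+1}$ separates them; let $\tilde A$ be the annular component of $\cbar\smm(\alpha\cup\beta)$, so $\delta\subset\tilde A$. Since $\alpha$ and $\beta$ are both homotopic to $\gamma$ rel $f^{-n}(\PPP_f)$, they cobound an annulus in $\cbar\smm f^{-n}(\PPP_f)$, which forces $\tilde A\cap f^{-n}(\PPP_f)=\emptyset$. The curve $\delta\subset\tilde A$ is isotopic inside $\tilde A$ to its core; because $\tilde A$ avoids $f^{-n}(\PPP_f)$, this isotopy lives in $\cbar\smm f^{-n}(\PPP_f)$, so $\delta$ is homotopic to $\gamma$ rel $f^{-n}(\PPP_f)$ and $\delta\in\Gamma_{n+1}(\gamma)$.

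For the second non-separation statement, take $\alpha\in\Gamma_{n+1}(\gamma_1)$, $\beta\in\Gamma_{n+1}(\gamma_2)$, and $\delta\in\Gamma_{n+1}$ separating them; I must show $\delta\in\Gamma_{n+1}(\gamma_1)\cup\Gamma_{n+1}(\gamma_2)$. The approach is to push everything forward to $\cbar\smm\PPP_f$ and invoke stability. The annulus in $\cbar\smm f^{-n}(\PPP_f)$ cobounded by $\alpha$ and $\gamma_1$ avoids the critical points of $f$, so $f$ carries it to an annulus in $\cbar\smm f^{-(n-1)}(\PPP_f)$ cobounded by $f(\alpha)$ and $f(\gamma_1)$; iterating $n$ times gives $f^n(\alpha)$ homotopic to $f^n(\gamma_1)$ rel $\PPP_f$, and similarly for $\beta$. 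Since $f^n$ is a local homeomorphism along $\delta$, the two sides of $\delta$ in $\cbar$ map onto the two sides of $f^n(\delta)$, so $f^n(\delta)$ separates $f^n(\alpha)$ from $f^n(\beta)$ in $\cbar$ and is thus essential in $\cbar\smm\PPP_f$. Stability of $\Gamma$ now produces $\nu\in\Gamma$ with $f^n(\delta)\sim\nu$ rel $\PPP_f$; $n$ applications of the contrapositive of Proposition~\ref{succeed} then yield some $\eta_\delta\in\Gamma_n$ with $\delta\sim\eta_\delta$ rel $f^{-n}(\PPP_f)$, so $\delta\in\Gamma_{n+1}(\eta_\delta)$.

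The step I expect to be the main obstacle is identifying $\eta_\delta$ with $\gamma_1$ or $\gamma_2$. Suppose otherwise. Since $\eta_\delta,\gamma_1,\gamma_2$ are pairwise disjoint curves of the multicurve $\Gamma_n$, the curve $\eta_\delta$ lies entirely in one of the three components $U_1,A_{12},U_2$ of $\cbar\smm(\gamma_1\cup\gamma_2)$, where $U_i$ is the disk on the side of $\gamma_i$ away from $\gamma_{3-i}$ and $A_{12}$ is the annulus between them. Because $\delta\sim\eta_\delta$ rel $f^{-n}(\PPP_f)$ and $\delta$ separates $\alpha\sim\gamma_1$ from $\beta\sim\gamma_2$, the common partition of $f^{-n}(\PPP_f)$ places the $a\geq 2$ marked points in $U_1\cap f^{-n}(\PPP_f)$ on the $\alpha$-side of $\delta$ and the $b\geq 2$ marked points in $U_2\cap f^{-n}(\PPP_f)$ on the $\beta$-side, with at least one point of $A_{12}\cap f^{-n}(\PPP_f)$ on each side (otherwise $\delta$ would already be isotopic to $\gamma_1$ or $\gamma_2$). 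A Jordan curve bounding a disk inside $U_1$, $U_2$, or $A_{12}$ induces a partition in which one side's marked points all lie in a single region, incompatible with the mixed partition above. Hence $\eta_\delta$ must lie essentially in $A_{12}$, so $\eta_\delta\in\Gamma_n$ separates $\gamma_1$ from $\gamma_2$ — contradicting the hypothesis. Therefore $\eta_\delta\in\{\gamma_1,\gamma_2\}$ and the proof is complete.
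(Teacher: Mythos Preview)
The paper itself supplies no proof here (it declares the result ``easy to check'' from pre-stability and stability), so there is nothing to compare against. Your arguments for $\Gamma_{n+1}(\gamma)\neq\emptyset$ and for the first non-separation claim are correct, and your final paragraph --- showing that once you have $\eta_\delta\in\Gamma_n$ with $\delta\sim\eta_\delta$ rel $f^{-n}(\PPP_f)$, it must be $\gamma_1$ or $\gamma_2$ --- is essentially right as well.

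The genuine gap is the step where you assert that $f^n(\delta)$ separates $f^n(\alpha)$ from $f^n(\beta)$ because ``$f^n$ is a local homeomorphism along $\delta$, so the two sides of $\delta$ map onto the two sides of $f^n(\delta)$.'' This is false in general: the restriction of $f^n$ to a complementary disk of $\delta$ is a branched covering whose image is typically all of $\cbar$, not one side of $f^n(\delta)$; nothing prevents $f^n(\alpha)$ and $f^n(\beta)$ from landing on the same side of $f^n(\delta)$, or even from coinciding. What you actually need is only that $f^n(\delta)$ is essential in $\cbar\smm\PPP_f$, and that follows by a different route. First, $\delta$ itself is essential in $\cbar\smm f^{-n}(\PPP_f)$: each complementary disk of $\delta$ contains one of $\alpha,\beta$, and since these are essential rel $f^{-n}(\PPP_f)$, each side of $\delta$ carries at least two points of $f^{-n}(\PPP_f)$. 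Now if $f^n(\delta)$ were inessential it would bound a disk $D\subset\cbar$ with $\#(D\cap\PPP_f)\le 1$; since the critical values of $f^n$ lie in $\PPP_f$, every component of $(f^n)^{-1}(D)$ is a disk meeting $f^{-n}(\PPP_f)$ in at most one point, and $\delta$ is the boundary of one such component --- contradicting the essentiality of $\delta$. With $f^n(\delta)$ essential, your appeal to stability of $\Gamma$ and to the contrapositive of Proposition~\ref{succeed} then produces $\eta_\delta$ as you intended.
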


{\bf Dual trees}. For any $n\ge 0$, let $T_n$ be the dual tree of $\Gamma_n$ defined by the following: There is a bijection between vertices of $T_n$ and components of $\cbar\smm\Gamma_n$. Two vertices are connected by an edge if their corresponding components of $\cbar\smm\Gamma_n$ have a common boundary component, which is a curve in $\Gamma_n$. Thus, there is a bijection between edges of $T_n$ and curves in $\Gamma_n$. Denote by $e_{\gamma}$ the edge of $T_n$ corresponding to the curve $\gamma\in\Gamma_n$.

\vskip 0.24cm

{\bf Inclusion maps}.
The homotopy rel $f^{-n}(\PPP_f)$ induces an inclusion $\iota_n: T_n\to T_{n+1}$ by the following: For each curve
$\gamma\in\Gamma_n$, define
$$
\iota_n: e_{\gamma}\to\bigcup_{\beta\in\Gamma_{n+1}(\gamma)} e_{\beta}\cup\{\text{common endpoints of }e_{\beta}\}
$$
to be a homeomorphism preserving the orientation induced by a choice of orientations on these curves. The continuity of $\iota_n$ at vertices comes from Proposition \ref{adjacent}. The injectivity comes from the fact that no two curves in $\Gamma_n$ are homotopic rel $f^{-n}(\PPP_f)$.

\vskip 0.24cm

{\bf Induced tree maps}. Given any $n\ge 0$. A continuous map $\tau: T_{n+1}\to T_n$ is called an induced tree map if for each edge $e_{\gamma}$ of $T_{n+1}$ corresponding to a curve $\gamma\in\Gamma_{n+1}$, $\tau: e_{\gamma}\to e_{f(\gamma)}$ is a homeomorphism such that it preserves the orientation induced by the map $f:\gamma\to f(\gamma)$. It is easy to check that induced tree maps always exist.

\begin{proposition}\label{tree}
There exists a linear metric $\rho_1$ on the tree $T_1$ and an induced tree map $\tau_0: T_1\to T_0$ such that $\iota_0\circ\tau_0$ is linear on each edge of $T_1$ and $|(\iota_0\circ\tau_0)'|\ge\lambda$ for some constant $\lambda>1$.
\end{proposition}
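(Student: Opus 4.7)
The plan is to reduce the expansion condition on the self-map $\iota_0\circ\tau_0$ of $T_1$ to a Perron--Frobenius-type inequality for the reduced transition matrix $M_{r,\Gamma}$, apply Lemma~\ref{eigenvalue}, and then pull the resulting positive length vector from $\Gamma$ back to $\Gamma_1$.

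First, I would unwind what $\iota_0\circ\tau_0$ does to an edge $e_\alpha$ of $T_1$ (with $\alpha\in\Gamma_1$). By definition $\tau_0$ sends $e_\alpha$ homeomorphically onto the edge $e_{f(\alpha)}$ of $T_0$, and $\iota_0$ then sends $e_{f(\alpha)}$ onto the path $\bigcup_{\beta\in\Gamma_1(f(\alpha))}e_\beta$ inside $T_1$, which is connected by Proposition~\ref{adjacent}. A linear metric $\rho_1$ on $T_1$ is exactly a choice of positive edge lengths $\ell(\beta)$, $\beta\in\Gamma_1$. If we parametrize $\tau_0$ so that $\iota_0\circ\tau_0$ is affine on each $e_\alpha$, the constant derivative there is
$$
|(\iota_0\circ\tau_0)'|_{e_\alpha}=\frac{1}{\ell(\alpha)}\sum_{\beta\in\Gamma_1(f(\alpha))}\ell(\beta).
$$

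Second, I would seek $\ell$ of the pull-back form $\ell(\beta):=L(f(\beta))$ for a positive vector $L:\Gamma\to\R$. Because $(M_{r,\Gamma})_{\gamma\delta}=\#\{\beta\in\Gamma_1(\gamma):f(\beta)=\delta\}$, the required inequality on $e_\alpha$ (with $\gamma=f(\alpha)$) collapses to
$$
(M_{r,\Gamma}L)_\gamma=\sum_{\beta\in\Gamma_1(\gamma)}L(f(\beta))\ \ge\ \lambda L(\gamma),
$$
so it suffices to find a strictly positive $L$ satisfying $M_{r,\Gamma}L\ge\lambda L$ for some $\lambda>1$. Lemma~\ref{eigenvalue} already provides $\lambda(M_{r,\Gamma})>1$. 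When $M_{r,\Gamma}$ is irreducible, Perron--Frobenius yields such an $L$ immediately (the Perron eigenvector, with $\lambda=\lambda(M_{r,\Gamma})$). In general I would decompose $\Gamma$ under the successor relation into strongly connected components, apply Perron--Frobenius inside each block of spectral radius $>1$, and glue these local eigenvectors together — using the pre-stability bound $M_{r,\Gamma}\mathbf 1\ge\mathbf 1$ together with the Cantor hypothesis $\kappa_n(\gamma)\to\infty$ to rule out trapping in any block of spectral radius $\le 1$.

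Third, once $L$ is at hand, I would define $\rho_1$ by $\ell(\beta):=L(f(\beta))$, give $T_0$ the compatible lengths $|e_\gamma|_{T_0}:=\sum_{\beta\in\Gamma_1(\gamma)}\ell(\beta)$ (making $\iota_0$ a piecewise isometry onto the path $\iota_0(e_\gamma)$), and let $\tau_0$ be the unique affine orientation-respecting homeomorphism on each edge $e_\alpha\to e_{f(\alpha)}$. Then $\iota_0\circ\tau_0$ is affine on every edge of $T_1$ with constant derivative $(M_{r,\Gamma}L)_{f(\alpha)}/L(f(\alpha))\ge\lambda>1$, which is the desired conclusion.

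The main obstacle I anticipate is producing the globally positive $L$ in the possibly reducible case: this requires combining Lemma~\ref{eigenvalue} with the Cantor condition to show that every $\gamma\in\Gamma$ lies above, in the successor partial order, a strongly connected component of spectral radius strictly greater than one, and then extending a positive eigenvector from that component to all of $\Gamma$. Once this positivity issue is handled, the rest is a formal unwinding of the definitions.
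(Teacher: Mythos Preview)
Your proposal is correct and follows essentially the same route as the paper: both reduce the expansion to finding a strictly positive vector $L$ with $M_{r,\Gamma}L>\lambda L$ for some $\lambda>1$, and then assign to each edge $e_\beta$ of $T_1$ the length $L(f(\beta))$ (the paper writes this as $v(\tau_0(e))$). The only difference is that where you sketch a strongly-connected-component argument to produce $L$ in the possibly reducible case, the paper simply invokes Lemma~A.1 of \cite{CT} (together with Lemma~\ref{eigenvalue}) to obtain such a positive vector directly.
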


\begin{proof} Let $\{e_1, \cdots, e_n\}$ be the edges of $T_0$. Let $M=(b_{ij})$ be the reduced transition matrix of $\Gamma$ as defined in \S2. Then its leading eigenvalue verifies $\lambda_0>1$ by Lemma \ref{eigenvalue} since $\Gamma$ is a Cantor multicurve. Thus, there exists a constant $\lambda\in (1, \lambda_0)$ and a positive eigenvector ${\bf v}=(v(e_i))$ such that $M{\bf v}>\lambda{\bf v}$ by Lemma A.1 in \cite{CT}. Define a linear metric $\rho_1$ on $T_1$ such that for each edge $e$ of $T_1$, it has length $v(\tau_0(e))$. Then the length of $\iota_0(e_i)$ is:
$$
|\iota_0(e_i)|=\ds\sum_j b_{ij}\, v(e_j)>\lambda v(e_i).
$$
Define $\tau_0: T_1\to T_0$ to be an induced tree map such that $\iota_0\circ\tau_0$ is linear on each edge of $T_1$. Then $|(\iota_0\circ\tau_0)'|>\lambda$.
\end{proof}

There exists an induced tree map $\tau_1: T_2\to T_1$ such that $\tau_1\circ\iota_1=\iota_0\circ\tau_0$ on $T_1$. Inductively, for each $n\ge 2$, there exists an induced tree map $\tau_{n-1}: T_{n}\to T_{n-1}$ such that $\tau_{n-1}\circ\iota_{n-1}=\iota_{n-2}\circ\tau_{n-2}$ on $T_{n-1}$. By Proposition \ref{succeed}, $\tau_{n+1}(T_{n+2}\smm\iota_{n+1}(T_{n+1}))\subset T_{n+1}\smm\iota_{n}(T_{n})$. Thus $\{T_n,\iota_n, \tau_n\}_{n\ge 0}$ is an expanding tower of tree maps with degree $\deg\tau_n\le\deg f$. We call it the {\bf induced tower of tree maps} of $f$ with respect to the multicurve $\Gamma$.

Denote by $\tau_f:\, \TTT(\Gamma)\to\TTT(\Gamma)$ the limit of the induced tower of tree maps of $f$ with respect to the multicurve $\Gamma$. Then it is an expanding finite dendrite map by Theorem \ref{limit}.

\vskip 0.24cm

{\it Proof of Theorem \ref{coding}}.
Let $\{T_n,\iota_n, \tau_n\}_{n\ge 0}$ be the induced tower of tree maps of $f$ with respect to $\Gamma$. We may identify $T_n$ with $i_n(T_n)\subset\TTT(\Gamma)$ by Theorem \ref{limit}. Then $\tau_n=\tau_f$.
Let $\III\subset T_1$ be the union of open edges of $T_1$ contained in $T_0$. Let $\sigma$ be the restriction of $\tau_f$ on $\III$. Then $\sigma$ is an expanding linear system. So $\JJJ_{\sigma}$ is dense in $\III$. Let $g=f: \AAA^1\to\AAA$ be the exact annular system obtained in Theorem \ref{as}. There exists a bijection from the components of $\AAA^1$ to the components of $\III$ according to the correspondence from $\Gamma_1$ to the edges of $T_1$ and the homotopy rel $f^{-1}(\PPP_f)$.

Define a map $\Theta_0: \JJJ_g\to\JJJ_{\sigma}$ through itineraries as in Proposition \ref{linear}. It is order-preserving. Since its image $\JJJ_{\sigma}$ is dense in $\III$, it can be extended to a continuous onto map $\Theta_0: \cbar\to T_0$ such that each component of $\cbar\smm\AAA$ maps to a vertex of $T_0$. It is easy to check that $\tau_f\circ \Theta_0=\Theta_0\circ f$ on $\AAA^1$.

Pulling back the map $\Theta_0$ by the above equation, we get a continuous onto map $\Theta_1: \cbar\to T_1$ such that each component of $\cbar\smm f^{-1}(\AAA)$ maps to a vertex of $T_1$ and $\tau_f\circ \Theta_1=\Theta_1\circ f$ on $f^{-1}(\AAA)$.

Inductively, we get a sequence of continuous maps $\Theta_n: \cbar\to T_n$ such that each component of $\cbar\smm f^{-n}(\AAA)$ maps to a vertex of $T_n$ and $\tau_f\circ \Theta_n=\Theta_n\circ f$ on $f^{-n}(\AAA)$. It is easy to check that $\Theta_n$ converges uniformly to a continuous onto map $\Theta:\,\cbar\to \TTT(\Gamma)$ as $n\to\infty$ and the map $\Theta$ satisfies all the conditions.
\qed

\section{Wandering continua}

We will prove Theorem \ref{wc} here. A continuum $E\subset\cbar\smm\PPP_f$ is called {\bf essential}\ if there are exactly two components of $\cbar\smm E$ containing points of $\PPP_f$ and each of them contains at least two points of $\PPP_f$.

\begin{lemma}\label{essential}
Let $f$ be a post-critically finite rational map. Suppose that $K\subset\JJJ_f$ is a wandering continuum. Then either $f^n(K)$ is simply connected for all $n\ge 0$, or there exists an integer $N\ge 0$ such that $f^n(K)$ is essential for $n\ge N$.
\end{lemma}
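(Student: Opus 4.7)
The plan is to combine the finiteness of $\PPP_f$ with a pull-back/expansion argument based on the orbifold metric of $f$. First, since $K$ is wandering and $\PPP_f$ consists of eventually-periodic points of $f$, we have $K\cap\PPP_f=\emptyset$, and every $f^n(K)$ is a continuum in $\cbar\smm\PPP_f$ whose complementary components are Jordan disks. Call such a component \emph{essential} if it meets $\PPP_f$, \emph{trivial} otherwise. By the definition of essential continuum given above, $f^n(K)$ is essential iff it has exactly two essential complementary components, each containing at least two points of $\PPP_f$. Assume we are not in the first alternative of the lemma and fix $n_0$ with $f^{n_0}(K)$ not simply connected; the goal is to produce $N$ with $f^n(K)$ essential for every $n\ge N$.

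The main analytic input is the pull-back/expansion principle already used in the proof of Theorem \ref{as}: there is a singular conformal metric $\rho$ on $\cbar$ with singularities only on $\PPP_f$, and a constant $\lambda>1$ such that $\|f'\|_\rho\ge\lambda$ off any fixed neighborhood of the super-attracting cycles. Suppose for contradiction that, for an infinite sequence $n_k\to\infty$, $f^{n_k}(K)$ is contained in a Jordan disk $D_k$ meeting $\PPP_f$ in at most one point. Since the critical values of $f^{n_k}$ all lie in $\PPP_f$, each $D_k$ contains at most one critical value of $f^{n_k}$, so by Riemann--Hurwitz the component $D_k'$ of $f^{-n_k}(D_k)$ containing $K$ is again a Jordan disk; the $f^{n_k}$-expansion in $\rho$ then forces the $\rho$-diameter of $D_k'$ to tend to zero, so $K$ is a single point, contradicting that $K$ is a continuum. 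A parallel argument rules out the peripheral configuration, where some essential complementary component of $f^n(K)$ meets $\PPP_f$ in exactly one point: the peripheral disk pulls back to a disk around a sub-continuum of $K$ whose $\rho$-diameter tends to zero.

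To finish, attach to each $n$ the partition $\pi_n$ of $\PPP_f$ whose blocks are $U\cap\PPP_f$ as $U$ ranges over the essential complementary components of $f^n(K)$. Since $\PPP_f$ is finite, $\pi_n$ takes only finitely many values, and lifting arcs through $f$ (using $f(\PPP_f)\subset\PPP_f$) gives a coarsening-type compatibility between consecutive $\pi_n$. The pull-back arguments of the previous paragraph eliminate, for $n$ large, every configuration in which $f^n(K)$ has at most one essential complementary component (the simply-connected-rel-$\PPP_f$ case) or a peripheral essential component. What remains is the case of exactly two essential components each meeting $\PPP_f$ in at least two points, i.e., $f^n(K)$ essential. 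The main obstacle is the pull-back argument near $\PPP_f$: because $\rho$ is singular there, $\rho$-diameter estimates for disks that contain a post-critical point require care, either by puncturing such a cusp and comparing with the hyperbolic metric of the complement, or by appealing to an orbifold Koebe distortion bound. An auxiliary reduction is also needed for the residual case $p_n\ge 3$: after further pull-back one isolates a sub-continuum of $K$ lying inside a configuration with fewer essential components, reducing to the cases already excluded.
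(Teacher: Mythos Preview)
Your approach via the orbifold metric is genuinely different from the paper's, but it has a real gap. The paper does \emph{not} use any expansion or shrinking-lemma argument here; instead it combines elementary combinatorics with Sullivan's no-wandering-domain theorem. Writing $p(K_n)$ for the number of complementary components of $K_n:=f^n(K)$ meeting $\PPP_f$, the paper first observes that because the $K_n$ are pairwise disjoint, at most $\#\PPP_f-2$ of them can have $p(K_n)\ge 3$ (a tree-counting bound). The cases $p(K_n)\equiv 1$ and the persistent peripheral case are then ruled out by showing that a trivial (resp.\ peripheral) complementary component $U$ of some $K_n$ must be disjoint from $\JJJ_f$---otherwise a forward image of $U$ would swallow $\JJJ_f$---hence $U$ is a Fatou domain with wandering boundary, contradicting Sullivan. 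No metric estimate enters.

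Your shrinking argument breaks at the step ``the $f^{n_k}$-expansion in $\rho$ forces the $\rho$-diameter of $D_k'$ to tend to zero.'' For this you would need a \emph{uniform} bound on the $\rho$-diameter of the target disks $D_k$, but the $D_k$ vary with $k$: the continuum $f^{n_k}(K)$ can be arbitrarily long in $\cbar\smm\PPP_f$, so any Jordan disk containing it and meeting $\PPP_f$ in at most one point can have arbitrarily large $\rho$-diameter (and infinite $\rho$-diameter if it contains a post-critical point, as you note). Without a fixed or precompact family of targets, the contraction factor $\lambda^{-n_k}$ has nothing to act on. The peripheral case is similarly unclear: a complementary disk of $f^n(K)$ does not pull back to a disk containing a sub-continuum of $K$; it pulls back into complementary components of $K$, so no diameter bound on $K$ follows. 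Finally, your treatment of $p_n\ge 3$ (``after further pull-back one isolates a sub-continuum of $K$\ldots'') is too vague to count as an argument; the paper's one-line pigeonhole bound handles this instantly. If you want to rescue the metric approach you would need a genuinely new ingredient---for instance, producing a \emph{fixed} disk visited by infinitely many $f^{n_k}(K)$---but at that point you are essentially re-deriving Sullivan-type information, and invoking Sullivan directly, as the paper does, is cleaner.
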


\begin{proof}
Set $K_n=f^n(K)$ for $n\ge 0$. Since $\#\PPP_f<\infty$ and $K$ is wandering, we have $K_n\cap \PPP_f=\emptyset$ for all $n\geq 0$. Thus, if $K_n$ is simply connected, then $K_m$ is also simply connected for $m\le n$.

Suppose that there is an integer $n_0\ge 1$ such that $K_{n_0}$ is not simply connected, then $K_n$ is not simply connected for all $n\ge n_0$. Let $p(K_n)\ge 1$ be the number of components of $\cbar\smm K_n$ containing points of $\PPP_f$. Since $K_n$ are pairwise disjoint, there are at most $(\#\PPP_f-2)$ continua $K_n$ such that $p(K_n)\ge 3$. Thus, there is an integer $n_1\ge n_0$ such that $p(K_n)\le 2$ for all $n\ge n_1$.

If $p(K_n)\equiv 1$ for all $n\ge n_1$, let $\widehat{K}_n$ be the union of $K_n$ together with the components of $\cbar\smm K_n$ disjoint from $\PPP_f$. Then $f:\widehat{K}_n\to \widehat{K}_{n+1}$ is a homeomorphism for $n\ge n_1$. Since $K_{n_1}$ is not simply connected, $\wh K_{n_1}\smm K_{n_1}$ is non-empty. Let $U$ be a component of $\wh K_{n_1}\smm K_{n_1}$. Then $U\cap \PPP_f=\emptyset$. If $U\cap\JJJ_f\neq\emptyset$, then $f^{m}(U)\supset\JJJ_f$ for some
$m\ge 1$. But $f^m(U)$ is a component of $\cbar\smm K_{n_1+m}$ which is a contradiction. So $U\cap \JJJ_f=\emptyset$. Noticing that $\partial U\subset K_{n_1}\subset \JJJ_f$, the simply connected domain $U$ is exactly a Fatou domain. But $\partial U$ is wandering. This is a contradiction since there is no wandering Fatou domain by Sullivan's theorem (cf.\  \cite{Mi}). Therefore, there is an integer $n_2\ge n_1$ such that $p(K_{n_2})=2$.

We claim that $p(K_n)\equiv 2$ for all $n\ge n_2$. Otherwise, assume that there is an integer $m>n_2$ such that $p(K_m)=1$. Then there is a disk $D$ containing $K_m$ for which $D\cap\PPP_f=\emptyset$. Let $D_n$ be the component of $f^{n-m}(D)$ containing $K_n$ for $n_2\le n\le m$. Then $D_n$ is disjoint from $\PPP_f$. So $p(K_n)=1$ for $n_2\le n\le m$. This contradicts $p(K_{n_2})=2$.

We may assume $\#\PPP_f\ge 3$ (otherwise $f$ is conjugate to the map $z\to z^{\pm d}$ and hence has no wandering continuum). Then $f$ has at most one exceptional point. If there is an integer $m\ge n_2$ such that $\cbar\smm K_m$ has a component containing exactly one $\PPP_f$ point, then there is a disk $D\supset K_m$ such that $D$ contains exactly one $\PPP_f$ point. Let $D_n$ be the component of $f^{n-m}(D)$ containing $K_n$ for $n_2\le n\le m$. Then $D_n$ is simply connected and contains at most one point of $\PPP_f$. Thus, $\cbar\smm K_n$ has a component containing exactly one $\PPP_f$ point for $n_2\le n\le m$. Therefore, either there exists an integer $N\ge n_2$ such that for $n\ge N$, $f^n(K)$ is essential, or $\cbar\smm f^n(K)$ has a component containing exactly one $\PPP_f$ point for all $n\ge n_2$.

In the latter case, denote by $U$ the component of $\cbar\smm K_{n_2}$ containing exactly one $\PPP_f$ point. If $U\cap\JJJ_f\neq\emptyset$, then there is an integer $k>0$ such that $\cbar\smm f^k(U)$ contains at most one point (an exceptional point). On the other hand, there is a disk $D\supset K_{n_2+k}$ such that $D$ contains exactly one $\PPP_f$ point. Let $D_{n_2}$ be the component of $f^{-k}(D)$ containing $K_{n_2}$. Then $D_n$ is simply connected and contains at most one point of $\PPP_f$. Thus, $U\subset D_{n_2}$. Therefore, $f^k(U)\subset D$ and hence $\cbar\smm D\subset\cbar\smm f^k(U)$ contains at most one point. This contradicts $\#\PPP_f\ge 3$. So $U$ is disjoint from $\JJJ_f$ and hence is a simply connected Fatou domain. This again contradicts Sullivan's no wandering Fatou domain theorem.
\end{proof}

\begin{lemma}\label{multicurve}
Suppose that $K\subset\JJJ_f$ is a non-simply connected wandering continuum. There is a multicurve $\G_K$ such that:

(1) for each curve $\g$ in $\G_K$, there are infinitely many continua $f^n(K)$ homotopic to $\g$ rel $\PPP_f$, and

(2) there is an integer $N_1\ge 0$ such that for $n\ge N_1$, $f^n(K)$ is essential and homotopic rel $\PPP_f$ to a curve in $\G_K$.
\end{lemma}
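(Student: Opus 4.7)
The plan is to use Lemma~\ref{essential} to reduce to a situation where each sufficiently late iterate of $K$ determines a well-defined isotopy class of essential simple closed curve on the punctured sphere $\cbar\smm\PPP_f$, and then to invoke the finiteness of the set of such classes.

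First, I would apply Lemma~\ref{essential} to produce $N\ge 0$ with $f^n(K)$ essential for all $n\ge N$. For each such $n$, since $K$ is wandering the continuum $f^n(K)$ is disjoint from the finite set $\PPP_f$, so it has an annular neighborhood $A_n\subset\cbar\smm\PPP_f$ whose essential core $\alpha_n$ separates the two $\PPP_f$-containing complementary components of $f^n(K)$. Because $f^n(K)$ is essential, both sides of $\alpha_n$ contain at least two points of $\PPP_f$, so $\alpha_n$ is essential rel $\PPP_f$; declare $f^n(K)$ homotopic rel $\PPP_f$ to $[\alpha_n]$.

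Next, I would invoke the standard classification: on the $\#\PPP_f$-punctured sphere, isotopy classes of essential simple closed curves are in bijection with unordered partitions $\{S_1,S_2\}$ of $\PPP_f$ with $|S_i|\ge 2$, hence finite in number. In particular the set $\{[\alpha_n]:n\ge N\}$ is finite. Let $\G_K$ consist of those classes that occur infinitely often, and pick $N_1\ge N$ past which every $[\alpha_n]$ lies in $\G_K$. Conditions (1) and (2) of the lemma are then immediate by construction.

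It remains to verify $\G_K$ is honestly a multicurve, i.e.\ representable by pairwise disjoint, pairwise non-homotopic essential Jordan curves. Non-homotopy is automatic since distinct classes in $\G_K$ correspond to distinct partitions of $\PPP_f$. For disjointness, for each $c\in\G_K$ pick a distinct index $n_c\ge N_1$ with $[\alpha_{n_c}]=c$; the continua $\{f^{n_c}(K)\}_{c\in\G_K}$ are pairwise disjoint (as $K$ is wandering) and collectively disjoint from the finite set $\PPP_f$, so they admit pairwise disjoint annular neighborhoods in $\cbar\smm\PPP_f$, inside which we choose the representing curves. The step I expect to require the most care is the first one: making the homotopy class of the essential continuum $f^n(K)$ well-defined up to isotopy rel $\PPP_f$, which amounts to verifying that the two $\PPP_f$-meeting complementary components determine $[\alpha_n]$ independently of the auxiliary choice of $A_n$ (formally, passing to the "fill-in" of $f^n(K)$ obtained by absorbing all complementary components disjoint from $\PPP_f$). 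Once this is in place, everything else follows cleanly from surface topology and the wandering hypothesis.
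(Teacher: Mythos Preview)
Your argument has a genuine gap at the finiteness step. The claimed bijection between isotopy classes of essential simple closed curves in $\cbar\smm\PPP_f$ and partitions $\{S_1,S_2\}$ of $\PPP_f$ with $|S_i|\ge 2$ is false whenever $\#\PPP_f\ge 4$: the set of isotopy classes is infinite. For a concrete reason, take any essential curve $\gamma$ and any essential curve $\delta$ with $\#(\gamma\cap\delta)\ge 2$; the Dehn twist $T_\delta$ fixes $\PPP_f$ pointwise, so $T_\delta(\gamma)$ induces the same partition as $\gamma$, yet $T_\delta(\gamma)$ is not isotopic to $\gamma$ rel $\PPP_f$. (Equivalently: the curve complex $\mathcal C(S_{0,4})$ is the Farey graph, with infinitely many vertices.) So from the sequence $\{[\alpha_n]\}_{n\ge N}$ alone you cannot conclude that only finitely many classes occur, and without that you have neither the finiteness of $\G_K$ nor the existence of $N_1$.

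The paper's proof repairs exactly this point by exploiting the wandering hypothesis \emph{before} the finiteness step rather than after. Since the continua $f^N(K),\ldots,f^m(K)$ are pairwise disjoint, one can choose the representing curves $\beta_N,\ldots,\beta_m$ to be pairwise disjoint as well; a collection of pairwise disjoint, pairwise non-homotopic essential curves in $\cbar\smm\PPP_f$ is a multicurve and hence has at most $\#\PPP_f-3$ elements. Thus the number of distinct classes among $[\alpha_N],\ldots,[\alpha_m]$ is bounded independently of $m$, and letting $m\to\infty$ gives the desired stabilization. Your step~5 already contains the disjointness idea; you just need to move it earlier and use the multicurve bound in place of the incorrect global finiteness.
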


\begin{proof} By Lemma \ref{essential}, there is an integer $N\ge 0$ such that $f^n(K)$ is essential for $n\ge N$. Given any integer $m\ge N$, one may choose an essential Jordan curve $\be_n$ in $\cbar\smm\PPP_f$ for $N\le n\le m$ such that $\beta_n$ is homotopic to $f^n(K)$ rel $\PPP_f$ and all these curves are pairwise disjoint since $f^n(K)$ are pairwise disjoint. Let $\wt\G_m$ be the collection of all these curves.

Let $\G_m\subset\wt\G_m$ be a multicurve such that each curve in $\wt\G_m$ is homotopic to a curve in $\G_m$. Then each curve in $\G_m$ is homotopic to a curve in $\G_{m+1}$. This implies that $\#\G_m$ is increasing and hence there is an integer $m_0\ge N$ such that $\#\G_m$ is a constant for $m\ge m_0$ since any multicurve contains at most $\#\PPP_f-3$ curves. Therefore each curve in $\G_{m+1}$ is homotopic to a curve in $\G_m$ for $m\ge m_0$. This shows that the multicurves $\G_m$ are homotopic to each other for all $m\ge m_0$.

Let $\G_K\subset\G_{m_0}$ be the sub-collection consisting of curves $\g\in\G_{m_0}$ such that there are infinitely many $f^n(K)$ homotopic to $\g$ rel $\PPP_f$. Then it is non-empty and hence is a multicurve. Obviously, $\G_K$ is uniquely determined by $K$ and there is an integer $N_1\ge 0$ such that for $n\ge N_1$, $f^n(K)$ is essential and homotopic rel $\PPP_f$ to a curve in $\G_K$.
\end{proof}

\begin{lemma}\label{irreducible}
$\G_K$ is an irreducible Cantor multicurve.
\end{lemma}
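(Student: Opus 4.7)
The plan is to establish irreducibility and the Cantor property for $\G_K$ separately, via the sequence $\gamma_n \in \G_K$ of homotopy classes of $f^n(K)$ for $n \ge N_1$ supplied by Lemma~\ref{multicurve}.

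For irreducibility, given $\gamma, \beta \in \G_K$, each class in $\G_K$ is hit infinitely often by $\{\gamma_n\}$, so I pick $n_1 < n_2$ with $\gamma_{n_1} = \gamma$ and $\gamma_{n_2} = \beta$ and take $\delta_i := \gamma_{n_1+i}$ for $0 \le i \le n_2 - n_1$. Since $f(f^{n_1+i-1}(K)) = f^{n_1+i}(K)$ while $f^{n_1+i-1}(K) \sim \delta_{i-1}$ and $f^{n_1+i}(K) \sim \delta_i$, a thin annular neighborhood of $f^{n_1+i}(K)$ pulls back to one containing $f^{n_1+i-1}(K)$, whose essential core is a component of $f^{-1}(\delta_i)$ homotopic to $\delta_{i-1}$. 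The same remark applied to any $\gamma = \gamma_n$ and its successor shows $\G_K$ is pre-stable, so Lemma~\ref{Cantor} becomes applicable.

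The Cantor property is proved by contradiction. If $\G_K$ were not Cantor, Lemma~\ref{Cantor} would force $m(\gamma) = 1$ for every $\gamma \in \G_K$, giving a unique essential preimage class $\Phi(\gamma) \in \G_K$. Irreducibility then promotes $\Phi$, a self-map of the finite set $\G_K$ in which every pair of elements can be connected by forward iteration, to a single cyclic permutation of some length $p$; from $\gamma_n = \Phi(\gamma_{n+1})$ I conclude $\gamma_{n+p} = \gamma_n$ for $n \ge N_1$. Fix one such $\gamma$, let $\CCC_\gamma \subset \cbar \smm \PPP_f$ be the maximal annulus homotopic to $\gamma$ rel $\PPP_f$, and let $\CCC_\gamma^p \subseteq \CCC_\gamma$ be the unique component of $f^{-p}(\CCC_\gamma)$ of class $\gamma$ (uniqueness follows from $m=1$ iterated through the $\Phi$-cycle). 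Writing $K_n := f^n(K)$, the analysis of $f^p \colon \CCC_\gamma^p \to \CCC_\gamma$ delivers the contradiction.

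The degenerate case $\CCC_\gamma^p = \CCC_\gamma$ would make $f^p$ a conformal automorphism of the annulus $\CCC_\gamma$ (by the modulus identity $\mathrm{mod}(\CCC_\gamma) = d \cdot \mathrm{mod}(\CCC_\gamma^p)$); but such an automorphism must be of finite order---an irrational rotation would make $\CCC_\gamma$ a Herman ring, impossible for the PCF map $f$, while a boundary-swap has order two---so $f^{kp}$ would fix $K_n$ for some $k$, contradicting wandering. Hence $\CCC_\gamma^p \Subset \CCC_\gamma$ strictly and $f^p$ is a proper covering of degree $d \ge 2$; iterating produces nested annuli $\CCC_\gamma^{(k+1)p} \Subset \CCC_\gamma^{kp}$ with moduli decaying geometrically, and their intersection $J$ is an invariant Jordan curve (a quasicircle, obtained by direct QC surgery as in the proof of Proposition~\ref{qc}). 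By the uniqueness of the essential class-$\gamma$ preimage at each stage, $K_{n+kp} \subset \CCC_\gamma^p$ for all $k \ge 0$; inductively, $K_n \subset \CCC_\gamma^{kp}$ for every $k$, and so $K_n \subset J$. Since $K_n$ is essential (hence not simply connected) while every proper sub-arc of the Jordan curve $J$ is simply connected, $K_n$ must equal $J$; but then $K_{n+p} = f^p(J) = J = K_n$, contradicting wandering. The main obstacle is precisely this degenerate case: the one-annulus setup falls outside Definition~\ref{defAS} (condition~(2) fails when $m=1$), so Propositions~\ref{deg} and~\ref{qc} do not apply off the shelf, and the PCF assumption on $f$ must be invoked directly to rule out an irrational-rotation alternative.
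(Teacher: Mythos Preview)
Your irreducibility argument coincides with the paper's. For the Cantor property your route diverges, and it has a real gap: there is no ``maximal annulus $\CCC_\gamma\subset\cbar\smm\PPP_f$ homotopic to $\gamma$'' in general. If $\gamma$ separates $\PPP_f$ into two pairs $\{a,b\}$ and $\{c,d\}$, then any annulus in class $\gamma$ has two complementary continua, one containing $\{a,b\}$ and one containing $\{c,d\}$; for any such choice one can enlarge the annulus further by shrinking these continua toward different arcs, and distinct limiting choices give incomparable annuli. In particular no embedded annulus contains \emph{every} essential continuum in class~$\gamma$, so there is no reason the $\Phi$-tracking component $\CCC_\gamma^p$ of $f^{-p}(\CCC_\gamma)$ should lie inside $\CCC_\gamma$, and the nested-annulus argument never gets started. (Even granting some workable $\CCC_\gamma$, the step ``hence $\CCC_\gamma^p\Subset\CCC_\gamma$'' would still need a separate shared-boundary analysis in the spirit of Proposition~\ref{compactly}.)

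The paper sidesteps this by using the wandering continua themselves as boundaries. With the same $p$-periodicity setup, it lets $A(n,n+kp)$ be the annular component of $\cbar\smm(f^n(K)\cup f^{n+kp}(K))$, shows (via the $m(\gamma)=1$ hypothesis) that $f^m$ carries it properly onto $A(n+m,n+kp+m)\subset\cbar\smm\PPP_f$, and then invokes Montel: $\{f^m\}$ is normal on $A(k_1p,k_2p)$, contradicting the fact that a third iterate $f^{k_3p}(K)\subset\JJJ_f$ lies inside. If you wish to salvage your approach, replace $\CCC_\gamma$ by one of these inter-$K$ annuli; the required invariance then comes from exactly this properness, and the argument converges to the paper's.
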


\begin{proof}
By Lemma \ref{multicurve}, there exists an integer $N_1\ge 0$ such that $f^n(K)$ for every $n\ge N_1$ is homotopic to a curve in $\G_K$ rel $\PPP_f$. Thus, $\Gamma_K$ is pre-stable. For any pair $(\g,\al)\in\G_K\times\G_K$, there are integers $k_2>k_1\ge N_1$ such that $f^{k_1}(K)$ is homotopic to $\g$ and $f^{k_2}(K)$ is homotopic to $\al$ rel $\PPP_f$. Thus, $f^{k_1-k_2}(\al)$ has a component $\de$ homotopic to $\g$ rel $\PPP_f$. So for $1<i<k_2-k_1$ the curve $f^i(\de)$ is homotopic rel $\PPP_f$ to $f^{k_1+i}(K)$ and hence to a curve in $\G_K$ rel $\PPP_f$. This shows that $\G_K$ is irreducible.

Now we want to prove that $\G_K$ is a Cantor multicurve. We may apply Lemma \ref{Cantor} and assume by contradiction that for each $\g\in\G_K$, $f^{-1}(\g)$ has exactly one component homotopic rel $\PPP_f$ to a curve in $\G_K$.

Assume $N_1=0$ for simplicity. Denote by $\G_K=\{\g_0,\cdots,\g_{p-1}\}$ such that $\g_0$ is homotopic to $K$ and $\g_n$ is homotopic to a component of $f^{-1}(\g_{n+1})$ for $0\le n<p$ (set $\g_p=\g_0$). It makes sense since for each $\g\in\G_K$, $f^{-1}(\gamma)$ has exactly one component homotopic rel $\PPP_f$ to a curve in $\G_K$. Then $f^n(K)$ is homotopic to $\g_k$ if $n\equiv k(\m p)$.

For $n\ge 0$ and $k\ge 1$ denote by $A(n,n+kp)$ the unique annular component of $\cbar\smm(f^n(K)\cup f^{n+kp}(K))$. Then $f^m: A(n,n+kp)\to A(n+m, n+kp+m)$ is proper for any $m\ge 1$. This is because $A(n+m, n+kp+m)$ is disjoint from $\PPP_f$ and homotopic to $f^{n+m}(K)$, so $f^{-m}(A(n+m, n+kp+m))$ has a unique component homotopic to $f^n(K)$, which must be $A(n,n+kp)$.

One may choose three distinct positive integers $k_1, k_2, k_3$ such that $A(k_1p, k_2p)$ contains $f^{k_3p}(K)$. Hence $A(k_1p, k_2p)$ contains points of $\JJJ_f$. On the other hand, for all $m\ge 1$, $f^m$ is proper on $A(k_1p, k_2p)$, whose image is disjoint from $\PPP_f$. Thus $\{f^m\}$ is a normal family in $A(k_1p, k_2p)$ since $\PPP_f$ contains at least three points. It is a contradiction.
\end{proof}

{\noindent\it Proof of Theorem \ref{wc}}. Suppose that $K\subset\JJJ_f$ is a wandering continuum and is not simply connected. Then $\G_K$ is an irreducible Cantor multicurve by Lemma \ref{irreducible}. By Lemma \ref{multicurve}, there exists an integer $N_1\ge 0$ such that $f^n(K)$ for every $n\ge N_1$ is homotopic to a curve in $\G_K$ rel $\PPP_f$. We assume $N_1=0$ for simplicity.

Let $\EEE$ be the collection of the essential components $E$ of $f^{-m}(f^n(K))$ for $n,m \ge 0$ such that $f^i(E)$ is homotopic to a curve in $\G_K$ for $0\le i<m$. Then $f(E)\in\EEE$ for any element $E\in\EEE$, and any two elements in $\EEE$ are either disjoint or one contains the other as subsets of $\cbar$.

For each $\g\in\G_K$, let $\EEE(\g)$ be the sub-collection of continua in $\EEE$   homotopic to $\g$ rel $\PPP_f$. We claim that for any continuum $E\in\EEE(\g)$, there are two disjoint continua $E_1,E_2\in\EEE(\g)$ such that $E\subset A(E_1, E_2)$, where $A(E_1,E_2)$ denotes the unique annular component of $\cbar\smm(E_1\cup E_2)$.

Consider $\{f^n(E)\}$ for $0\le n\le 2\cdot \#\G_K+1$. There is a curve $\be\in\G_K$ such that at least three of them are contained in $\EEE(\be)$.
Let us eumerate them by $f^{n_i}(E)$ $(i=1,2,3)$
such that $f^{n_3}(E)\subset A(f^{n_1}(E), f^{n_2}(E))$. Let $A$ be the component of $f^{-n_3}(A(f^{n_1}(E), f^{n_2}(E)))$ that contains
$E$. Then $A=A(E_1, E_2)$ where $E_i$ ($i=1,2$) is a component of $f^{-n_3}(f^{n_i}(E))$. The claim is proved.

Denote $A(\g)=\cup A(E,E')$ for all disjoint pairs $E,E'\in\EEE(\g)$. Then $A(\g)$ is an annulus in $\cbar\smm\PPP_f$
homotopic to $\g$ rel $\PPP_f$, and $A(\g)\cap A(\be)=\emptyset$ for distinct curves $\be,\g\in\G_K$.

Denote by $\AAA=\cup_{\gamma\in\Gamma_K} A(\g)$ and $\AAA^1$ the union of components of $f^{-1}(\AAA)$ homotopic to curves in $\G_K$. Then $\AAA^1\subset\AAA$ and $\partial\AAA\subset\partial\AAA^1$ by the claim and the definition of $\EEE$. So $g=f|_{\AAA^1}:\AAA^1\to\AAA$ is an exact annular system. In particular, $f^n(K)\subset\AAA$ and hence $f^n(K)\subset\AAA^1$ for all $n\ge 0$. So $K\subset\JJJ_g$. Since $K$ is connected, it must be contained in a component of $\JJJ_g$ which is a Jordan curve by  Theorem \ref{curve}. But $K$ is essential. Therefore, $K$ coincides with the Jordan curve.
\qed

\section{Foldings of polynomials}

In this section, we introduce a topological surgery to produce branched coverings with Cantor multicurves from  polynomials. We give two criteria for these maps to be equivalent to rational maps. This section is self-contained and can be read independently to the previous sections.

\subsection{Folding maps}

Let $F$ be a post-critically finite branched covering of $\cbar$ and $\be$ an essential Jordan curve in $\cbar\smm\PPP_F$. The pair $(F,\be)$ is called a {\bf folding map}\ if $F^{-1}(\be)$ contains at least two curves and each of them is essential and homotopic to $\be$ rel $\PPP_F$.

Let $(F,\be)$ be a folding map. Denote by $U, V$ the two components of $\cbar\smm\be$. Denote by $U_1, V_1$ the two disk components of $\cbar\smm F^{-1}(\be)$ such that $U_1$ is homotopic to $U$ (i.e. there is an isotopy $\theta$ of $\cbar$ rel $\PPP_F$ such that $U_1=\theta(U)$). Then $V_1$ is homotopic to $V$. There are three possibilities:

\vskip 0.24cm

\indent type A: $F(U_1)=U$ and $F(V_1)=U$, \\
\indent type B: $F(U_1)=U$ and $F(V_1)=V$,\text{ and} \\
\indent type C: $F(U_1)=V$ and $F(V_1)=U$.

\vskip 0.24cm

Define
$$
m(F,\be)=\#\{\text{components of }F^{-1}(\be)\}, \text{ and}
$$
$$
d(F,\be)=\left\{\begin{array}{ll} \deg (F|_{U_1}), & \text{ in type A}, \\
\min\{\deg (F|_{U_1}), \deg (F|_{V_1})\}, \mystrut & \text{ in type B}, \\
\sqrt{\deg (F|_{U_1})\deg (F|_{V_1})}, & \text{ in type C}.
\end{array}\right.
$$

The following facts are easy to check: \\
\indent $\bullet$ $(F,\be)$ is of type A if and only if $m(F,\be)$ is an even number. \\
\indent $\bullet$ $(F^n,\be)$ is also a folding map with $m(F^n,\be)=m(F,\beta)^n$ and $d(F^n,\be)=d(F,\be)^n$ for $n\ge 1$. \\
\indent $\bullet$ If $(F,\be)$ is of type A (resp. type B), then $(F^n,\be)$ is also of type A (resp. type B) for $n\ge 1$; If $(F,\be)$ is of type C, then $(F^{2k-1},\be)$ is of type C and $(F^{2k},\be)$ is of type B for $k\ge 1$.

It is obvious that $F$ has no Thurston obstruction if $F^2$ has no Thurston obstruction. We will consider only types A and B in this section.

\vskip 0.24cm

Let $(F,\be)$ be a folding map of type A and $g$ be a polynomial. We say that $(F,\be)$ is a {\bf folding of the polynomial} $g$ if $F$ is Thurston equivalent to another map $G$ through a pair of homeomorphisms $(\phi,\psi)$ of $\cbar$ such that $G^{-1}(U)$ has a disk component $U_1\Subset U$ and $G|_{U_1}=g$, where $U$ is a component of $\cbar\smm\phi(\beta)$. It is obvious that the polynomial $g$ is also post-critically finite.

Let $(F,\be)$ be a folding map of type B and $(g_1, g_2)$ be a pair of polynomials. We say that $(F,\be)$ is a {\bf folding of the pair of polynomials} $(g_1, g_2)$ if $F$ is Thurston equivalent to another map $G$ through a pair of homeomorphisms $(\phi,\psi)$ of $\cbar$ such that there are disjoint Jordan domains $U$ and $V$ in $\cbar$ with both $\partial U$ and $\partial V$ homotopic to $\phi(\beta)$ rel $\PPP_G$, both $G^{-1}(U)$ and $G^{-1}(V)$ have a disk component $U_1\Subset U$ and $V_1\Subset V$, $G|_{U_1}=g_1$ and $G|_{V_1}=g_2$. Obviously, both $g_1$ and $g_2$ are also post-critically finite.

\vskip 0.24cm

Note that the multicurve consisting of the single Jordan curve $\beta$ is a Cantor multicurve. Denote by $d_i$ ($1\le i\le m(F,\beta)$) the degrees of $F$ on the components of $F^{-1}(\be)$. The leading eigenvalue of its transition matrix is
$$
\lambda_{\be}=\dfrac1{d_1}+\cdots +\dfrac1{d_{m(F,\beta)}}.
$$
The next result relates a folding map to a folding of polynomials.

\begin{proposition}\label{polynomial} Let $(F,\be)$ be a folding map of type A (or type B) with $\lambda_{\be}<1$. Then $(F,\be)$ is the folding of a polynomial $g$ (or a pair of polynomials $(g_1, g_2)$) if and only if any stable multicurve disjoint from $\be$ is not a Thurston obstruction. Moreover, the polynomial $g$ (or the pair of polynomials $(g_1,g_2)$) is unique up to holomorphic conjugation.
\end{proposition}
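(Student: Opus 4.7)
The plan is to associate to the folding map $(F,\beta)$ a topological polynomial through a pinching construction and then invoke Thurston's characterization theorem (Theorem \ref{Thurston}).

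For the forward direction, suppose $(F,\beta)$ is a folding of a polynomial $g$ in type A (type B being analogous with a pair $(g_1,g_2)$). Work with the Thurston-equivalent model $G$ with $G|_{U_1}=g$. Given any stable multicurve $\Gamma$ of $F$ disjoint from $\beta$, transport it to a stable multicurve of $G$ disjoint from $\partial U$, and split it as $\Gamma=\Gamma_U\sqcup\Gamma_V$ according to which side of $\partial U$ each curve lies in. The key observation is that every essential preimage of a curve in $\Gamma_U$ lying in an annular component of $G^{-1}(U)$ is homotopic to $\partial U$, hence not to any curve in $\Gamma$; so only the disk components $U_1$ and $V_1$ of $G^{-1}(U)$ contribute to the transition matrix of $\Gamma$. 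The contribution from $U_1$ is exactly the transition matrix of an extension of $\Gamma_U$ under the polynomial $g$, which has leading eigenvalue strictly less than one because $g$, being a post-critically finite polynomial with $\#\PPP_g\ge 3$, has a hyperbolic orbifold and therefore no Thurston obstruction. The $V_1$-contribution can be dominated (up to reshuffling of homotopy classes) by the $U_1$-contribution divided by $\deg(G|_{V_1})$, and an analogous analysis applies to $\Gamma_V$ in type B. Combining these bounds yields $\la_\Gamma<1$.

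For the reverse direction, I will construct the polynomial by pinching. In type A, the restriction $F|_{U_1}:U_1\to U$ is a branched covering of disks of degree $d_1=\deg(F|_{U_1})$. Compactify $U$ by collapsing $\cbar\smm U$ to a single point $\infty_U$, obtaining a topological sphere $\hat U$, and extend $F|_{U_1}$ to a branched covering $\tilde g:\hat U\to\hat U$ of degree $d_1$ with $\tilde g^{-1}(\infty_U)=\{\infty_U\}$ via a standard wrapping extension across the annulus $U\smm U_1$. The result is a post-critically finite topological polynomial with $\PPP_{\tilde g}=(\PPP_F\cap U)\cup\{\infty_U\}$. Stable multicurves of $\tilde g$ correspond bijectively to stable multicurves of $F$ that lie inside $U$ (hence are disjoint from $\beta$), with matching transition matrices. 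By hypothesis, $\tilde g$ has no Thurston obstruction, so by Theorem \ref{Thurston} it is Thurston equivalent to a genuine polynomial $g$. Assemble the target model $G$ by replacing $F$ on $U_1$ with $g$ via the equivalence, keeping the dynamics outside $U_1$ intact up to isotopy, and verify using the combinatorial uniqueness in Thurston's theorem that $G$ is Thurston equivalent to $F$. In type B, the pinching is performed independently on both sides of $\beta$, producing $g_1$ and $g_2$ simultaneously. Uniqueness of $g$ (resp. of the pair) follows from the uniqueness part of Theorem \ref{Thurston} applied to $\tilde g$, whose combinatorial data is intrinsic to $F|_{U_1}$.

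The hard part will be establishing the bijection between stable multicurves of the pinched topological polynomial $\tilde g$ and stable multicurves of $F$ contained in $U$ with matching transition data. The pinching modifies the global topology of the sphere, so one must verify carefully that no spurious obstructions appear near the pinched point $\infty_U$ and, conversely, that every stable multicurve of $\tilde g$ genuinely pulls back to one of $F$ (using the hypothesis that $\la_\beta<1$ so that $\beta$-parallel curves do not accumulate). A secondary technical point, especially in type B, is reassembling the separately constructed polynomials into a branched covering Thurston equivalent to $F$, which requires matching the two sides across the pinched boundary without introducing new homotopy classes. Both difficulties are controlled by the exactness of the pullback behaviour of $\beta$ encoded in the definition of a folding map.
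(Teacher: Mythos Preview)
Your forward direction is on the right track but two points need tightening. First, by Lemma~\ref{Mc} you may restrict to an irreducible multicurve $\Gamma$. In type~A, any curve $\gamma\subset V$ has $F^{-1}(\gamma)$ entirely contained in the annular components of $\cbar\smm F^{-1}(\beta)$, so every essential preimage is homotopic to $\beta$; by irreducibility this forces $\Gamma_V=\emptyset$ (or $\Gamma=\{\beta\}$ up to homotopy, in which case $\lambda_\Gamma=\lambda_\beta<1$). So the ``$V_1$-contribution'' you try to bound is simply absent, and no domination argument is needed. Second, and more seriously, the curves in $\Gamma_U$ are essential rel $\PPP_F\cap U$, which may be strictly larger than $\PPP_g$: extra critical values of $F$ outside $U_1$ can land in $U$. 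So ``$g$ has hyperbolic orbifold hence no obstruction'' does not directly apply to $\Gamma_U$. The paper closes this gap by treating $(g,\PPP)$ with $\PPP=(\PPP_G\cap U)\cup\{\infty\}$ as a \emph{marked} polynomial and invoking \cite[Theorem~3.3]{CT}, which asserts that a marked polynomial has no Thurston obstruction.

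Your reverse direction has a genuine topological flaw. If $\tilde g:\hat U\to\hat U$ has degree $d_1$ and $\tilde g|_{U_1}=F|_{U_1}:U_1\to U$ is already proper of degree $d_1$, then every point of $U=\hat U\smm\{\infty_U\}$ has its full $d_1$ preimages inside $U_1$; consequently $\tilde g$ would have to send the closed disk $\hat U\smm U_1$ to the single point $\infty_U$, which is not a branched covering. No ``standard wrapping extension'' fixes this while keeping both $\deg\tilde g=d_1$ and $\tilde g|_{U_1}=F|_{U_1}$. One can salvage the idea by first precomposing with an isotopy $\theta$ rel $\PPP_F$ carrying $U$ into $U_1$ and then compactifying, so that the genuine self-map $F\circ\theta^{-1}:U\to U$ extends to a topological polynomial on $\hat U$; but then one must still check that $\PPP_{\tilde g}$ matches $(\PPP_F\cap U)\cup\{\infty_U\}$ and that the orbifold is hyperbolic before invoking Theorem~\ref{Thurston}. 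The paper bypasses all of this: it views $G_1|_{\theta(W_1)}:(\overline{\theta(W_1)},\PPP)\to(\overline{W},\PPP)$ as a \emph{marked repelling system} in the sense of \cite{CT}, applies \cite[Lemma~2.1 and Theorem~3.5]{CT} to obtain, under the no-obstruction hypothesis, a Thurston equivalence to a genuine polynomial-like map $g_1:V_1\to V$, and then straightens $g_1$ via Douady--Hubbard. This route is shorter and avoids the compactification and orbifold issues entirely.
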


\begin{proof}
Suppose that $(F,\be)$ is a folding of a polynomial $g$ with $\lambda_\be<1$. By the definition, there is a branched covering $G$ of $\cbar$ which is Thurston equivalent to $F$ through a pair of homeomorphisms $(\phi,\psi)$ of $\cbar$, such that $G^{-1}(U)$ has a disk component $U_1\Subset U$ and $G|_{U_1}=g$, where $U$ is a component of $\cbar\smm\phi(\beta)$.

Let $\G$ be an irreducible multicurve of $G$ disjoint from $\alpha:=\phi(\beta)$. If there is one curve $\g\in\G$ homotopic to $\al$ rel $\PPP_G$, then $\G=\{\g\}$ and hence $\lambda_\G<1$. Now we assume that for any $\g\in\G$, $\g$ is not homotopic to $\al$ rel $\PPP_G$. Then $\g\subset U$. Otherwise $\g$ is contained in the other component $V$ of $\cbar\smm\{\al\}$. Thus each curve in $F^{-1}(\g)$ is contained in the annulus $F^{-1}(V)$ and hence is either non-essential or homotopic to $\al$ rel $\PPP_G$.

Set $\PPP=(\PPP_G\cap U)\cup\{\infty\}$. Then $\PPP_g\subset\PPP$, $g(\PPP)\subset\PPP$ and $\G$ is a multicurve of the marked polynomial $(g,\PPP)$. Thus $\lambda_\G<1$ by \cite[Theorem 3.3]{CT}. Applying Lemma \ref{Mc}, we see that $\lambda_\Gamma<1$ for any stable multicurve $\Gamma$ of $G$ disjoint from $\al$.

Conversely, suppose that any stable multicurve of $F$ disjoint from $\be$ is not a Thurston obstruction. Let $W$ be the component of $\cbar\smm\beta$ and $W_1$ be the component of $\cbar\smm F^{-1}(\be)$ homotopic to $W$. Then $F(W_1)=W$ and there is an isotopy $\theta$ of $\cbar$ rel $\PPP_F$ such that $\theta(W_1)\Subset W$. Set $G_1=F\circ\theta^{-1}$. Then $G_1$ is Thurston equivalent to $F$.

Denote $\PPP=\PPP_F\cap W$. Then $G_1(\PPP)\subset\PPP$ and $G_1: (\overline{\theta(W_1)}, \PPP)\to (\overline{W}, \PPP)$ is a marked repelling system (ref to \cite{CT}). Applying Lemma 2.1 and Theorem 3.5 in \cite{CT}, there exist two Jordan domains $V_1\Subset V$ in $\cbar$, a polynomial-like map $g_1: V_1\to V$ and a pair of homeomorphisms $(\phi, \psi)$ from $\overline{W}$ to $\overline{V}$ such that $\psi$ is isotopic to $\phi$ rel $\PPP\cup\partial W$,
$\psi(\theta(W_1))=V_1$ and $\phi\circ G_1\circ\psi^{-1}=g_1$ on $\overline{V_1}$. Extend $(\phi,\psi)$ to homeomorphisms of $\cbar$ such that they coincide with each other outside of $W$. Let $G_2=\phi\circ G_1\circ\psi^{-1}$. Then $G_2$ is Thurston equivalent to $F$ and $G_2|_{V_1}=g_1$ is a polynomial-like map. By the Straightening Theorem \cite{DH2}, there is a quasiconformal map $h$ of $\cbar$ such that $(h\circ G_2\circ h^{-1})|_{h(V_1)}$ is the restriction of a polynomial $g$. Therefore, $(F,\be)$ is a folding of the polynomial $g$. The uniqueness of $g$ comes from Thurston's Theorem.

This argument also works for type B. We omit its proof.
\end{proof}

\begin{proposition}\label{construction1}
Let $g$ be a post-critically finite polynomial with $\#\PPP_g\ge 3$. Let $m\ge 2$ be an even number and $\{d_1, \cdots, d_m\}$ be a sequence of integers such that $d_1=\deg g$ and
$$
\dfrac1{d_1}+\cdots+\dfrac1{d_{m}}<1.
$$
Then there exists a folding $(F,\beta)$ of the polynomial $g$ such that $\lambda_\be<1$, $m(F,\beta)=m$ and the degrees of $F$ on all the components of $F^{-1}(\beta)$ are $\{d_1, \cdots, d_m\}$.
\end{proposition}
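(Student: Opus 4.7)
The plan is to build $F$ as a topological branched covering of $\cbar$ by using $g$ on one disk (chosen from the dynamics of $g$) and topological pieces on the rest, with the $m$ curves $\alpha_1,\dots,\alpha_m$ that will be the preimages of $\beta$ all placed so as to be homotopic to $\beta$ rel $\PPP_F$.

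First I would set up coordinates from $g$ itself. Since $g$ is post-critically finite, its filled Julia set is connected, so there is a Jordan disk $U\subset\cbar$ (an equipotential disk in the basin of $\infty$, say) containing $\PPP_g\smm\{\infty\}$ and satisfying $U_1:=g^{-1}(U)\Subset U$. Set $\beta:=\partial U$, $V:=\cbar\smm\overline U$, $\alpha_1:=\partial U_1$, and declare $F|_{U_1}:=g$; this is a degree-$d_1$ branched covering $U_1\to U$, and $\alpha_1\subset U$ is one preimage of $\beta$ with boundary degree $d_1$.

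Next I would place $m-1$ further nested Jordan curves $\alpha_2,\dots,\alpha_m$ in $\cbar\smm\overline{U_1}$ so that cutting $\cbar$ along $\alpha_1\cup\cdots\cup\alpha_m$ yields the two disks $U_1,V_1$ (with $V_1\Subset V$ and $\infty\in V_1$) and $m-1$ annuli $B_1,\dots,B_{m-1}$, where $B_i$ has boundary $\alpha_i\cup\alpha_{i+1}$. Because $m$ is even, a type A arrangement is forced on the targets: $F(U_1)=F(V_1)=U$, $F(B_i)=V$ for odd $i$, and $F(B_i)=U$ for even $i$. On $V_1$ I take a degree-$d_m$ branched covering onto $U$ modelled on $z\mapsto z^{d_m}$, and on each $B_i$ a branched covering onto its target of degree $d_i+d_{i+1}$ with boundary degrees $d_i$ on $\alpha_i$ and $d_{i+1}$ on $\alpha_{i+1}$; the topological Riemann--Hurwitz count $\sum(e_p-1)=d_i+d_{i+1}$ matches exactly what is available on the annulus, and such branched coverings exist by elementary cut-and-paste. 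To glue the pieces into a continuous branched covering of $\cbar$, the only constraint is that the two pieces meeting along each $\alpha_j$ induce the same degree-$d_j$ covering $\alpha_j\to\beta$; since any two such coverings of circles are topologically conjugate, this is achieved by pre-composing each newly built piece with a suitable homeomorphism of its domain.

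To finish I would steer the critical values of the new pieces so that $F$ is post-critically finite and $\beta$ is essential. Fix a periodic cycle $P\subset\PPP_g\cap U_1$ of $g$. By post-composing each newly built piece with a homeomorphism of its target, I can force every critical value of $F|_{V_1}$ and of each $B_i$ mapping to $U$ to lie in $P$, so those critical orbits immediately enter the $g$-invariant set $\PPP_g$. For the $B_i$'s mapping to $V$, pick two distinct marked points $q_1,q_2\in V_1$, route all their critical values into $\{q_1,q_2\}$, and in the design of $F|_{V_1}$ arrange $F(q_1),F(q_2)\in P$. Then $\PPP_F\subset\PPP_g\cup\{q_1,q_2\}$ is finite; $\PPP_F\cap U$ contains the $\ge 2$ points of $\PPP_g\smm\{\infty\}$ (using $\#\PPP_g\ge 3$), and $\PPP_F\cap V$ contains $q_1,q_2$, so $\beta$ is essential. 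The curves $\alpha_i$ are disjoint from $\PPP_F$ and, by their nested placement, each separates $\PPP_F$ in the same way $\beta$ does, hence each is homotopic to $\beta$ rel $\PPP_F$. Thus $(F,\beta)$ is a type-A folding map with $m(F,\beta)=m$ and boundary degrees $d_1,\dots,d_m$; the reduced transition matrix is the scalar $\sum_{i=1}^m 1/d_i=\lambda_\beta<1$; and because $F|_{U_1}=g$ by construction, $(F,\beta)$ is a folding of $g$ on taking $G=F$, $\phi=\psi=\mathrm{id}$ in the definition.

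The principal difficulty is the topological bookkeeping in the middle step: producing annulus-to-disk branched coverings with prescribed boundary degrees and with critical values constrained to a pre-chosen small finite set, and then assembling these pieces compatibly with $g$ into a single continuous branched covering of $\cbar$. Once that scaffolding is in place, essentiality of $\beta$, the estimate $\lambda_\beta<1$, and the polynomial-folding property follow by inspection.
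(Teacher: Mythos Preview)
Your construction is essentially the paper's: assemble $F$ piecewise from (a conjugate of) $g$ on an inner disk, a single branched cover on an outer disk, and annulus-to-disk branched covers on the $m-1$ intermediate annuli, routing all new critical values so that their forward orbits fall into $\PPP_g$ after one or two steps. The paper realises this with concentric round circles $\delta_i$ and explicit boundary data $z\mapsto(z/r_i)^{(-1)^{i+1}d_i}$, conjugating $g$ onto $\D$ via $z\mapsto z/(1-|z|)$; you instead keep $g$ in its own coordinates on an equipotential disk and place the $\alpha_i$ abstractly, which is a cosmetic difference. Your explicit placement of two distinct post-critical points $q_1,q_2\in V_1$ and the check that every $\alpha_i$ separates $\PPP_F$ exactly as $\beta$ does make the essentiality of $\beta$ and the folding-map axioms transparent, filling in details the paper leaves implicit.
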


\begin{proof}
Let $\beta\subset\cbar$ be the unit circle. For $1\le i\le m$, let $\delta_i\subset\cbar$ be a round circle with center zero and radius $r_i$ such that $1=r_1<\cdots<r_m<\infty$.  Define
$$
F(z)=\left(\dfrac{z}{r_i}\right)^{(-1)^{i+1}d_i}\text{ for }z\in\delta_i.
$$
Then $F:\delta_i\to\beta$ is a covering of degree $d_i$.

Set $\rho(z)=z/(1-|z|)$. It is a homeomorphism from the unit disk $\D$ to $\C$. Define $G_0=\rho^{-1}\circ g\circ\rho$ on $\D$.

Let $D_m$ be the unbounded component of $\cbar\smm\delta_m$. Define $G_m: D_m\to\D$ to be a branched covering such that its boundary value coincides with the definition of $F$ on $\delta_m$ and its critical values are eventually periodic points of $G_0$.

For each $1\le i<m$, let $A_i$ be the annulus bounded by $\delta_{i}$ and $\delta_{i+1}$. Define $G_i: A_i\to\cbar\smm\overline{\D}$ to be a branched covering such that its boundary value coincides with the definition of $F$ on $\delta_{i}\cup\delta_{i+1}$ and its critical values land on the pre-images under $G_m$ of eventually periodic points of $G_0$. Define
$$
F=\left\{\begin{array}{lll}
G_0: & \overline\D\to\overline\D, \\
G_i: & \overline{A_i}\to\cbar\smm\D\text{ for }1\le i< m, \\
G_m: & \overline{D_m}\to\overline{\D}\ .
\end{array}\right.
$$
Then $(F, \beta)$ is a folding of the polynomial $g$ with $\lambda_\be<1$.
\end{proof}

\begin{proposition}\label{construction2}
Let $(g_1, g_2)$ be a pair of post-critically finite polynomials such that $\#\PPP_{g_1}\ge 3$, $\#\PPP_{g_2}\ge 3$ and $\deg g_1+\deg g_2\ge 5$. Let $m\ge 3$ be an odd number and $\{d_1, \cdots, d_m\}$ be a sequence of integers such that $d_1=\deg g_1$, $d_m=\deg g_2$ and
$$
\dfrac1{d_1}+\cdots+\dfrac1{d_{m}}<1.
$$
Then there exists a folding $(F,\beta)$ of the pair $(g_1, g_2)$ such that $\lambda_\be<1$, $m(F,\beta)=m$ and the degrees of $F$ on all the components of $F^{-1}(\beta)$ are $\{d_1, \cdots, d_m\}$.
\end{proposition}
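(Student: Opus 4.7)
The plan is to adapt the explicit construction of Proposition \ref{construction1} to the type B situation. Because $m$ is odd, the innermost and outermost components of $\cbar\smm F^{-1}(\beta)$ will lie on opposite sides of $\beta$, so I can install a conjugate of $g_1$ on the innermost disk and a conjugate of $g_2$ on the outermost one, rather than only a single polynomial as in construction1.

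Concretely, I take $\beta$ to be the unit circle, choose nested round circles $\delta_i$ of radii $1=r_1<r_2<\cdots<r_m$ centered at $0$, and define
$$F(z)=\Big(\dfrac{z}{r_i}\Big)^{(-1)^{i+1}d_i}\quad\text{for }z\in\delta_i,$$
exactly as in Proposition \ref{construction1}. Tracking whether $|F(z)|$ exceeds $1$ just inside and just outside each $\delta_i$ shows that $\overline{\D}$ maps onto $\overline{\D}$, and that because $m$ is odd the outermost disk $\overline{D_m}$ maps onto $\overline{\cbar\smm\overline{\D}}$. On $\overline{\D}$ I install $g_1$: with $\rho_1(z)=z/(1-|z|)$ as in construction1, set $G_0=\rho_1^{-1}\circ g_1\circ\rho_1$ on $\D$ and extend continuously to $\delta_1$ to match $F|_{\delta_1}(z)=z^{d_1}$; this uses $\deg g_1=d_1$. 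Symmetrically on $\overline{D_m}$, after identifying $D_m$ and $\cbar\smm\overline{\D}$ with $\C$ via homeomorphisms fixing $\infty$, I define $G_m$ to be $g_2$ in these coordinates, extended to $\delta_m$ to match $F|_{\delta_m}$; this uses $\deg g_2=d_m$. On each intermediate annulus $\overline{A_i}$ ($1\le i<m$) I define $G_i$ to be a branched covering onto the appropriate side of $\beta$ extending the prescribed boundary values on $\delta_i\cup\delta_{i+1}$, with all critical values chosen to be pre-periodic under $G_0$ or $G_m$ so that the concatenated map $F$ is post-critically finite.

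To conclude, I would verify: (a) $\PPP_F$ is finite and $(F,\beta)$ is a type B folding map with $m(F,\beta)=m$ and the prescribed component degrees $\{d_i\}$; (b) the transition scalar $\lambda_\beta=\sum_{i=1}^m 1/d_i<1$ by hypothesis; and (c) after the standard isotopy adjustment used in the proof of Proposition \ref{polynomial} (shrinking each distinguished disk strictly inside itself and absorbing the conjugating homeomorphisms), $F$ becomes Thurston equivalent to a branched covering $G$ exhibiting $g_1,g_2$ as literal polynomial restrictions on disks $U_1\Subset U$, $V_1\Subset V$. The main obstacle I expect is the bookkeeping in the definition of the annular maps $G_i$: one must arrange only finitely many critical values in a forward-invariant subset of $\PPP_{g_1}\cup\PPP_{g_2}$ and ensure that no post-critical point lands on a $\delta_j$, so that each $\delta_j$ remains essential in $\cbar\smm\PPP_F$. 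The hypothesis $\deg g_1+\deg g_2\ge 5$ is precisely what makes the inequality $\sum 1/d_i<1$ satisfiable for odd $m\ge 3$: if $d_1=d_m=2$ then $1/d_1+1/d_m=1$ already, leaving no room for the middle terms.
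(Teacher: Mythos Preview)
Your proposal is correct and follows essentially the same construction as the paper: unit circle $\beta$, nested concentric circles $\delta_i$, the map $F(z)=(z/r_i)^{(-1)^{i+1}d_i}$ on each $\delta_i$, a conjugate of $g_1$ on $\D$, a conjugate of $g_2$ on the outer disk, and branched coverings with eventually periodic critical values on the intermediate annuli. The one presentational difference is that the paper inserts an extra circle $\alpha=\{|z|=2\}$ (taking all $r_i<2$) and installs the conjugate of $g_2$ on the disk $\Delta=\{|z|>2\}\Subset D_m$, so that the compact containment $V_1\Subset V$ required by the definition of a folding of a pair of polynomials is built in from the start; you achieve the same end via the isotopy adjustment you describe in (c).
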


\begin{proof}
Let $\beta\subset\cbar$ be the unit circle. For $1\le i\le m$, let $\delta_i\subset\cbar$ be a round circle with center zero and radius $r_i$ such that $1=r_1<\cdots<r_m<2$.  Define
$$
F(z)=\left(\dfrac{z}{r_i}\right)^{(-1)^{i+1}d_i}\text{ for }z\in\delta_i.
$$
Then $F:\delta_i\to\beta$ is a covering of degree $d_i$.

Define $G_0=\rho^{-1}\circ g_1\circ\rho$ on $\D$, where $\rho(z)=z/(1-|z|)$.

Let $\alpha=\{z\in\C:\, |z|=2\}$. Let $\Delta$ be the unbounded component of $\cbar\smm\al$. Set $\rho_1(z)=|z|/(z(1-|z|))$. It is a homeomorphism from $\Delta$ to $\C$. Define $G_m=\rho_1^{-1}\circ g_2\circ\rho_1$ on $\Delta$.

Let $D_m$ be the unbounded component of $\cbar\smm\delta_m$. Then $\Delta\Subset D_m$. The map $G_m$ can be extended to a branched covering from $D_m$ to $\cbar\smm\overline{\D}$ with the same degree such that its boundary value coincides with the definition of $F$ on $\delta_m$.

For each $1\le i<m$, let $A_i$ be the annulus bounded by $\delta_{i}$ and $\delta_{i+1}$. When $i$ is an odd number, define $G_i: A_i\to D_m$ to be an branched covering such that its boundary value coincides with the definition of $F$ on $\delta_{i}\cup\delta_{i+1}$ and its critical values are eventually periodic points of $G_m$ in $\Delta$. When $i$ is an even number, define $G_i: A_i\to\D$ to be an branched covering such that its boundary value coincides with the definition of $F$ on $\delta_{i}\cup\delta_{i+1}$ and its critical values are eventually periodic points of $G_0$. Define
$$
F=\left\{\begin{array}{lll}
G_0: & \overline\D\to\overline\D, \\
G_i: & \overline{A_i}\to\overline\D\text{ for even number }1<i<m, \\
G_i: & \overline{A_i}\to\overline{D_m}\text{ for odd number }1\le i<m, \\
G_m: & \overline{D_m}\to\overline{D_m}\ .
\end{array}\right.
$$
Then $(F, \beta)$ is a folding of the pair of polynomials $(g_1, g_2)$ with $\lambda_\be<1$.
\end{proof}

\begin{theorem}\label{no1}
Let $(F,\be)$ be a folding of a polynomial (or a pair of polynomials) with $\lambda_\be<1$. Suppose that $d(F,\be)< m(F,\be)$. Then $F$ has no Thurston obstructions.
\end{theorem}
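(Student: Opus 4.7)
My plan is to argue by contradiction: suppose $\Gamma$ is a stable multicurve of $F$ with $\lambda_\Gamma\ge 1$. By the second part of Lemma~\ref{Mc} we may pass to an irreducible sub-multicurve with the same leading eigenvalue, so assume $\Gamma$ is irreducible. The whole argument pivots on one structural fact about the folding curve: every component of $F^{-1}(\beta)$ is homotopic to $\beta$, so in any transition matrix containing $\beta$, the $\beta$-column equals $(\lambda_\beta,0,\dots,0)^T$. I split according to whether $\Gamma$ can be isotoped disjoint from $\beta$.

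\emph{Case 1: $\Gamma$ admits representatives disjoint from $\beta$.} If no curve of $\Gamma$ is homotopic to $\beta$, Proposition~\ref{polynomial} (valid because $(F,\beta)$ is a folding of a polynomial or pair with $\lambda_\beta<1$) directly gives $\lambda_\Gamma<1$, a contradiction. If some curve of $\Gamma$ is homotopic to $\beta$, assume $\beta\in\Gamma$ and write $\Gamma'=\Gamma\setminus\{\beta\}$. The observation above puts $M_\Gamma$ in block upper-triangular form with diagonal blocks $\lambda_\beta$ and $M_{\Gamma'}$, so $\lambda_\Gamma=\max(\lambda_\beta,\lambda_{\Gamma'})$, and it suffices to prove $\lambda_{\Gamma'}<1$. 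The multicurve $\Gamma'$ is disjoint from $\beta$ and pre-stable because any preimage of a curve in $\Gamma'$ homotopic to $\beta$ would come from $F^{-1}(\beta)$, contradicting that no curve of $\Gamma'$ is homotopic to $\beta$. I extend $\Gamma'$ to a stable $\tilde\Gamma'\supset\Gamma'$ via the first part of Lemma~\ref{Mc}; using $F(\beta)=\beta$, curves disjoint from $\beta$ have preimages disjoint from $\beta$, so $\tilde\Gamma'$ remains disjoint from $\beta$. Proposition~\ref{polynomial} then forces $\lambda_{\Gamma'}\le\lambda_{\tilde\Gamma'}<1$.

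\emph{Case 2: some $\gamma\in\Gamma$ satisfies $i(\gamma,\beta)>0$.} This is where the hypothesis $d(F,\beta)<m(F,\beta)$ enters. The key input is the intersection identity
$$
i\bigl(F^{-1}(\alpha),\beta\bigr)=\frac{D}{m}\,i(\alpha,\beta),\qquad D=\deg F,
$$
which follows from $F^{-1}(\beta)\simeq m\cdot\beta$ as a formal multicurve and the branched-covering formula $i(F^{-1}\alpha,F^{-1}\beta)=D\cdot i(\alpha,\beta)$. Setting $v(\gamma)=i(\gamma,\beta)$ and letting $N_{\gamma'\gamma}$ count essential components of $F^{-1}(\gamma)$ homotopic to $\gamma'$, stability of $\Gamma$ converts the identity into the spectral equation $Nv=(D/m)v$ on $\mathbb{R}^\Gamma$. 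The Cauchy--Schwarz inequality $(\sum d_i)(\sum 1/d_i)\ge m^2$ combined with $\lambda_\beta<1$ gives $D/m>m$. I then exploit the folding geometry: essential preimage components of $\Gamma$-curves lying in annular components of $F^{-1}(U)\cup F^{-1}(V)$ are either inessential or homotopic to $\beta$ (excluded since $\beta\notin\Gamma$ here), so every nonzero entry of $M_\Gamma$ comes from preimages inside the disk $U_1$, or in type~B also $V_1$, on which $F$ restricts to a polynomial of degree $d(F,\beta)$. A careful comparison of $M_\Gamma$ against $N$ weighted by the common Perron vector, in which $d(F,\beta)<m(F,\beta)$ quantitatively beats the $D/m$ growth on $N$, yields $\lambda_\Gamma<1$, the desired contradiction.

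The principal obstacle is the final step of Case~2: converting the hypothesis $d<m$ together with the spectral information $Nv=(D/m)v$ into a rigorous strict upper bound on $\lambda(M_\Gamma)$. I expect this to require matching the Perron eigenvectors of $M_\Gamma$ and $N$ on the intersection pattern, and a separate bookkeeping of preimages through $U_1$ versus $V_1$ so that types~A and~B can be handled in parallel; the polynomial behavior of $F$ on $U_1$ (and on $V_1$ in type~B) is what ultimately forces the contribution to $M_\Gamma$ below $1$ when $d<m$.
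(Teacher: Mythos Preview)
Your Case~1 is fine and matches the paper's treatment (both reduce to Proposition~\ref{polynomial}). The problem is entirely in Case~2, and it is a genuine gap, not just missing detail.

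Your key geometric claim there is wrong. You write that ``essential preimage components of $\Gamma$-curves lying in annular components of $F^{-1}(U)\cup F^{-1}(V)$ are either inessential or homotopic to $\beta$\ldots\ so every nonzero entry of $M_\Gamma$ comes from preimages inside the disk $U_1$.'' But in Case~2 each $\gamma\in\Gamma$ satisfies $k(\gamma,\beta)>0$, so $\gamma$ crosses $\beta$, and therefore every component $\delta$ of $F^{-1}(\gamma)$ crosses $F^{-1}(\beta)$. Such a $\delta$ is not contained in any component of $\cbar\smm F^{-1}(\beta)$; in particular it does not lie in $U_1$ or in an annular piece. Your subsequent claim that $F$ restricts to a polynomial of degree $d(F,\beta)$ on the relevant preimages is therefore without foundation, and the ``careful comparison'' you announce has no mechanism to run on.

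What the paper actually does is count \emph{arcs}, not curves, inside $U_1$ (and $V_1$ in type~B). A curve $\delta\sim\gamma_i$ in $F^{-1}(\gamma_j)$ meets $\partial U_1$ in at least $k_i$ points, so contributes at least $k_i/2$ arcs to $U_1\cap\delta$; on the other hand $U_1\cap F^{-1}(\gamma_j)$ has exactly $(\deg F|_{U_1})\cdot k_j/2$ arcs in total. This yields the count $N_{ij}\le d_0\,k_j/k_i$ (Lemma~\ref{entry}). Separately, Lemma~\ref{Deg} gives $\deg(F:\delta\to\gamma_j)\ge m\,k_i/k_j$. Together these produce the entrywise bound $a_{ij}\le d_0 k_j^2/(m k_i^2)$, and with the test vector $v_i=1/k_i^2$ one gets $(M_\Gamma v)_i\le n\,(d_0/m)\,v_i$ where $n=\#\Gamma\le p-3$. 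This alone need not be $<1$; the paper first replaces $F$ by $F^N$ with $N$ large enough that $(p-3)(d_0/m)^N<1$, which is exactly where the hypothesis $d_0<m$ is used. Your spectral identity $Nv=(D/m)v$ is correct but does not by itself control $\lambda(M_\Gamma)$, and you have no substitute for the arc-counting step or for the passage to an iterate.
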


Let $(F,\be)$ be a folding of a polynomial (or a pair of polynomials) and $T\subset\cbar\smm\beta$ be a finite tree whose endpoints are contained in $\PPP_F$. We call it an {\bf injective tree} if there exists an integer $p\ge 1$ such that $F^p(T)$ is isotopically contained in $T$ (i.e. there exists a homeomorphism $\theta$ of $\cbar$ isotopic to the identity rel $\PPP_F$ such that $F^p(T)\subset\theta(T)$) and $F^p$ is injective on $T$.

\begin{theorem}\label{no2}
Let $(F,\be)$ be a folding of a polynomial (or a pair of polynomials) with $\lambda_\be<1$. Suppose that there exist an injective tree $T\subset\cbar\smm\beta$ and an integer $k\ge 1$ such that $F^{-k}(T)$ has a component homotopic to $\beta$ rel $\PPP_F$. Then $F$ has no Thurston obstructions.
\end{theorem}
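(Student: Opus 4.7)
The plan is to proceed by contradiction. Assume $F$ has a Thurston obstruction; by Lemma~\ref{Mc} we may pass to an irreducible stable sub-multicurve $\G$ with $\la_\G\ge 1$. Since $(F,\be)$ is a folding of a polynomial (or a pair of polynomials) with $\la_\be<1$, Proposition~\ref{polynomial} rules out $\G$ being disjoint from $\be$ up to isotopy. So either $\G$ contains a curve isotopic to $\be$ rel $\PPP_F$ -- but then irreducibility and the folding definition (each component of $F^{-1}(\be)$ is isotopic to $\be$) force $\G=\{\be\}$ and thus $\la_\G=\la_\be<1$, a contradiction -- or some $\g\in\G$ crosses $\be$ essentially.

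The next step is to invoke the injective tree $T$. Since $T$ is an injective tree with period $p$, $F^p|_T$ is injective and $F^p(T)$ is isotopically contained in $T$ rel $\PPP_F$; iterating yields that $F^{np}(T)$ is isotopically contained in $T$ for all $n\ge 0$, and the combinatorial injectivity of $F^p$ on the finite graph $T$ caps the number of components of $F^{-np}(T)$ lying in any single isotopy class by a polynomial in $n$. Let $T_0$ be the component of $F^{-k}(T)$ homotopic to $\be$, interpreted as having a regular neighborhood whose boundary is isotopic to $\be$ rel $\PPP_F$. Since the curve $\g$ found above crosses $\be$ essentially, it also crosses $T_0$ essentially.

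These two facts yield a growth contradiction. The eigenvalue condition $\la_\G\ge 1$ forces the number of essential pre-image curves in $F^{-np}(\G)$ isotopic to curves of $\G$ to grow at rate $\la_\G^{np}$, and each of these must cross an $F^{-np-k}$-preimage tree of $T$ isotopic to $T_0$, hence homotopic to $\be$. This produces at least $\la_\G^{np}$ essential intersections of a fixed orbit segment with $F^{-np-k}(T)$. The polynomial cap from the previous paragraph then contradicts this exponential lower bound, forcing $\la_\G<1$ and completing the proof.

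The main obstacle I anticipate is formalizing essential intersection between curves and trees in $\cbar\smm\PPP_F$, and in particular interpreting ``a component of $F^{-k}(T)$ is homotopic to $\be$''; both require a careful use of canonical regular-neighborhood boundaries and the tracking of isotopy classes under pullback. A delicate secondary point is the polynomial bound on the number of pullback trees in a fixed isotopy class, which rests on the graph-injectivity of $F^p$ on the finite tree $T$; an alternative avenue, if this bookkeeping becomes unwieldy, would be to use $T$ and $T_0$ to produce a modified folding $(F',\be)$ satisfying $d(F',\be)<m(F',\be)$ and invoke Theorem~\ref{no1} directly.
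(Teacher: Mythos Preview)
Your initial reduction is correct and matches the paper: pass to an irreducible $\G$ with $\la_\G\ge 1$, use Lemma~\ref{intersection} and Proposition~\ref{polynomial} to rule out the case $k(\g,\be)=0$, and dispose of $\G=\{\be\}$ via $\la_\be<1$. After that, however, your argument has a genuine gap.

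The growth contradiction does not go through as written. First, the ``polynomial cap'' is not a consequence of injectivity of $F^p|_T$: injectivity of $F^p$ on $T$ says nothing about the number of components of $F^{-np}(T)$ in a given isotopy class, which in general grows like a power of $\deg F$. What injectivity \emph{does} control is intersection numbers: if $F(T)\subset T$ and $F|_T$ is injective, then for the components $\de_s$ of $F^{-1}(\g_j)$ one has $\sum_s\#(\de_s\cap T)\le\#(\g_j\cap T)$, because $F$ maps $(\cup_s\de_s)\cap T$ injectively into $\g_j\cap T$. Second, your lower bound is also shaky: $\la_\G\ge 1$ is the eigenvalue of the weighted transition matrix, and when $\la_\G=1$ the number of preimage curves in the homotopy classes of $\G$ need not grow at all (a Levy cycle is the basic example), so there is no exponential--versus--polynomial mismatch to exploit.

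The paper's route is quite different and avoids growth estimates entirely. After reducing to the case $k(\g_i,\be)\neq 0$ for all $i$, it splits on whether some $k(\g_i,T)=0$. If so, pulling back through the component of $F^{-k}(T)$ homotopic to $\be$ forces $k(\de,\be)=0$ for every component $\de$ of $F^{-k}(\g_i)$, contradicting irreducibility and $k(\g_j,\be)\neq 0$. If instead $k(\g_i,T)\neq 0$ for all $i$, the injectivity inequality above shows that $k(\g_i,T)$ is constant along $\G$ and that $F^{-1}(\g_j)$ has \emph{exactly one} component homotopic into $\G$. The transition matrix is then a single cycle, and Lemma~\ref{Deg} bounds each nonzero entry by $k_{j+1}/(m\,k_j)$ with $m=m(F,\be)$; the vector $(1/k_i)$ witnesses $\la_\G\le 1/m<1$. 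Your alternative avenue (producing a modified folding with $d<m$ and invoking Theorem~\ref{no1}) is not developed and would require an independent construction; the intersection-number argument with $T$ is both shorter and self-contained.
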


{\bf Remark}. (1) We will give an example in \S8.4 a folding $(F,\beta)$ of a polynomial with $\lambda_\beta<1$ such that it has Thurston obstructions. We will show that the condition in each of the above two theorems is not necessary.

(3) Let $(G, \alpha)$ be a folding of a polynomial $g$ such that $G^{-1}(U)$ has a disk component $U_1\Subset U$ and $G|_{U_1}=g$, where $U$ is a component of $\cbar\smm\al$. Suppose that $T$ is an injective tree of $(G, \alpha)$ with period $p\ge 1$. Then there exists a homeomorphism $\theta$ of $\cbar$ isotopic to the identity rel $\PPP_G$ such that $\theta(T)$ consists of finitely many internal rays in periodic Fatou domains of $g$ together with their endpoints. Moreover, $\theta(T)=g^p(\theta(T))$. This shows that a folding of a polynomial have an injective tree if and only if $g$ has non-empty bounded Fatou domains.

\vskip 0.24cm

Apply Theorems \ref{no1} and \ref{no2}, we have following results.

\begin{theorem}\label{apply1}
(1) Let $g$ be a post-critically finite polynomial with $\#\PPP_g\ge 3$ such that $g$ has non-empty bounded Fatou domains. Then there exist a post-critically finite rational map $f$ and an essential curve $\beta$ in $\cbar\smm\PPP_f$ such that $(f,\beta)$ is a folding of $g$. The degree of $f$ can be taken to be any integer $d$ with
$$
d\ge\max\{\deg g+2, 5\}.
$$

(2) Let $(g_1, g_2)$ be a pair of post-critically finite polynomials with $\#\PPP_{g_1}\ge 3$ and $\#\PPP_{g_2}\ge 3$ such that $\deg g_1+\deg g_2\ge 5$ and $g_1$ or $g_2$ has non-empty bounded Fatou domains. Then there exist a post-critically finite rational map $f$ and an essential curve $\beta$ in $\cbar\smm\PPP_f$ such that $(f,\beta)$ is a folding of the pair $(g_1, g_2)$. The degree of $f$ can be taken to be any integer $d$ with
$$
d>\deg g_1+\deg g_2+\dfrac{\deg g_1\deg g_2}{\deg g_1\deg g_2-\deg g_1-\deg g_2}.
$$
\end{theorem}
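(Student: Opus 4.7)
Plan: Both parts of Theorem \ref{apply1} follow the same template: use the surgery of \S8.1 to build a folding $(F,\beta)$ with $\lambda_\beta<1$, rule out Thurston obstructions via Theorem \ref{no2}, and then apply Thurston's Theorem \ref{Thurston} to obtain a rational map.

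For part (1), set $d_1 = \deg g$, $m = 2$, and $d_2 = d - d_1$. The bound $d \ge \max\{\deg g + 2, 5\}$ guarantees $d_2 \ge 2$ and rules out the only excluded case $d_1 = d_2 = 2$: when $d_1 = 2$ the hypothesis forces $d_2 \ge 3$ so $\frac{1}{d_1} + \frac{1}{d_2} \le \frac{1}{2} + \frac{1}{3} < 1$, while if $d_1 \ge 3$ then $d_2 \ge 2$ and $\frac{1}{d_1} + \frac{1}{d_2} \le \frac{1}{3} + \frac{1}{2} < 1$. Proposition \ref{construction1} then produces a folding $(F,\beta)$ of $g$ of type A with $m(F,\beta)=2$, $\lambda_\beta<1$, and total degree $d_1 + d_2 = d$. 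For part (2), the bound on $d$ is the exact rearrangement of $\frac{1}{d_1} + \frac{1}{d_2} + \frac{1}{d_3} < 1$ when one takes $d_1 = \deg g_1$, $d_3 = \deg g_2$, $m = 3$, and $d_2 = d - d_1 - d_3$; Proposition \ref{construction2} then yields a folding $(F,\beta)$ of $(g_1, g_2)$ of type B with the analogous properties.

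In both cases the hypothesis that $g$ (respectively $g_1$ or $g_2$) has a non-empty bounded Fatou component supplies, via Remark (3) following Theorem \ref{no2}, an injective tree $T \subset \cbar\smm\beta$ for $(F,\beta)$. The freedom in Propositions \ref{construction1} and \ref{construction2} to place the critical values of each auxiliary branched cover $G_i$ at any eventually periodic points of $G_0$ allows us to arrange that all such critical values lie on $T$, for instance at the centers of the periodic Fatou components of the polynomial (which are endpoints of internal rays of $T$). A direct Riemann--Hurwitz count on the sheets of the restrictions $F|_W^{-1}(T)$ over the regions $W$ of $\cbar\smm F^{-1}(\beta)$, exploiting the fact that confluence of two distinct sheets over two distinct points of $T$ produces a cycle in the preimage, then shows that for $k$ large enough, some component of $F^{-k}(T)$ is a loop enclosing the correct subset of $\PPP_F$ to be homotopic to $\beta$ rel $\PPP_F$.

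With the hypotheses of Theorem \ref{no2} in place, $F$ has no Thurston obstructions. The orbifold of $F$ is hyperbolic since $\#\PPP_F \ge 4$ and the local degrees we have arranged rule out the finitely many non-hyperbolic signatures. Thurston's Theorem \ref{Thurston} then produces the desired rational map $f$, and transporting $\beta$ through the Thurston equivalence yields the required essential Jordan curve; by construction $(f,\beta)$ is a folding of $g$ (respectively of $(g_1,g_2)$). The main obstacle is precisely the verification that $F^{-k}(T)$ eventually carries a $\beta$-homotopic component: without a deliberate placement of the critical values of the outer pieces $G_1,\ldots,G_m$ onto the invariant tree $T$, each preimage $F^{-k}(T)$ would be a disjoint union of trees and no essential loop could ever arise.
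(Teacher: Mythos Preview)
Your strategy matches the paper's: choose $m=2$ (resp.\ $m=3$), verify $\lambda_\beta<1$ arithmetically, invoke Proposition \ref{construction1} (resp.\ \ref{construction2}), produce an injective tree from the bounded Fatou set, and appeal to Theorem \ref{no2}. The numerical checks and the reduction to Theorem \ref{no2} are correct.

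The one place where your argument is looser than the paper's is the verification that some $F^{-k}(T)$ has a component homotopic to $\beta$. You write that the critical values of the auxiliary covers $G_i$ can be placed ``on $T$'' and then invoke an unspecified Riemann--Hurwitz count. Two remarks. First, for type A (part (1)) this is not literally possible: the unique annular piece $A$ maps under $F$ to the side $V$ \emph{opposite} to the one containing $T$, so the critical values of $F|_A$ cannot lie on $T$; they can only lie on a component of $F^{-1}(T)\cap V$. This is why the paper uses $F^{-2}(T)$ rather than $F^{-1}(T)$ in part (1). Second, the paper does not rely on a counting argument at all: it simply stipulates that $F$ has \emph{exactly two} critical points in one chosen annular component $A$, and that these map to the two endpoints of a fixed arc in $F^{-1}(T)$ (part (1)) or in $T$ (part (2)). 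The component of the relevant preimage containing both critical points is then forced to be an essential curve in $A$, hence homotopic to $\beta$. This is both shorter and more explicit than the sheet-confluence picture you sketch, and it makes clear exactly which $k$ works ($k=2$ for type A, $k=1$ for type B).
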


\begin{proof} (1) Choose $m=2$ and $d_1=\deg g$. For any integer $d\ge\max\{\deg g+2, 5\}$, let $d_2=d-d_1$. Then
$$
\dfrac{1}{d_1}+\dfrac{1}{d_2}<1.
$$
By proposition \ref{construction1}, there exists a folding $(F, \beta)$ of $g$ with $\lambda_\beta<1$ such that $\deg F=d$.

Since $g$ has non-empty bounded Fatou domains, there exists an injective tree $T$ for $F$. Let $A$ be the unique annular component of $\cbar\smm F^{-1}(\beta)$. We further require that $F$ has exactly two critical points in $A$, which map by $F$ to the endpoints of a component of $F^{-1}(T)$. Let $E$ be the component of $F^{-2}(T)$ containing the two critical points of $F$ in $A$. Then $E$ is homotopic to $\beta$ rel $\PPP_F$. Applying Theorem \ref{no2}, we conclude that $F$ is Thurston equivalent to a rational map.

(2) Choose $m=3$, $d_1=\deg g_1$ and $d_3=\deg g_2$. For any integer $d$ satisfying the condition (2) of the theorem, let $d_2=d-d_1-d_3$. Then
$$
d_2>\dfrac{d_1d_3}{d_1d_3-d_1-d_3}.
$$
Thus $\sum_{i=1}^m (1/d_i)<1$. By proposition \ref{construction2}, there exists a folding $(F, \beta)$ of the pair $(g_1, g_2)$ with $\lambda_\beta<1$ such that $\deg F=d$.

Since $g_1$ or $g_2$ has non-empty bounded Fatou domains, there exists an injective tree $T$ for $F$. As above, we further require that $F$ has exactly two critical points in an annular component $A$ of $\cbar\smm F^{-1}(\beta)$, which map by $F$ to the endpoints of $T$. Let $E$ be the component of $F^{-1}(T)$ containing the two critical points of $F$ in $A$. Then $E$ is homotopic to $\beta$ rel $\PPP_F$. Applying Theorem \ref{no2}, we conclude that $F$ is Thurston equivalent to a rational map.
\end{proof}

\vskip 0.24cm

{\bf Remark}. The folding map we have constructed in (1) satisfies the condition of Theorem \ref{no2} but not Theorem \ref{no1}.

\begin{theorem}\label{apply2}
(1) Let $g$ be a post-critically finite polynomial with $\#\PPP_g\ge 3$. Then there exist a post-critically finite rational map $f$ and an essential curve $\beta$ in $\cbar\smm\PPP_f$ such that $(f,\beta)$ is a folding of $g$. The degree of $f$ can be taken to be any integer $d$ with
$$
d\ge\deg g+(\deg g+1)(\deg g+5).
$$

(2) Let $(g_1, g_2)$ be a pair of post-critically finite polynomials with $\#\PPP_{g_1}\ge 3$ and $\#\PPP_{g_2}\ge 3$ such that $\deg g_1\le\deg g_2$ and $\deg g_1+\deg g_2\ge 5$ . Then there exist a post-critically finite rational map $f$ and an essential curve $\beta$ in $\cbar\smm\PPP_f$ such that $(f,\beta)$ is a folding of the pair $(g_1, g_2)$. The degree of $f$ can be taken to be any integer $d$ with
$$
d\ge\deg g_1+\deg g_2+\deg g_1(\deg g_2+7).
$$
\end{theorem}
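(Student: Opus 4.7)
The plan is to parallel the proof of Theorem~\ref{apply1}, but replace the appeal to Theorem~\ref{no2} (whose hypothesis required $g$ to have bounded Fatou domains) by Theorem~\ref{no1} (whose hypothesis is the purely combinatorial inequality $d(F,\beta)<m(F,\beta)$). This removes the Fatou-domain hypothesis at the price of a larger lower bound on $\deg f$, namely the one stated in the theorem.

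For part (1), let $n=\deg g$ and take $m$ to be the smallest even integer strictly greater than $n$, so $m\in\{n+1,n+2\}$ and in particular $m\le n+2$. Apply Proposition~\ref{construction1} with $d_1=n$, $d_2=\cdots=d_{m-1}=n+5$, and $d_m\ge n+5$ chosen so that $d_1+\cdots+d_m=d$; this is feasible for any prescribed $d\ge n+(n+1)(n+5)$, the worst case $m=n+2$ making the stated lower bound equal to $n+(m-1)(n+5)$. The hypothesis $\lambda_\beta<1$ follows from the one-line estimate
\[
\sum_{i=1}^m\frac{1}{d_i}\;\le\;\frac{1}{n}+\frac{n+1}{n+5}\;<\;1\qquad(n\ge 2),
\]
so Proposition~\ref{construction1} produces a folding $(F,\beta)$ of $g$ of type A with $\deg F=d$, $m(F,\beta)=m$, and $d(F,\beta)=d_1=n<m=m(F,\beta)$. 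Theorem~\ref{no1} then rules out Thurston obstructions, and Theorem~\ref{Thurston} supplies the desired rational map $f$.

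For part (2), let $n_1=\deg g_1\le n_2=\deg g_2$ and take $m$ to be the smallest odd integer strictly greater than $n_1$, so $m\le n_1+2$, with equality precisely when $n_1$ is odd (this is the binding case). Apply Proposition~\ref{construction2} with $d_1=n_1$, $d_m=n_2$, and the middle entries $d_2,\ldots,d_{m-1}$ mostly equal to $n_2+7$ together with one slack slot so that $d_1+\cdots+d_m$ matches any prescribed $d\ge n_1+n_2+n_1(n_2+7)$; in the binding case ($n_1$ odd, $m=n_1+2$, all middle entries $=n_2+7$) this lower bound is sharp. The estimate
\[
\sum_{i=1}^m\frac{1}{d_i}\;\le\;\frac{1}{n_1}+\frac{1}{n_2}+\frac{n_1}{n_2+7}\;<\;1
\]
holds under the hypotheses $n_1\ge 2$, $n_2\ge n_1$, and $n_1+n_2\ge 5$ by a direct polynomial check; for even $n_1$ the factor $n_1/(n_2+7)$ improves to $(n_1-1)/(n_2+7)$ and the same conclusion holds. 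The resulting folding $(F,\beta)$ of $(g_1,g_2)$ has type B with $d(F,\beta)=\min\{n_1,n_2\}=n_1<m=m(F,\beta)$, so Theorem~\ref{no1} and Theorem~\ref{Thurston} again complete the argument.

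The only genuine obstacle is the arithmetic book-keeping: checking that the stated lower bounds on $d$, together with the parity constraints on $m$ coming from Propositions~\ref{construction1} and~\ref{construction2}, leave enough flexibility to choose the sequence $\{d_i\}$ satisfying $\sum 1/d_i<1$. Once this inequality is handled uniformly in the worst-case parity, the result reduces to a mechanical combination of the construction propositions of this section with the obstruction criterion of Theorem~\ref{no1}.
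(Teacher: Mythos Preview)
Your proof plan is correct and follows essentially the same approach as the paper: choose $m$ to be the least even (resp.\ odd) integer exceeding $\deg g$ (resp.\ $\deg g_1$), fill the degree sequence with entries $\deg g+5$ (resp.\ $\deg g_2+7$) plus one slack slot to hit the prescribed total degree $d$, verify $\lambda_\beta<1$ via the elementary estimate you wrote, and then invoke Theorem~\ref{no1} with $d(F,\beta)=d_1<m=m(F,\beta)$. The paper's proof differs only in that it spells out the ``direct polynomial check'' for part~(2) as a three-case analysis ($d_1=d_m$; $2=d_1<d_m$; $3\le d_1<d_m$), whereas you defer this to a remark; your observation that even $n_1$ gives the sharper factor $(n_1-1)/(n_2+7)$ is exactly what makes the case $n_1=2$ go through.
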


\begin{proof}
(1) Choose $m=\deg g+1$ when $\deg g+1$ is an even number or $m=\deg g+2$ when $\deg g+2$ is an even number. Choose $d_1=\deg g$ and $d_i=\deg g+5$ for $1<i<m$. For any integer $d$ with $d\ge d_1+(d_1+1)(d_1+5)$, let $d_m=d-\sum_{i=1}^{m-1}d_i$. Then $d_m\ge d_1+5$. Thus
$$
\sum_{i=1}^m\dfrac{1}{d_i}\le\dfrac{1}{d_1}+\dfrac{m-1}{d_1+5}\le \dfrac{1}{d_1}+\dfrac{d_1+1}{d_1+5}<1.
$$
By proposition \ref{construction1}, there exists a folding $(F, \beta)$ of the pair $(g_1, g_2)$ with $\lambda_\beta<1$ such that $\deg F=d$. Applying Theorem \ref{no1}, we conclude that $F$ is Thurston equivalent to a rational map.

(2) Choose $m=\deg g_1+1$ when $\deg g_1+1$ is an odd number or $m=\deg g_1+2$ when $\deg g_1+2$ is an odd number. Choose $d_1=\deg g_1$, $d_m=\deg g_2$ and $d_i=d_m+7$ for $1<i\le m-2$. For any integer $d$ with $d\ge d_1+d_m+d_1(d_m+7)$, let $d_{m-1}=d-d_m-\sum_{i=1}^{m-2}d_i$. Then $d_{m-1}\ge d_m+7$. Thus
$$
\sum_{i=1}^m\dfrac{1}{d_i}\le\dfrac{1}{d_1}+\dfrac{1}{d_m}+\dfrac{m-2}{d_m+7}.
$$

{\it Case 1. $d_1=d_m$}. Then $d_1\ge 3$. Thus
$$
\sum_{i=1}^m\dfrac{1}{d_i}\le\dfrac{2}{d_1}+\dfrac{d_1}{d_1+7}<1.
$$

{\it Case 2. $2=d_1<d_m$}. Then $m=3$. Thus
$$
\sum_{i=1}^m\dfrac{1}{d_i}\le\dfrac{1}{2}+\dfrac{1}{d_m}+\dfrac{1}{d_m+7}\le\dfrac{1}{2}+\dfrac{1}{3}+\dfrac{1}{10}<1.
$$

{\it Case 3. $3\le d_1<d_m$}. Then
$$
\sum_{i=1}^m\dfrac{1}{d_i}\le\dfrac{1}{d_1}+\dfrac{1}{d_m}+\dfrac{d_1}{d_m+7}\le\left(\dfrac{2}{d_m}+\dfrac{d_m}{d_m+7}\right)+
\left(\dfrac{d_m-d_1}{d_1d_m}-\dfrac{d_m-d_1}{d_m+7}\right).
$$
Since $d_1\ge 3$ and $d_m\ge 4$, we have $d_1d_m>d_m+7$. Thus the second part of the last term of the above inequality is negative. The first part of the last term is less than $1$. Thus $\sum_{i=1}^m (1/d_i)<1$.

By proposition \ref{construction2}, there exists a folding $(F, \beta)$ of the pair $(g_1, g_2)$ with $\lambda_\beta<1$ such that $\deg F=d$. Applying Theorem \ref{no1}, we conclude that $F$ is Thurston equivalent to a rational map.
\end{proof}

\vskip 0.24cm

{\it Proof of Theorem \ref{folding}}. This is a direct consequence of Theorem \ref{apply2} (1).

\subsection{Proof of Theorems \ref{no1} and \ref{no2}}

Let $(F,\be)$ be a folding of a polynomial (or a pair of polynomials). For any two essential Jordan curves $\g$ and $\al$ in $\cbar\smm\PPP_F$, set $k(\g,\al)$ to be their geometric intersection number. It is defined by
$$
k(\g,\al)=\min\{\#(\de\cap\al)\},
$$
where the minimum is taken over all the choices of $\de$ in the homotopy class of $\g$. By definition $k(\g,\al)=0$ if $\g$ is homotopic to $\al$ rel $\PPP_F$.

\begin{lemma}\label{intersection}
Let $\G$ be an irreducible multicurve of $F$. Then either $k(\g,\be)\neq 0$ for all $\g\in\G$ or $k(\g,\be)=0$ for all $\g\in\G$.
\end{lemma}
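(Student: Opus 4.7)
The plan is to propagate the relation $k(\cdot,\be)=0$ through the chains supplied by irreducibility. Set $\G_0=\{\g\in\G:\,k(\g,\be)=0\}$; it suffices to show that $\G_0\ne\emptyset$ forces $\G_0=\G$, which is exactly the desired dichotomy.

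The heart of the argument is the following claim: if $\g\subset\cbar\smm\PPP_F$ is an essential Jordan curve with $k(\g,\be)=0$ and $\eta$ is an essential component of $F^{-1}(\g)$, then $k(\eta,\be)=0$. To prove it, first isotope $\g$ rel $\PPP_F$ to a curve $\g'$ contained in $U\cup V$, i.e.\ disjoint from $\be$. Since $F:\cbar\smm F^{-1}(\PPP_F)\to\cbar\smm\PPP_F$ is a covering map, this isotopy lifts componentwise to an isotopy rel $F^{-1}(\PPP_F)$, producing an essential component $\eta'$ of $F^{-1}(\g')$ isotopic to $\eta$ rel $\PPP_F$. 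Now $\eta'$ lies in a single component $W$ of $\cbar\smm F^{-1}(\be)$, and two cases arise. If $W=U_1$ or $W=V_1$, then by the folding definition there is a homeomorphism of $\cbar$ isotopic to the identity rel $\PPP_F$ sending $W$ onto $U$ or $V$; hence $\eta'$ is isotopic rel $\PPP_F$ to a curve in $\cbar\smm\be$, giving $k(\eta',\be)=0$. Otherwise $W$ is an annular component of $\cbar\smm F^{-1}(\be)$. Because all $m\ge 2$ components of $F^{-1}(\be)$ are mutually homotopic rel $\PPP_F$, such a $W$ is disjoint from $\PPP_F$, and its core is homotopic to $\be$. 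The essential curve $\eta'\subset W$ cannot be null-homotopic in $W$, since it would then bound a $\PPP_F$-free disk in $W$ and fail to be essential; hence $\eta'$ is isotopic to the core of $W$, therefore to $\be$, yielding $k(\eta',\be)=0$. Since $k(\cdot,\be)$ is invariant under isotopy rel $\PPP_F$, the claim follows.

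With the claim in hand, suppose $\G_0\ne\emptyset$ and fix $\g_0\in\G_0$. For an arbitrary $\al\in\G$, irreducibility supplies a chain $\de_0=\al,\de_1,\ldots,\de_n=\g_0$ in $\G$ such that each $\de_{k-1}$ is isotopic rel $\PPP_F$ to a (necessarily essential) component of $F^{-1}(\de_k)$. Reading the chain from right to left and applying the claim at each step, we conclude $\de_{n-1},\de_{n-2},\ldots,\de_0=\al$ all lie in $\G_0$, hence $\G_0=\G$. The main technical obstacle I anticipate is the case analysis for the component $W$, in particular verifying that in the annular case $W$ is $\PPP_F$-free; this is precisely the place where the folding hypothesis that every component of $F^{-1}(\be)$ is essential and homotopic to $\be$ rel $\PPP_F$ gets used.
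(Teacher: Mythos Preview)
Your argument is correct and follows the same overall strategy as the paper: assume $k(\g_0,\be)=0$ for some $\g_0\in\G$ and propagate this along the irreducibility chain to every $\al\in\G$. The paper's execution is shorter, however: rather than doing a case analysis on the component $W$ of $\cbar\smm F^{-1}(\be)$ containing $\eta'$, it simply observes that $F^{-n}(\be)$ already contains a curve $\be'$ homotopic to $\be$ rel $\PPP_F$, so any curve disjoint from $F^{-n}(\be)$ is in particular disjoint from $\be'$, giving $k(\cdot,\be)=k(\cdot,\be')=0$ immediately. Your disk/annulus dichotomy recovers this fact by hand, which is fine but unnecessary once you notice that geometric intersection number is invariant under isotopy of \emph{either} curve.
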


\begin{proof}
Suppose that $k(\g,\be)=0$ for some $\g\in\G$. For any curve $\al\in\G$, since $\G$ is irreducible, $\al$ is homotopic to a component of $F^{-n}(\g)$ rel $\PPP_F$ for some $n\ge 0$. Let $\de$ be a Jordan curve in $\cbar\smm\PPP_F$ homotopic to $\g$ rel $\PPP_F$ such that it is disjoint from $\be$, then $\al$ is homotopic to a component of $F^{-n}(\de)$ rel $\PPP_F$, which is disjoint from $F^{-n}(\be)$. Thus, $k(\al,\be)=0$ since $F^{-n}(\be)$ contains a curve homotopic to $\be$ rel $\PPP_F$.
\end{proof}

\begin{lemma}\label{Deg}
Let $\g$ and $\al$ be essential Jordan curves in $\cbar\smm\PPP_F$ such that $k(\g,\be)\neq 0$. Suppose that
$F^{-1}(\g)$ has a component $\de$ homotopic to $\al$ rel $\PPP_F$. Then
$$
\deg(F:\de\to\g)\ge\dfrac{m\cdot k(\al, \be)}{k(\g,\be)},
$$
where $m=m(F,\be)$.
\end{lemma}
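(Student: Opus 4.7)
\bigskip

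\noindent\textbf{Proof plan for Lemma \ref{Deg}.}

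\medskip

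The plan is to estimate the intersection number $\#(\de\cap F^{-1}(\be))$ in two different ways and compare them. First I would choose a Jordan curve in the homotopy class of $\g$ rel $\PPP_F$ (still calling it $\g$) that realizes the geometric intersection number, i.e.\ $\#(\g\cap\be)=k(\g,\be)$, and is transverse to $\be$. Then $\de$, being a component of $F^{-1}(\g)$, is automatically transverse to $F^{-1}(\be)$.

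The upper bound comes from the covering structure. Since $F\colon\de\to\g$ is a covering of degree $d:=\deg(F\colon\de\to\g)$ and $\de\cap F^{-1}(\be)=(F|_\de)^{-1}(\g\cap\be)$, each point of $\g\cap\be$ has exactly $d$ preimages on $\de$, so
$$
\#(\de\cap F^{-1}(\be))=d\cdot\#(\g\cap\be)=d\cdot k(\g,\be).
$$

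For the lower bound I would use that, by the folding hypothesis, $F^{-1}(\be)$ splits into $m=m(F,\be)$ components $\be_1,\dots,\be_m$, each essential and homotopic to $\be$ rel $\PPP_F$. Since $\de$ is homotopic to $\al$ rel $\PPP_F$ by assumption, the homotopy invariance of the geometric intersection number gives $k(\de,\be_i)=k(\al,\be)$ for every $i$. Hence $\#(\de\cap\be_i)\ge k(\al,\be)$ for each $i$, and summing over the $m$ components yields
$$
\#(\de\cap F^{-1}(\be))=\sum_{i=1}^{m}\#(\de\cap\be_i)\ge m\cdot k(\al,\be).
$$
Combining the two estimates produces $d\cdot k(\g,\be)\ge m\cdot k(\al,\be)$, which is exactly the desired inequality (valid because the hypothesis $k(\g,\be)\ne 0$ lets us divide).

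The only delicate point is the standard surface-topology fact that geometric intersection numbers in $\cbar\smm\PPP_F$ are additive under disjoint unions of curves in distinct homotopy classes, and are preserved under the homotopy $\de\simeq\al$ and $\be_i\simeq\be$; this is routine once one chooses representatives minimizing intersections, so I do not expect it to be a genuine obstacle.
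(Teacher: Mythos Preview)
Your proposal is correct and is essentially identical to the paper's own proof: choose $\gamma$ realizing $k(\gamma,\beta)$, count $\#(\delta\cap F^{-1}(\beta))$ exactly as $d\cdot k(\gamma,\beta)$ via the covering, bound it below by $m\cdot k(\alpha,\beta)$ using that each of the $m$ components of $F^{-1}(\beta)$ is homotopic to $\beta$, and compare. The only remark is that your ``delicate point'' about additivity is not actually delicate---the components $\beta_i$ of $F^{-1}(\beta)$ are pairwise disjoint (not merely in distinct homotopy classes; in fact they are all homotopic to $\beta$), so the sum decomposition of the intersection count is immediate.
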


\begin{proof} Denote by $d(\delta)=\deg(F:\de\to\g)$. We may assume that $\#(\g\cap\be)=k(\g, \be)$. Denote by $\al_1, \ldots,\al_m$ the components of $F^{-1}(\be)$. All of them are homotopic to $\beta$ rel $\PPP_F$. Thus,
$$
\#\Big(\de\cap\bigcup_{p=1}^{m}\al_p\Big)=d(\delta)\cdot\#(\g\cap \be)= d(\delta)\cdot k(\g,\be)\ .
$$
On the other hand,
$$
\#\Big(\de\cap\bigcup_{p=1}^{m} \al_p\Big)= \sum_{p=1}^{m}\#(\de\cap \al_p) \ge
\sum_{p=1}^{m} k(\al, \be) =m\cdot k(\al,\be)\ .
$$
Combining the above two inequalities we get the lemma.
\end{proof}

\begin{lemma}\label{entry}
Suppose that $\G=\{\g_1,\cdots, \g_n\}$ is an irreducible multicurve of $F$ such that $k_i=k(\g_i, \be)\neq 0$. Let $M_{\G}=(a_{ij})$ be the transition matrix of $\G$. Then
$$
a_{ij}\le \frac{d_0 k_j^2}{m k_i^2}\ ,
$$
where $d_0=d(F,\be)$ and $m=m(F,\be)$.
\end{lemma}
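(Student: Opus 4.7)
The plan is to bound $a_{ij}$ by combining two ingredients: an upper bound on each summand $1/\deg(F\colon\alpha\to\gamma_j)$ in the definition of $a_{ij}$, and an upper bound on the number $r$ of components $\alpha$ of $F^{-1}(\gamma_j)$ homotopic to $\gamma_i$ rel $\PPP_F$. The first bound is immediate from Lemma~\ref{Deg}, which gives $\deg(F\colon\alpha\to\gamma_j)\ge m k_i/k_j$, so every summand is at most $k_j/(m k_i)$. It therefore suffices to prove $r\le d_0 k_j/k_i$, since then
$$
a_{ij}\le r\cdot\frac{k_j}{m k_i}\le\frac{d_0 k_j^2}{m k_i^2}.
$$

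To bound $r$, I will single out one component $\alpha_{p_0}$ of $F^{-1}(\beta)$ whose covering degree over $\beta$ equals $d_0$. In type~A, take $\alpha_{p_0}:=\partial U_1$, for which $\deg(F|_{\alpha_{p_0}})=\deg(F|_{U_1})=d_0$; in type~B, take whichever of $\partial U_1,\partial V_1$ realizes the minimum in the definition of $d_0$. Place $\gamma_j$ in minimal position with respect to $\beta$, so that $\#(\gamma_j\cap\beta)=k_j$. Since $F|_{\alpha_{p_0}}\colon\alpha_{p_0}\to\beta$ is a covering of degree $d_0$ and $F^{-1}(\gamma_j)\cap\alpha_{p_0}$ is precisely the fibre over $\gamma_j\cap\beta$, one obtains
$$
\#\bigl(F^{-1}(\gamma_j)\cap\alpha_{p_0}\bigr)=d_0\,k_j.
$$
On the other hand, for each component $\alpha$ of $F^{-1}(\gamma_j)$ homotopic to $\gamma_i$, the curve $\alpha_{p_0}$ is itself homotopic to $\beta$ rel $\PPP_F$ by the definition of a folding map, so $\#(\alpha\cap\alpha_{p_0})$ is at least the homotopy-invariant geometric intersection number $k(\gamma_i,\beta)=k_i$. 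Summing these pairwise disjoint contributions over the $r$ components yields $r k_i\le d_0 k_j$, which is the desired bound on $r$.

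The only delicate point in this plan is the availability of a single preimage $\alpha_{p_0}\subset F^{-1}(\beta)$ on which $F$ has degree exactly $d_0$; this is precisely why the section restricts to types~A and B. In type~C the value $d_0=\sqrt{\deg(F|_{U_1})\deg(F|_{V_1})}$ is a geometric mean that need not be realized as the degree of $F$ on any single component of $F^{-1}(\beta)$, and the intersection-counting argument above would have to be replaced by one using both $\partial U_1$ and $\partial V_1$ simultaneously.
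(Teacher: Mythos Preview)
Your proof is correct and follows essentially the same approach as the paper: bound each summand via Lemma~\ref{Deg} and separately bound the number $r$ of relevant components of $F^{-1}(\gamma_j)$. The only cosmetic difference is that the paper counts \emph{arcs} of $F^{-1}(\gamma_j)$ inside the disk $U_1$ (getting $d_1 k_j/2$ arcs, with each $\delta_s$ contributing at least $k_i/2$), whereas you count intersection \emph{points} with the boundary curve $\partial U_1$; these two counts differ by a factor of $2$ and lead to the same inequality $r\le d_0 k_j/k_i$. The paper also runs the count through $V_1$ to obtain the slightly sharper $r\le\min\{d_1,d_2\}k_j/k_i$ before relaxing to $d_0$, but your single-curve version already suffices for the stated lemma.
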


\begin{proof}
We may assume that $\#(\g_i\cap\be)=k(\g_i, \be)=k_i$ for any $\g_i\in\G$. Fix a pair $(i,j)$. If $F^{-1}(\g_j)$ has no component homotopic to $\g_i$, then $a_{ij}=0$. Now suppose that $F^{-1}(\g_j)$ has $n>0$ components homotopic to $\g_i$. Denote them by $\{\de_s, s=1,\cdots,n\}$. We claim that $n\le d_0k_j/k_i$.

Denote by $U, V$ the two components of $\cbar\smm\be$. Denote by $U_1$, $V_1$ the two disc components of $\cbar\smm F^{-1}(\be)$ such that $U_1, V_1$ are homotopic to $U$ and $V$, respectively. Denote $d_1=\deg (F|_{U_1})$ and $d_2=\deg (F|_{V_1})$. Then both $U\cap\g_j$ and $V\cap\g_j$ have exactly ${k_j}/2$ components (notice that $k_j=\#(\g_j\cap\be)$ is an even number). It follows that $U_1\cap F^{-1}(\g_j)$ has exactly $d_1k_j/2$ components and $V_1\cap F^{-1}(\g_j)$ has exactly $d_2k_j/2$ components.

On the other hand, since both $\partial U_1$ and $\partial V_1$ are homotopic to $\be$ rel $\PPP_F$, both $U_1\cap\de_s$ and $V_1\cap\de_s$ have at least $k_i/2$ components for $s=1,\cdots, n$. It follows that
$$
\begin{array}{rl}
\dfrac{nk_i}2 & \le\#\Big\{\text{components of }U_1\cap(\cup\de_s )\Big\} \\
& \le \#\Big\{\text{components of }U_1\cap F^{-1}(\g_j)\Big\}=\dfrac{d_1k_j}2\ .
\end{array}
$$
So $n\le d_1k_j/k_i$. We also have $n\le d_2k_j/k_i$ if we replace $U_1$ by $V_1$ in the above inequality. Hence
$n\le\min\{d_1,d_2\}k_j/k_i\le d_0k_j/k_i$.

Now applying Lemma \ref{Deg}, we have
$$
a_{ij}=\sum_{s=1}^n \frac1{\deg(F:\de_s\to\g_j)}\le \sum_{s=1}^n\frac{k_j}{m k_i}= n\frac{k_j}{m k_i} \le
d_0\frac{k_j}{k_i}\frac{k_j}{m k_i}=\frac{d_0k_j^2}{m k_i^2}\ .
$$
This proves the lemma.
\end{proof}

\noindent {\it Proof of Theorem \ref{no1}}. Denote $m=m(F,\be)$, $d_0=d(F,\be)$ and $p=\#\PPP_F$. By the condition $d_0<m$, there is an integer $N\ge 1$ such that $(p-3)d_0^N<m^N$.

Consider now the folding map $(F^N,\be)$. Note that $\#\PPP_{F^N}=p$, $m(F^N,\be)=m^N$ and $d(F^N,\be)=d_0^N$. Let
$\G=\{\g_1,\cdots, \g_n\}$ be an irreducible multicurve of $F^N$. If $k(\g_i,\be)=0$ for some $\g_i\in\G$, then $k(\g_j,\be)=0$ for all $\g_j\in\G$ by Lemma \ref{intersection}. So $\G$ is homotopic rel $\PPP_F$ to a multicurve disjoint from $\be$. Thus, it is not a Thurston obstruction by Theorem \ref{polynomial}.

Now, assume that $k_i=k(\g_i,\be)\neq 0$ for all $\g_i\in\G$. Set the vector ${\bf v}=(v_i)$ with $v_i=1/k_i^2$, then by Lemma \ref{entry}
$$
(M_{\G}{\bf v})_i=\sum_{j=1}^n a_{ij} v_j\le \sum_{j=1}^n \frac{d_0^Nk_j^2}{m^Nk_i^2}\cdot \frac1{k_j^2}= \frac{n\,d_0^N}{m^Nk_i^2}.
$$
Notice that $n=|\G|\le \#\PPP_{F^N}-3=p-3$. Therefore,
$$
(M_{\G}{\bf v})_i\le
\frac{(p-3)d_0^N}{m^Nk_i^2}< \frac1{k_i^2} = v_i\ .
$$
It follows that $M_\G{\bf v}<{\bf v}$ and hence $\la_\G<1$ \cite[Lemma A.1]{CT}. Thus $F^N$ has no Thurston obstruction by Lemma \ref{Mc}. This implies that $F$ has no Thurston obstruction. \qed

\vskip 0.24cm

\noindent {\it Proof of Theorem \ref{no2}}. Denote $m=m(F,\be)$. Let $\G=\{\g_1,\cdots, \g_n\}$ be an irreducible multicurve of $F$. If $k(\g_i,\be)=0$ for some $\g_i\in\G$, then $k(\g_j,\be)=0$ for all $\g_j\in\G$ by Lemma \ref{intersection}. So $\G$ is homotopic rel $\PPP_F$ to a multicurve disjoint from $\be$. Thus it is not a Thurston obstruction by Theorem \ref{polynomial}.

Now we assume that $k(\g_i,\be)\neq 0$ for all $\g_i\in\G$. Denote by $k(\g_i, T)$ the geometric intersection number.

{\it Case 1. $k(\g_i, T)=0$ for some curve $\g_i\in\G$}. Assume that $\g_i$ is disjoint from $T$. By the condition, there exists an integer $k\ge 1$ such that $F^{-k}(T)$ has a component homotopic to $\beta$ rel $\PPP_F$. Thus, $k(\de, \beta)=0$ for all the components $\de$ of $F^{-k}(\g_i)$. But $\G$ is irreducible, so $k(\g_j, \beta)=0$ for some $\g_j\in\G$. This is a contradiction.

{\it Case 2. $k(\g_i, T)\neq 0$ for all curves $\g_i\in\G$}. We assume $p=1$ and $F(T)\subset T$ for the simplicity. We may assume that $\#(\g_i\cap T)=k(\g_i, T)$ for $\g_i\in\G$. Let $\de_s$ ($s=1,\cdots, n$) be all the components of $F^{-1}(\g_j)$ homotopic to a curve in $\G$, then $F: (\cup \de_s)\cap T\to\g_j\cap T$ is injective since $F(T)\subset T$ and $F|_T$ is injective. So
$$
\ds\sum_{s=1}^n\#(\de_s\cap T)\le\#(\g_j\cap T)=k(\g_j, T).
$$
Therefore, if $\g_i$ is homotopic to a curve in $F^{-1}(\g_j)$, then $k(\g_i, T)\le k(\g_j, T)$. Since $\G$ is irreducible, for each pair $(i,j)$ with $1\le i,j\le n$, there exist integers $i_0=i, i_1, \cdots, i_{p}=j$ in $[1,n]$ such that $\g_{i_s}$ is homotopic rel $\PPP_F$ to a curve in $F^{-1}(\g_{i_{s+1}})$ for $0\le s<p$ and $\g_{i_p}$ is homotopic rel $\PPP_F$ to a curve in $F^{-1}(\g_{i_{0}})$. Thus $k(\g_i, T)\le k(\g_j, T)$. This implies that $k(\g_i, T)$ is a constant for all $\g_i\in\G$. Moreover $F^{-1}(\g_j)$ has exactly one component homotopic to a curve in $\G$. Relabel indices such that $F^{-1}(\g_{j+1})$ has a curve $\de_j$ homotopic to $\g_j$ for $j=1,\cdots, n$ (set $\g_{n+1}=\g_1$). Let $M_{\G}=(a_{ij})$ be the transition matrix of $\G$. Then
$$
a_{j,j+1}=\dfrac{1}{\deg( F:\de_j\to\g_{j+1})}
$$
for $j=1,\cdots, n$ (set $a_{n,n+1}=a_{n,1}$), and $a_{ij}=0$ otherwise. By Lemma \ref{Deg}, we have
$$
\deg(F:\de_{j}\to\g_{j+1})\ge \dfrac{mk_{j}}{k_{j+1}}.
$$
So $a_{j, j+1}\le k_{j+1}/(mk_{j})$. Define the vector ${\bf v}=(1/k_i)_{1\leq i\leq n}$, then $M_{\G}\,{\bf v}\le (1/m) {\bf v}$ and hence $\la_\G\le 1/m<1$ \cite[Lemma A.1]{CT}. Applying Lemma \ref{Mc} again we conclude that $F$ has no Thurston obstructions. \qed

\subsection{Sierpinski maps}

A connected compact subset $E\subset\cbar$ is called a {\bf Sierpinski carpet}\ if it is locally connected, nowhere dense and its complementary components are Jordan domains with pairwise disjoint closures.

A post-critically finite and hyperbolic rational map is called a {\bf Sierpinski map}\/ if its Julia set is a Sierpinski carpet. A post-critically finite and hyperbolic polynomial is called of {\bf Sierpinski type}\ if it has at least two bounded Fatou domains and any two of them have disjoint closures.

\begin{theorem}\label{Sierpinski}
Let $f$ be a post-critically finite and hyperbolic rational map and $\beta\subset\cbar\smm\PPP_f$ be an essential Jordan curve. Suppose that $(f,\be)$ is a folding of a Sierpinski type polynomial (or a pair of Sierpinsky type polynomials). Then $f$ is a Sierpinski map.
\end{theorem}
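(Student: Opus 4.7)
The plan is to use Theorems \ref{as} and \ref{decomposition} to reduce the Sierpinski carpet condition on $\JJJ_f$ to the Sierpinski structure of the polynomial renormalization(s). First I would observe that $\Gamma=\{\beta\}$ is a Cantor multicurve: by the folding hypothesis every component of $f^{-1}(\beta)$ is homotopic to $\beta$ rel $\PPP_f$ and there are $m(f,\beta)\ge 2$ such components, so $\kappa_n(\beta)=m(f,\beta)^n\to\infty$. Applying Theorem \ref{as} gives the exact annular system $f|_{\AAA^1}\colon \AAA^1\to\AAA$ with $\JJJ_g\subset\JJJ_f$ a disjoint union of Jordan curves. Applying Theorem \ref{decomposition}, each periodic continuum component of $\KKK_c$ is either a periodic Fatou closure or the filled Julia set of a renormalization. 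The folding hypothesis, together with the uniqueness in Proposition \ref{polynomial} and the Straightening Theorem \ref{st}, identifies the renormalization(s) of $f$ with the given Sierpinski type polynomial(s): in type~A there is one periodic component of $\KKK_c$ that is the filled Julia set of a renormalization straightening to $g$, and in type~B there are two, straightening to $g_1$ and $g_2$.

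Next I would locate all periodic Fatou components of $f$. Since $f$ is hyperbolic and post-critically finite, every Fatou component is pre-periodic to a super-attracting cycle, and every critical point of $f$ lies in the open Fatou set. The folding construction forces every critical orbit of $f$ to enter and remain in the polynomial side(s), so the super-attracting cycles of $f$ coincide with those of the Sierpinski type polynomial(s). Via the quasiconformal straightening conjugacy in a neighborhood of each periodic $\KKK_c$-component, the periodic Fatou components of $f$ contained in it correspond bijectively to the bounded super-attracting Fatou components of the corresponding polynomial. The Sierpinski type hypothesis then gives that all periodic Fatou components of $f$ are quasi-disks and their closures are pairwise disjoint within each periodic $\KKK_c$-component; across different $\KKK_c$-components the closures are separated by positive distance since $\KKK_c$ has finitely many pairwise disjoint continuum components.

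The key step is promoting pairwise disjoint closures from the periodic layer to all Fatou components. Suppose by contradiction that two distinct Fatou components $V_1\ne V_2$ satisfy $p\in\overline{V_1}\cap\overline{V_2}$. Then $p\in\JJJ_f$, so by hyperbolicity $p$ is not critical. If $f(V_1)=f(V_2)$ then $f$ would be at least $2$-to-$1$ near $p$, contradicting noncriticality; hence $f(V_1)\ne f(V_2)$ and $f(p)\in\overline{f(V_1)}\cap\overline{f(V_2)}$. Iterating, and using that every Fatou component of a hyperbolic pcf rational map is pre-periodic, we eventually obtain two distinct periodic Fatou components with intersecting closures, contradicting the previous paragraph. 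A parallel pullback argument, using again that no critical point of $f$ lies on a Fatou boundary, shows every Fatou component is a Jordan domain (quasi-disk). Local connectivity of $\JJJ_f$ is standard for hyperbolic post-critically finite rational maps (expansion under the orbifold metric of $f$), and $\JJJ_f$ is nowhere dense since $\FFF_f$ is nonempty. Whyburn's topological characterization of the Sierpinski carpet then yields that $\JJJ_f$ is a Sierpinski carpet, so $f$ is a Sierpinski map.

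The hard part will be the identification in the first paragraph and its consequence in the second: one must verify that the straightening quasiconformal conjugacy matches the periodic Fatou components of $f$ (defined dynamically on the whole sphere) with the bounded Fatou components of the straightened polynomial(s), and that no extra super-attracting cycle of $f$ lurks outside the periodic $\KKK_c$-components. This requires tracing through the folding construction and combining Theorem \ref{decomposition} with Theorem \ref{st} to ensure that the renormalizations really capture all periodic Fatou behavior of $f$; the rest of the argument is then a purely local topological propagation using hyperbolicity.
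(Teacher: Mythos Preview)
Your forward-iteration argument for disjoint closures has a gap. The claim ``if $f(V_1)=f(V_2)$ then $f$ would be at least $2$-to-$1$ near $p$'' is not justified: with $f$ a local homeomorphism at $p$ and $W=f(V_1)=f(V_2)$, all you get is that $f(N\cap V_1)$ and $f(N\cap V_2)$ are disjoint open subsets of $W\cap f(N)$, so $W\cap f(N)$ is disconnected. That is not a contradiction unless you already know $\partial W$ is a Jordan curve, since for a general simply connected Fatou domain with locally connected boundary a boundary point can have several accesses from $W$. Thus your iteration could stall at a step where two distinct preimages of the same (not-yet-known-to-be-Jordan) component touch. The repair is to reverse the order: first pull back the Jordan-domain property from the periodic components (your ``parallel pullback argument'' works, using that $f:\partial V\to\partial W$ is an unbranched covering of a Jordan curve when $f$ is hyperbolic), and only then run the touching argument; once the target $W$ is a Jordan domain, $W\cap f(N)$ is connected for small $N$ and the step goes through.

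The paper's proof is shorter and sidesteps this issue by using the exact annular system from Theorem~\ref{as} directly rather than invoking Theorem~\ref{decomposition}. With $B_1,B_2$ the two components of $\cbar\smm A$, every periodic Fatou domain lies in $B_1$ and is a Fatou domain of the polynomial-like restriction, hence Jordan by the Sierpinski-type hypothesis; hyperbolicity then makes every Fatou domain Jordan. For disjoint closures the paper observes that two Fatou domains either lie in the same component of $f^{-n}(B_1)$ for some $n$ (and are then separated inside a copy of the Sierpinski-type filled Julia set), or are separated by a component of $f^{-m}(A)$. This geometric separation replaces your iteration and also makes the ``hard part'' you flag---matching the renormalization to $g$ and locating all super-attracting cycles in $B_1$---immediate, since a suitable essential curve in $A$ cuts out a polynomial-like map on $B_1$ directly.
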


\begin{proof} We will prove the theorem for type A. The case of type B is similar.

Note that $\{\be\}$ is a Cantor multicurve of $f$. Applying Theorem \ref{as}, there exists an annulus $A\subset\cbar\smm\PPP_f$ homotopic to $\be$ rel $\PPP_f$, such that $f: f^{-1}(A)\to A$ is an exact annular system.

Denote by $B_1, B_2$ the two components of $\cbar\smm A$. They are {\em a fortiori} components of $f^{-1}(B_1\cup B_2)$. Rearranging the indices if necessary, we may assume that $f(B_1)=f(B_2)=B_1$. There exists a Jordan curve $\beta_0$ contained essentially in $A$ such that $U_1\Subset U_0$, where $U_0$ is the domain enclosed by $\beta_0$ and containing $B_1$ and $U_1$ is the pre-image of $f^{-1}(U_0)$ containing $B_1$. Since $\be_0$ is homotopic to $\be$ rel $\PPP_f$, the polynomial-like map $g_1=f|_{U_1}: U_1\to U_0$ is quasiconformally conjugate to a restriction of the polynomial $g$.

Each periodic Fatou domain of $f$ is contained in $B_1$. Thus it is a periodic Fatou domain of the polynomial-like map $g_1$. Since the polynomial $g$ is of Sierpinski type, every periodic Fatou domain of $f$ is a Jordan domain. Since $f$ is hyperbolic, every Fatou domain of $f$ is a Jordan domain.

Any two Fatou domains of $f$ are either contained in the same component of $f^{-n}(B_1)$ for some integer $n\ge 0$ and hence they have disjoint closures since $g$ is of Sierpinski type, or separated by a component of $f^{-m}(A)$ for some integer $m\ge 0$ and hence have disjoint closures. So $f$ is a Sierpinski map.
\end{proof}

\vskip 0.24cm

{\bf Remark}. Start with a Sierpinsky type polynomial $g$. Using the ideas in the proof of Theorem \ref{apply2}, one may construct a folding $(G,\al)$ of $g$ such that it satisfies the condition of Theorem \ref{no1} and each cycle in $\PPP_G$ is super-attracting. Thus $G$ is Thurston equivalent to a Sierpinsky map. We will see that the folding map $(G,\al)$ does not satisfy the condition of Theorem \ref{no2}.

Through a Thurston equivalence, we may assume that $G^{-1}(U)$ has a disc component $U_1\Subset U$ and $G|_{U_1}=g$, where $U$ is a component of $\cbar\smm\beta$. For any injective tree $T$ of $(G, \al)$, there exists a homeomorphism $\theta$ of $\cbar$ isotopic to the identity rel $\PPP_G$ such that $\theta(T)$ consists of internal rays in periodic Fatou domains of $g$. Therefore $\theta(T)$ is contained in one Fatou domain of $g$ except endpoints since no two bounded Fatou domains of $g$ touch together. In particular, $\theta(T)$ contains exactly one attracting periodic point. Consequently, every component of $G^{-k}(T)$ for each $k\ge 1$ is not homotopic to $\al$ rel $\PPP_G$. This shows that the folding map $(G, \al)$ does not satisfy the condition of Theorem \ref{no2}.

\subsection{An example}\label{ex}

We will give an example of the folding of polynomials which has Thurston obstructions.

Let $Q_c(z)=z^2+c$ be the airplane quadratic polynomial, i.e. the parameter $c$ is chosen to be the unique real solution of the equation $(c^2+c)^2+c=0$. The critical point $z=0$ forms a super-attracting cycle with period $3$. Let $x_0<0$ be the unique fixed point of $Q_c$ on the negative real axis (it is called the $\al$-fixed point). Then $Q_c^{-2}(x_0)$ has
four points $ x_{-1}, x_0,  x_1, x_2$, located in $\R$ relative to the super-attracting cycle as follows:
$$
c<x_{-1}<x_0<0<x_1<c^2+c<x_2.
$$
Denote by $R(\theta)$ the external ray of $Q_c$ of angle $2\pi\theta$. It is also a ray of $g$. Both $R(1/3)$ and $R(2/3)$ land on the $\al$-fixed point $x_0$. Denote $L_0=R(1/3)\cup\{x_0\}\cup R(2/3)$. Then $Q_c(L_0)=L_0$ . Now $Q_c^{-2}(L_0)$ has 4 arcs $L_{-1},L_0, L_1, L_2$ with $x_i\in L_i$.

\vskip 0.24cm

Set $\rho(z)=z/(1-|z|)$. It is a homeomorphism from the unit disk $\D$ to $\C$. Define $G_1(z)=\rho^{-1}\circ Q_c^2\circ\rho(z): \D\to\D$. It can be extended continuously to the boundary with $G_1(z)=z^{4}$ on $\beta:=\partial\D$.

Set $\sigma(z)=2/z$. It is a homeomorphism from $V=\{z:\, |z|>2\}\cup\{\infty\}$ to $\D$. Define $G_2=G_1\circ\sigma(z): \overline V\to\overline\D$. Then $G_2(z)=(2/z)^4$ on $\be_*=\partial V$.

Set $A=\{z:\, 1<|z|<2\}$. Define $G_3: A\to A\cup\overline{V}$ to be a branched covering such that its boundary value coincides with $G_1$ on $\be$, and $G_2$ on $\be_*$, and its critical values are contained in $V$. The precise definition of $G_3$ requires some care in order to control the obstructions of $F$.

Denote $S_k:=\rho^{-1}(L_k)$ for $k=-1,0,1,2$. The two ends of $S_k$ are $(e^{2\pi i\theta_k}, e^{2\pi i\varphi_k})$, with $\theta_{-1}=5/12$, $\theta_0=1/3$, $\theta_1=1/6$, $\theta_2=1/12$ and $\varphi_k=-\theta_k$. As $G_1=\rho^{-1}\circ g\circ\rho$, we have $\ds G_1^{-1}(S_0)=\cup_{k=-1}^2 S_k$.

Denote $E_k=\sigma^{-1}(S_k)$.  As $G_2=G_1\circ\sigma$, we have $G_2^{-1}(S_0)=\cup_{k=-1}^2 E_k$. As $\sigma(re^{2\pi i \eta})=\rho(2/(re^{2\pi i \eta}))$ and $\varphi_k=-\theta_k$  we know that the two ends of the arc $E_k$ are $(2e^{2\pi i\theta_k}, 2e^{2\pi i\varphi_k})$. Denote
$$
I_k=\Big\{re^{2\pi i\theta_k}, r\in [1,2]\Big\}\cup \Big\{re^{2\pi i\varphi_k}, r\in [1,2]\Big\}.
$$
Then $\g_k:=S_k\cup E_k\cup I_k $ is a Jordan curve.

\begin{figure}[h]\centering
\includegraphics[scale=0.7]{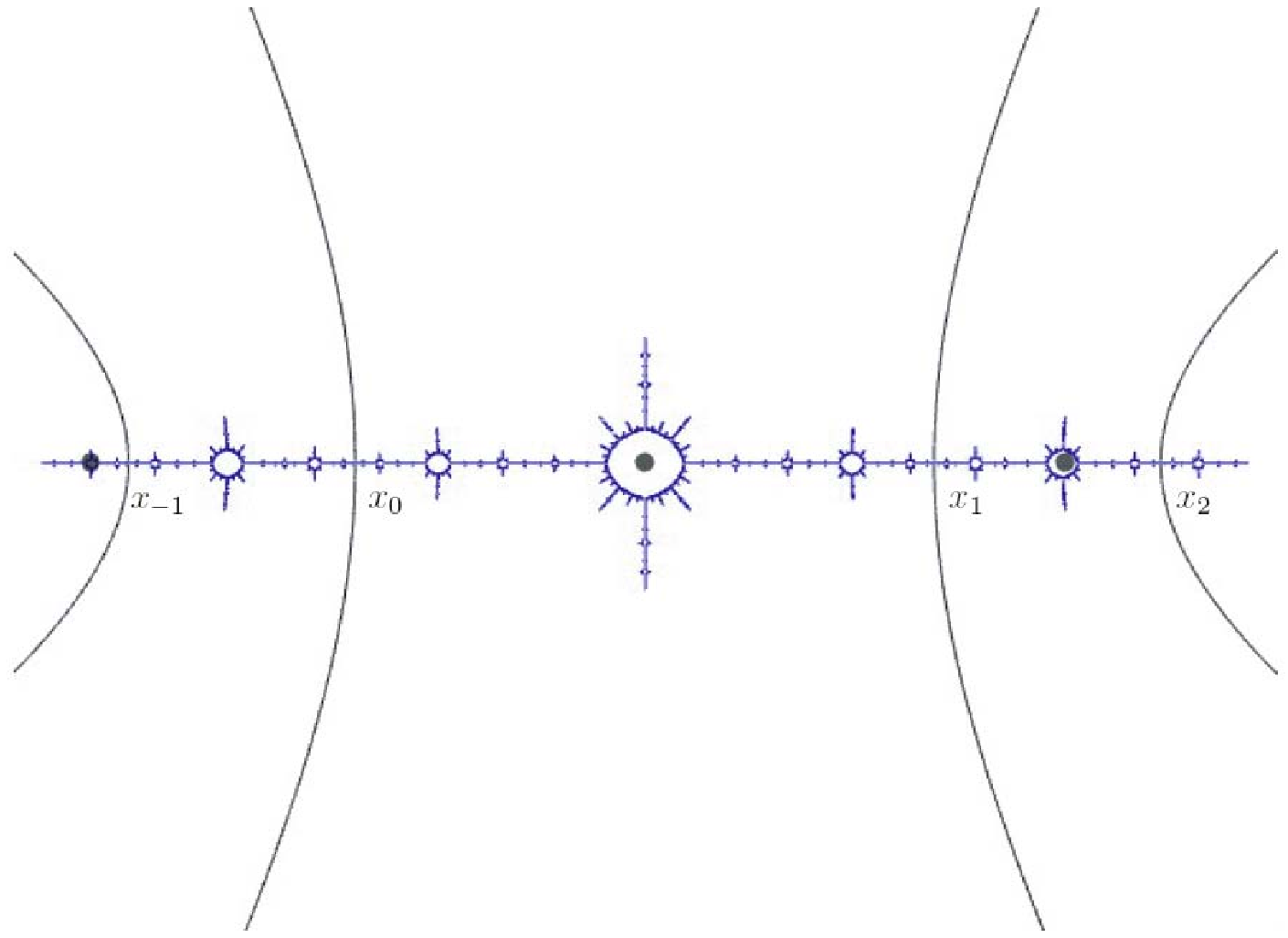}
\begin{center}{\sf Figure 2. The four arcs $L_{-1}, L_0, L_1, L_2$ (from left to right)}\end{center}
\end{figure}

Define $G_3$ on each of the $8$ arcs in $A$ such that it maps the arc homeomorphically onto $ I_0\cup E_0$, and maps $(e^{2\pi i t}, 2e^{2\pi i t})$ to $(e^{2\pi i (4t)}, e^{-2\pi i (4t)})$.

Extend $G_3$ continuously in each of the $8$ quadrilaterals of $A\smm(\cup_k I_k)$ as a orientation preserving branched covering of degree two. The image must be one of the two components of $(A\cup\overline{V})\smm(I_0\cup E_0)$. We also assure that the unique critical point in each quadrilateral is mapped to either $y_c=\sigma^{-1}(\rho^{-1}(c))$ or $y_0=\sigma^{-1}(0)$. Define
$$
F=\left\{\begin{array}{lll}
G_1: & \overline\D\to \overline\D, \\
G_2: & \overline V\to \overline\D, \\
G_3: & A\to A\cup\overline{V}.
\end{array}\right.
$$
Then $(F,\beta)$ is a folding of the polynomial $Q_c^2$ with $\lambda_\beta=1/2$.

The post-critical set $\PPP_{F}=\rho^{-1}(\PPP_{Q_c})\cup\{y_0, y_c\}$. The curve $F^{-1}(\g_0)$ has $4$ components $\g_k\ni x_k$ ($k=-1,0,1,2$) with $\deg(F_1:\g_k\to\g_0)=2$. The curve $\g_2$ is null-homotopic and $\g_{1}$ is peripheral. Both $\g_0$ and $\g_{-1}$ are essential and homotopic to each other rel $\PPP_{F}$. Set $\G=\{\g_0\}$. Then $\G$ is a stable multicurve with $\lambda_\G=1/2+ 1/2=1$. So $\G$ is a Thurston obstruction.

\subsection{Perspectives}

Here are some remarks and several problems related to this work.

(1) If a renormalization in our decomposition admits again a Cantor multicurve, one can join it to the original one and then eventually get a maximal Cantor multicurve under inclusion. In this way we can declare that the renormalizations are post-critically finite rational maps without Cantor multicurves. Maximal Cantor multicurves might not be unique. Therefore, our decomposition need not be canonical. This phenomenon occurs also in realizing rational maps as matings of polynomials.

(2) The existence of simply connected wandering continua is a very interesting problem and remains largely open. It is known that a Latt\`{e}s map has a simply connected wandering continuum if and only if it is flexible \cite{Cui-Gao}. We conjecture that every post-critically finite rational maps have no simply connected wandering continuum except for flexible Latt\`{e}s maps. One possible attempt to address the problem is to study the mating of two polynomials at first.

(3) It is conjectured that a hyperbolic component of rational maps whose Julia sets are Sierpinski carpets has compact closure in the parameter space \cite{Mc3}. Many such rational maps were constructed in \cite{HP}. One may study this conjecture just for a Sierpinsky rational map $f$ which is a folding of a polynomial. Applying Theorem \ref{as} for $f$, we get a multi-annulus consisting of one annulus. This annulus is well-defined for any rational map in the hyperbolic component containing the map $f$. We can prove that the modulus of this annulus is bounded from above in this hyperbolic component. We do not know if it is bounded from below, which would be sufficient to solve the conjecture for this hyperbolic component.

(4) Expanding Thurston type branched coverings have attracted much attention in recent years. We hope that some of the techniques developed in this work can be adapted to these expanding maps.

\appendix

\section{Rees-Shishikura's semi-conjugacy}

Let $F$ be a formal mating of two post-critically finite polynomials. Suppose that $F$ is Thurston equivalent to a rational map $f$. There is a semi-conjugacy from $F$ to $f$. This result was obtained by M. Rees if both polynomials are hyperbolic \cite{Rees}, and proved by Shishikura in general \cite{Shi}. The same result is still true for general post-critically finite branched coverings. Here we provide a statement in a general form.

\begin{theorem}\label{shi}
Let $F:\cbar\to\cbar$ be a post-critically finite branched covering which is Thurston equivalent to a rational map $f$ through a pair of homeomorphisms $(\psi_0, \psi_1)$ of $\cbar$. Suppose that $F$ is holomorphic in a neighborhood of the critical cycles of $F$. Then there exist a neighborhood $U$ of the critical cycles of $F$ and a sequence of homeomorphisms $\{\phi_n\}$ $(n\ge 0)$ of $\cbar$ homotopic to $\psi_0$ rel $\PPP_F$, such that $\phi_n|_U$ is holomorphic, $\phi_n|_U=\phi_0|_U$ and $f\circ\phi_{n+1}=\phi_n\circ F$. The sequence $\{\phi_n\}$ converges uniformly to a continuous map $h:\, \cbar\to\cbar$.
Moreover, the following statements hold: \\
\indent (1) $h\circ F=f\circ h$. \\
\indent (2) $h$ is surjective. \\
\indent (3) $h^{-1}(w)$ is a single point for $w\in \FFF_f$ and $h^{-1}(w)$ is either a single point or a full continuum for $w\in \JJJ_f$. \\
\indent (4) For points $x,y\in\cbar$ with $f(x)=y$, $h^{-1}(x)$ is a component of $F^{-1}(h^{-1}(y))$.
Moreover, $\deg F|_{h^{-1}(x)}=\deg_x f$. \\
\indent (5) $h^{-1}(E)$ is a continuum if $E\subset\cbar$ is a continuum. \\
\indent (6) $h(F^{-1}(E))=f^{-1}(h(E))$ for any $E\subset\cbar$. \\
\indent (7) $F^{-1}(\wh E)=\wh{F^{-1}(E)}$ for any $E\subset\cbar$, where $\wh E=h^{-1}(h(E))$.
\end{theorem}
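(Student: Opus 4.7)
My plan is to construct $\phi_n$ inductively by pulling back under the functional equation. Set $\phi_0 = \psi_0$ and $\phi_1 = \psi_1$; by the Thurston equivalence we have $f \circ \phi_1 = \phi_0 \circ F$, and $\phi_1$ is isotopic to $\phi_0$ rel $\PPP_F$. Assuming $\phi_n$ has been constructed homotopic to $\psi_0$ rel $\PPP_F$, I lift it to $\phi_{n+1}$ using the covering property of $f$ away from critical values. More concretely, since $\phi_n(\PPP_F) = \psi_0(\PPP_F) \supset \PPP_f$ and $F^{-1}(\PPP_F) \supset \PPP_F$, the isotopy class of $\phi_n$ rel $\PPP_F$ admits a unique lift $\phi_{n+1}$ rel $F^{-1}(\PPP_F)$ with $f \circ \phi_{n+1} = \phi_n \circ F$ once one fixes the lift on $\PPP_F$ (this is the standard isotopy lifting for branched covers, using that $F$ and $f$ have matching local degrees at corresponding critical points). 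Near critical cycles I use the hypothesis that $F$ is holomorphic and the fact that $f$ is holomorphic there; Böttcher coordinates on both sides allow me to choose a holomorphic $\phi_0$ on a forward-invariant neighborhood $U$ of the critical cycles conjugating $F|_U$ to $f|_{\phi_0(U)}$, and to arrange $\phi_{n+1}|_U = \phi_n|_U$ by selecting the canonical lift there.

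\textbf{Convergence.} The heart of the argument is to prove that $\{\phi_n\}$ is uniformly Cauchy. I decompose $\cbar$ into the union of $U$, where $\phi_n$ is stationary, and its complement $K = \cbar \smm U$, where I exploit expansion. Equip $\cbar \smm \PPP_f$ with the orbifold metric $\rho$ of $f$, so that $f$ is strictly expanding on the complement of its super-attracting cycles. For each $n$, an isotopy $H_n$ from $\phi_n$ to $\phi_{n+1}$ rel $\PPP_F$ traces, for each $z$, a path $t \mapsto H_n(z,t)$; by iterating the functional equation I can write $H_n$ as an $n$-fold lift of $H_0$ under $f$, so
\[
\rho\bigl(\phi_n(z),\phi_{n+1}(z)\bigr) \;\le\; C \lambda^{-n}
\]
on $K$ for some $\lambda > 1$ and $C$ depending on the initial homotopy $H_0$. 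Points whose orbits eventually enter $U$ are controlled by the stationary behavior there, and only finitely many iterates are needed to send $z \in K$ either into $U$ or to accumulate enough expansion. Summing a geometric series gives uniform convergence of $\{\phi_n\}$ to a continuous map $h$ on $\cbar$ with $h \circ F = f \circ h$.

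\textbf{Properties (1)--(7).} Property (1) is immediate from passing to the limit. Surjectivity (2) follows because each $\phi_n$ is a homeomorphism and $\cbar$ is compact. For (3) and (4), note that $h$ agrees with the homeomorphism $\phi_0$ on $U$ and hence on the grand orbit of $U$, which contains $\FFF_f$ by invariance and the no-wandering-domain property; thus fibers over $\FFF_f$ are points. Over $w \in \JJJ_f$, the fiber $h^{-1}(w)$ is the nested intersection $\bigcap_n \phi_n^{-1}(\overline{B(w,\varepsilon_n)})$ with $\varepsilon_n \to 0$, hence a compactum whose complementary components are exhausted; this gives that it is either a point or a ``full'' continuum (complement connected), and the local degree identity follows by counting preimages locally. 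The commutation $h \circ F = f \circ h$ gives $h^{-1}(x) \subset F^{-1}(h^{-1}(y))$ when $f(x) = y$, and connectivity of $h^{-1}(x)$ together with the local degree match shows it is exactly one component, proving (4). Properties (5), (6), (7) are formal consequences: (5) follows from the fact that the fibers of $h$ are continua with connected complement, so $h^{-1}$ preserves connectedness of continua; (6) is a direct set-theoretic reformulation of the semi-conjugacy; and (7) combines (6) with the definition $\wh{E} = h^{-1}(h(E))$.

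\textbf{Main obstacle.} The subtle step is the uniform convergence on all of $\cbar$, particularly the matching of the holomorphic stationary behavior on $U$ with the expanding dynamics outside. One must choose $\phi_0$ on $U$ so that the Böttcher-type conjugacy is compatible with the isotopy class $[\psi_0]$ rel $\PPP_F$, and then verify that the lifts $\phi_{n+1}$ produced by the expanding region automatically agree with $\phi_n$ on $U$; this requires a careful choice of the initial isotopy $H_0$ to be holomorphic on $U$ and relative to $F^{-1}(\PPP_F)$ of all orders. The rest is bookkeeping, but this compatibility, together with showing that full fibers over Julia set points collapse in the right way to yield property (3), is where genuine work is needed.
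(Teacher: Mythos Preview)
The paper does not reprove the construction of $\{\phi_n\}$, its convergence, or properties (1)--(4); it attributes these to Rees and Shishikura (and \cite{CPT}) and only supplies proofs of (5), (6), (7). Your sketch of the construction and convergence via lifting and orbifold-metric expansion is in the spirit of those references, and you correctly flag the compatibility on $U$ as the delicate point, so there is nothing to compare on that part.

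For (5) your route differs from the paper's. You invoke the general fact that a map with connected fibers (a monotone map) sends connected sets to connected sets under preimage; since (3) gives that every fiber of $h$ is a point or a continuum, $h$ is monotone and $h^{-1}(E)$ is a continuum whenever $E$ is. The paper instead argues directly from uniform convergence: if $h^{-1}(E)$ were disconnected, one separates it by disjoint open sets $U_1,U_2$, finds a connected neighborhood $V\supset E$ missing $h(\cbar\smm(U_1\cup U_2))$, and then for large $n$ the connected set $\phi_n^{-1}(V)$ would be trapped in $U_1\cup U_2$ while meeting both, a contradiction. Your argument is cleaner but relies on (3); the paper's is self-contained from the uniform convergence alone.

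For (6) your description as ``a direct set-theoretic reformulation of the semi-conjugacy'' is too quick. The inclusion $h(F^{-1}(E))\subset f^{-1}(h(E))$ is indeed formal from $h\circ F=f\circ h$, but the reverse inclusion is not: given $w\in f^{-1}(h(E))$ one must produce $z$ with $h(z)=w$ \emph{and} $F(z)\in E$, not merely $F(z)\in h^{-1}(h(E))$. The paper obtains this from (4), which guarantees that $F:h^{-1}(w)\to h^{-1}(f(w))$ is surjective; picking $e\in E$ with $h(e)=f(w)$ and then a preimage $z\in h^{-1}(w)$ with $F(z)=e$ finishes it. You should make this dependence on (4) explicit. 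Property (7) then follows from (6) exactly as you and the paper both say.
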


One may also look at \cite{CPT} for a detailed account in a generalized form. The crucial part of the theorem is the construction of the homotopy $(\phi_0, \phi_1)$ rel a neighborhood $U$ of critical cycles and the convergence of the sequence $\{\phi_n\}$. The other statements are directly deduced. The statements (5)-(7) is used in this paper, so we add them in the theorem and provide a proof here.

\vskip 0.24cm

{\noindent\it Proof of (5)-(7)}.

(5) Suppose that $E\subset\cbar$ is a connected closed subset. The fact that $h^{-1}(E)$ is closed, is easy to see. Now suppose that $h^{-1}(E)$ is not connected, i.e., there are open sets $U_1$, $U_2$ in $\cbar$ such that $h^{-1}(E)\subset U_1\cup U_2$, $U_1\cap U_2=\emptyset$ and both $U_{1}$ and $U_2$ intersect $h^{-1}(E)$. Then $K:=h(\cbar\smm (U_1\cup U_2))$ is a compact set disjoint from $E$. Since $E$ is connected, there is a connected
neighborhood $V$ of $E$ such that $\overline{V}\cap K=\emptyset$. Since $\{\phi_n\}$ converges uniformly to $h$, there exists an integer $n>0$ such that
$$
\text{d}(h,\phi_n)=\sup_{z\in\cbar}\text{d}\Big(h(z),\phi_n(z)\Big)<\min\Big\{\text{d}(E,\partial
V), \text{d} (\overline{V},K)\Big\},
$$
where $d(\cdot, \cdot)$ denotes the spherical distance. It follows that $\phi_n(\cbar\smm (U_1\cup U_2))\cap\overline{V}=\emptyset$, hence $\phi_n^{-1}(V)\subset U_{1}\cup U_{2}$. On the other hand, since $V\supset E$, both $U_1$ and $U_2$ intersect $\phi_n^{-1}(V)$. This contradicts the fact that $\phi_n^{-1}(V)$ is connected.

(6) From $f\circ h(F^{-1}(E))=h\circ F(F^{-1}(E))=h(E)$, we have $h(F^{-1}(E))\subset f^{-1}(h(E))$. Conversely, for any point $w\in f^{-1}(h(E))$, $f(w)\in h(E)$. So there is a point $z_0\in E$ such that $f(w)=h(z_0)$. By (5), the map
$$
F: h^{-1}(w)\to h^{-1}\Big(f(w)\Big)
$$
is surjective. Noticing that $z_0\in h^{-1}(f(w))$, there is a point $z_1\in h^{-1}(w)$ such that $F(z_1)=z_0$. So $w=h(z_1)\in h(F^{-1}(z_0))\subset h(F^{-1}(E))$. Therefore, $f^{-1}(h(E))\subset h(F^{-1}(E))$.

(7) $F^{-1}(\wh E)=F^{-1}\Big(h^{-1}\big(h(E)\big)\Big)=h^{-1}\Big(f^{-1}\big(h(E)\big)\Big)$. From (6), we obtain
$$
F^{-1}(\wh E)=h^{-1}\Big(h\big(F^{-1}(E)\big)\Big)=\wh{F^{-1}(E)}.
$$
\qed

\section{Buff's example}

{\bf Example}. Denote by $U=\{z: 1<|z|<r_0\}$ with $r_0>2$. Define a spiral in $U$ by:
$$
\delta=\Big\{\rho e^{i\theta}: \dfrac{r_0}2\le\rho<r_0, \,
\theta=\dfrac{1}{r_0-\rho}\Big\}.
$$

Set $A_1=U\smm\de$. Then $A_1$ is an annulus with modulus $\m(A_1)<\log r_0/(2\pi)$. Pick an integer $d>2$ such that $d\,\m(A_1)>\log r_0/\pi$. Let $r_1>1$ be such that $\log r_1/(2\pi)=d\,\m(A_1)$. Then $r_1>r_0^2$.
Set $A=\{z: 1<|z|<r_1\}$ and $A_2=h(A_1)$ with $h(z)=r_1/z$. Then $A_2$ is disjoint from $A_1$ and there is a holomorphic covering $g$ of degree $d$ from $A_i$ ($i=1,2$) to $A$ such that $g$ fixes the two components of $\partial A$. Then $g: A_1\cup A_2\to A$ is an exact annular system .

\begin{figure}[htbp]\centering
\includegraphics[width=9cm]{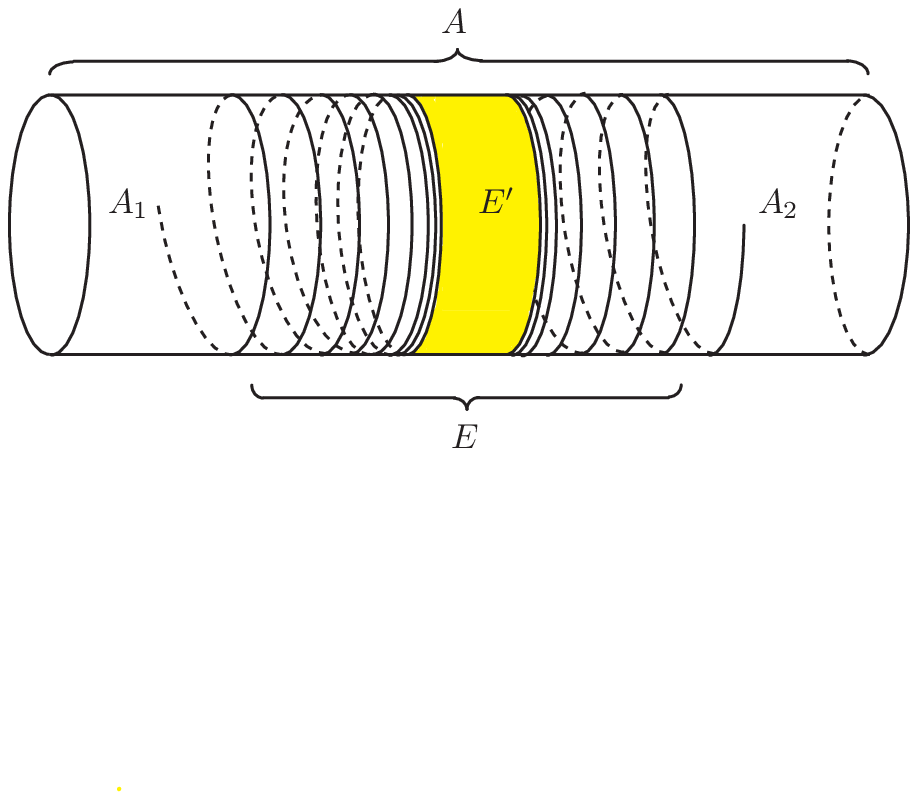}\vspace{-2mm}
\begin{center}{\sf Figure 3. An annular system with spirals}\end{center}
\end{figure}

\begin{theorem}{\label{Buff}}
Let $\mathscr{J}$ be the collection of the components of the Julia set of the exact annular system $g: A_1\cup A_2\to A$. With the topology induced by the corresponding linear system, $\mathscr{J}$ has a dense subset whose elements are not locally connected.
\end{theorem}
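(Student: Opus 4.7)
The plan is to exploit the semi-conjugacy in Proposition~\ref{linear} to identify $\mathscr{J}$ with the Cantor set $\JJJ_\sigma$, reduce the density assertion to the construction of one non-locally connected component, and then build such a component by tracking the spiral accumulation of $\delta$ through iterated pullbacks.

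First I would invoke Proposition~\ref{linear} to identify $\mathscr{J}$ (with the topology induced by $\sigma$) with the Cantor set $\JJJ_\sigma$ via $\Pi$, each component of $\JJJ_g$ corresponding to a unique itinerary. Density in $\mathscr{J}$ then amounts to producing non-locally connected components whose itineraries begin with any prescribed finite word.

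Next, assuming the existence of one wandering non-locally connected component $K_* \in \mathscr{J}$ with itinerary $(a_0, a_1, \ldots)$ that fails to be locally connected at some point $p_*$, density follows at once. For any finite word $(b_0, \ldots, b_{n-1})$ let $L$ be the component of $\JJJ_g$ whose itinerary is $(b_0, \ldots, b_{n-1}, a_0, a_1, \ldots)$; then $g^n(L) = K_*$. Since $g : \AAA^1 \to \AAA$ is a holomorphic covering (hence without critical points), $g^n$ restricts to a covering from the component $A^n(L)$ of $g^{-n}(\AAA)$ onto $\AAA$, and therefore $g^n|_L : L \to K_*$ is itself a covering. Coverings are open and continuous, so local connectedness passes in the forward direction; consequently $L$ fails to be locally connected at every preimage of $p_*$. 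Since the prefix $(b_0, \ldots, b_{n-1})$ is arbitrary, this exhibits a dense family of non-locally connected components in $\mathscr{J}$.

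The substantive step is the construction of $K_*$. The arc $\delta$ lies in $\partial A_1$ inside $A$ and accumulates on the circle $\{|z| = r_0\}$; the set $\Delta := \delta \cup \{|z|=r_0\}$ is not locally connected at any point of $\{|z|=r_0\}$, since every small disk about such a point meets $\Delta$ in infinitely many disjoint spiral arcs. The iterated preimages $g^{-n}(\Delta)$ inherit the same winding structure inside the pullback annuli $A_1^n$. I would choose a sequence of annular pullbacks $A^n \subset g^{-n}(\AAA)$ (i.e., a sequence of itinerary entries) so that the outer boundary of each $A^n$ lies along a spiraling preimage of $\Delta$, and define $K_* := \bigcap_n A^n$. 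Proposition~\ref{2-continuum} guarantees that $K_*$ is a $2$-connected continuum. The aim is then to identify a point $p_* \in K_*$ on its ``outer'' side around which the nested spiral structure of the boundaries $\partial A^n$ forces $K_*$ to meet every sufficiently small neighborhood in infinitely many components, witnessing non-local connectedness.

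The main obstacle is precisely this final verification. The passage of non-local connectedness from $\Delta$ to the limit continuum $K_*$ is possible only because $g$ is not uniformly expanding in the hyperbolic metric of $\AAA$: its hyperbolic derivative degenerates to $1$ at the shared boundary $\partial A_1 \cap \partial A$, so the annuli $A^n$ can retain positive diameter and carry the intrinsic twist contributed by $\delta$ all the way to the limit. Note that uniform expansion would, by Theorem~\ref{curve}, force every component of $\JJJ_g$ to be a Jordan curve, so the absence of uniform expansion in this example is essential. A rigorous verification requires careful geometric control of the pre-images $g^{-n}(\delta)$ and of the accumulation of their winding onto a chosen point $p_* \in K_*$, and this is the delicate and novel content of Buff's construction.
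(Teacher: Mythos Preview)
Your density reduction is sound: if $g^n:L\to K_*$ is a covering and $K_*$ fails local connectedness at $p_*$, then $L$ fails it at every preimage of $p_*$, and varying the prefix gives a dense family in $\JJJ_\sigma$. This is a legitimate alternative to what the paper does. But you explicitly leave the construction of a single non-locally connected $K_*$ unfinished, and that is the entire content of the theorem; your sketch (``choose the outer boundary of each $A^n$ to lie along a spiraling preimage of $\Delta$'') does not yet yield a concrete obstruction to local connectedness of the limit. So as it stands the proposal has a genuine gap.

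The paper avoids constructing any specific $K_*$. It introduces, via the universal cover $\pi:B\to A$, the quantity
\[
\text{h-dist}(z,\partial A;E_n)=\inf_\gamma \text{diam}\,\pi^{-1}(\gamma),
\]
the infimum over arcs in $A\smm E_n$ joining $z$ to $\partial A$. The spiral forces this to blow up for points near the ``core'' of each gap $E_n$, and by Koebe this persists for every pullback gap (Lemma~\ref{Buff1}). Now let $\mathscr{N}(m)$ be the (open, dense) set of components lying sufficiently close to \emph{some} gap $E_n$ so that they contain a point with $\text{h-dist}>m$; by Baire, $\mathscr{N}=\bigcap_m\mathscr{N}(m)$ is dense. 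Any $K\in\mathscr{N}$ that were a Jordan curve would admit arcs to $\partial A$ of uniformly bounded lifted diameter, contradicting $K\in\mathscr{N}(m)$ for large $m$.

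The comparison: your route trades the Baire step for a pullback argument, but it still owes the same geometric debt --- you need a quantity, robust under pullback, that the spiral makes unbounded and that a Jordan curve would keep bounded. The paper's $\text{h-dist}$ is exactly that quantity. If you want to salvage your approach, you would end up proving Lemma~\ref{Buff1} anyway, applied along one carefully chosen itinerary (e.g.\ one that returns infinitely often, at controlled depths, to the neighborhood of a fixed gap); at that point the Baire argument is no harder and yields a dense $G_\delta$ rather than a countable dense set.
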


Set $B=\{\zeta, 0<\text{Im }\zeta<\log r_1\}$. Then $\pi(\zeta)=\exp(\zeta):\, B\to A$ is a universal covering. Denote $E_0=A\smm(A_1\cup A_2)$. For each component $E_n$ of $g^{-n}(E_0)$ ($n\ge 0$) and  any point $z\in A\smm E_n$, denote $$
\text{h-dist}(z, \partial A; E_n)=\inf\Big\{\text{diam}(\pi^{-1}(\gamma))\Big\},
$$
where the infimum is taken over all the open arc $\gamma$ in $A\smm E_n$ connecting $z$ with $\partial A$, and
$\text{diam}(\pi^{-1}(\gamma))$ is the Euclidean diameter of a component of $\pi^{-1}(\gamma)$.

Denote $E'_0=A\smm(U\cup h(U))=E_0\smm(\de\cup h(\de))$. It is easy to check that for any constant $M<\infty$, there exists a constant $\varepsilon>0$ such that for any point $z\in A\smm E_0$, if the Euclidean distance $\text{dist}(z,E'_0)<\varepsilon$, then $\text{h-dist}(z, \partial A; E_0)>M$.

For each component $E_n$ of $g^{-n}(E_0)$ and any $k>n\ge 0$, denote by $V_k(E_n)$ the union of the two components of $g^{-k}(A)$ whose closures meet $E_n$. Then $\wt V_k(E_n):=V_k(E_n)\cup E_n$ is an annulus with $\wt V_{k+1}(E_n)\subset\wt V_k(E_n)$ and $\cap_{k\ge n}\wt V_k(E_n)=E_n$.

By the above argument, we see that for any constant $M<\infty$, there is an integer $k(M)\ge 1$ such that for any component $K$ of $\JJJ_g\cap V_{k(M)}(E_0)$, there exists a point $z\in K$ such that $\text{h-dist}(z, \partial A; E_0)>M$. By the Ko\"{e}be distortion theorem, we may prove the next lemma.

\begin{lemma}{\label{Buff1}}
For any component $E_n$ of $g^{-n}(E_0)$ ($n\ge 0$) and any constant $M<\infty$, there is an integer $k(M,E_n)> n$ such that for any component $K$ of $\JJJ_g\cap V_{k(M,E_n)}(E_n)$, there exists a point $z\in K$ such that  $\text{h-dist}(z, \partial A; E_n)>M$.
\end{lemma}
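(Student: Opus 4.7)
The plan is to exhibit, inside every basic clopen neighborhood of $\mathscr{J}$, a component of $\JJJ_g$ built as a nested intersection whose spiral geometry prevents local connectedness.

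\textbf{Step 1 (Topology).} By Proposition \ref{linear}, the semi-conjugacy $\Pi:\JJJ_g\to\JJJ_{\sigma}$ identifies $\mathscr{J}$ with the Cantor set $\JJJ_{\sigma}$, and a basis for its topology is given by the clopen sets $\mathcal{U}_{A^n}=\{K\in\mathscr{J}:K\subset A^n\}$ where $A^n$ ranges over the components of $g^{-n}(A)$. It therefore suffices to prove that every $A^{n_0}$ contains a non-locally-connected component of $\JJJ_g$.

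\textbf{Step 2 (Inductive nesting).} Fix $A^{n_0}$ and a divergent sequence $M_j\nearrow\infty$. Pick first a component $E_{m_0}\subset A^{n_0}$ of $g^{-m_0}(E_0)$; set $k_1:=k(M_1,E_{m_0})$ from Lemma \ref{Buff1}, and let $A^{k_1}$ be one of the two components of $g^{-k_1}(A)$ whose closure meets $E_{m_0}$. Inductively, given $A^{k_j}$, choose a component $E_{m_j}\subset A^{k_j}$ of $g^{-m_j}(E_0)$ with $m_j>k_j$, set $k_{j+1}:=k(M_{j+1},E_{m_j})$, and let $A^{k_{j+1}}$ be one of the two components of $g^{-k_{j+1}}(A)$ whose closure meets $E_{m_j}$; then $A^{k_{j+1}}\subset V_{k_{j+1}}(E_{m_j})\subset A^{k_j}$. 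Each $\mathcal{U}_{A^{k_j}}$ is a nonempty clopen subset of the Cantor set $\mathscr{J}$ (nonemptiness by Proposition \ref{2-continuum}(2)), hence $\bigcap_j\mathcal{U}_{A^{k_j}}\ne\emptyset$. Any $K$ in this intersection satisfies $K\subset A^{n_0}$ and $K\subset V_{k_j}(E_{m_{j-1}})$ for every $j\ge 1$.

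\textbf{Step 3 (Failure of local connectedness).} Lemma \ref{Buff1} supplies, for each $j\ge 1$, a point $z_j\in K$ with $\text{h-dist}(z_j,\partial A;E_{m_{j-1}})>M_j$. The Euclidean versus hyperbolic comparison recorded just before Lemma \ref{Buff1} then forces the Euclidean distance from $z_j$ to the spiral core inside $E_{m_{j-1}}$ to tend to zero as $j\to\infty$, while the Koebe distortion theorem applied to the holomorphic pullback $g^{k_j}:A^{k_j}\to A$ keeps the shape of the transported spiral $\delta\subset E_{m_{j-1}}$ controlled up to bounded distortion. After extraction, $z_j\to z_{\infty}\in K$, and in every Euclidean neighborhood of $z_{\infty}$ one sees, for all large $j$, a quasi-conformally distorted copy of the base spiral $\delta$ to which $K$ must conform from one side. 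These nested distorted spirals produce arbitrarily many pairwise separated arms of $K$ converging to $z_{\infty}$ inside shrinking Euclidean disks, so $K$ cannot be locally connected at $z_{\infty}$.

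\textbf{Main obstacle.} The delicate part is the final topological conclusion in Step 3: one must convert the quantitative h-distance blow-up supplied by Lemma \ref{Buff1} into a genuine obstruction to local connectedness at the limit point $z_{\infty}$. This requires tracking, via Koebe distortion on the holomorphic coverings $g^{k_j}$, that the base spiral $\delta$ pulls back faithfully into each $E_{m_{j-1}}$, and then arguing that in arbitrarily small Euclidean disks around $z_{\infty}$ the intersection of $K$ with the disk genuinely splits into a growing number of pairwise disconnected spiral arms separated by these pulled-back spirals. Establishing uniform distortion control and turning the resulting accumulation of arms into a bona fide failure of local connectedness is where the technical weight of the proof sits.
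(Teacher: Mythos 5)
Your proposal does not prove the assigned statement. Lemma \ref{Buff1} is invoked twice inside your own argument (in Step 2, to define $k_1=k(M_1,E_{m_0})$ and $k_{j+1}=k(M_{j+1},E_{m_j})$, and in Step 3, to produce the points $z_j$ with $\text{h-dist}(z_j,\partial A;E_{m_{j-1}})>M_j$), so as a proof of Lemma \ref{Buff1} it is circular. What Steps 1--3 actually sketch is the deduction of Theorem \ref{Buff} from Lemma \ref{Buff1}, which is a separate task in the paper; moreover the paper's deduction is much shorter than your Step 3: it is a Baire-category argument with the open dense sets $\mathscr{N}(m)$, followed by the remark that a locally connected component $K$ would be a Jordan curve, hence every $z\in K$ would admit access arcs in $A\smm K$ to the two components of $\partial A$ whose lifts under the universal covering have uniformly bounded diameter, contradicting $\text{h-dist}(z,\partial A;E_n)>m$ for $m$ large. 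The elaborate picture of quasiconformally distorted spiral arms accumulating at $z_\infty$, which you flag as the ``main obstacle'' and do not carry out, is not needed for that deduction and in any case does not address the lemma you were asked to prove.

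A proof of Lemma \ref{Buff1} has two ingredients, neither of which appears in your text. First, the base case $E_n=E_0$: by the explicit geometry of the spiral $\delta$, for every $M$ there is $\varepsilon>0$ such that any $z\in A\smm E_0$ at Euclidean distance less than $\varepsilon$ from $E'_0$ satisfies $\text{h-dist}(z,\partial A;E_0)>M$, because an arc from such a point to $\partial A$ avoiding $E_0$ must unwind along the channel between consecutive turns of $\delta$, so any lift to the band $B$ has large diameter; then, since $\bigcap_{k}\wt V_k(E_0)=E_0$ and each component of $\JJJ_g$ lying in $V_k(E_0)$ is an essential continuum in its annulus (Proposition \ref{2-continuum}), while no essential continuum fits in a small neighborhood of the spiral part of $E_0$ away from $E'_0$, every such component must contain a point $\varepsilon$-close to $E'_0$ once $k$ is large; this yields $k(M,E_0)$. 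Second, the transfer to an arbitrary component $E_n$ of $g^{-n}(E_0)$: this is exactly where the Koebe distortion theorem enters, applied to univalent branches (or lifts to $B$) of $g^{-n}$ carrying the picture near $E_0$ to a bounded-distortion copy near $E_n$, so that the same winding estimate holds relative to $E_n$ and one may choose $k(M,E_n)>n$ so that all components of $\JJJ_g\cap V_{k(M,E_n)}(E_n)$ come close enough to the core of $E_n$. In your proposal the Koebe theorem appears only in Step 3, applied to a different object for a different purpose, so the actual content of Lemma \ref{Buff1} is never established.
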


\noindent{\it Proof of Theorem \ref{Buff}}. For each integer $m>0$ and any component $E_n$ of $g^{-n}(E_0)$ ($n\ge 1$), define $\mathscr{N}(m, E_n)$ to be the sub-collection of $\mathscr{J}$ such that $K\in \mathscr{N}(m, E_n)$ if $K\subset V_{k(m, E_n)}(E_n)$. Then $\mathscr{N}(m, E_n)$ is an open set in $\mathscr{J}$.
Set $\mathscr{N}(m)$ to be the union of $\mathscr{N}(m, E_n)$ for all $n\ge 1$ and all the components $E_n$ of $g^{-n}(E_0)$. Then it is an open dense subset of $\mathscr{J}$. Thus, $\mathscr{N}=\bigcap_{m\ge 1}\mathscr{N}(m)$ is a dense subset of $\mathscr{J}$  (in the sense of Baire category).

For each $K\in\mathscr{N}$, we want to show that $K$ is not locally connected. Otherwise $K$ is a Jordan curve and hence there is an constant $M<\infty$ such that for any point $z\in K$, there are open arcs $\de_+(z)$ and $\de_-(z)$ in $A\smm K$ connecting $z$ with the two components of $\partial A$, respectively, such that $\text{diam}(\pi^{-1}(\de_{\pm}(z)))<M$.

Fix an integer $m>M$. Since $K\in\mathscr{N}(m)$, there exist an integer $n\ge 0$ and a component $E_n$ of $g^{-n}(E_0)$ such that $K\in\mathscr{N}(m, E_n)$. Thus, there is a point $z\in K$ such that $\text{h-diam}(z, \partial A; E_n)>m>M$, contradicting the fact that $\text{diam}(\pi^{-1}(\de_{\pm}(z)))<M$.
\qed

\vskip 0.24cm

{\it Acknowledgements}. The first and second authors are supported by the NSFC grant no. 11125106 and 11101402. The third author is supported by ANR LAMBDA and Geanpyl Pays de la Loire. The authors are grateful to the referees and H.H. Rugh for helpful suggestions and comments.

\noindent
Guizhen Cui and Wenjuan Peng \\
Academy of Mathematics and Systems Science, \\
Chinese Academy of Sciences, Beijing 100190, \\
P. R. China. \\
gzcui@math.ac.cn and wenjpeng@amss.ac.cn

\vskip 0.24cm

\noindent
TAN Lei, \\
LAREMA, UMR 6093 CNRS. \\
Universit\'e d'Angers, \\
2 bd Lavoisier, Angers, 49045, \\
France. \\
Lei.Tan@univ-angers.fr

\end{document}